\newcolumntype{L}{>{$}l<{$}}
\newcolumntype{C}{>{$}c<{$}}
\definecolor{lgray}{gray}{0.8}
\providecommand{\keywords}[1]{{\textit{Key words and phrases---}} #1}
\providecommand{\subjclass}[1]{{\textit{AMS 2000 subject classifications---}} #1}
\theoremstyle{plain}
\newtheorem{thrm}{Theorem}[section]
\newtheorem{lmm}[thrm]{Lemma}
\newtheorem{crllr}[thrm]{Corollary}
\newtheorem{prpstn}[thrm]{Proposition}
\newtheorem{rmrk}[thrm]{Remark}
\newtheorem{rmk}{Remark}[section]
\newenvironment{proof}[1][Proof]{\noindent\textbf{#1.} }{\ \rule{0.5em}{0.5em}}
\DeclareMathOperator{\pen}{pen}
\DeclareMathOperator{\crit}{crit}
\DeclareMathOperator{\leb}{Leb}
\DeclareMathOperator{\card}{Card}
\DeclareMathOperator{\var}{Var}
\DeclareMathOperator{\Span}{Span}
\DeclareMathOperator{\VFCV}{VFCV}
\DeclareMathOperator{\p}{p}
\DeclareMathOperator{\emp}{emp}
\DeclareMathOperator*{\supp}{supp}
\title{\textbf{\scshape Slope heuristics and V-Fold model selection\\ in heteroscedastic regression\\ using strongly localized bases}}
\author{{\scshape Fabien Navarro and Adrien Saumard}\\\\
{CREST-ENSAI, BRUZ, FRANCE}\\
}
\date{}
\begin{document}
\maketitle

\begin{abstract}
We investigate the optimality for model selection of the so-called
slope heuristics, $V$-fold cross-validation and $V$-fold penalization in a heteroscedatic with random design
regression context. We consider a new class of linear models that we call
strongly localized bases and that generalize histograms, piecewise
polynomials and compactly supported wavelets.
We derive sharp oracle inequalities that prove the asymptotic optimality of
the slope heuristics---when the optimal penalty shape is known---and $V$%
-fold penalization. Furthermore, $V$-fold cross-validation seems to be
suboptimal for a fixed value of $V$ since it recovers asymptotically the
oracle learned from a sample size equal to $1-V^{-1}$ of the original amount
of data.
Our results are based on genuine concentration inequalities for the true and
empirical excess risks that are of independent interest.
We show in our experiments the good behavior of the slope heuristics
for the selection of linear wavelet models. Furthermore, $V$-fold
cross-validation and $V$-fold penalization have comparable efficiency.
\end{abstract}
%
%
\subjclass{62G08, 62G09}

\noindent\keywords{Nonparametric regression, heteroscedastic noise, random design, model selection,  cross-validation, wavelets}
\maketitle
\tableofcontents

\section*{Introduction}
The main goal of this paper is to substantially extend the study, in a
heteroscedastic regression with random design context, of the \textit{%
optimality} of two general model selection devices: the so-called slope
heuristics and V-fold resampling strategies. More precisely, we consider
projection estimators on some general linear models and investigate from a
theoretical perspective the possibility to derive optimal oracle
inequalities for the considered model selection procedures. We also
experiment and compare the procedures for the selection of linear wavelet
models.

The slope heuristics \cite{BirMas:07} is a recent calibration method of
penalization procedures in model selection: from the knowledge of a (good)
penalty shape it allows to calibrate a penalty that performs an accurate
model selection. It is based on the existence of a minimal penalty, around
which there is a drastic change in the behavior of the model selection
procedure. Moreover, the optimal penalty is simply linked to the minimal one
by a factor two. The slope heuristics is thus a general method for the
selection of M-estimators \cite{ArlotMassart:09} and it has been
successfully applied in various methodological studies surveyed in \cite%
{BauMauMich:12}.

However, there is a gap between the wide range of applicability of the slope
heuristics and its theoretical justification. Indeed, there are only a few
studies, in quite restrictive frameworks, that theoretically describe the
optimality of this penalty calibration procedure. First, Birg\'{e} and
Massart \cite{BirMas:07} have shown the validity of the slope heuristics in
a generalized linear Gaussian model setting, including the case of
homoscedastic regression with fixed design. Then, Arlot and Massart \cite%
{ArlotMassart:09} validated the slope heuristics in a heteroscedastic with
random design regression framework, for the selection of linear models of
histograms. These result has been extended to the case of piecewise
polynomial functions in \cite{saum:13}. Lerasle 
\cite{Ler:11,Ler:12}
has shown the optimality of the slope heuristics in least-squares density
estimation for the selection of some linear models for both independent and
dependent data. It has also been shown in \cite{Saum:10c}---refining
previous partial results of \cite{Castellan:99}---that the slope heuristics
is valid for the selection of histograms in maximum likelihood density
estimation. On the negative side, Arlot and Bach \cite{Arl_Bac:2009} proved
that the constant two between the minimal penalty and the optimal one is not always valid for the selection of linear estimators in least-squares
regression with fixed design. For instance, kernel ridge regression leads to a ratio between the optimal penalty and the minimal one that takes values between $1$ and $2$. The existence of a minimal penalty---that can be estimated in practice---seems to be general however, even for the selection of linear estimators.

If the noise is homoscedastic, then the shape of the ideal penalty is known
and is linear in the dimension of the models as in the case of Mallows' $%
C_{p}$. However, if the noise is heteroscedastic, then Arlot \cite{Arl:2010}
showed that the ideal penalty is not in general a function of the linear
dimension of the models. Hence, it is likely that finding a good penalty
shape in order to use the slope heuristics will be hard and another approach
would be needed. Probably, the most commonly used method to select an
hyperparameter---such as the linear dimension of the models in our problem---in practice is the $V$-fold cross-validation (VFCV) procedure \cite{Geisser:75},
with $V$ classically taken to be equal to $5$ or $10$.

Despite its wide success in practice, there is still quite few theoretical
results concerning VFCV, that are surveyed in \cite%
{ArlotCelisse:10}. Some asymptotic results are described in \cite%
{GLKKWbook:02}. Some papers more specifically address the efficiency of VFCV as a model selection tool by deriving oracle
inequalities. But most results, such as in \cite{LecuMitchell:2012} in a
general learning context or in \cite{Wegkamp:03} for least-squares
regression, do not allow to tackle the question of the \textit{optimality}
of the procedure as a model selection tool, since they prove oracle
inequalities with unknown or suboptimal leading constant. A notable
exception is \cite{Arl:2008a}, which proves that VFCV
for a fixed $V$ is indeed asymptotically suboptimal for the selection of
regressograms. This is simply explained by the fact that VFCV gives a biased estimation of the risk, as emphasized
earlier by Burman \cite{Burman:89}, who proposed to remove this bias.

Building on ideas of \cite{Burman:89}, Arlot \cite{Arl:2008a} defined the
so-called $V$-fold penalization and proved its asymptotic optimality, even
for fixed $V$, for the selection of histograms. In particular, the procedure adapts to the
heteroscedasticity of the noise, a property of $V$-fold techniques also putted on
emphasis in \cite{ArlotCelisse:11}\ in the context of change-point detection.
The idea is that $V$-fold penalization gives an unbiased estimate of the
risk by adding to the empirical risk a cross-validated estimate of the ideal
penalty. However, in practice $V$-fold penalization and cross-validation
roughly give the same accuracy, since the over-penalization performed by
cross-validation can actually be an advantage when the sample size is small
to moderate. Concerning the choice of $V$ in either VFCV or penalization, Arlot and Lerasle \cite%
{Arl_Ler:2012:penVF:JMLR}\ recently justified in a least-squares density
estimation context that the choice of $V=5$ or $10$ is a reasonable choice.

The theoretical investigation of optimality of either the slope heuristics
or $V$-fold strategies will be based, among other things, on sharp results
that describe the\textit{\ concentration} of the true and the empirical
excess risks when the model is fixed---but with dimension allowed to depend
on the sample size. Since the excess risk of an empirical risk minimizer is
a central object of the theory of statistical learning, such concentration
result and subsequent optimal upper and lower bounds for the excess risk of
least-squares estimators are of independent interest. Moreover, excess
risk's concentration around a single deterministic point is an exciting new
direction of research that refines more classical excess risk bounds. It recently gained interest after the work of Chatterjee \cite{chatterjee2014},
proving concentration inequalities for excess risk in least-squares
regression under convex constraint and deducing universal admissibility of
least-squares estimation in this context.

It is worth noting that one of the main arguments developed in \cite%
{chatterjee2014} and leading to excess risk's concentration is a formula
expressing the excess risk as the maximizer of a functional related to local
suprema of a Gaussian process. In fact, such a \textit{representation} of
the excess risk of a general M-estimator in terms of an empirical process
appeared earlier in Saumard \cite{saum:12}---see Remark 1 of Section 3
therein---and was also used to prove concentration inequalities for
the excess risk of a projection estimator in least-squares regression.
Building on \cite{chatterjee2014}, Muro and van de Geer \cite%
{MurovandeGeer:15} recently proved concentration inequalities for the excess
risk in regularized least-squares regression and van de Geer and Wainwright 
\cite{vandeGeerWain:16} proposed a generic framework of regularized
M-estimation allowing to derive excess risk's concentration. These studies
are also both based on excess risk's representation in terms of either a
Gaussian or an empirical process.

Let us now detail our contributions:

\begin{itemize}
\item We propose a new analytical property, allowing to deal with a lot of
functional bases, that we call \textit{strongly localized basis}. We show
that it is a refinement on the classical concept of localized basis \cite%
{BirgeMassart:98}, that encompasses the cases of histograms, piecewise
polynomials and compactly supported wavelets. We prove better results for
strongly localized bases than for localized bases, while
all known examples of localized bases are in fact strongly localized.
Therefore, the concept of strongly localized basis is a way to describe some
functional bases that is of independent interest and that could be used in
many other nonparametric settings.

\item We substantially extend the theoretical analysis of the slope
heuristics, generalizing the results of 
\cite{ArlotMassart:09,saum:13}
to the case of strongly localized bases.

\item We prove sharp oracle inequalities for the $V$-fold cross validation
with fixed $V$, showing that it asymptotically recovers an oracle model
learned with a fraction equal to $1-V^{-1}$ of the original amount of data.
Then we improve on these bounds by considering $V$-fold penalization, which
satisfies optimal oracle inequalities. By proving such a result, we generalize
a previous study of Arlot \cite{Arl:2008a}, from the case of histograms to
the case of strongly localized bases.

\item We prove concentration bounds for the excess risk of projection
estimators, that are of independent interest. These results are
based on previous work \cite{saum:12} and on a new approach to sup-norm
consistency. We indeed generalize previous representation formulas in terms
of empirical process for the excess risk of a (regularized) M-estimator
obtained in 
\cite{saum:12,vandeGeerWain:16}
to \textit{any functional} of a M-estimator and use it to obtain bounds in
sup-norm for projection estimators on strongly localized bases. These new
representation formulas are also of independent interest, since they are
totally general in M-estimation.

\item We show in our experiments the good behavior of the slope
heuristics for the selection of linear wavelet models. Indeed, it often
compares favorably to VFCV and penalization. In addition, Mallows' $C_p$ seems to be also efficient. We also recover in our more general framework some previous
observations of Arlot \cite{Arl:2008a}: even if the $V$-fold penalization
has better theoretical guarantees than the $V$-fold cross validation, it has
only comparable efficiency in practice.
\end{itemize}

The paper is organized as follows. In Section \ref{section_framework_reg
copy(1)}, we describe the statistical framework. The concept of strongly
localized basis is presented in Section \ref{section_strong_loc_bas}. The
slope heuristics is validated in Section \ref{section_slope_heuristics}, and 
$V$-fold strategies are considered in Section \ref%
{section_Vfold_model_selection}. Then we expose our results for a fixed
model, that are of independent interest, in Section \ref%
{section_excess_risk_concentration}. Numerical experiments are detailed in
Section \ref{section_experiments}. The proofs are postponed to Section \ref%
{section_proof_slope_reg}.

\section{Statistical framework\label{section_framework_reg copy(1)}}

We consider $n$ independent observations $\xi _{i}=\left( X_{i},Y_{i}\right)
\in \mathcal{X\times }\mathbb{R}$ with common distribution $P$, as well as a
generic random variable $\xi =\left( X,Y\right) $, independent of the sample 
$\left( \xi _{1},\ldots,\xi _{n}\right) $, following the same distribution $P$.
The \textit{feature space} $\mathcal{X}$ is a subset of $\mathbb{R}^{d}$, $%
d\geq 1$. The marginal distribution of $X_{i}$ is denoted $P^{X}$. We assume
that the following relation holds,%
\begin{equation*}
Y=s_{\ast }\left( X\right) +\sigma \left( X\right) \varepsilon \text{ },
\end{equation*}%
where $s_{\ast }\in L_{2}\left( P^{X}\right) $ is the \textit{regression
function }of $Y$ with respect to $X$ to be estimated. Conditionally to $X$,
the residual $\varepsilon $ is normalized, i.e. it has mean zero and
variance one. The function $\sigma :\mathcal{X\rightarrow }\mathbb{R}_{+}$
is the unknown \textit{heteroscedastic noise level}.

To estimate $s_{\ast }$, we consider a finite \textit{collection of models} $%
\mathcal{M}_{n}$, with cardinality depending on the sample size $n$. Each
model $m\in \mathcal{M}_{n}$ will be a finite-dimensional vector space of 
\textit{linear dimension} $D_{m}$. The models that we consider in this paper
are more precisely defined in Section \ref{section_strong_loc_bas} below.

We write $\left\Vert s\right\Vert _{2}=\left( \int_{\mathcal{X}%
}s^{2}dP^{X}\right) ^{1/2}$ the quadratic norm in $L_{2}\left( P^{X}\right) $
and $s_{m}$ the\textit{\ orthogonal projection} of $s_{\ast }$ onto $m$ in
the Hilbert space $\left( L^{2}\left( P^{X}\right) ,\left\Vert \cdot
\right\Vert _{2}\right) $. For a function $f\in L_{1}\left( P\right) $, we
write $P(f)=Pf=\mathbb{E}\left[ f\left( \xi \right) \right] $. By setting $%
\gamma :L_{2}\left( P^{X}\right) \rightarrow L_{1}\left( P\right) $ the 
\textit{least-squares contrast}, defined by 
\begin{equation*}
\gamma \left( s\right) :\left( x,y\right) \mapsto \left( y-s\left( x\right)
\right) ^{2}\text{ , \ \ \ \ }s\in L_{2}\left( P^{X}\right) ,
\end{equation*}%
the regression function $s_{\ast }$ is characterized by the following
relation, 
\begin{equation*}
s_{\ast }=\arg \min_{s\in L_{2}\left( P^{X}\right) }P\left( \gamma \left(
s\right) \right) .  
\end{equation*}%
The projections $s_{m}$ also satisfy,%
\begin{equation*}
s_{m}=\arg \min_{s\in m}P\left( \gamma \left( s\right) \right) .
\end{equation*}%
For each model $m\in \mathcal{M}_{n}$, we consider a \textit{least-squares
estimator} $\widehat{s}_{m}$ (possibly non unique), satisfying 
\begin{align*}
\widehat{s}_{m}& \in \arg \min_{s\in m}\left\{ P_{n}\left( \gamma \left(
s\right) \right) \right\} \\
& =\arg \min_{s\in m}\left\{ \frac{1}{n}\sum_{i=1}^{n}\left( Y_{i}-s\left(
X_{i}\right) \right) ^{2}\right\} ,
\end{align*}%
where $P_{n}=n^{-1}\sum_{i=1}^{n}\delta _{\xi _{i}}$ is the \textit{%
empirical measure} built from the data.

The performance of the least-squares estimators is tackled through their 
\textit{excess loss},%
\begin{equation*}
\ell \left( s_{\ast },\widehat{s}_{m}\right) :=P\left( \gamma \left( \widehat{s}%
_{m}\right) -\gamma \left( s_{\ast }\right) \right) =\left\Vert \widehat{s}%
_{m}-s_{\ast }\right\Vert _{2}^{2}.
\end{equation*}%
We split the excess risk into a sum of two terms,%
\begin{equation*}
\ell \left( s_{\ast },\widehat{s}_{m}\right) =\ell \left( s_{\ast },s_{m}\right)
+\ell \left( s_{m},\widehat{s}_{m}\right) ,
\end{equation*}%
where%
\begin{equation*}
\ell \left( s_{\ast },s_{m}\right) :=P\left( \gamma \left( s_{m}\right)
-\gamma \left( s_{\ast }\right) \right) =\left\Vert s_{m}-s_{\ast
}\right\Vert _{2}^{2}\text{ \ \ \ and \ \ \ }\ell \left( s_{m},\widehat{s}%
_{m}\right) :=P\left( \gamma \left( \widehat{s}_{m}\right) -\gamma \left(
s_{m}\right) \right) \geq 0.
\end{equation*}%
The quantity $\ell \left( s_{\ast },s_{m}\right) $ is a deterministic term
called the \textit{bias} of the model $m$, while $\ell \left( s_{m},\widehat{s}%
_{m}\right) $ is a random variable that we call the \textit{excess risk }of
the least-squares estimator $\widehat{s}_{m}$ \textit{on the model} $m$. Notice
that by the Pythagorean theorem, it holds%
\begin{equation*}
\ell \left( s_{m},\widehat{s}_{m}\right) =\left\Vert \widehat{s}_{m}-s_{m}\right%
\Vert _{2}^{2}.
\end{equation*}

Having at hand the collection of models $\mathcal{M}_{n}$, we want to
construct an estimator whose excess risk is as close as possible to the
excess risk of an \textit{oracle model} $m_{\ast }$,%
\begin{equation}
m_{\ast }\in \arg \min_{m\in \mathcal{M}_{n}}\left\{ \ell \left( s_{\ast },%
\widehat{s}_{m}\right) \right\} .  \label{oracle_model}
\end{equation}%
We propose to perform this task \textit{via} a penalization procedure: given
some penalty $%
\pen%
$, that is a function from $\mathcal{M}_{n}$ to $\mathbb{R}^+$, we consider
the following \textit{selected model},%
\begin{equation}
\widehat{m}\in \arg \min_{m\in \mathcal{M}_{n}}\left\{ P_{n}\left( \gamma
\left( \widehat{s}_{m}\right) \right) +%
\pen%
\left( m\right) \right\} \text{ }.  \label{def_proc_2_reg}
\end{equation}%
The goal is then to find a good penalty, such that the selected model $%
\widehat{m}$ satisfies an \textit{oracle inequality} of the form%
\begin{equation}
\ell \left( s_{\ast },\widehat{s}_{\widehat{m}}\right) \leq C\times \inf_{m\in 
\mathcal{M}_{n}}\ell \left( s_{\ast },\widehat{s}_{m}\right) ,
\label{def_oracle_ineq}
\end{equation}%
with probability close to one and with some constant $C\geq 1$, as close to
one as possible.

\section{Strongly localized bases\label{section_strong_loc_bas}}

We define here the analytic constraints that we need to put on the models in
order to derive our model selection results. We also provide various
examples of such models.

\subsection{Definition}

Let us take a finite-dimensional model $m$ with linear dimension $D_{m}$ and
an orthonormal basis $\left( \varphi _{k}\right) _{k=1}^{D_{m}}$. The family 
$\left(\varphi _{k}\right) _{k=1}^{D_{m}}$ is called a \textit{strongly
localized basis} (with respect to the probability measure $P^{X}$) if the
following assumption is satisfied:

\begin{description}
\item[(\textbf{Aslb})] there exist $r_{m}>0$, $b_m\in \mathbb{N}_{\ast }$, a
partition $\left( \Pi _{i}\right) _{i=1}^{b_m}$ of $\left\{
1,\ldots,D_{m}\right\} $, positive constants $\left( A_{i}\right) _{i=1}^{b_m}$
and an orthonormal basis $\left(\varphi _{k}\right) _{k=1}^{D_{m}}$ of $%
\left( m,\left\Vert 
\cdot%
\right\Vert _{2}\right) $ such that $1\leq A_{1}\leq A_{2}\leq \ldots\leq
A_{b_m}<+\infty $,%
\begin{equation}
\sum_{i=1}^{b_m}\sqrt{A_{i}}\leq r_{m}\sqrt{D_{m}},  \label{def_Ai}
\end{equation}%
and%
\begin{equation}
\text{for all }i\in \left\{ 1,\ldots,b_m\right\} \text{, for all }k\in \Pi _{i}%
\text{, }\left\Vert \varphi _{k}\right\Vert _{\infty }\leq r_{m}\sqrt{A_{i}}%
.  \label{loc_plus}
\end{equation}%
Moreover, for every $\left( i,j\right) \in \left\{ 1,\ldots,b_m\right\} ^{2}$ and 
$k\in \Pi _{i}$, we set 
\begin{equation*}
\Pi _{j\left\vert k\right. }=\left\{ l\in \Pi _{j} ; \supp%
\left( \varphi _{k}\right) \bigcap \supp\left( \varphi _{l}\right)
\neq \emptyset \right\}
\end{equation*}%
and we assume that there exists a positive constant $A_{c}$ such that for
all $j\in \left\{ 1,\ldots,b_m\right\} $,%
\begin{equation}
\max_{k\in \Pi _{i}}%
\card%
\left( \Pi _{j\left\vert k\right. }\right) \leq A_{c}\left(
A_{j}A_{i}^{-1}\vee 1\right) .  \label{def_card_Pi_i_k}
\end{equation}
\end{description}

Up to our knowledge, the concept of strongly localized basis is new. In (\ref%
{loc_plus}), we ask for a control in sup-norm of each element of the
considered basis. We also require in (\ref{def_card_Pi_i_k}) a control of
the number of intersections between the supports of the elements of the
considered orthonormal basis.

As shown in Section \ref{ssection_examples} below, the property of strongly
localized basis allows to unify the treatment of some models of histograms,
piecewise polynomials and compactly supported wavelets. From this point of view, we may interpret the parameter $b_m$ as the number "scales" in the basis, which in particular equals one for histograms and piecewise polynomials. It is also equal to the number of resolutions in the multi-resolution analysis associated to wavelet models. See Section \ref{ssection_examples} below for details about these examples.

The classical concept of localized basis (Birg\'{e} and Massart \cite%
{BirgeMassart:98}) also covers the previous examples. More precisely, recall
that an orthonormal basis $\left( \varphi _{k}\right) _{k=1}^{D_{m}}$ of $%
\left( m,\left\Vert 
\cdot%
\right\Vert _{2}\right) $ is a\textit{\ localized basis} if there exists $%
r_{\varphi }>0$ such that%
\begin{equation*}
\text{for all }\beta =\left( \beta _{k}\right) _{k=1}^{D_{m}}\in \mathbb{R}%
^{D_{m}}\text{, }\left\Vert \sum_{k=1}^{D_{m}}\beta _{k}\varphi
_{k}\right\Vert _{\infty }\leq r_{\varphi }\sqrt{D_{m}}\max_{k\in \left\{
1,\ldots,D_{m}\right\} }\left\vert \beta _{k}\right\vert .
\label{def_localized_basis}
\end{equation*}%
In fact, we show in the next proposition that strongly localized bases are
localized in the classical sense. The interest of strongly localized bases
over localized bases then comes from the fact that it allows to derive
concentration bounds for the excess risks for models with dimension much
larger than what we can prove with localized bases (from $D_{m}\ll n^{1/3}$
for localized bases to $D_{m}\ll n$ for strongly localized ones). This point
is detailed in Section \ref{ssection_strong_loc_bas}.

\begin{prpstn}
\label{prop_strong_loc_loc}If an orthonormal basis $\left( \varphi
_{k}\right) _{k=1}^{D_{m}}$ is strongly localized, then it is localized.
More precisely, if $\left( \varphi _{k}\right) _{k=1}^{D_{m}}$ satisfies (%
\textbf{Aslb}), then for every $\beta =\left( \beta _{k}\right)
_{k=1}^{D_{m}}\in \mathbb{R}^{D_{m}}$,%
\begin{eqnarray*}
\left\Vert \sum_{k=1}^{D_{m}}\beta _{k}\varphi _{k}\right\Vert _{\infty }
&\leq &A_{c}r_{m}\sum_{i=1}^{b_m}\sqrt{A_{i}}\max_{l\in \Pi _{i}}\left\vert
\beta _{l}\right\vert \\
&\leq &A_{c}r_{m}^{2}\sqrt{D_{m}}\max_{k\in \left\{ 1,\ldots,D_{m}\right\}
}\left\vert \beta _{k}\right\vert .
\end{eqnarray*}%
Reciprocally, if $\left( \varphi _{k}\right) _{k=1}^{D_{m}}$ is a localized
basis as in (\ref{def_localized_basis}), then it achieves (\ref{def_Ai}) and (\ref{loc_plus}) above with $b=1$, $A_{1}=D_{m}$ and $r_m=\max\{r_\varphi,1\}$.
\end{prpstn}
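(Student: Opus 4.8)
The plan is to establish both inequalities of the first assertion pointwise and then pass to the supremum over $x\in\mathcal{X}$. Fixing $x$, I would split the sum along the partition,
\[
\sum_{k=1}^{D_{m}}\beta_{k}\varphi_{k}(x)=\sum_{i=1}^{b_{m}}\Bigl(\sum_{k\in\Pi_{i}}\beta_{k}\varphi_{k}(x)\Bigr),
\]
so that it is enough to bound each block sum separately. In the $i$-th block only the indices $k$ with $\varphi_{k}(x)\neq 0$ contribute; for each of these $x\in\supp(\varphi_{k})$, and by (\ref{loc_plus}) the corresponding term is at most $r_{m}\sqrt{A_{i}}\,\max_{l\in\Pi_{i}}\left\vert\beta_{l}\right\vert$ in absolute value. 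Hence everything reduces to counting, for fixed $i$ and $x$, the number $N_{i}(x):=\card\{k\in\Pi_{i}:\varphi_{k}(x)\neq 0\}$ of basis functions of the $i$-th block that are active at $x$.

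The one genuinely non-routine step is to show $N_{i}(x)\leq A_{c}$. If $N_{i}(x)\geq 1$, choose any $k_{0}\in\Pi_{i}$ with $\varphi_{k_{0}}(x)\neq 0$. Then for every active $k\in\Pi_{i}$ at $x$ the point $x$ lies in $\supp(\varphi_{k_{0}})\cap\supp(\varphi_{k})$, which is therefore nonempty, so that $k\in\Pi_{i\vert k_{0}}$ in the notation of (\ref{def_card_Pi_i_k}). Consequently $\{k\in\Pi_{i}:\varphi_{k}(x)\neq 0\}\subseteq\Pi_{i\vert k_{0}}$, and taking $j=i$ in (\ref{def_card_Pi_i_k}) yields $N_{i}(x)\leq\card(\Pi_{i\vert k_{0}})\leq A_{c}(A_{i}A_{i}^{-1}\vee 1)=A_{c}$. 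Combining this with the per-term bound from (\ref{loc_plus}) gives $\bigl\vert\sum_{k\in\Pi_{i}}\beta_{k}\varphi_{k}(x)\bigr\vert\leq A_{c}r_{m}\sqrt{A_{i}}\,\max_{l\in\Pi_{i}}\left\vert\beta_{l}\right\vert$; summing over $i\in\{1,\ldots,b_{m}\}$ and taking the supremum over $x$ produces the first inequality. The second is then immediate, using $\max_{l\in\Pi_{i}}\left\vert\beta_{l}\right\vert\leq\max_{1\leq k\leq D_{m}}\left\vert\beta_{k}\right\vert$ for each $i$ together with $\sum_{i=1}^{b_{m}}\sqrt{A_{i}}\leq r_{m}\sqrt{D_{m}}$ from (\ref{def_Ai}).

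For the converse, suppose $(\varphi_{k})_{k=1}^{D_{m}}$ is a localized basis with constant $r_{\varphi}$, and put $b=1$, $\Pi_{1}=\{1,\ldots,D_{m}\}$, $A_{1}=D_{m}$ and $r_{m}=\max\{r_{\varphi},1\}$. Then $1\leq A_{1}<+\infty$ since $D_{m}\geq 1$, and (\ref{def_Ai}) reads $\sqrt{D_{m}}\leq r_{m}\sqrt{D_{m}}$, which holds because $r_{m}\geq 1$. To obtain (\ref{loc_plus}), apply the defining inequality (\ref{def_localized_basis}) of a localized basis to the vector $\beta$ having a single nonzero coordinate, equal to $1$, in position $k$: this yields $\left\Vert\varphi_{k}\right\Vert_{\infty}\leq r_{\varphi}\sqrt{D_{m}}\leq r_{m}\sqrt{D_{m}}=r_{m}\sqrt{A_{1}}$ for every $k$, as required.

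I do not expect a real obstacle: once the hypotheses are read correctly the argument is elementary. The only subtle point is the combinatorial reduction above, namely the observation that the counting hypothesis (\ref{def_card_Pi_i_k}) applied with $j=i$ already bounds, starting from a \emph{single} active index $k_{0}$, the total number of active indices of the same block at a given point $x$. This is exactly where being \emph{strongly} localized improves on the crude estimate one would obtain by triangulating over all of $\Pi_{i}$, and it is what ultimately permits larger model dimensions later in the paper.
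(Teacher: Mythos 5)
Your proof is correct and follows essentially the same route as the paper's: decompose along the partition $\left(\Pi_i\right)_{i=1}^{b_m}$, bound each block's sup-norm by $A_c$ times the per-term bound from (\ref{loc_plus}), and conclude with (\ref{def_Ai}). The paper's version states the block bound $\left\Vert \sum_{l\in \Pi _{i}}\beta _{l}\varphi_{l}\right\Vert _{\infty }\leq A_{c}\max_{l\in \Pi _{i}}\left\Vert \varphi_{l}\right\Vert _{\infty }\max_{l\in \Pi _{i}}\left\vert \beta _{l}\right\vert$ without justification and omits the (easy) converse; you have simply made explicit the counting argument via $\Pi_{i\vert k_0}$ with $j=i$ and verified the converse, both of which are exactly what the paper intends.
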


Proposition \ref{prop_strong_loc_loc} shows that the parameter $r_m$ appearing in the definition of a strongly localized basis is closely related to the parameter $r_\varphi$ defining a localized basis.\\
The proof of Proposition \ref{prop_strong_loc_loc} can be found in Section %
\ref{ssection_proofs_loc_bas}. 

\subsection{Examples\label{ssection_examples}}

We investigate here the scope of the concept of strongly localized basis by
providing some examples of linear models achieving this condition.

\subsubsection{Histograms and piecewise polynomials}

\noindent It is proved in \cite{saum:12} that linear models of histograms
and more general piecewise polynomials with bounded degree are localized
bases in $L_{2}\left( P^{X}\right) $ if the underlying partition $\mathcal{P}
$ of $\mathcal{X}$ is lower-regular in the sense that there exists a
constant $c_{m}$ such that 
\begin{equation*}
0<c_{m}<\sqrt{\left\vert \mathcal{P}\right\vert \inf_{I\in \mathcal{P}%
}P^{X}\left( I\right) }\text{ }.
\end{equation*}%
More precisely, if $r\in \mathbb{N}$ is the maximal degree of the piecewise
polynomials---$r=0$\ in the case of histograms---then any orthonormal basis 
$\left\{ \varphi _{I,j},\text{ }I\in \mathcal{P},\text{ }j\in \left\{
0,\ldots,r\right\} \right\} $ of $\left( m,\left\Vert 
\cdot%
\right\Vert _{2}\right) $ such that for all $j\in \left\{ 0,\ldots,r\right\} $, 
$\varphi _{I,j}$ is supported by the element $I$ of $\mathcal{P}$, is
localized. Hence, by Proposition \ref{prop_strong_loc_loc}, it achieves
Inequalities (\ref{def_Ai}) and (\ref{loc_plus}) of the definition of
strongly localized basis with $b=1$ and $A_{1}=D_{m}=(r+1)$Card$\left( 
\mathcal{P}\right) $. It is also immediately seen that such basis achieves
in this case (\ref{def_card_Pi_i_k}) with $A_{c}=r+1$. Furthermore, $r_m=c_m^{-1}$ for histograms (Lemma 4, \cite{saum:12}) and it has a more complicated expression for piecewise polynomials (Lemma 7, \cite{saum:12}). As a result, models
of histograms and piecewise polynomials with bounded degree and underlying
lower-regular partition $\mathcal{P}$ of $\mathcal{X}$ are endowed with a
strongly localized structure.

\subsubsection{Compactly supported wavelet expansions\label{section_Haar}}

\noindent We assume here that $\mathcal{X=}\left[ 0,1\right] $ and take $%
b_m\in \mathbb{N}^\star$. For details about wavelets and interactions with
Statistics, we refer to \cite{HKPT:98}. Set $\phi _{0}$ the father wavelet
and $\psi _{0}$ the mother wavelet. For every integers $j\geq 0$, $1\leq
k\leq 2^{j}$, define%
\begin{equation*}
\psi _{j,k}:x\mapsto 2^{j/2}\psi _{0}\left( 2^{j}x-k+1\right) .
\end{equation*}%
As explained in \cite{CohenDaubVial:93}, there are many ways to consider
wavelets on the interval. We will consider here one of the most classical
solution, that consists of using "periodized" wavelets. To this aim, we
associate to a function $\psi$ on $\mathbb{R}$, the $1$-periodic function%
\begin{equation*}
\psi^{\text{per}}\left( x\right) =\sum_{p\in \mathbb{Z}}\psi \left(
x+p\right) .
\end{equation*}%
Notice that if $\psi$ has a compact support, then the sum at the right-hand
side of the latter inequality is finite for any $x$.

We set for every integers $i,j,l\geq 0$, satisfying $i\leq j$ and $1\leq
l\leq 2^{i}$,%
\begin{eqnarray*}
\Lambda \left( j\right)  &=&\left\{ \left( j,k\right) \text{ };\text{ }1\leq
k\leq 2^{j}\right\} ,  \\
\Lambda \left( j,i,l\right)  &=&\left\{ \left( j,k\right) \text{ };\text{ }%
2^{j-i}\left( l-1\right) +1\leq k\leq 2^{j-i}l\right\} .
\end{eqnarray*}%
Moreover, we set $\psi _{-1,k}\left( x\right) =\phi _{0}\left( x-k+1\right) $%
, 
\begin{equation*}
\Lambda \left( -1\right) =\left\{ \left( -1,k\right) ; \supp\left(
\psi _{-1,k}\right) \cap \left[ 0,1\right] \not=\emptyset \right\} \text{ \
and \ }\Lambda _{b_m}=\bigcup\limits_{j=-1}^{b_m}\Lambda \left( j\right) \text{ .%
}
\end{equation*}%
Notice that for every integers $i,j\geq 0$ such that $i\leq j$, $\left\{
\Lambda \left( j,i,l\right) \text{ };\text{ }1\leq l\leq 2^{i}\right\} $ is
a partition of $\Lambda \left( j\right) $, which means that%
\begin{equation*}
\Lambda \left( j\right) =\bigcup\limits_{l=1}^{2^{i}}\Lambda \left(
j,i,l\right) \text{ and for all }1\leq l,h\leq 2^{i}\text{, }\Lambda \left(
j,i,l\right) \bigcap \Lambda \left( j,i,h\right) =\emptyset .
\end{equation*}%
We consider the model%
\begin{equation}
m=%
\Span%
\left\{ \psi _{\lambda }^{\text{per}}\text{ };\text{ }\lambda \in \Lambda
_{b_m}\right\} .  \label{model_wav}
\end{equation}%
Notice that the linear dimension $D_{m}$ of $m$ satisfies $D_{m}=2^{b_m+1}$.

\begin{prpstn}
\label{prop_wav}With the notations above, if $\phi _{0}$ and $\psi _{0}$ are
compactly supported, then $\left\{ \psi _{\lambda }^{\text{per}}\text{ };%
\text{ }\lambda \in \Lambda _{b_m}\right\} $ is a strongly localized basis on $%
\left( \left[ 0,1\right] ,%
\leb%
\right) $, with parameters $b_m$ as defined above,
$A_{j}=2^{j}$ for $j\geq 0$ and $A_{-1}=1$ (an explicit value of $r_m$ is also given in the proof, but is more complicated).
\end{prpstn}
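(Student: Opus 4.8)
The strategy is to verify each of the three structural requirements of assumption (\textbf{Aslb}) for the periodized wavelet basis, taking the natural candidate partition: group the indices by resolution level, i.e.\ set $\Pi_{-1}$ (relabelled as the first block) $=\Lambda(-1)$ and $\Pi_j=\Lambda(j)$ for $0\le j\le b_m$, with $A_{-1}=1$ and $A_j=2^j$. (Since (\textbf{Aslb}) indexes blocks by $1,\ldots,b_m$ with $A_1\le\cdots\le A_{b_m}$, one just shifts the index by two; the ordering $1=A_{-1}\le A_0=1\le A_1=2\le\cdots\le A_{b_m}=2^{b_m}$ is automatic.) First I would record the elementary facts about periodized compactly supported wavelets: $\psi_{j,k}^{\text{per}}$ is obtained from $\psi_{j,k}$ by $1$-periodization, $\|\psi_{j,k}\|_\infty=2^{j/2}\|\psi_0\|_\infty$, the support of $\psi_{j,k}$ has length $\asymp 2^{-j}$ (equal to $2^{-j}(\operatorname{length}\operatorname{supp}\psi_0)$), and periodization does not increase the sup-norm, so $\|\psi_{j,k}^{\text{per}}\|_\infty\le 2^{j/2}\|\psi_0\|_\infty$ (similarly for the father wavelet at level $-1$). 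Orthonormality of $\{\psi_\lambda^{\text{per}}:\lambda\in\Lambda_{b_m}\}$ in $L_2([0,1],\leb)$ is the standard fact for periodized multiresolution analyses and may be cited from \cite{HKPT:98,CohenDaubVial:93}.

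Next, I would check \eqref{loc_plus}: for $\lambda=(j,k)\in\Pi_j$ we need $\|\psi_{j,k}^{\text{per}}\|_\infty\le r_m\sqrt{A_j}=r_m 2^{j/2}$, which holds with $r_m\ge\|\psi_0\|_\infty$ (and the analogous bound at level $-1$ forces $r_m\ge\|\phi_0\|_\infty$); so any $r_m\ge\max\{1,\|\phi_0\|_\infty,\|\psi_0\|_\infty\}$ works for this clause. For \eqref{def_Ai} I would compute $\sum_{j=-1}^{b_m}\sqrt{A_j}=1+\sum_{j=0}^{b_m}2^{j/2}=1+\frac{2^{(b_m+1)/2}-1}{\sqrt2-1}\le C 2^{(b_m+1)/2}=C\sqrt{D_m}$ for an absolute constant $C$ (using $D_m=2^{b_m+1}$), so \eqref{def_Ai} holds after enlarging $r_m$ by this geometric-series constant. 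The final and most delicate clause is \eqref{def_card_Pi_i_k}, the support-overlap bound: fixing resolution levels $i\le j$ (WLOG, by symmetry of the intersection condition) and a fixed $\psi_{i,k}^{\text{per}}$, I must bound the number of $\psi_{j,l}^{\text{per}}$ at level $j$ whose support meets that of $\psi_{i,k}^{\text{per}}$. Here the key geometric input is that $\operatorname{supp}\psi_{i,k}$ has length $\asymp 2^{-i}$ while each $\operatorname{supp}\psi_{j,l}$ has length $\asymp 2^{-j}$ and these level-$j$ supports have bounded overlap multiplicity (a fixed point lies in $O(1)$ of them, the $O(1)$ depending only on $\operatorname{length}\operatorname{supp}\psi_0$); hence the number of level-$j$ wavelets meeting a set of length $\asymp 2^{-i}$ is $\asymp 2^{-i}/2^{-j}=2^{j-i}=A_jA_i^{-1}$, and when $i=j$ this is $O(1)=A_c(A_iA_i^{-1}\vee1)$. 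The same count applies when the interval $[0,1]$ is wrapped (periodization only identifies points, it cannot increase the number of distinct level-$j$ wavelets hit). This yields \eqref{def_card_Pi_i_k} with $A_c$ depending only on the supports of $\phi_0,\psi_0$.

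The main obstacle is a careful bookkeeping of the periodization near the endpoints of $[0,1]$: one must confirm that (i) after periodization the family still has exactly $2^j$ distinct functions at level $j\ge0$ and the correct count at level $-1$ (encoded in the definition of $\Lambda(-1)$), (ii) the sup-norm is not inflated—true because overlapping periodic translates of a compactly supported wavelet at a fixed point are finite in number, bounded independently of $j$, but if one wants the clean bound $2^{j/2}\|\psi_0\|_\infty$ one should note that for $j$ large enough the translates that are periodized together have disjoint supports inside $[0,1]$, and absorb the finitely many small-$j$ exceptions into the constant $r_m$, and (iii) the overlap count \eqref{def_card_Pi_i_k} is not worsened by wrap-around. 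Once these endpoint effects are controlled, assembling the three clauses gives the claimed strongly localized structure with $b_m$, $A_j=2^j$ ($j\ge0$), $A_{-1}=1$, and an explicit (if unwieldy) $r_m$ built from $\max\{1,\|\phi_0\|_\infty,\|\psi_0\|_\infty\}$, the lengths of $\operatorname{supp}\phi_0$ and $\operatorname{supp}\psi_0$, and the geometric-series constant above; $A_c$ is an absolute multiple of $\operatorname{length}\operatorname{supp}\psi_0$. I would then invoke Proposition \ref{prop_strong_loc_loc} to note, as a sanity check, that this basis is in particular localized in the classical sense of \cite{BirgeMassart:98}.
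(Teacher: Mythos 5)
Your proposal is correct and follows essentially the same route as the paper: the same partition of $\Lambda_{b_m}$ by resolution level with $A_{-1}=1$, $A_j=2^j$, the same geometric-series bound $\sum_j\sqrt{A_j}\le(1+\sqrt2)\sqrt{D_m}$, a sup-norm bound for $\psi^{\text{per}}_{j,k}$ obtained by counting the finitely many overlapping integer translates (the paper's factor $[m]+2$, which you correctly reinstate after your initial overclaim that periodization never inflates the sup-norm), and the same support-length/overlap-multiplicity count giving $A_c$ of the order of the support length of $\psi_0$. The only cosmetic difference is that the paper formalizes the overlap count \eqref{def_card_Pi_i_k} by partitioning each level into $2^{j_0}$ classes of wavelets with pairwise disjoint supports, whereas you count directly via bounded overlap multiplicity; these are the same estimate.
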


The proof of Proposition \ref{prop_wav} can be found in Section \ref%
{ssection_proofs_loc_bas}. Proposition \ref{prop_wav} proves that periodized
compactly supported wavelets on the unit interval form a localized basis for
the Lebesgue measure.

Considering the Haar basis, we can avoid the use of periodization and
consider more general measures than the Lebesgue one.

\begin{prpstn}
\label{prop_Haar}Let us take $\phi _{0}=\mathbf{1}_{\left[ 0,1\right] }$ and 
$\psi _{0}=\mathbf{1}_{\left[ 0,1/2\right] }-\mathbf{1}_{\left( 1/2,1\right]
}$ and consider the model $m$ given in (\ref{model_wav}). Set for every
integers $j\geq 0$, $1\leq k\leq 2^{j}$,%
\begin{equation*}
p_{j,k,-}=P^{X}\left( \left[ 2^{-j}\left( k-1\right) ,2^{-j}\left(
k-\frac{1}{2}\right) \right] \right) \text{ },\text{ }p_{j,k,+}=P^{X}\left( \left(
2^{-j}\left( k-\frac{1}{2}\right) ,2^{-j}k\right] \right) 
\end{equation*}%
\begin{equation*}
\psi _{j,k}:x\in \left[ 0,1\right] \mapsto \frac{1}{\sqrt{%
p_{j,k,+}^{2}p_{j,k,-}+p_{j,k,-}^{2}p_{j,k,+}}}\left( p_{j,k,+}\mathbf{1}_{%
\left[ 2^{-j}\left( k-1\right) ,2^{-j}\left( k-1/2\right) \right] }-p_{j,k,-}%
\mathbf{1}_{\left( 2^{-j}\left( k-\frac{1}{2}\right) ,2^{-j}k\right]
}\right) .  
\end{equation*}%
Moreover we set $\psi _{-1}=\phi _{0}$. Assume that $P^{X}$ has a density $f$
with respect to $%
\leb%
$ on $\left[ 0,1\right] $ and that there exists $c_{\min }>0$ such that for
all $x\in \left[ 0,1\right] $,%
\begin{equation*}
f\left( x\right) \geq c_{\min }>0.
\end{equation*}%
Then $\left\{ \psi _{\lambda }\text{ };\text{ }\lambda \in \Lambda
_{b_m}\right\} $ is a strongly localized orthonormal basis of $\left(
m,\left\Vert 
\cdot%
\right\Vert _{2}\right) $. Indeed, by setting $A_{-1}=1$ and $A_{j}=2^{j}$, $%
j\geq 0$, we have for every integers $j\geq 0$, $1\leq k\leq 2^{j}$,%
\begin{equation*}
\left\Vert \psi _{j,k}\right\Vert _{\infty }\leq \sqrt{\frac{2}{c_{\min }}%
A_{j}}  
\end{equation*}%
and%
\begin{equation*}
\sum_{j=-1}^{b_m}\sqrt{A_{j}}\leq \left( \sqrt{2}+1\right) \sqrt{D_{m}}\text{ .%
}  
\end{equation*}%
Finally, if $\Lambda _{j\left\vert \mu \right. }=\left\{ \lambda \in \Lambda
_{j};\supp\left( \varphi _{\mu }\right) \bigcap \supp%
\left( \varphi _{\lambda }\right) \neq \emptyset \right\} $ for $\mu \in
\Lambda _{b_m}$ and $j\in \left\{ -1,0,1,...,b_m\right\} $,%
\begin{equation*}
\max_{\mu \in \Lambda _{i}}%
\card%
\left( \Lambda _{j\left\vert \mu \right. }\right) \leq A_{j}A_{i}^{-1}\vee 1%
.  
\end{equation*}
\end{prpstn}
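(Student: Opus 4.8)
The plan is to check directly that $\{\psi_\lambda:\lambda\in\Lambda_{b_m}\}$ is an orthonormal basis of $(m,\Vert\cdot\Vert_2)$ and then to verify the three requirements of (\textbf{Aslb}). First, since every Haar mother wavelet of scale $j\le b_m$ is constant on each dyadic interval of length $2^{-(b_m+1)}$, the model $m$ of (\ref{model_wav}) coincides, as a vector space, with the space of functions constant on those intervals; in particular $\dim m=2^{b_m+1}=D_m$. Each $\psi_{j,k}$ (for $0\le j\le b_m$) is constant on the two halves of the level-$j$ dyadic interval $[2^{-j}(k-1),2^{-j}k]$, hence on level-$(b_m+1)$ dyadic intervals, so $\psi_{j,k}\in m$, and $\psi_{-1}=\mathbf 1_{[0,1]}\in m$ as well. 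Since $\card(\Lambda_{b_m})=1+\sum_{j=0}^{b_m}2^{j}=2^{b_m+1}=D_m$, it then suffices to prove orthonormality in $L_2(P^X)$. This is routine: $P^X(\psi_{j,k}^2)=p_{j,k,+}^2p_{j,k,-}+p_{j,k,-}^2p_{j,k,+}$ equals the square of the normalising constant, so $\Vert\psi_{j,k}\Vert_2=1$, and $\Vert\psi_{-1}\Vert_2^2=P^X([0,1])=1$; two wavelets of the same scale $j\ge 0$ have disjoint supports; and for $-1\le j<j'$ the coarser wavelet $\psi_{j,k}$ (or $\psi_{-1}$) is constant on the level-$j'$ dyadic interval carrying $\psi_{j',k'}$ — or that interval is disjoint from its support — so the inner product is a constant times $P^X(\psi_{j',k'})=p_{j',k',+}p_{j',k',-}-p_{j',k',-}p_{j',k',+}=0$.

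Next I would take as the blocks of (\textbf{Aslb}) the index sets of the scales $j\in\{-1,0,\dots,b_m\}$, with the non-decreasing weights $A_{-1}=1$ and $A_j=2^{j}$ for $j\ge 0$. Inequality (\ref{def_Ai}) is the geometric sum $\sum_{j=-1}^{b_m}\sqrt{A_j}=1+\sum_{j=0}^{b_m}2^{j/2}=1+(\sqrt2+1)(\sqrt{D_m}-1)=(\sqrt2+1)\sqrt{D_m}-\sqrt2\le(\sqrt2+1)\sqrt{D_m}$. For (\ref{loc_plus}), writing $p_{j,k}:=p_{j,k,+}+p_{j,k,-}=P^X([2^{-j}(k-1),2^{-j}k])$ and using the factorisation $p_{j,k,+}^2p_{j,k,-}+p_{j,k,-}^2p_{j,k,+}=p_{j,k,+}\,p_{j,k,-}\,p_{j,k}$, one gets $\Vert\psi_{j,k}\Vert_\infty=\max(p_{j,k,+},p_{j,k,-})/\sqrt{p_{j,k,+}p_{j,k,-}\,p_{j,k}}\le 1/\sqrt{\min(p_{j,k,+},p_{j,k,-})}$; since $f\ge c_{\min}$, each half-interval has $P^X$-mass at least $c_{\min}2^{-(j+1)}$, whence $\Vert\psi_{j,k}\Vert_\infty\le\sqrt{2\cdot 2^{j}/c_{\min}}=\sqrt{(2/c_{\min})A_j}$, while $\Vert\psi_{-1}\Vert_\infty=1\le\sqrt{2/c_{\min}}$ because $c_{\min}\le 1$. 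Finally, for (\ref{def_card_Pi_i_k}) I would use that the supports at play are dyadic intervals (and $[0,1]$ for $\psi_{-1}$), hence pairwise nested or essentially disjoint: for $\mu$ of scale $i$ and a finer scale $j$, $\Lambda_{j|\mu}$ consists exactly of the level-$j$ dyadic subintervals of $\supp(\psi_\mu)$, which number $2^{j-i}=A_jA_i^{-1}$ when $0\le i<j$ and $2^{j}=A_jA_{-1}^{-1}$ when $i=-1<j$; for $j\le i$ it reduces to the unique level-$j$ dyadic interval containing $\supp(\psi_\mu)$, i.e. $1=A_jA_i^{-1}\vee 1$. Thus (\ref{def_card_Pi_i_k}) holds with $A_c=1$, and (\textbf{Aslb}) holds with $r_m=\max\{\sqrt2+1,\sqrt{2/c_{\min}}\}$.

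All the computations are elementary; the only points needing care are the bookkeeping around the coarsest scale $-1$ and the convention that the supports of the Haar functions be read as dyadic intervals, so that coincidences at endpoints do not affect the combinatorial count in (\ref{def_card_Pi_i_k}). The one substantive observation is the factorisation of the normalising constant: it makes the sup-norm estimate collapse to the lower bound $c_{\min}2^{-(j+1)}$ on the $P^X$-mass of a half-interval, which is precisely what produces the scaling $A_j=2^{j}$.
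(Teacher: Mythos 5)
Your proof is correct, and it is exactly the direct verification the authors have in mind: the paper states that the proof of Proposition \ref{prop_Haar} is ``straightforward and left to the reader,'' and your argument supplies precisely the omitted checks (orthonormality via the vanishing of $P^X(\psi_{j',k'})$, the factorisation $p_{j,k,+}^{2}p_{j,k,-}+p_{j,k,-}^{2}p_{j,k,+}=p_{j,k,+}p_{j,k,-}p_{j,k}$ for the sup-norm bound, the geometric sum for (\ref{def_Ai}), and the dyadic-nesting count for (\ref{def_card_Pi_i_k})), recovering the announced constants $r_{m}=\max\{\sqrt{2}+1,\sqrt{2c_{\min }^{-1}}\}$ and $A_{c}=1$. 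Your explicit remarks on the endpoint/support convention and on the coarsest scale $j=-1$ are appropriate precautions rather than gaps.
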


\noindent Proposition \ref{prop_Haar}, which proof is straightforward and
left to the reader, ensures that if $P^{X}$ has a density which is uniformly
bounded away from zero on $\mathcal{X}$, then the Haar basis is a strongly
localized orthonormal basis for the $L_{2}\left( P^{X}\right) $-norm. More
precisely, with notations of (\textbf{Aslb}), $r_{m}=\max \left\{ \sqrt{2}+1,%
\sqrt{2c_{\min }^{-1}}\right\} $ and $A_{c}=1$ are convenient.

\section{The slope heuristics\label{section_slope_heuristics}}

\subsection{Principles\label{ssection_principles}}

The slope heuristics is a conjunction of general facts about penalization
techniques in model selection, that lead in practice to an efficient penalty
calibration procedure. Let us briefly recall the main ideas underlying the
slope heuristics.

Consider the model selection problem described in (\ref{def_proc_2_reg}).
First, there exists a \textit{minimal penalty}, denoted $%
\pen%
_{\min }$, such that if $%
\pen%
\left( m_{1}\right) <%
\pen%
_{\min }\left( m_{1}\right) $ where $m_{1}$ is one of the largest models in $%
\mathcal{M}_{n}$, then the procedure defined in (\ref{def_proc_2_reg}) totally
misbehaves in the sense that the dimension of the selected model is one of
the largest of the collection, $D_{\widehat{m}}\gtrsim D_{m_{1}}$, and the
excess risk of the selected model explodes compared to the excess risk of
the oracle.

Furthermore, if $%
\pen%
>%
\pen%
_{\min }$ uniformly over the collection of models, then the selected model
is of reasonable dimension and achieves an oracle inequality as in (\ref%
{def_oracle_ineq}).

Arlot and Massart \cite{ArlotMassart:09} conjectured the validity in a large
M-estimation context of the following candidate for the minimal penalty,%
\begin{equation}
\pen%
_{\min }\left( m\right) =\mathbb{E}\left[ \ell _{%
\emp%
}\left( \widehat{s}_{m},s_{m}\right) \right] ,  \label{def_pen_min}
\end{equation}%
where $\ell _{%
\emp%
}\left( \widehat{s}_{m},s_{m}\right) $ is the empirical excess risk on the
model $m\in \mathcal{M}_{n}$, defined to be 
\begin{equation}
\ell _{%
\emp%
}\left( \widehat{s}_{m},s_{m}\right) =P_{n}\left( \gamma \left( s_{m}\right)
-\gamma \left( \widehat{s}_{m}\right) \right) \geq 0.
\label{def_emp_risk}
\end{equation}%
Finally, if the penalty satisfies $%
\pen%
=2\times 
\pen%
_{\min }$ then it is optimal in the sense that the excess risk of the
selected model converges to the excess risk of the oracle when the amount of
data tends to infinity,%
\begin{equation*}
\frac{\ell \left( s_{\ast },\widehat{s}_{\widehat{m}}\right) }{\inf_{m\in 
\mathcal{M}_{n}}\left\{ \ell \left( s_{\ast },\widehat{s}_{m}\right)
\right\} }\longrightarrow _{n\rightarrow +\infty }1.
\end{equation*}%
From the previous facts, two algorithms have been built in order to
optimally calibrate a penalty shape. Both are based on the estimation of the
minimal penalty. One takes advantage of the dimension jump of the selected
model occurring around the minimal penalty (see Figure~\ref{fig:dimjump}) and the other is based on formula
(\ref{def_pen_min}), performing a robust regression of the empirical risk
with respect to the penalty shape. We refer to the survey paper \cite%
{BauMauMich:12}\ for further details about the algorithmic and theoretical
works existing on the slope heuristics.

 \begin{figure}[t]
 \centering
 \includegraphics[width=0.4\textwidth]{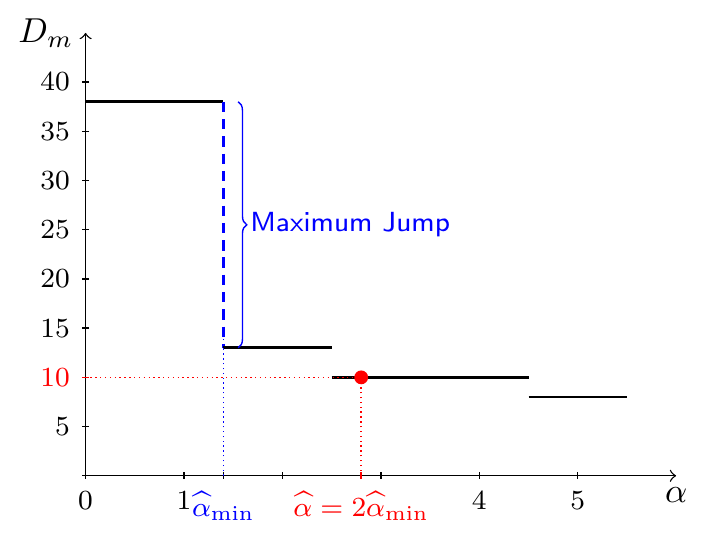}
 \caption{If $\mathrm{pen}_{\min} = \alpha_{\min}\cdot \mathrm{pen}_{\mathrm{shape}}$, for a known penalty shape $\mathrm{pen}_{\mathrm{shape}}$, then one can estimate $\alpha_{\min}$ by using the dimension jump. This gives $\widehat{\alpha}_{\min}$ and $\mathrm{pen}_{\mathrm{opt}}=2\widehat{\alpha}_{\min}\cdot \mathrm{pen}_{\mathrm{shape}}$ is then an optimal penalty according to the slope heuristics.}
 \label{fig:dimjump}
 \end{figure}

\subsection{Assumptions and comments\label{section_main_assumptions}}

\smallskip

\noindent \textbf{Set of assumptions : (SA)}

\begin{description}
\item[(\textbf{P1})] Polynomial complexity of $\mathcal{M}_{n}$: there exist
some constants $c_{\mathcal{M}},$ $\alpha _{\mathcal{M}}>0$ such that $%
\card%
\left( \mathcal{M}_{n}\right) \leq c_{\mathcal{M}}n^{\alpha _{\mathcal{M}}}$ 
$.$

\item[(Auslb)] Existence of strongly localized bases: there exist $r_{%
\mathcal{M}},A_{c}>0$ such that for every $m\in \mathcal{M}_{n}$, there
exist $b_{m}\in \mathbb{N}_{\ast }$, a partition $\left( \Pi _{i}\right)
_{i=1}^{b_{m}}$ of $\left\{ 1,\ldots,D_{m}\right\} $, positive constants $%
\left( A_{i}\right) _{i=1}^{b_{m}}$ and an orthonormal basis $\left( \varphi
_{k}\right) _{k=1}^{D_{m}}$ of $\left( m,\left\Vert 
\cdot%
\right\Vert _{2}\right) $ such that $0<A_{1}\leq A_{2}\leq \ldots\leq
A_{b_{m}}<+\infty $,%
\begin{equation*}
\sum_{i=1}^{b_{m}}\sqrt{A_{i}}\leq r_{\mathcal{M}}\sqrt{D_{m}},
\end{equation*}%
and%
\begin{equation*}
\text{for all }i\in \left\{ 1,\ldots,b_{m}\right\} \text{, for all }k\in \Pi
_{i}\text{, }\left\Vert \varphi _{k}\right\Vert _{\infty }\leq r_{\mathcal{M}%
}\sqrt{A_{i}}.
\end{equation*}%
Moreover, for every $\left( i,j\right) \in \left\{ 1,\ldots,b_{m}\right\} ^{2}$
and $k\in \Pi _{i}$, we set 
\begin{equation*}
\Pi _{j\left\vert k\right. }=\left\{ l\in \Pi _{j}\text{ };\supp%
\left( \varphi _{k}\right) \bigcap \supp\left( \varphi _{l}\right)
\neq \emptyset \right\}
\end{equation*}%
and we assume that for all $j\in \left\{ 1,\ldots,b_{m}\right\} $,%
\begin{equation*}
\max_{k\in \Pi _{i}}%
\card%
\left( \Pi _{j\left\vert k\right. }\right) \leq A_{c}\left(
A_{j}A_{i}^{-1}\vee 1\right) .
\end{equation*}

\item[(\textbf{P2})] Upper bound on dimensions of models in $\mathcal{M}_{n}$%
: there exists a positive constant $A_{\mathcal{M},+}$ such that for every $%
m\in \mathcal{M}_{n},$ $1\leq D_{m}\leq \max \left\{
D_{m},b_{m}^{2}A_{b_{m}}\right\} \leq A_{\mathcal{M},+}n\left( \ln n\right)
^{-2}$ $.$

\item[(\textbf{P3})] Richness of $\mathcal{M}_{n}$: there exist $%
m_{0},m_{1}\in \mathcal{M}_{n}$ and some constants $c_{\mathrm{\mathrm{rich}}},$ $A_{\mathrm{\mathrm{rich}}}>0$
such that $D_{m_{0}}\in \left[ \sqrt{n},c_{\mathrm{rich}}\sqrt{n}\right] $ and $%
D_{m_{1}}\geq A_{\mathrm{rich}}n\left( \ln n\right) ^{-2}$ $.$

\item[(\textbf{Ab})] A positive constant $A$ exists, that bounds the data
and the projections $s_{m}$ of the target $s_{\ast }$ over the models $m$ of
the collection $\mathcal{M}_{n}$: $\left\vert Y_{i}\right\vert \leq A<\infty
,$ $\left\Vert s_{m}\right\Vert _{\infty }\leq A<\infty $ for all $m\in 
\mathcal{M}_{n}.$

\item[(\textbf{An})] Uniform lower-bound on the noise level: $\sigma \left(
X_{i}\right) \geq \sigma _{\min }>0$ $a.s.$

\item[(\textbf{Ap}$_{u}$)] The bias decreases as a power of $D_{m}$: there
exist $\beta _{+}>0$ and $C_{+}>0$ such that%
\begin{equation*}
\ell \left( s_{\ast },s_{m}\right) \leq C_{+}D_{m}^{-\beta _{+}}\text{ }.
\end{equation*}
\end{description}

\smallskip

The set of Assumptions (\textbf{SA}) is very similar---and actually extends---the set of assumptions used in \cite{ArlotMassart:09} and \cite{saum:13}
to prove the validity of the slope heuristics in heteroscedastic
least-squares regression, respectively for models of histograms and
piecewise polynomials.

The main features in this set of Assumptions (\textbf{SA}) are as follows.
Assumption (\textbf{P1}) amounts to say that we select a model among a
"small" collection, as opposed to large collection of models whose cardinal
is exponential with respect to the amount of data $n$. Roughly speaking,
this assumption allows to neglect the deviations of the excess risks on each
model around their mean, since concentration inequalities shown in Section %
\ref{section_excess_risk_concentration} below for these quantities are
exponential.

Then Assumptions (\textbf{Auslb}), (\textbf{P2}), (\textbf{Ab}) and (\textbf{%
An}) enable to apply the desired concentration inequalities for the excess
risks established in Section \ref{section_excess_risk_concentration}. As
shown in Section \ref{section_Haar}, Assumption (\textbf{Auslb}) allows in
particular to encompass the case of compactly supported wavelet expansions
on the interval.

For further and more detailed comments on the above assumptions, we refer to 
\cite{ArlotMassart:09} and \cite{saum:13}.

\subsection{Statement of the theorems}

Let us now state our results validating the slope heuristics for the
selection of uniformly strongly localized bases. The first theorem exhibits the
empirical excess risk defined in (\ref{def_emp_risk}) as a (majorant of the)
minimal penalty, as conjectured by Arlot and Massart \cite{ArlotMassart:09}.

\begin{thrm}
\label{theorem_min_pen_reg_pp}Take a positive penalty: for all $m\in 
\mathcal{M}_{n}$, $%
\pen%
\left( m\right) \geq 0$. Suppose that the assumptions (\textbf{SA}) of
Section \ref{section_main_assumptions} hold, and furthermore suppose that
for $A_{%
\pen%
}\in \left[ 0,1\right) $ and $A_{p}>0$ the model $m_{1}$ of assumption (%
\textbf{P3}) satisfies 
\begin{equation*}
\text{ }0\leq 
\pen%
\left( m_{1}\right) \leq A_{%
\pen%
}\mathbb{E}\left[ \ell _{%
\emp%
}\left( \widehat{s}_{m_{1}},s_{m_{1}}\right) \right] \text{ },
\end{equation*}%
with probability at least $1-A_{p}n^{-2}$. Then there exist a constant $%
L_{1}>0$ only depending on constants in \textit{(\textbf{SA})}, as well as
an integer $n_{0}$ and a positive constant $L_{2}$ only depending on $A_{%
\pen%
}$ and on constants in \textit{(\textbf{SA})} such that, for all $n\geq
n_{0} $, it holds with probability at least $1-L_{1}n^{-2}$,%
\begin{equation*}
D_{\widehat{m}}\geq L_{2}n\ln \left( n\right) ^{-2}
\end{equation*}%
and%
\begin{equation*}
\ell \left( s_{\ast },\widehat{s}_{\widehat{m}}\right) \geq \frac{n^{\beta
_{+}/\left( 1+\beta _{+}\right) }}{\left( \ln n\right) ^{3}}\inf_{m\in 
\mathcal{M}_{n}}\left\{ \ell \left( s_{\ast },\widehat{s}_{m}\right)
\right\} \text{ },  
\end{equation*}%
where $\beta _{+}>0$ is defined in assumption (\textbf{Ap}$_{u}$) of (%
\textbf{SA}).
\end{thrm}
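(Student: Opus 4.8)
The plan is to run the classical ``minimal penalty'' argument of Arlot and Massart \cite{ArlotMassart:09} (and of \cite{saum:13}), now fed with the concentration inequalities for the true and empirical excess risks proved in Section \ref{section_excess_risk_concentration}. Concretely, I will use that under (\textbf{Auslb}), (\textbf{P2}), (\textbf{Ab}) and (\textbf{An}), for each $m\in\mathcal{M}_n$ one has, on an event of probability at least $1-Cn^{-2}$: two-sided concentration of $\ell_{\emp}(\widehat{s}_m,s_m)$ and of $\ell(s_m,\widehat{s}_m)$ around their expectations with remainders of order $(\ln n)/n$; and the matching bounds $c_-D_m/n\le\mathbb{E}[\ell_{\emp}(\widehat{s}_m,s_m)]\wedge\mathbb{E}[\ell(s_m,\widehat{s}_m)]$ and $\mathbb{E}[\ell_{\emp}(\widehat{s}_m,s_m)]\vee\mathbb{E}[\ell(s_m,\widehat{s}_m)]\le c_+D_m/n$, where $c_\pm>0$ depend only on the constants of (\textbf{SA}) (the lower bound uses $\sigma_{\min}$, the upper bound uses $A$ together with the fact --- Proposition \ref{prop_strong_loc_loc} --- that a strongly localized basis is localized). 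I also need a Bernstein bound for $(P_n-P)(\gamma(s_m)-\gamma(s_{m'}))$: since $\|\gamma(s_m)\|_\infty\le 4A^2$ and $\|\gamma(s_{m'})\|_\infty\le 4A^2$ by (\textbf{Ab}), this deviation is at most $CA^2(\sqrt{t/n}+t/n)$ with probability $1-2e^{-t}$, hence, taking $t$ of order $\ln n$ and using the polynomial cardinality (\textbf{P1}), of order $\sqrt{(\ln n)/n}=o((\ln n)^{-2})$ uniformly over $m,m'\in\mathcal{M}_n$.

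The first step is to rewrite the selection rule (\ref{def_proc_2_reg}): after subtracting the $m$-free term $P_n(\gamma(s_\ast))$, the selected model $\widehat{m}$ minimizes over $\mathcal{M}_n$ the functional
\begin{equation*}
\widetilde{\crit}(m):=\ell(s_\ast,s_m)-\ell_{\emp}(\widehat{s}_m,s_m)+(P_n-P)(\gamma(s_m)-\gamma(s_\ast))+\pen(m).
\end{equation*}
I then compare, for any $m$ with $D_m\le D_0:=\theta n(\ln n)^{-2}$ ($\theta>0$ to be fixed), the value $\widetilde{\crit}(m)$ with $\widetilde{\crit}(m_1)$. Writing the difference as
\begin{equation*}
[\ell(s_\ast,s_m)-\ell(s_\ast,s_{m_1})]+[\ell_{\emp}(\widehat{s}_{m_1},s_{m_1})-\ell_{\emp}(\widehat{s}_m,s_m)]+(P_n-P)(\gamma(s_m)-\gamma(s_{m_1}))+[\pen(m)-\pen(m_1)],
\end{equation*}
I bound, on the above events intersected with $\{\pen(m_1)\le A_{\pen}\mathbb{E}[\ell_{\emp}(\widehat{s}_{m_1},s_{m_1})]\}$: the first bracket by $-C_+D_{m_1}^{-\beta_+}=o((\ln n)^{-2})$ (by (\textbf{Ap}$_{u}$) and (\textbf{P3})); $\ell_{\emp}(\widehat{s}_{m_1},s_{m_1})$ from below by $(1-\varepsilon)c_-A_{\mathrm{rich}}(\ln n)^{-2}$ and $\pen(m_1)$ from above by $\tfrac{A_{\pen}}{1-\varepsilon}\ell_{\emp}(\widehat{s}_{m_1},s_{m_1})$; $\ell_{\emp}(\widehat{s}_m,s_m)$ from above by $(1+\varepsilon)c_+\theta(\ln n)^{-2}+o((\ln n)^{-2})$; and the empirical process term by $o((\ln n)^{-2})$. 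With $\varepsilon>0$ chosen so that $\delta:=1-A_{\pen}/(1-\varepsilon)>0$ (possible since $A_{\pen}<1$), this yields
\begin{equation*}
\widetilde{\crit}(m)-\widetilde{\crit}(m_1)\ge\big(\delta(1-\varepsilon)c_-A_{\mathrm{rich}}-(1+\varepsilon)c_+\theta\big)(\ln n)^{-2}+o((\ln n)^{-2}),
\end{equation*}
which is positive for all $n$ large once $\theta$ is fixed small enough (depending only on $A_{\pen}$ and the constants of (\textbf{SA})); hence no $m$ with $D_m\le D_0$ can minimize $\widetilde{\crit}$, i.e. $D_{\widehat{m}}>D_0$, giving the first conclusion with $L_2=\theta$.

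For the second conclusion, I first get the uniform lower bound $\ell(s_m,\widehat{s}_m)\ge\tfrac12 c_-D_m/n$ valid simultaneously for all $m\in\mathcal{M}_n$ with $D_m\ge L_2n(\ln n)^{-2}$ (union bound over the polynomially many such models, (\textbf{P1}); here the remainder $(\ln n)/n$ is negligible against the mean $\asymp D_m/n\gtrsim(\ln n)^{-2}$); applied to $\widehat{m}$ this gives $\ell(s_\ast,\widehat{s}_{\widehat{m}})\ge\ell(s_{\widehat{m}},\widehat{s}_{\widehat{m}})\ge\tfrac12 c_-L_2(\ln n)^{-2}$. For the oracle I use the richness of $\mathcal{M}_n$ ((\textbf{P3})) to exhibit a model $m^\star\in\mathcal{M}_n$ whose dimension realizes, up to constants, the bias--variance trade-off, $D_{m^\star}\asymp n^{1/(1+\beta_+)}$; by (\textbf{Ap}$_{u}$), the concentration of $\ell(s_{m^\star},\widehat{s}_{m^\star})$ and $\mathbb{E}[\ell(s_{m^\star},\widehat{s}_{m^\star})]\le c_+D_{m^\star}/n$, with high probability $\inf_{m\in\mathcal{M}_n}\ell(s_\ast,\widehat{s}_m)\le\ell(s_\ast,\widehat{s}_{m^\star})\le C_+D_{m^\star}^{-\beta_+}+2c_+D_{m^\star}/n\le C'n^{-\beta_+/(1+\beta_+)}$. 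Dividing the two estimates gives $\ell(s_\ast,\widehat{s}_{\widehat{m}})/\inf_m\ell(s_\ast,\widehat{s}_m)\ge\tfrac{c_-L_2}{2C'}\,n^{\beta_+/(1+\beta_+)}(\ln n)^{-2}$, which exceeds $n^{\beta_+/(1+\beta_+)}(\ln n)^{-3}$ once $n_0$ is enlarged so that $\tfrac{c_-L_2}{2C'}\ln n\ge1$. Finally, union-bounding the finitely many invoked events over the at most $c_{\mathcal{M}}n^{\alpha_{\mathcal{M}}}$ models --- their exponential tails absorbing the polynomial factor by (\textbf{P1}) --- and adding the $A_pn^{-2}$ coming from the hypothesis on $\pen(m_1)$, everything holds simultaneously with probability at least $1-L_1n^{-2}$.

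The main difficulty is not this comparison, which is essentially bookkeeping in the spirit of \cite{ArlotMassart:09,saum:13}, but what it consumes upstream: the concentration inequalities of Section \ref{section_excess_risk_concentration}. It is precisely because a \emph{strongly} localized basis (rather than a merely localized one --- compare Proposition \ref{prop_strong_loc_loc} and the remarks following it) keeps those inequalities sharp up to dimensions $D_m$ of order $n(\ln n)^{-2}$ that the threshold $D_0$ can be taken of that order, as (\textbf{P3}) demands. The only genuinely new point relative to the histogram and piecewise-polynomial analyses is the multiscale parameter $b_m$ entering the constants of Section \ref{section_excess_risk_concentration}, whose effect is exactly what the bound $b_m^2A_{b_m}\le A_{\mathcal{M},+}n(\ln n)^{-2}$ in (\textbf{P2}) is there to absorb.
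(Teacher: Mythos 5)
Your architecture for the first conclusion (the lower bound on $D_{\widehat m}$) is sound, but it is worth stressing that it is not the paper's proof: the paper does not rerun the comparison argument at all. It observes that Theorem \ref{theorem_min_pen_reg_pp} holds under the general set (\textbf{GSA}) by \cite{saum:13}, and that the only two hypotheses of (\textbf{GSA}) not already contained in (\textbf{SA}) --- the localized-basis property (\textbf{Alb}) and the sup-norm consistency event (\textbf{Ac}$_{\infty}$) --- are supplied by (\textbf{Auslb}) through Proposition \ref{prop_strong_loc_loc} and Theorem \ref{theorem_general_sup} respectively. So the genuinely new content of the paper's proof is exactly the verification step you relegate to your closing paragraph, while the bookkeeping you reconstruct is what lives inside the cited theorem. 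Within that bookkeeping, note that Theorem \ref{theorem_excess_risk_strong_loc} only applies for $D_m\geq A_-(\ln n)^2$, so the upper bound on $\ell_{\emp}(\widehat s_m,s_m)$ for the remaining small models requires the separate crude bound of order $(D_m\vee\ln n)/n$ used elsewhere in the paper; your ``$+\,o((\ln n)^{-2})$'' presumably stands for this, and that part is fine.

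The genuine gap is in your derivation of the second conclusion. You invoke ``the richness of $\mathcal M_n$ ((\textbf{P3}))'' to produce a model $m^\star$ with $D_{m^\star}\asymp n^{1/(1+\beta_+)}$, but (\textbf{P3}) provides no such model: it only guarantees $m_0$ with $D_{m_0}\in\left[\sqrt n,\,c_{\mathrm{rich}}\sqrt n\right]$ and $m_1$ with $D_{m_1}\geq A_{\mathrm{rich}}\,n(\ln n)^{-2}$, and $n^{1/(1+\beta_+)}$ coincides with $\sqrt n$ only when $\beta_+=1$. Substituting $m_0$ does not repair the step: it gives $\inf_{m}\ell\left(s_\ast,\widehat s_m\right)\leq C\left(n^{-\beta_+/2}+n^{-1/2}\right)$, hence a ratio of order $n^{\min(\beta_+,1)/2}(\ln n)^{-2}$, and since $\min(\beta_+,1)/2<\beta_+/(1+\beta_+)$ for every $\beta_+\neq 1$, this is strictly weaker than the announced $n^{\beta_+/(1+\beta_+)}(\ln n)^{-3}$. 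As written, your argument therefore does not reach the stated rate; you would need either to import the corresponding conclusion from \cite{saum:13} as the paper does, or to identify and state whatever additional structure of the model collection is being used there to realize the bias--variance trade-off dimension.
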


\noindent In order to theoretically validate the slope heuristics described
in Section \ref{ssection_principles}\ above, it remains, in addition to
Theorem \ref{theorem_min_pen_reg_pp}, to show that taking a penalty greater
than the empirical excess risk ensures an oracle inequality and that taking
two times the empirical excess risk yields asymptotic optimality of the
procedure. That's what we present now.

\begin{thrm}
\label{theorem_opt_pen_reg_pp}Suppose that the assumptions \textit{(\textbf{%
SA})} of Section \ref{section_main_assumptions} hold, and furthermore
suppose that for some $\delta \in \left[ 0,1\right) $ and $A_{p},A_{r}>0$,
there exists an event of probability at least $1-A_{p}n^{-2}$ on which, for
every model $m\in \mathcal{M}_{n}$ such that $D_{m}\geq A_{\mathcal{M}%
,+}\left( \ln n\right) ^{3}$, it holds%
\begin{equation*}
\left\vert 
\pen%
\left( m\right) -2\mathbb{E}\left[ \ell _{%
\emp%
}\left( \widehat{s}_{m},s_{m}\right) \right] \right\vert \leq \delta \left(
\ell \left( s_{\ast },s_{m}\right) +\mathbb{E}\left[ \ell _{%
\emp%
}\left( \widehat{s}_{m},s_{m}\right) \right] \right)  
\end{equation*}%
together with%
\begin{equation}
\left\vert 
\pen%
\left( m\right) \right\vert \leq A_{r}\left( \frac{\ell \left( s_{\ast
},s_{m}\right) }{\left( \ln n\right) ^{2}}+\frac{\left( \ln n\right) ^{3}}{n}%
\right) .  \label{pen_id_2_pp}
\end{equation}%
Then, for any $\eta \in \left( 0,\beta _{+}/\left( 1+\beta _{+}\right)
\right) $, there exist an integer $n_{0}$ only depending on $\eta ,\delta $
and $\beta _{+}$ and on constants in \textit{(\textbf{SA})}, a positive
constant $L_{3}$ only depending on $c_{\mathcal{M}}$ given in \textit{(%
\textbf{SA})} and on $A_{p}$, two positive constants $L_{4}$ and $L_{5}$
only depending on constants in \textit{(\textbf{SA})} and on $A_{r}$ and a
sequence 
\begin{equation*}
\theta _{n}\leq \frac{L_{4}}{\left( \ln n\right) ^{1/4}}
\end{equation*}%
such that it holds for all $n\geq n_{0}$, with probability at least $%
1-L_{3}n^{-2}$, 
\begin{equation*}
D_{%
\widehat{m}%
}\leq n^{\eta +1/\left( 1+\beta _{+}\right) }
\end{equation*}%
and%
\begin{equation*}
\ell \left( s_{\ast },\widehat{s}_{\widehat{m}}\right) \leq \left( \frac{%
1+\delta }{1-\delta }+\frac{5\theta _{n}}{\left( 1-\delta \right) ^{2}}%
\right) \inf_{m\in \mathcal{M}_{n}}\left\{ \ell \left( s_{\ast },\widehat{s}%
_{m}\right) \right\} +L_{5}\frac{\left( \ln n\right) ^{3}}{n}.
\end{equation*}%
Assume that in addition, the following assumption holds,

\begin{description}
\item[(\textbf{Ap})] The bias decreases like a power of $D_{m}$: there exist 
$\beta _{-}\geq \beta _{+}>0$ and $C_{+},C_{-}>0$ such that%
\begin{equation*}
C_{-}D_{m}^{-\beta _{-}}\leq \ell \left( s_{\ast },s_{m}\right) \leq
C_{+}D_{m}^{-\beta _{+}}\text{ }.
\end{equation*}
\end{description}

\noindent Then it holds for all $n\geq n_{0}\left( \left( \text{\textbf{SA}}%
\right) ,C_{-},\beta _{-},\beta _{+},\eta ,\delta \right) $, with
probability at least $1-L_{3}n^{-2}$,%
\begin{equation*}
A_{\mathcal{M},+}\left( \ln n\right) ^{3}\leq D_{%
\widehat{m}%
}\leq n^{\eta +1/\left( 1+\beta _{+}\right) } 
\end{equation*}%
and%
\begin{equation}
\ell \left( s_{\ast },\widehat{s}_{\widehat{m}}\right) \leq \left( \frac{%
1+\delta }{1-\delta }+\frac{5\theta _{n}}{\left( 1-\delta \right) ^{2}}%
\right) \inf_{m\in \mathcal{M}_{n}}\left\{ \ell \left( s_{\ast },\widehat{s}%
_{m}\right) \right\} .  \label{oracle_opt_pp}
\end{equation}
\end{thrm}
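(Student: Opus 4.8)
The plan is to prove Theorem \ref{theorem_opt_pen_reg_pp} by reducing the model selection problem to uniform control of the empirical process fluctuations on each model, then using the concentration inequalities for the true and empirical excess risks announced in Section \ref{section_excess_risk_concentration}. The starting point is the defining property of $\widehat{m}$: for every competing model $m\in\mathcal{M}_n$,
\begin{equation*}
P_n\left(\gamma\left(\widehat{s}_{\widehat m}\right)\right)+\pen\left(\widehat m\right)\leq P_n\left(\gamma\left(\widehat{s}_{m}\right)\right)+\pen\left(m\right).
\end{equation*}
Rewriting each side by adding and subtracting $P\gamma(s_\ast)$ and splitting $P_n\gamma(\widehat s_m)=P\gamma(\widehat s_m)-(P-P_n)(\gamma(\widehat s_m)-\gamma(s_m))-(P-P_n)\gamma(s_m)+\dots$, and using $\ell_{\emp}(\widehat s_m,s_m)=P_n(\gamma(s_m)-\gamma(\widehat s_m))$ together with $\ell(s_m,\widehat s_m)=P(\gamma(\widehat s_m)-\gamma(s_m))$, one gets a ``comparison inequality'' of the schematic form
\begin{equation*}
\ell\left(s_\ast,\widehat s_{\widehat m}\right)\leq \ell\left(s_\ast,\widehat s_{m}\right)+\left[\text{error}(m)\right]+\left[\text{error}(\widehat m)\right],
\end{equation*}
where each $\text{error}(m)$ collects $\pen(m)-2\ell_{\emp}(\widehat s_m,s_m)$, plus the difference between $\ell_{\emp}(\widehat s_m,s_m)$ and $\ell(s_m,\widehat s_m)$, plus the centered linear term $(P_n-P)(\gamma(s_m)-\gamma(s_\ast))$. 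The idea is that the penalty is designed precisely so that $\pen(m)\approx 2\mathbb{E}[\ell_{\emp}]\approx \mathbb{E}[\ell_{\emp}]+\mathbb{E}[\ell(s_m,\widehat s_m)]$, which cancels the leading-order terms.

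Next I would control each piece uniformly over $\mathcal{M}_n$. By Assumption (\textbf{P1}) the collection is polynomial, so a union bound against the exponential concentration inequalities of Section \ref{section_excess_risk_concentration} gives, on an event of probability $1-L_3n^{-2}$, that simultaneously for all $m$ with $D_m\geq A_{\mathcal{M},+}(\ln n)^3$ one has $\ell_{\emp}(\widehat s_m,s_m)$ and $\ell(s_m,\widehat s_m)$ both within a factor $(1\pm\theta_n)$ of their common-order deterministic equivalent, and that $\mathbb{E}[\ell_{\emp}(\widehat s_m,s_m)]$ is comparable to $\mathbb{E}[\ell(s_m,\widehat s_m)]$ up to $\theta_n$, with $\theta_n\lesssim (\ln n)^{-1/4}$. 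For the small-dimensional models ($D_m<A_{\mathcal{M},+}(\ln n)^3$), the penalty bound \eqref{pen_id_2_pp} and a crude bound on the empirical process show the corresponding error terms are $O((\ln n)^3/n)$, hence absorbed in the additive remainder $L_5(\ln n)^3/n$; simultaneously the hypothesis on $\pen$ for large-dimensional models feeds directly into the cancellation. Combining, and optimizing over $m$ on the right-hand side to reach $\inf_m\ell(s_\ast,\widehat s_m)$, yields the first displayed oracle inequality with leading constant $\frac{1+\delta}{1-\delta}+\frac{5\theta_n}{(1-\delta)^2}$ and the dimension upper bound $D_{\widehat m}\leq n^{\eta+1/(1+\beta_+)}$ (the latter coming from the fact that $\ell_{\emp}(\widehat s_{\widehat m},s_{\widehat m})$ is of order $D_{\widehat m}/n$ while the oracle excess risk is at most polynomially small in $n$, using (\textbf{Ap}$_u$)).

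Finally, to upgrade to \eqref{oracle_opt_pp}—that is, to remove the additive $L_5(\ln n)^3/n$ term—I would invoke the two-sided bias assumption (\textbf{Ap}). Under the lower bound $\ell(s_\ast,s_m)\geq C_-D_m^{-\beta_-}$, the oracle excess risk $\inf_m\ell(s_\ast,\widehat s_m)$ is bounded below by a negative power of $n$: balancing bias $\asymp D_m^{-\beta_-}$ against variance $\asymp D_m/n$ shows the oracle risk is at least of order $n^{-\beta_-/(1+\beta_-)}$, which for $n$ large dominates $(\ln n)^3/n$. Hence the remainder term is itself bounded by a vanishing multiple of $\inf_m\ell(s_\ast,\widehat s_m)$ and can be folded into the leading constant (at the cost of enlarging $\theta_n$ by a term still $\lesssim (\ln n)^{-1/4}$), which also forces $D_{\widehat m}\geq A_{\mathcal{M},+}(\ln n)^3$ since a smaller-dimensional selected model would contradict the oracle inequality. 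I expect the main obstacle to be the uniform control of the gap between the empirical excess risk $\ell_{\emp}(\widehat s_m,s_m)$ and the true excess risk $\ell(s_m,\widehat s_m)$ and of both around their expectations, for models of dimension up to $n(\ln n)^{-2}$: this is exactly where the strongly-localized-basis structure (Assumption (\textbf{Auslb})) is essential, via the sharp concentration inequalities of Section \ref{section_excess_risk_concentration}, and getting the $\theta_n=O((\ln n)^{-1/4})$ rate rather than something worse requires the careful moment/sup-norm estimates those results provide.
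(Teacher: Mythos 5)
Your proposal is correct in substance, but it takes a different (much more self-contained) route than the paper. What you sketch is essentially the full argument of Arlot--Massart type: the comparison inequality from the definition of $\widehat{m}$, cancellation of the leading terms via $\pen(m)\approx 2\mathbb{E}[\ell_{\emp}(\widehat{s}_m,s_m)]\approx\mathbb{E}[\ell_{\emp}(\widehat{s}_m,s_m)]+\mathbb{E}[\ell(s_m,\widehat{s}_m)]$, union bounds over the polynomial collection, a separate crude treatment of models with $D_m<A_{\mathcal{M},+}(\ln n)^3$, and the use of the two-sided bias assumption (\textbf{Ap}) to absorb the additive remainder and force $D_{\widehat{m}}\geq A_{\mathcal{M},+}(\ln n)^3$. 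The paper does not rewrite any of this: its proof (Section \ref{ssection_proof_slope}) consists of observing that the theorem is already established in \cite{saum:13} under the general set of assumptions (\textbf{GSA}), in which the strongly localized basis hypothesis is replaced by the weaker localized basis condition (\textbf{Alb}) plus the sup-norm consistency condition (\textbf{Ac}$_{\infty}$), and then checking that (\textbf{Auslb}) implies (\textbf{Alb}) via Proposition \ref{prop_strong_loc_loc} and implies (\textbf{Ac}$_{\infty}$) via Theorem \ref{theorem_general_sup}. You correctly identify that the genuinely new ingredient is the uniform control, for models of dimension up to $n(\ln n)^{-2}$, of the gap between $\ell_{\emp}(\widehat{s}_m,s_m)$, $\ell(s_m,\widehat{s}_m)$ and their common deterministic equivalent $n^{-1}\mathcal{C}_m$ at rate $\theta_n=O((\ln n)^{-1/4})$ --- this is exactly what Theorem \ref{theorem_general_sup} (sup-norm consistency for strongly localized bases) combined with Theorem \ref{theorem_excess_risk_strong_loc} delivers, and it is the only point at which the present paper improves on the histogram/piecewise-polynomial cases. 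Your approach buys a reader-facing, citation-free proof at the cost of re-deriving machinery from \cite{saum:13}; the paper's buys brevity at the cost of opacity. If you were to write your version out in full, the main care points would be the control of the centered linear terms $(P_n-P)(\gamma(s_m)-\gamma(s_\ast))$ by $\theta_n\,\ell(s_\ast,s_m)$ plus a $(\ln n)/n$ remainder (Bernstein), and keeping track of which dimension regime each concentration inequality applies to.
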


Notice that taking $\delta =0$ in Theorem \ref{theorem_opt_pen_reg_pp} gives
an oracle inequality (\ref{oracle_opt_pp}) with leading constant equal to $%
1+5\theta _{n}$ and thus converging to one when the amount of data tends to
infinity. This shows the optimality of the penalty equal to two times the
minimal one, thus validating the slope heuristics for the selection of
models endowed with a strongly localized basis structure.\\
The proofs of Theorems \ref{theorem_min_pen_reg_pp} and \ref{theorem_opt_pen_reg_pp} simply derive from \cite{saum:13} and Theorem \ref{theorem_general_sup} above (see Section \ref{ssection_proof_slope} for more details). 

\section{V-fold model selection\label{section_Vfold_model_selection}}

We need some further notations and we follow here the notations of Arlot 
\cite{Arl:2008a}. In order to highlight the dependence in the training set,
we will denote $\widehat{s}_{m}\left( P_{n}\right) $ for the least-squares
estimator learned from the empirical distribution $P_{n}=1/n\sum_{i=1}^{n}%
\delta _{\left( X_{i},Y_{i}\right) }$. In $V$-fold sampling, we choose some
partition $\left( B_{j}\right) _{1\leq j\leq V}$ of the index set $\left\{
1,\ldots,n\right\} $ and define%
\begin{equation*}
P_{n}^{\left( j\right) }=\frac{1}{%
\card%
\left( B_{j}\right) }\sum_{i\in B_{j}}\delta _{\left( X_{i},Y_{i}\right) }%
\text{ \ \ \ and \ \ \ }P_{n}^{\left( -j\right) }=\frac{1}{n-%
\card%
\left( B_{j}\right) }\sum_{i\notin B_{j}}\delta _{\left( X_{i},Y_{i}\right) }
\end{equation*}%
together with the estimators,%
\begin{equation*}
\widehat{s}_{m}^{\left( -j\right) }=\widehat{s}_{m}\left( P_{n}^{\left(
-j\right) }\right) .
\end{equation*}

\subsection{Classical V-fold cross-validation}

In the VFCV procedure, the selected model $%
\widehat{m}_{\mathrm{VFCV}}$ optimizes the classical $V$-fold criterion,%
\begin{equation}
\widehat{m}_{\mathrm{VFCV}}\in \arg \min_{m\in \mathcal{M}_{n}}\left\{\mathrm{%
crit}_{\mathrm{VFCV}}\left(m\right) \right\},  \label{def_VFCV}
\end{equation}

where%
\begin{equation}
\mathrm{crit}_{\mathrm{VFCV}}\left( m\right) =\frac{1}{V}\sum_{j=1}^{V}P_{n}^{%
\left( j\right) }\gamma \left( \widehat{s}_{m}^{\left( -j\right) }\right) 
.  \label{def_crit_VFCV}
\end{equation}%
We assume that the partition is regular in the sense that for all $j\in
\left\{ 1,\ldots,V\right\} $, $%
\card%
\left( B_{j}\right) =n/V$ and in practice we can always ensure that for all $%
j$, $\left\vert 
\card%
\left( B_{j}\right) -n/V\right\vert <1$.

\begin{thrm}
\label{Theorem_VFCV}Assume that (\textbf{SA}) holds. Let $r\in \left(
2,+\infty \right) $ and $V\in \left\{ 2,\ldots,n-1\right\} $ satisfying $%
1<V\leq r$. Define the VFCV procedure as the model
selection procedure given by (\ref{def_VFCV}). Then, there exists a constant $L_{\left( \text{\textbf{SA}}\right) ,r}>0$ such that for all $n\geq
n_{0}\left( \left( \text{\textbf{SA}}\right) ,r\right) $, with probability
at least $1-L_{\left( \text{\textbf{SA}}\right) ,r}n^{-2}$,%
\begin{equation*}
\ell \left( s_{\ast },\widehat{s}_{\widehat{m}_{\mathrm{VFCV}}}\right) \leq
\left( 1+\frac{L_{\left( \text{\textbf{SA}}\right) ,r}}{\sqrt{\ln n}}\right)
\inf_{m\in \mathcal{M}_{n}}\left\{ \ell \left( s_{\ast },\widehat{s}%
_{m}^{\left( -1\right) }\right) \right\} +L_{\left( \text{\textbf{SA}}%
\right) ,r}\frac{\left( \ln n\right) ^{3}}{n}.  
\end{equation*}
\end{thrm}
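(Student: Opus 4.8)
The argument follows the scheme introduced by Arlot \cite{Arl:2008a} for regressograms, but systematically replaces the explicit computations available for histograms by the concentration inequalities for the true and empirical excess risks of Section~\ref{section_excess_risk_concentration} and by the sup-norm consistency of projection estimators on strongly localized bases proved there. The point is that, up to a term not depending on $m$, the $V$-fold criterion is a mild perturbation of the averaged true excess risk of the estimators trained on the remaining folds, which explains why $\widehat{m}_{\mathrm{VFCV}}$ behaves like an oracle for the reduced training set $P_n^{(-1)}$ of size $n(1-V^{-1})$.

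\emph{Decomposition of the criterion.} Writing $Y-s_\ast(X)=\sigma(X)\varepsilon$ and using that $\widehat{s}_m^{(-j)}$ is independent of $P_n^{(j)}$,
\[
P_n^{(j)}\gamma(\widehat{s}_m^{(-j)})=P\gamma(s_\ast)+\ell(s_\ast,\widehat{s}_m^{(-j)})+(P_n^{(j)}-P)\gamma(s_\ast)+(P_n^{(j)}-P)[\gamma(\widehat{s}_m^{(-j)})-\gamma(s_\ast)].
\]
Averaging over $j$ and noting that $\tfrac{1}{V}\sum_{j=1}^{V}(P_n^{(j)}-P)\gamma(s_\ast)=(P_n-P)\gamma(s_\ast)$ does not depend on $m$, minimizing $\mathrm{crit}_{\mathrm{VFCV}}$ over $\mathcal{M}_n$ amounts to minimizing $\overline{\mathrm{crit}}(m):=\overline{\ell}_m+R_n(m)$, where $\overline{\ell}_m:=\tfrac{1}{V}\sum_{j=1}^{V}\ell(s_\ast,\widehat{s}_m^{(-j)})$ and $R_n(m):=\tfrac{1}{V}\sum_{j=1}^{V}(P_n^{(j)}-P)[\gamma(\widehat{s}_m^{(-j)})-\gamma(s_\ast)]$. \emph{Uniform control of the remainder.} Since $\gamma(t)-\gamma(s_\ast)=(t-s_\ast)^2-2(t-s_\ast)\sigma\varepsilon$ and $|Y|\le A$ forces $|\sigma(X)\varepsilon|\le 2A$, each summand of $R_n(m)$ splits into $(P_n^{(j)}-P)[(\widehat{s}_m^{(-j)}-s_\ast)^2]$ and $-2(P_n^{(j)}-P)[(\widehat{s}_m^{(-j)}-s_\ast)\sigma\varepsilon]$; conditionally on $P_n^{(-j)}$ these are centered means of $n/V$ independent variables, bounded by a constant times $\Vert\widehat{s}_m^{(-j)}-s_\ast\Vert_\infty$ (resp.\ by its square) and of variance $\lesssim\Vert\widehat{s}_m^{(-j)}-s_\ast\Vert_\infty^2\,\ell(s_\ast,\widehat{s}_m^{(-j)})$. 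On the high-probability event where the sup-norm consistency of Section~\ref{section_excess_risk_concentration} gives $\Vert\widehat{s}_m^{(-j)}-s_\ast\Vert_\infty\le C$ for all $m\in\mathcal{M}_n$ (this uses (\textbf{Auslb}), (\textbf{P2}), (\textbf{Ab}), (\textbf{An}) and $D_m\lesssim n(\ln n)^{-2}$), Bernstein's inequality with deviation level $x\asymp\ln n$, followed by a union bound over the $V\le r$ folds and over the polynomially many models (Assumption (\textbf{P1})), yields $|R_n(m)|\le\theta_n\overline{\ell}_m+L(\ln n)^3/n$ for every $m$, with $\theta_n=O((\ln n)^{-1/2})$.

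\emph{Concentration of the averaged excess risk.} Since the partition is regular, each $\widehat{s}_m^{(-j)}$ is built from $n(1-V^{-1})$ i.i.d.\ observations, so $\mathbb{E}[\ell(s_\ast,\widehat{s}_m^{(-j)})]$ equals a common value $E_m:=\mathbb{E}[\ell(s_\ast,\widehat{s}_m^{(-1)})]$ for all $j$. The relative concentration inequality of Section~\ref{section_excess_risk_concentration} for the true excess risk---valid with training size $n(1-V^{-1})\asymp n$ because $V\le r$---gives, uniformly over $m\in\mathcal{M}_n$ and $j\le V$, that $|\ell(s_\ast,\widehat{s}_m^{(-j)})-E_m|\le\theta_n(\ell(s_\ast,s_m)+E_m)+L(\ln n)^3/n$. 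Combining this for the average and for $j=1$, and using that the bias $\ell(s_\ast,s_m)$ is dominated by $\ell(s_\ast,\widehat{s}_m^{(-1)})$, one gets $(1-\theta_n)\ell(s_\ast,\widehat{s}_m^{(-1)})-L(\ln n)^3/n\le\overline{\ell}_m\le(1+\theta_n)\ell(s_\ast,\widehat{s}_m^{(-1)})+L(\ln n)^3/n$ for all $m$ (after enlarging $\theta_n$ and $L$ by bounded factors). \emph{Conclusion.} On the intersection of the previous events, $\overline{\mathrm{crit}}(m)=\overline{\ell}_m+R_n(m)$ satisfies, for every $m\in\mathcal{M}_n$,
\[
(1-\theta_n)\,\ell(s_\ast,\widehat{s}_m^{(-1)})-\frac{L(\ln n)^3}{n}\le\overline{\mathrm{crit}}(m)\le(1+\theta_n)\,\ell(s_\ast,\widehat{s}_m^{(-1)})+\frac{L(\ln n)^3}{n}.
\]
As $\widehat{m}_{\mathrm{VFCV}}$ minimizes $\overline{\mathrm{crit}}$, comparing its value at $\widehat{m}_{\mathrm{VFCV}}$ (the lower bound being legitimately applied to this random index because the estimates hold simultaneously over $\mathcal{M}_n$) with its value at an arbitrary fixed $m$, then optimizing over $m$, yields
\[
\ell(s_\ast,\widehat{s}_{\widehat{m}_{\mathrm{VFCV}}}^{(-1)})\le\frac{1+\theta_n}{1-\theta_n}\inf_{m\in\mathcal{M}_n}\ell(s_\ast,\widehat{s}_m^{(-1)})+\frac{L'(\ln n)^3}{n}.
\]
It remains to replace $\widehat{s}_{\widehat{m}_{\mathrm{VFCV}}}^{(-1)}$ by the full-sample estimator $\widehat{s}_{\widehat{m}_{\mathrm{VFCV}}}=\widehat{s}_{\widehat{m}_{\mathrm{VFCV}}}(P_n)$: applying once more the relative concentration of the true excess risk to the fixed model $\widehat{m}_{\mathrm{VFCV}}$ (union bound over $\mathcal{M}_n$), together with the elementary fact that $\mathbb{E}[\ell(s_m,\widehat{s}_m(P_n))]\le(1+o(1))\mathbb{E}[\ell(s_m,\widehat{s}_m^{(-1)})]$ for a fixed model (the expected excess risk on a model essentially decreases with the training size), gives $\ell(s_\ast,\widehat{s}_{\widehat{m}_{\mathrm{VFCV}}})\le(1+\theta_n)\ell(s_\ast,\widehat{s}_{\widehat{m}_{\mathrm{VFCV}}}^{(-1)})+L(\ln n)^3/n$. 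Chaining the last two displays and absorbing all constants into a single $L_{(\text{\textbf{SA}}),r}$ produces the announced inequality with leading constant $1+L_{(\text{\textbf{SA}}),r}/\sqrt{\ln n}$.

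\emph{Main obstacle.} The crux is the uniform control of $R_n(m)$: turning the conditional Bernstein bounds into a genuinely negligible, bias-adapted remainder \emph{simultaneously} over all models of dimension up to order $n(\ln n)^{-2}$ requires the sup-norm consistency of $\widehat{s}_m^{(-j)}$ on \emph{strongly} localized---rather than merely localized---bases; this, together with keeping $n/V$ of order $n$ and the number of folds bounded, is exactly where (\textbf{Auslb}), (\textbf{P2}) and the restriction $V\le r$ are genuinely used. The remaining bookkeeping---propagating $1+o(1)$ leading constants and the additive $(\ln n)^3/n$ term through the argmin and through the passage from $P_n^{(-1)}$ to $P_n$---is routine given the concentration toolkit of Section~\ref{section_excess_risk_concentration}.
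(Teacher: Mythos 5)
Your proposal is correct and takes essentially the same route as the paper: after removing the model-independent term $\tfrac{1}{V}\sum_{j}P_n^{(j)}\gamma(s_\ast)$, both arguments show that the $V$-fold criterion is a uniform relative perturbation of the hold-out excess risk $\ell(s_\ast,\widehat{s}_m^{(-1)})$ --- via sup-norm consistency on strongly localized bases, Bernstein-type deviations conditional on the training folds, concentration of the excess risks around $\mathcal{C}_m/n_V$, and a union bound over the polynomial collection --- and then conclude by the argmin inequality, absorbing small-dimensional models into the additive $(\ln n)^3/n$ term. The only differences are organizational: the paper pivots its decomposition at $s_m$ (splitting the remainder into $\Delta_V(m)$ and $\bar\delta(m)$) and outsources the deviation bounds to Lemma 9 and Lemma 5 of \cite{saum:13} and to its $V$-fold averaging corollary, whereas you re-derive them directly and, usefully, make explicit the final passage from $\widehat{s}_{\widehat{m}}^{(-1)}$ to the full-sample estimator $\widehat{s}_{\widehat{m}}$, which the paper treats as immediate.
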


The proof of Theorem \ref{Theorem_VFCV} can be found in Section \ref{ssection_proof_VF}.\\
 In Theorem \ref{Theorem_VFCV}, we show an oracle inequality with leading
constant converging to one when the amount of data tends to infinity, that
compares the excess risk of the model selected \textit{via} VFCV to the excess risk of the best estimator learned with a
fraction of the amount of data equal to $1-V^{-1}$. Thus, VFCV allows to asymptotically recover the oracle learned with a
fraction of the amount of data equal to $1-V^{-1}$. This is natural since
the $V$-fold criterion given in (\ref{def_crit_VFCV}) is an unbiased
estimate of the risk of estimators learned with a fraction $1-V^{-1}$ of the
data.

Consequently, it seems from Theorem \ref{Theorem_VFCV} that there is some
room to improve the performances of VFCV for fixed $V$,
since the oracle learned with all the data has better performances (smaller
excess risk) than the oracle learned with only part of the initial data.
Furthermore, using the concentration inequalities derived in Theorem \ref%
{theorem_excess_risk_strong_loc}, we roughly have, for any $m\in \mathcal{M}%
_{n}$,%
\begin{eqnarray*}
\mathbb{E}\left[ \ell \left( s_{\ast },\widehat{s}_{m}^{\left( -1\right)
}\right) \right] &=&\ell \left( s_{\ast },s_{m}\right) +\mathbb{E}\left[
\ell \left( s_{m},\widehat{s}_{m}^{\left( -1\right) }\right) \right] \\
&\sim &\ell \left( s_{\ast },s_{m}\right) +\frac{1}{4}\frac{\mathcal{C}_{m}}{%
(1-V^{-1})n} \\
&\sim &\ell \left( s_{\ast },s_{m}\right) +\frac{V}{V-1}\mathbb{E}\left[
\ell \left( s_{m},\widehat{s}_{m}\right) \right] \\
&\leq &\frac{V}{V-1}\mathbb{E}\left[ \ell \left( s_{\ast },\widehat{s}%
_{m}\right) \right] .
\end{eqnarray*}%
The natural idea to overcome this issue is to try to select a model using an
unbiased estimate of the risk of the estimators $\widehat{s}_{m}$ (rather
than $\widehat{s}_{m}^{\left( -1\right) }$ for VFCV).
This is what we propose in the following section.

\subsection{V-fold penalization}

Let us consider the following penalization procedure, proposed by Arlot \cite%
{Arl:2008a} and called $V$-fold penalization,

\begin{equation*}
\widehat{m}_{\mathrm{penVF}}\in \arg \min_{m\in \mathcal{M}_{n}}\left\{ \text{%
crit}_{\mathrm{penVF}}\left( m\right) \right\},
\end{equation*}

where%
\begin{equation}\label{def_crit_penVF}
\mathrm{crit}_{\mathrm{penVF}}\left( m\right) =P_{n}\left( \gamma \left( 
\widehat{s}_{m}\right) \right) +%
\pen%
_{\mathrm{VF}}\left( m\right) ,
\end{equation}%
with 
\begin{equation}
\pen%
_{\mathrm{VF}}\left( m\right) =\frac{V-1}{V}\sum_{j=1}^{V}\left[ P_{n}\gamma
\left( \widehat{s}_{m}^{\left( -j\right) }\right) -P_{n}^{\left( -j\right)
}\gamma \left( \widehat{s}_{m}^{\left( -j\right) }\right) \right] .
\label{def_VF_pen}
\end{equation}%
The idea behind $V$-fold penalization is to use the $V$-fold penalty $%
\pen%
_{\mathrm{VF}}$ as an unbiased estimate of the \textit{ideal penalty }$%
\pen%
_{\mathrm{id}}$, the latter allowing to recover exactly the oracle $m_{\ast }$ defined in \ref{oracle_model}. Indeed, we can write%
\begin{eqnarray*}
m_{\ast } &\in \arg \min_{m\in \mathcal{M}_{n}}\left\{ P_{n}\left( \gamma \left( 
\widehat{s}_{m}\right) \right) +%
\pen%
_{\mathrm{id}}\left( m\right) \right\} ,
\end{eqnarray*}%
where%
\begin{equation}
\pen%
_{\mathrm{id}}\left( m\right) =P\left( \gamma \left( \widehat{s}_{m}\right)
\right) -P_{n}\left( \gamma \left( \widehat{s}_{m}\right) \right) .
\label{def_pen_id}
\end{equation}%
Comparing (\ref{def_VF_pen}) and (\ref{def_pen_id}), it is now clear that the $V$%
-fold penalty is a \textit{resampling estimate} of the ideal penalty where
for each $j\in \left\{ 1,\ldots,V\right\} $ the role of $P$ is played by $P_{n}$
and the role of $P_{n}$ is played by $P_{n}^{\left( -j\right) }$.

Now, the benefit compared to VFCV is that $V$-fold
penalization is asymptotically optimal, as stated in the following theorem.

\begin{thrm}
\label{Theorem_VFpen}Assume that (\textbf{SA}) holds. Let $r\in \left(
2,+\infty \right) $ and $V\in \left\{ 2,\ldots,n-1\right\} $ satisfying $%
1<V\leq r$. Define the $V$-fold penalization procedure as the model
selection procedure given in (\ref{def_crit_penVF}). Then, there exists a constant $L_{\left( \text{\textbf{SA}}\right) ,r}>0$ such that for all $n\geq n_{0}\left( \left( \text{\textbf{SA}}\right) ,r\right) $%
, with probability at least $1-L_{\left( \text{\textbf{SA}}\right) ,r}n^{-2}$%
,%
\begin{equation*}
\ell \left( s_{\ast },\widehat{s}_{\widehat{m}_{\mathrm{penVF}}}\right) \leq
\left( 1+\frac{L_{\left( \text{\textbf{SA}}\right) ,r}}{\sqrt{\ln n}}\right)
\inf_{m\in \mathcal{M}_{n}}\left\{ \ell \left( s_{\ast },\widehat{s}%
_{m}\right) \right\} +L_{\left( \text{\textbf{SA}}\right) ,r}\frac{\left(
\ln n\right) ^{3}}{n}.
\end{equation*}
\end{thrm}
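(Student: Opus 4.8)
The plan is to derive Theorem \ref{Theorem_VFpen} from the abstract penalization result Theorem \ref{theorem_opt_pen_reg_pp}, by checking that the $V$-fold penalty $\pen_{\mathrm{VF}}$ is, uniformly over $\mathcal{M}_{n}$ and on an event of probability $1-O(n^{-2})$, close to $2\,\mathbb{E}[\ell_{\emp}(\widehat{s}_{m},s_{m})]$ up to a relative error $\delta_{n}\to0$, and also satisfies the rough size bound (\ref{pen_id_2_pp}). Granting this, Theorem \ref{theorem_opt_pen_reg_pp} applied with $\delta=\delta_{n}$ gives an oracle inequality with leading constant tending to one, and assumption (\textbf{Ap}) (part of (\textbf{SA}) here) absorbs the additive $(\ln n)^{3}/n$ term; tracking the constants yields the announced leading constant $1+O(1/\sqrt{\ln n})$. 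This is the scheme used by Arlot \cite{Arl:2008a} for histograms, the new ingredient being the excess-risk concentration results of Section \ref{section_excess_risk_concentration}, which for strongly localized bases remain valid up to dimensions of order $n$, modulo logarithmic factors.

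First I would put $\pen_{\mathrm{VF}}$ in a workable form. Since the partition is regular, $P_{n}=V^{-1}P_{n}^{(j)}+(1-V^{-1})P_{n}^{(-j)}$, whence $P_{n}\gamma(\widehat{s}_{m}^{(-j)})-P_{n}^{(-j)}\gamma(\widehat{s}_{m}^{(-j)})=V^{-1}(P_{n}^{(j)}-P_{n}^{(-j)})\gamma(\widehat{s}_{m}^{(-j)})$, so that
\[
\pen_{\mathrm{VF}}(m)=\frac{V-1}{V^{2}}\sum_{j=1}^{V}(P_{n}^{(j)}-P_{n}^{(-j)})\gamma(\widehat{s}_{m}^{(-j)}).
\]
Since moreover $\sum_{j=1}^{V}(P_{n}^{(j)}-P_{n}^{(-j)})=0$ (both $\sum_{j}P_{n}^{(j)}$ and $\sum_{j}P_{n}^{(-j)}$ equal $VP_{n}$), one may subtract $\gamma(s_{m})$ inside the sum for free; using $P(\gamma(\widehat{s}_{m}^{(-j)})-\gamma(s_{m}))=\ell(s_{m},\widehat{s}_{m}^{(-j)})$ and $P_{n}^{(-j)}(\gamma(s_{m})-\gamma(\widehat{s}_{m}^{(-j)}))=:\ell_{\emp}^{(-j)}(\widehat{s}_{m}^{(-j)},s_{m})\ge0$ (the empirical excess risk of the $j$-th held-out estimator on its own training fold), this gives
\[
\pen_{\mathrm{VF}}(m)=\frac{V-1}{V^{2}}\sum_{j=1}^{V}\Bigl(\ell(s_{m},\widehat{s}_{m}^{(-j)})+\ell_{\emp}^{(-j)}(\widehat{s}_{m}^{(-j)},s_{m})\Bigr)+R_{n}(m),
\]
with $R_{n}(m)=\frac{V-1}{V^{2}}\sum_{j=1}^{V}(P_{n}^{(j)}-P)(\gamma(\widehat{s}_{m}^{(-j)})-\gamma(s_{m}))$, a sum of terms each centered conditionally on the corresponding training fold.

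Next I would control the two pieces. Each $\widehat{s}_{m}^{(-j)}$ is a projection estimator on $m$ built from $n(1-V^{-1})\asymp n$ i.i.d.\ observations, so Theorem \ref{theorem_excess_risk_strong_loc} yields that $\ell(s_{m},\widehat{s}_{m}^{(-j)})$ and $\ell_{\emp}^{(-j)}(\widehat{s}_{m}^{(-j)},s_{m})$ concentrate, with relative error $o(1)$ and probability $1-O(n^{-2})$, around $\mathbb{E}[\ell(s_{m},\widehat{s}_{m}^{(-j)})]\sim\mathbb{E}[\ell_{\emp}^{(-j)}(\widehat{s}_{m}^{(-j)},s_{m})]\sim\frac{V}{V-1}\mathbb{E}[\ell_{\emp}(\widehat{s}_{m},s_{m})]$ (the last equivalence following from the first-order expansions $\mathbb{E}[\ell(s_{m},\widehat{s}_{m}^{(-j)})]\sim\tfrac{1}{4}\mathcal{C}_{m}/((1-V^{-1})n)$ and $\mathbb{E}[\ell_{\emp}(\widehat{s}_{m},s_{m})]\sim\tfrac{1}{4}\mathcal{C}_{m}/n$ provided by Section \ref{section_excess_risk_concentration}); this is made uniform over the collection by the union bound permitted by (\textbf{P1}). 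Hence the main term equals $2(1+o(1))\,\mathbb{E}[\ell_{\emp}(\widehat{s}_{m},s_{m})]$ uniformly, the combinatorics checking out since $\frac{V-1}{V^{2}}\cdot V\cdot2\cdot\frac{V}{V-1}=2$. For the remainder, conditioning on the $j$-th training fold, the $j$-th summand of $R_{n}(m)$ is $(P_{n}^{(j)}-P)$ applied to the fixed function $\gamma(\widehat{s}_{m}^{(-j)})-\gamma(s_{m})$, whose $P$-variance is $\lesssim\|\widehat{s}_{m}^{(-j)}-s_{m}\|_{2}^{2}=\ell(s_{m},\widehat{s}_{m}^{(-j)})$ and whose sup-norm is controlled by the sup-norm consistency of projection estimators on strongly localized bases (Theorem \ref{theorem_general_sup}) together with (\textbf{Ab}); Bernstein's inequality then bounds it by $\lesssim\sqrt{\mathbb{E}[\ell_{\emp}(\widehat{s}_{m},s_{m})](\ln n)/n}+(\ln n)^{2}/n$ on a high-probability event, uniform after a union bound. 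Since $\sqrt{\mathbb{E}[\ell_{\emp}(\widehat{s}_{m},s_{m})](\ln n)/n}=o(1)\cdot\mathbb{E}[\ell_{\emp}(\widehat{s}_{m},s_{m})]$ as soon as $D_{m}\gtrsim(\ln n)^{3}$, we get $|R_{n}(m)|=o(1)\cdot\mathbb{E}[\ell_{\emp}(\widehat{s}_{m},s_{m})]+O((\ln n)^{3}/n)$.

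Combining the last two paragraphs, $\pen_{\mathrm{VF}}$ satisfies the hypotheses of Theorem \ref{theorem_opt_pen_reg_pp} with $\delta=\delta_{n}\to0$ (the crude bound (\ref{pen_id_2_pp}) following from the same estimates and from (\textbf{Ab})), and invoking that theorem concludes the proof. The main obstacle is obtaining the relative-error control \emph{uniformly over the whole collection}, in particular for models whose dimension is close to $n$ up to logarithmic factors: this is exactly where the strongly localized structure is indispensable, both through the sharp excess-risk concentration inequalities of Section \ref{section_excess_risk_concentration} and through the sup-norm consistency of the held-out estimators $\widehat{s}_{m}^{(-j)}$, which is itself one of the new contributions of the paper.
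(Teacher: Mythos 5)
Your proposal is correct and follows essentially the same route as the paper: rewrite $\pen_{\mathrm{VF}}$ via $P_{n}=V^{-1}P_{n}^{(j)}+(1-V^{-1})P_{n}^{(-j)}$, show it concentrates around $2\,\mathbb{E}[\ell_{\emp}(\widehat{s}_{m},s_{m})]$ with vanishing relative error for $D_{m}\gtrsim(\ln n)^{3}$ (using the excess-risk concentration of Theorem \ref{theorem_excess_risk_strong_loc} at sample size $n(1-V^{-1})$ together with Bernstein and the sup-norm control of Theorem \ref{theorem_general_sup} for the cross-fold fluctuations), check the rough bound (\ref{pen_id_2_pp}) for small models, and conclude by the optimal-penalty oracle theorem. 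The only cosmetic differences are that you center at $\gamma(s_{m})$ using $\sum_{j}(P_{n}^{(j)}-P_{n}^{(-j)})=0$, which makes the paper's $\bar{\delta}-\bar{\delta}'$ terms vanish identically instead of having to be bounded via Lemma \ref{lemma_delta_VFCV}, and that you split $\bar{\mathrm{p}}_{1}$ explicitly into $\ell(s_{m},\widehat{s}_{m}^{(-j)})$ plus a centered fluctuation, which is exactly what the paper's Lemma \ref{lemma_hold_out} does internally (also note that (\textbf{Ap}) with a bias lower bound is not part of (\textbf{SA}) and is not needed here, since the target inequality retains the additive $(\ln n)^{3}/n$ term).
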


The proof of Theorem \ref{Theorem_VFpen} can be found in Section \ref{sssection_proof_VFpen}.\\
Theorem \ref{Theorem_VFpen} exhibits an oracle inequality with leading
constant converging to one, comparing the risk of the model selected by $V$%
-fold penalization to the risk of an oracle model. This shows asymptotic
optimality of the procedure and extends to the case of the selection of
linear models endowed with a strongly localized basis structure, previous
optimality results obtained by Arlot \cite{Arl:2008a} for the selection of
histograms, also in heteroscedatic regression with random design.

\section{Excess risks' concentration\label{section_excess_risk_concentration}%
}

We formulate in this section optimal upper and lower bounds that describe
the concentration of the excess risks for a fixed parametric model, but with
dimension depending on the sample size. In the case of the existence of a
strongly localized basis, we prove optimal bounds for models with dimension
roughly smaller than $n$ (up to logarithmic factors).

The proofs, which involve sophisticated arguments from empirical process
theory, are partly based on earlier work by Saumard \cite{saum:12}.
Furthermore, we use some representation formulas for functionals of
M-estimators, which generalize previous excess risks representations exposed
by Saumard \cite{saum:12}, Chatterjee \cite{chatterjee2014}, Muro and van de
Geer \cite{MurovandeGeer:15} and van de Geer and Wainwright \cite%
{vandeGeerWain:16}. We give these formulas in Section \ref%
{ssection_rep_formulas}.

\subsection{Strongly localized bases case} \label{ssection_strong_loc_bas}

The following result of consistency in sup-norm for the least-squares
estimator is a preliminary result that will be needed in the proof of our
optimal concentration bounds.

\begin{thrm}
\label{theorem_general_sup}Let $\alpha >0$. Assume that $m$ is a linear
vector space of finite dimension $D_m$ satisfying (\textbf{Aslb}) and use
notations of (\textbf{Aslb}). Assume moreover that the following assumption
holds:

\begin{description}
\item[(Ab($m$))] A positive constant $A$ exists, that bounds the data and
the projection $s_{m}$ of the target $s_{\ast }$ on the model $m$: $%
\left\vert Y_{i}\right\vert \leq A<\infty ,$ $\left\Vert s_{m}\right\Vert
_{\infty }\leq A<\infty.$
\end{description}

\noindent If there exists $A_{+}>0$ such that 
\begin{equation*}
\max \left\{ D_m,b_m^{2}A_{b_m}\right\} \leq A_{+}\frac{n}{\left( \ln n\right) ^{2}%
},  
\end{equation*}%
then there exists a positive constant $L_{A,r_{m},\alpha }$ such that, for
all $n\geq n_{0}\left( A_{+},A_{c},r_{m},\alpha \right) $,%
\begin{equation}
\mathbb{P}\left( \left\Vert \widehat{s}_{m}-s_{m}\right\Vert _{\infty }\geq
L_{A,r_{m},\alpha }\sqrt{\frac{D_m\ln n}{n}}\right) \leq n^{-\alpha }.
\label{conv_sup_norm_reg_sup}
\end{equation}
\end{thrm}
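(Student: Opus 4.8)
My first move is to reduce sup-norm control of $\widehat{s}_m - s_m$ to control of the coefficient vector in the strongly localized basis. Write $\widehat{s}_m - s_m = \sum_{k=1}^{D_m}\widehat\beta_k\varphi_k$, where $\widehat\beta_k = (P_n-P)(\varphi_k\cdot(Y-s_m))$ comes from the normal equations for the least-squares estimator combined with the fact that $P(\varphi_k(Y-s_m))=0$ for each basis element (since $s_m$ is the $L_2(P^X)$-projection of $s_\ast$). Actually one must be slightly careful because the design matrix of $\widehat{s}_m$ is $\mathrm{Gram}_n = (P_n(\varphi_k\varphi_l))_{k,l}$ rather than the identity; so I would first show, via a matrix Bernstein / operator-norm deviation argument using (\textbf{loc\_plus}) and $\max\{D_m,b_m^2A_{b_m}\}\ll n/(\ln n)^2$, that $\mathrm{Gram}_n$ is invertible with $\|\mathrm{Gram}_n^{-1} - I\|_{op}$ small (say $\le 1/2$) on an event of probability $\ge 1-n^{-\alpha}/2$. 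Then $\widehat\beta = \mathrm{Gram}_n^{-1}\big((P_n-P)(\varphi_k(Y-s_m))\big)_k$, and the sup-norm we want is governed, via Proposition \ref{prop_strong_loc_loc}, by $\sum_{i=1}^{b_m}\sqrt{A_i}\max_{l\in\Pi_i}|\widehat\beta_l|$ up to the factor $A_c r_m$.

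The heart of the matter is then a uniform (over $k$) deviation bound for the empirical process increments $Z_k := (P_n-P)(\varphi_k(Y-s_m))$, \emph{grouped by scale}: I need $\max_{l\in\Pi_i}|Z_l|$ to be of order $r_m\sqrt{A_i}\cdot\sqrt{(\ln n)/n}$ for each $i$, because then $\sum_i\sqrt{A_i}\max_{l\in\Pi_i}|Z_l|\lesssim \big(\sum_i\sqrt{A_i}\big)\cdot r_m\sqrt{A_{b_m}}\sqrt{(\ln n)/n}$ — but that is too crude; the right bookkeeping is $\sum_i A_i\sqrt{(\ln n)/n}\cdot r_m$, and using $\sum_i A_i \le (\max_i\sqrt{A_i})\sum_i\sqrt{A_i}\le \sqrt{A_{b_m}}\,r_m\sqrt{D_m}$ together with $b_m^2A_{b_m}\lesssim n/(\ln n)^2$ gives exactly the target rate $\sqrt{D_m\ln n/n}$. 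For a single fixed $k\in\Pi_i$, each summand of $nZ_k$ is bounded by $2A\|\varphi_k\|_\infty\le 2Ar_m\sqrt{A_i}$ and has variance at most $\|\varphi_k\|_\infty^2\cdot\mathbb{E}[(Y-s_m)^2]$, wait — one must be careful: $\mathbb{E}[\varphi_k^2(Y-s_m)^2]\le \|\varphi_k\|_\infty\cdot\mathbb{E}[|\varphi_k|(Y-s_m)^2]$ is not obviously $O(1)$ when $\|\varphi_k\|_\infty$ is large; instead bound the variance by $\|\varphi_k\|_\infty^2\cdot 4A^2$ only as a last resort, and more sharply by $4A^2\cdot P^X(\mathrm{supp}\,\varphi_k)\cdot\|\varphi_k\|_\infty^2$. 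In the wavelet/histogram examples $P^X(\mathrm{supp}\,\varphi_k)\asymp A_i^{-1}$, so the variance is $O(1)$ uniformly — but to stay at the level of the abstract hypothesis I will use the variance bound $\le 4A^2\|\varphi_k\|_\infty^2\le 4A^2r_m^2A_i$ and feed this into Bernstein's inequality. Bernstein then yields $\mathbb{P}(|Z_k|\ge t)\le 2\exp\big(-\tfrac{nt^2/2}{4A^2r_m^2A_i + 2Ar_m\sqrt{A_i}\,t/3}\big)$; choosing $t = L\,r_m\sqrt{A_i}\sqrt{(\ln n)/n}$ with $L$ large makes the variance term dominate (using $\sqrt{A_i\ln n/n}\le\sqrt{A_{b_m}\ln n/n}\le 1$ from (\textbf{P2})-type bound), giving probability $\le 2n^{-(\alpha+\beta)}$ for $\beta$ chosen so that a union bound over $D_m\le n$ indices costs at most $n^{-\alpha}/2$.

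The main obstacle I anticipate is precisely this variance control: getting the per-coordinate variance down to the size $A_i$ (rather than $A_i^2$, which would ruin the rate) using only (\textbf{Aslb}), and then summing the scales with the correct weights so that the cardinality bound (\textbf{def\_card\_Pi\_i\_k}) never actually enters the sup-norm estimate (it only enters Proposition \ref{prop_strong_loc_loc}, already proved, which I invoke as a black box). A secondary technical point is handling the $\mathrm{Gram}_n^{-1}$ perturbation cleanly: once $\|\mathrm{Gram}_n^{-1}-I\|_{op}\le 1/2$, I get $|\widehat\beta_l|\le |Z_l| + \|\mathrm{Gram}_n^{-1}-I\|_{op}\|Z\|_2$, and $\|Z\|_2^2=\sum_k Z_k^2$ can be bounded on the same good event by $\sum_i |\Pi_i|\max_{l\in\Pi_i}Z_l^2\lesssim D_m\cdot r_m^2A_{b_m}(\ln n)/n$, wait — that would need care too; better to bound $\|Z\|_2 \le \sqrt{D_m}\max_k|Z_k| \le \sqrt{D_m}\, r_m\sqrt{A_{b_m}}\sqrt{\ln n/n}$, and since $b_m\ge 1$ and $b_m^2A_{b_m}\lesssim n/(\ln n)^2$ the resulting contribution $A_c r_m^2\sqrt{D_m}\cdot\sqrt{D_m A_{b_m}\ln n/n}\cdot\sqrt{\ln n/n}$ is lower-order compared to $\sqrt{D_m\ln n/n}$ exactly when $D_m A_{b_m}(\ln n)^2/n \lesssim 1$, which (\textbf{P2}) delivers. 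Assembling these pieces on the intersection of the two good events and collecting constants into $L_{A,r_m,\alpha}$ finishes the proof.
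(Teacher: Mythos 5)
Your overall architecture is, at its core, the same as the paper's: the paper phrases the reduction through the representation formula of Proposition \ref{prop_func_rep} and the concavity of the quadratic $h_n$, but the critical-point system it solves, $2(I_{D_m}+R_n^{(2)})\beta =R_n^{(1)}$, is exactly your normal equations $\mathrm{Gram}_n\widehat{\beta}=Z$, and the paper likewise controls the Gram perturbation in an operator norm (the one induced by $\left\vert \beta \right\vert _{m,\infty }=r_{m}\sum_{i}\sqrt{A_{i}}\max_{k\in \Pi _{i}}\left\vert \beta _{k}\right\vert$, for which the cardinality condition (\ref{def_card_Pi_i_k}) is essential --- so your claim that (\ref{def_card_Pi_i_k}) only enters through Proposition \ref{prop_strong_loc_loc} is not accurate) before passing to the sup-norm via the strongly localized structure.

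There is, however, a genuine quantitative gap in your treatment of the linear terms $Z_{k}=(P_{n}-P)\left( \varphi _{k}\cdot (Y-s_{m})\right) $, and it destroys the rate. You bound the variance of $\varphi _{k}(X)(Y-s_{m}(X))$ by $4A^{2}\left\Vert \varphi _{k}\right\Vert _{\infty }^{2}\leq 4A^{2}r_{m}^{2}A_{i}$, obtaining $\max_{l\in \Pi _{i}}\left\vert Z_{l}\right\vert \lesssim r_{m}\sqrt{A_{i}}\sqrt{\ln n/n}$ and hence $\sum_{i}\sqrt{A_{i}}\max_{l\in \Pi _{i}}\left\vert Z_{l}\right\vert \lesssim r_{m}\sqrt{\ln n/n}\sum_{i}A_{i}$. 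Since $\sum_{i}A_{i}$ can only be bounded by $\sqrt{A_{b_{m}}}\sum_{i}\sqrt{A_{i}}\leq r_{m}\sqrt{A_{b_{m}}D_{m}}$, you end up with $r_{m}^{2}\sqrt{A_{b_{m}}}\cdot \sqrt{D_{m}\ln n/n}$; the hypothesis $b_{m}^{2}A_{b_{m}}\lesssim n/(\ln n)^{2}$ does \emph{not} absorb the extra $\sqrt{A_{b_{m}}}$ (for the wavelet models $A_{b_{m}}=D_{m}/2$, so your bound is off by a factor of order $\sqrt{D_{m}}$ and does not even tend to zero for $D_{m}$ near $n/(\ln n)^{2}$). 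The correct --- and much simpler --- variance bound uses the $L_{2}(P^{X})$-normalization of the basis: since $\left\vert Y-s_{m}(X)\right\vert \leq 2A$ a.s. under (\textbf{Ab}($m$)), one has $\mathbb{E}\left[ \varphi _{k}^{2}(X)(Y-s_{m}(X))^{2}\right] \leq 4A^{2}\mathbb{E}\left[ \varphi _{k}^{2}(X)\right] =4A^{2}$ uniformly in $k$, while the sup-norm term $2A\left\Vert \varphi _{k}\right\Vert _{\infty }\leq 2Ar_{m}\sqrt{A_{b_{m}}}$ in Bernstein's inequality contributes only $O\left( \sqrt{\ln n/n}\right) $ because $\sqrt{A_{b_{m}}}\ln n/n\leq \sqrt{A_{+}}\sqrt{\ln n/n}$. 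This yields $\max_{k}\left\vert Z_{k}\right\vert \leq L\sqrt{\ln n/n}$ with no scale dependence (this is precisely Lemma \ref{lemma_dev_phi1_reg_sup} in the paper), whence $\sum_{i}\sqrt{A_{i}}\max_{l\in \Pi _{i}}\left\vert Z_{l}\right\vert \leq L\left( \sum_{i}\sqrt{A_{i}}\right) \sqrt{\ln n/n}\leq Lr_{m}\sqrt{D_{m}\ln n/n}$ by (\ref{def_Ai}), which is the target. Your scale-by-scale refinement of the deviation bound is the wrong turn: the per-coordinate bound is already uniform, and the same correction also repairs your bound on $\left\Vert Z\right\Vert _{2}$.
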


Theorem \ref{theorem_general_sup} extends to the case of strongly localized
bases previous results obtained in \cite{saum:12} for the consistency in
sup-norm of least-squares estimators on linear models of histograms and
piecewise polynomials. Note that minimax rates of convergence in sup-norm---and more general $L_{q}$ norms, $1\leq q\leq \infty $---for random design
regression have been obtained by Stone \cite{Stone:82}.

Theorem \ref{theorem_general_sup} is based on new formulas for functionals
of M-estimators that are described in Section \ref{ssection_rep_formulas}\
below.
\begin{rmk} \label{remark_core_text}
The main results of our paper are proved for models endowed with a strongly
localized basis. In fact, we can also prove some results for the slightly
weaker and more classical assumption of localized basis, defined in (\ref{def_localized_basis}).
The main difference is that with models having a strongly localized basis we
can describe the optimality of model selection procedures for the selection
of models with dimension up to $n/\left( \ln n\right) ^{2}$, whereas for the
localized basis case, we describe optimal results for models with dimension
smaller than $n^{1/3}/\left( \ln n\right) ^{2}$. This is an issue for
instance in the slope heuristics, where the two algorithms of detection of
the minimal penalty are based on the behavior of the largest models in the collection at hand. At a technical level, the essential gap is that for models with localized bases, we are able to prove Inequality \eqref{conv_sup_norm_reg_sup} in Theorem \ref{theorem_general_sup}   
for models with linear dimension $D_{m}\ll n^{1/3}$ (see Remark \ref{remark_proof}).
\end{rmk}
Let us now detail our concentration bounds for the excess risks. Theorem \ref%
{theorem_excess_risk_strong_loc}\ below is a corollary of Theorem 2\ of \cite%
{saum:12}\ and Theorem \ref{theorem_general_sup}\ above. 

\begin{thrm}
\label{theorem_excess_risk_strong_loc}Let $A_{+},A_{-},\alpha >0$. Assume
that $m$ is a linear vector space of finite dimension $D_{m}$ satisfying (%
\textbf{Aslb}) and use notations of (\textbf{Aslb}). Assume moreover that Assumption (\textbf{Ab}($m$)) defined in Theorem \ref{theorem_general_sup} holds.
If we have 
\begin{equation*}
A_{-}\left( \ln n\right) ^{2}\leq D_{m}\leq \max \left\{
D_{m},b_m^{2}A_{b_m}\right\} \leq A_{+}\frac{n}{\left( \ln n\right) ^{2}}\text{ }%
,
\end{equation*}%
then a positive constant $L_{0}$ exists, only depending on $\alpha ,A_{-}$
and on the constants $A,\sigma _{\min }$ and $r_{m}$ such that by setting%
\begin{equation}
\varepsilon _{n}=L_{0}\max \left\{ \left( \frac{\ln n}{D_{m}}\right) ^{1/4},%
\text{ }\left( \frac{D_{m}\ln n}{n}\right) ^{1/4}\right\} \text{ },
\label{def_epsilon}
\end{equation}%
we have for all $n\geq n_{0}\left( A_{-},A_{+},A,r_{m},\sigma _{\min
},\alpha \right) $,%
\begin{align}
\mathbb{P}\left[ \left( 1-\varepsilon _{n}\right) \frac{\mathcal{C}_{m}}{n}%
\leq \ell \left( s_{m},\widehat{s}_{m}\right) \leq \left( 1+\varepsilon
_{n}\right) \frac{\mathcal{C}_{m}}{n}\right] & \geq 1-10n^{-\alpha }\text{ },
\label{lower_true_strong} \\
\mathbb{P}\left[ \left( 1-\varepsilon _{n}^{2}\right) \frac{\mathcal{C}_{m}}{%
n}\leq \ell _{%
\emp%
}\left( \widehat{s}_{m},s_{m}\right) \leq \left( 1+\varepsilon
_{n}^{2}\right) \frac{\mathcal{C}_{m}}{n}\right] & \geq 1-5n^{-\alpha }\text{
},  \label{upper_emp_strong}
\end{align}%
where $\mathcal{C}_{m}=\sum_{k=1}^{D_{m}}%
\var%
\left( \left( Y-s_{m}\left( X\right) \right) \cdot \varphi _{k}\left(
X\right) \right) $.
\end{thrm}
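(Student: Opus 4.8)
The plan is to obtain the statement as a direct corollary of Theorem~2 of \cite{saum:12} and of Theorem~\ref{theorem_general_sup} above, as announced just before the theorem. The first step is to reduce the strongly localized setting to the classical localized one: by Proposition~\ref{prop_strong_loc_loc}, the orthonormal basis $(\varphi_k)_{k=1}^{D_m}$ satisfying (\textbf{Aslb}) is a localized basis in the sense of \eqref{def_localized_basis}, with localization constant of order $A_c r_m^2$. Consequently the abstract framework of \cite{saum:12} applies to the model $m$: Assumption (\textbf{Ab}($m$)) provides the uniform bound on $|Y_i|$ and on $\|s_m\|_\infty$ that is required there, the uniform lower bound $\sigma_{\min}$ on the noise level forces $\mathcal{C}_m$ to be of order $D_m$ so that the centering constant $\mathcal{C}_m/n$ is non-degenerate, and the two-sided constraint $A_-(\ln n)^2 \le D_m \le \max\{D_m, b_m^2 A_{b_m}\} \le A_+ n/(\ln n)^2$ places us inside the range of dimensions covered by the empirical-process estimates of \cite{saum:12}, the lower bound in particular being what keeps $(\ln n/D_m)^{1/4}$ small.

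Next I would feed in sup-norm consistency. Applying Theorem~\ref{theorem_general_sup} with the exponent $\alpha$, and using $\max\{D_m, b_m^2 A_{b_m}\} \le A_+ n/(\ln n)^2$, one obtains for $n$ large an event of probability at least $1-n^{-\alpha}$ on which $\|\widehat{s}_m - s_m\|_\infty \le L_{A,r_m,\alpha}\sqrt{D_m\ln n/n} =: \delta_n$. On this event the least-squares estimator $\widehat{s}_m$ lies in the sup-norm ball of radius $\delta_n$ centered at $s_m$; this is precisely the localization hypothesis that Theorem~2 of \cite{saum:12} demands in order to restrict the empirical process indexed by the unit ball of $m$ to a small neighbourhood of $s_m$.

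Substituting this radius $\delta_n$ into the conclusion of Theorem~2 of \cite{saum:12} then yields \eqref{lower_true_strong} and \eqref{upper_emp_strong}, after raising the probability levels to $1-10n^{-\alpha}$ and $1-5n^{-\alpha}$ so as to absorb the sup-norm event together with the finitely many deviation events internal to that theorem. The precision \eqref{def_epsilon} splits accordingly: the term $(\ln n/D_m)^{1/4}$ is the intrinsic concentration rate of the excess risks around $\mathcal{C}_m/n$, i.e.\ the rate at which the relevant empirical quantities approach $\mathcal{C}_m$, while $(D_m\ln n/n)^{1/4}$ is of the order of $\delta_n^{1/2}$ and quantifies the price of only knowing $\widehat{s}_m$ up to $\delta_n$ in sup-norm; the constant $L_0$ merely aggregates $L_{A,r_m,\alpha}$, $A_-$, $\sigma_{\min}$, $A$ and the localization constant of the basis, hence can be bounded in terms of the quantities listed in the statement. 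That the empirical excess risk achieves the squared precision $\varepsilon_n^2$ rather than $\varepsilon_n$ reflects a cancellation of the dominant fluctuation, already established in \cite{saum:12}.

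The main obstacle is the bookkeeping needed to confirm that Theorem~2 of \cite{saum:12}, which in that reference is stated and illustrated chiefly for histograms and piecewise polynomials, does apply under the sole hypotheses retained here---namely that every constant and every threshold appearing in its statement can be re-expressed in terms of $A_c r_m^2$, $A$, $\sigma_{\min}$, $A_-$, $A_+$ and $\alpha$ once Proposition~\ref{prop_strong_loc_loc} has supplied the localized-basis property and Theorem~\ref{theorem_general_sup} the sup-norm rate, and in particular that the threshold $n_0$ inherits a dependence only on $(A_-,A_+,A,r_m,\sigma_{\min},\alpha)$. Once this is settled, all the remaining steps are routine substitutions.
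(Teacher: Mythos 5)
Your proposal matches the paper's own (very terse) argument: the authors state only that Theorem \ref{theorem_excess_risk_strong_loc} is a corollary of Theorem 2 of \cite{saum:12} combined with Theorem \ref{theorem_general_sup}, which is precisely the route you take --- reduce (\textbf{Aslb}) to the classical localized-basis property via Proposition \ref{prop_strong_loc_loc}, use Theorem \ref{theorem_general_sup} to supply the sup-norm localization event at radius $L\sqrt{D_m\ln n/n}$, and then read off the two-sided concentration of the true and empirical excess risks around $\mathcal{C}_m/n$ from Theorem 2 of \cite{saum:12}. Your accounting of the constants, of the role of the dimension constraints, and of why the empirical excess risk concentrates at rate $\varepsilon_n^2$ is consistent with the paper, so the proposal is correct and essentially identical in approach.
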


Theorem \ref{theorem_excess_risk_strong_loc} exhibits the concentration of
the excess risk and the empirical excess risk around the same value equal to 
$n^{-1}\mathcal{C}_{m}$. Furthermore, it is easy to check that the term $%
\mathcal{C}_{m}$ is of the order of the linear dimension $D_{m}$. More
precisely, it satisfies%
\begin{equation*}
0<\frac{\sigma _{\min }D_{m}}{2}\leq \mathcal{C}_{m}\leq \frac{3AD_{m}}{2}%
.
\end{equation*}%
See \cite{saum:12}, Section 4.3 for the details, noticing that with the
notations of \cite{saum:12}, it holds $\mathcal{C}_{m}=D_{m}\mathcal{K}%
_{1,m}^{2}/4$. It is also worth noticing that the empirical excess risk
concentrates better than the true excess risk, the rate of concentration for
the empirical excess risk---given by the term $\varepsilon _{n}^{2}$---being the square of the concentration rate $\varepsilon _{n}\ $of the excess risk.
This will be explained at a heuristic level in Section \ref%
{ssection_rep_formulas} using representation formulas for the excess risks
in terms of empirical process.

Compared to other concentration results established in \cite{chatterjee2014}%
, \cite{MurovandeGeer:15} and \cite{vandeGeerWain:16} for the excess risk of
least-squares or more general M-estimators, Inequalities (\ref%
{lower_true_strong}) and (\ref{upper_emp_strong}) share the strong feature of
computing the exact concentration point, which is equal to $n^{-1}\mathcal{C}%
_{m}$. On contrary, the methodology built by Chatterjee \cite{chatterjee2014}
and extended in \cite{MurovandeGeer:15} and \cite{vandeGeerWain:16}, gives
the concentration of the excess risk around a point, but says nothing on the
value of this point. We explain further this important aspect in Section \ref%
{ssection_rep_formulas} below.

\subsection{Representation formulas for functionals of M-estimators\label%
{ssection_rep_formulas}}

In this section only, we assume that the contrast $\gamma $ defining the
estimator $\widehat{s}_{m}$ is general, so that $\widehat{s}_{m}$ is a
general M-estimator---assumed to exist---on a model $m$,%
\begin{eqnarray*}
\widehat{s}_{m} &\in &\arg \min_{s\in m}\left\{ P_{n}\left( \gamma \left(
s\right) \right) \right\} \\
&=&\arg \min_{s\in m}\left\{ \frac{1}{n}\sum_{i=1}^{n}\gamma \left( s\right)
\left( Z_{i}\right) \right\} ,
\end{eqnarray*}%
where $\left( Z_{1},\ldots,Z_{n}\right) \in \mathcal{Z}^{n}$ is a sample of
random variables living in some general measurable space $\mathcal{Z}$.

Define $\mathcal{F}$ a nonnegative functional from $m$ to $\mathbb{R}_{+}$: $%
\forall s\in m$, $\mathcal{F}\left( s\right) \geq 0$. Then the following 
\textit{representation} of $\mathcal{F}\left( \widehat{s}_{m}\right) $ in
terms of local extrema of the empirical process of interest holds.

\begin{prpstn}
\label{prop_func_rep}With the notations above, let us also write $m_{C}$
(resp. $d_{C}$), $C\geq 0$, the subset of the model $m$ such that the values
of the functional $\mathcal{F}$ on this subset are bounded above by (resp.
equal to) $C$: 
\begin{equation*}
m_{C}=\left\{ s\in m\text{ };\text{ }\mathcal{F}\left( s\right) \leq
C\right\} \text{ and }d_{C}=\left\{ s\in m\text{ };\text{ }\mathcal{F}\left(
s\right) =C\right\} .
\end{equation*}%
Then,%
\begin{equation}
\mathcal{F}\left( \widehat{s}_{m}\right) \in \arg \min_{C\geq 0}\left\{
\inf_{s\in d_{C}}P_{n}\left( \gamma \left( s\right) \right) \right\}
\label{formula_funct_rep}
\end{equation}%
and%
\begin{equation}
\mathcal{F}\left( \widehat{s}_{m}\right) \in \arg \min_{C\geq 0}\left\{
\inf_{s\in m_{C}}P_{n}\left( \gamma \left( s\right) \right) \right\} \text{ .%
}  \label{formula_funct_rep_ball}
\end{equation}
\end{prpstn}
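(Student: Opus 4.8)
The plan is to exploit the defining property of $\widehat{s}_m$ as a minimizer of $s\mapsto P_n(\gamma(s))$ over $m$ and to ``slice'' the minimization according to the level sets of $\mathcal{F}$. The key observation is that any minimization over $m$ can be rewritten as a double minimization: first fix the value of $\mathcal{F}$, minimize the empirical criterion over the corresponding level set, and then optimize over the chosen value. Formally, for every choice of nonnegative $C$ one has $\inf_{s\in d_C} P_n(\gamma(s)) \geq \inf_{s\in m} P_n(\gamma(s)) = P_n(\gamma(\widehat{s}_m))$, simply because $d_C\subseteq m$. On the other hand, since $\mathcal{F}(\widehat{s}_m)$ is itself a legitimate value of $C$ (it is nonnegative, and $\widehat{s}_m\in d_{\mathcal{F}(\widehat{s}_m)}$), we get $\inf_{s\in d_{\mathcal{F}(\widehat{s}_m)}} P_n(\gamma(s)) \leq P_n(\gamma(\widehat{s}_m))$. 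Combining the two inequalities shows that $C=\mathcal{F}(\widehat{s}_m)$ attains the value $P_n(\gamma(\widehat{s}_m))$, which is the global infimum over $C\geq 0$ of $C\mapsto \inf_{s\in d_C} P_n(\gamma(s))$; this is exactly \eqref{formula_funct_rep}.

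For \eqref{formula_funct_rep_ball} the argument is the same in spirit but one must be slightly more careful because $m_C$ is a union of level sets $\bigcup_{0\le C'\le C} d_{C'}$, so the map $C\mapsto \inf_{s\in m_C} P_n(\gamma(s))$ is nonincreasing. First, for every $C\geq 0$ we still have $m_C\subseteq m$, hence $\inf_{s\in m_C}P_n(\gamma(s))\geq P_n(\gamma(\widehat{s}_m))$. Second, taking $C=\mathcal{F}(\widehat{s}_m)$ we have $\widehat{s}_m\in m_{\mathcal{F}(\widehat{s}_m)}$, so $\inf_{s\in m_{\mathcal{F}(\widehat{s}_m)}}P_n(\gamma(s))\leq P_n(\gamma(\widehat{s}_m))$. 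Therefore $C=\mathcal{F}(\widehat{s}_m)$ again realizes the infimum $\min_{C\geq 0}\{\inf_{s\in m_C}P_n(\gamma(s))\} = P_n(\gamma(\widehat{s}_m))$, giving \eqref{formula_funct_rep_ball}.

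A small point worth addressing explicitly is that one should argue the infimum over $C\geq 0$ is genuinely attained (so that ``$\arg\min$'' is nonempty and contains $\mathcal{F}(\widehat{s}_m)$) — but this is immediate from the two displayed inequalities, since they exhibit $C=\mathcal{F}(\widehat{s}_m)$ as a point where the value $P_n(\gamma(\widehat{s}_m))$ is reached and below which the function cannot go. I do not anticipate a serious obstacle here; the only thing to be mindful of is the direction of the inclusions ($d_C,m_C\subseteq m$) and the fact that $\mathcal{F}(\widehat{s}_m)$ is an admissible value of the parameter $C$, which is guaranteed by the nonnegativity of $\mathcal{F}$ together with $\widehat{s}_m\in m$. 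No concentration or empirical-process input is needed — this proposition is a purely deterministic, almost tautological, rewriting of the definition of an M-estimator, and that is precisely what makes it applicable to \emph{any} functional $\mathcal{F}$.
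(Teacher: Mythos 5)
Your proof is correct and follows essentially the same route as the paper: both arguments are the purely deterministic observation that $\widehat{s}_m$ lies in $d_{\mathcal{F}(\widehat{s}_m)}$ (resp.\ $m_{\mathcal{F}(\widehat{s}_m)}$), so the infimum over that slice equals the global infimum of $P_n(\gamma(\cdot))$ over $m$, which lower-bounds the infimum over every other slice. The only cosmetic difference is that you prove \eqref{formula_funct_rep_ball} directly whereas the paper deduces it from \eqref{formula_funct_rep} via $m_C=\bigcup_{R\leq C}d_R$; both are fine.
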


Proposition \ref{prop_func_rep}, whose proof is simple and written in
Section \ref{ssection_proofs_excess_risk_rep}, casts the problem of bounding 
\textit{any} functional of a M-estimator into an empirical process question,
consisting of comparing local extrema of the empirical measure $P_{n}$ taken
on the contrasted functions of the model. Up to our knowledge, such a result
is new.

Considering the particular case of the sup-norm, formula (\ref%
{formula_funct_rep}) is our starting point to prove Theorem \ref%
{theorem_general_sup}. More precisely, we use the fact that taking 
\begin{equation*}
\mathcal{F}\left( \widehat{s}_{m}\right) =\left\Vert \widehat{s}_{m}-s_{m}\right\Vert _{\infty }\text{ , 
}
\end{equation*}%
formula (\ref{formula_funct_rep}) directly implies that for any $C\geq 0$,%
\begin{eqnarray*}
&&\mathbb{P}\left( \left\Vert \widehat{s}_{m}-s_{m}\right\Vert _{\infty }\geq C\right)  \\
&\leq &\mathbb{P}\left( \inf_{s\in m\backslash m_{C}}P_{n}\left( \gamma
\left( s\right) \right) \leq \inf_{s\in m_{C}}P_{n}\left( \gamma \left(
s\right) \right) \right) .
\end{eqnarray*}
See Section \ref{section_proof_slope_reg} for the complete proofs.

Another interesting application of Proposition \ref{prop_func_rep} would be
to derive bounds for the $L_{p},$ $p\geq 1,$ moments---or more general
Orlicz norms---of a M-estimator. We postpone this question for future work.

\begin{rmrk}
Nonnegativity of $\mathcal{F}$ is not essential (but suitable to our needs)
and considering functionals with negative values is also possible, with
straightforward adaptations of formulas of Proposition \ref{prop_func_rep}.
\end{rmrk}

Taking $\mathcal{F}$ to be the true or the empirical excess risk on $m$, we
get the following results, refining the representation formulas previously
obtained by \cite{saum:12}---see Remark 1 of Section 3 therein.

\begin{prpstn}
\label{prop_excess_rep_loc}With the notations above, let also $\mathcal{G}$
be a nonnegative functional on $m$ and $R_{0}\in \mathbb{R}_{+}\cup \left\{
+\infty \right\} $. If the following event holds $\left\{ \mathcal{G}\left( 
\widehat{s}_{m}\right) \leq R_{0}\right\} $ (the case $R_{0}=+\infty $
corresponds to the trivial total event), then by setting%
\begin{equation*}
\widetilde{m}_{C}=\left\{ s\in m\text{ };\text{ }\mathcal{F}\left( s\right)
\leq C\And \mathcal{G}\left( s\right) \leq R_{0}\right\} \text{ and }%
\widetilde{d}_{C}=\left\{ s\in m\text{ };\text{ }\mathcal{F}\left( s\right)
=C\And \mathcal{G}\left( s\right) \leq R_{0}\right\} ,
\end{equation*}%
it holds%
\begin{equation}
\ell \left( s_{m},\widehat{s}_{m}\right) \in \arg \max_{C\geq 0}\left\{
\sup_{s\in \widetilde{d}_{C}}\left\{ \left( P_{n}-P\right) \left( \gamma
\left( s_{m}\right) -\gamma \left( s\right) \right) \right\} -C\right\} 
,  \label{formula_excess_risk}
\end{equation}
\begin{equation}
\ell \left( s_{m},\widehat{s}_{m}\right) \in \arg \max_{C\geq 0}\left\{
\sup_{s\in \widetilde{m}_{C}}\left\{ \left( P_{n}-P\right) \left( \gamma
\left( s_{m}\right) -\gamma \left( s\right) \right) \right\} -C\right\} 
,  \label{formula_excess_risk_ball}
\end{equation}%
\begin{equation}
\ell _{%
\emp%
}\left( s_{m},\widehat{s}_{m}\right) =\max_{C\geq 0}\left\{ \sup_{s\in 
\widetilde{d}_{C}}\left\{ \left( P_{n}-P\right) \left( \gamma \left(
s_{m}\right) -\gamma \left( s\right) \right) \right\} -C\right\} ,
\label{formula_emp_excess_risk}
\end{equation}%
and 
\begin{equation}
\ell _{%
\emp%
}\left( s_{m},\widehat{s}_{m}\right) =\max_{C\geq 0}\left\{ \sup_{s\in 
\widetilde{m}_{C}}\left\{ \left( P_{n}-P\right) \left( \gamma \left(
s_{m}\right) -\gamma \left( s\right) \right) \right\} -C\right\}
\label{formula_emp_excess_risk_ball}
\end{equation}
\end{prpstn}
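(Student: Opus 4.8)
The plan is to derive the four formulas from the generic functional representation of Proposition~\ref{prop_func_rep}, applied to the (in general non-linear) subset $m' := \{ s \in m \text{ ; } \mathcal{G}(s) \le R_0 \}$, together with the elementary identity that splits $P_n(\gamma(s))$ into a centered empirical part plus the true excess loss. First I would note that Proposition~\ref{prop_func_rep} is really a statement about an argmin over an arbitrary set stratified by the level sets of $\mathcal{F}$: its proof uses only the partition of that set into the slices $\{\mathcal{F}=C\}$ and the fact that the minimizer lies in the slice carrying its own $\mathcal{F}$-value, so no linear structure is involved. Hence it applies verbatim with $m$ replaced by $m'$, because on the event $\{\mathcal{G}(\widehat{s}_m)\le R_0\}$ the estimator $\widehat{s}_m$ lies in $m'$ and, being a minimizer of $P_n(\gamma(\cdot))$ over the larger set $m$, is also a minimizer over $m'$. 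Taking $\mathcal{F}=\ell(s_m,\cdot)$, which is nonnegative on $m$ since $s_m$ minimizes $P(\gamma(\cdot))$ over $m$, and writing $\widetilde{d}_C$, $\widetilde{m}_C$ for the corresponding slices and balls inside $m'$, formulas \eqref{formula_funct_rep} and \eqref{formula_funct_rep_ball} give
\[
\ell(s_m,\widehat{s}_m)\in\arg\min_{C\ge0}\inf_{s\in\widetilde{d}_C}P_n(\gamma(s))\qquad\text{and}\qquad\ell(s_m,\widehat{s}_m)\in\arg\min_{C\ge0}\inf_{s\in\widetilde{m}_C}P_n(\gamma(s)).
\]
The case $R_0=+\infty$ is the same computation with $m'=m$.

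Next I would insert the exact decomposition, valid for every $s\in m$,
\[
P_n(\gamma(s))=P_n(\gamma(s_m))+\ell(s_m,s)-(P_n-P)\bigl(\gamma(s_m)-\gamma(s)\bigr),
\]
which is just $\ell(s_m,s)=P(\gamma(s)-\gamma(s_m))$ rearranged. On the slice $\widetilde{d}_C$ one has $\ell(s_m,s)=C$, so taking the infimum over $s\in\widetilde{d}_C$ and using that $P_n(\gamma(s_m))$ is constant,
\[
\inf_{s\in\widetilde{d}_C}P_n(\gamma(s))=P_n(\gamma(s_m))+C-\sup_{s\in\widetilde{d}_C}(P_n-P)\bigl(\gamma(s_m)-\gamma(s)\bigr).
\]
Since $P_n(\gamma(s_m))$ depends on neither $s$ nor $C$, minimizing the left-hand side over $C$ is the same as maximizing $\sup_{s\in\widetilde{d}_C}(P_n-P)(\gamma(s_m)-\gamma(s))-C$ over $C$; together with the first display of the previous paragraph this is precisely \eqref{formula_excess_risk}. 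Moreover the common value of these extrema is $\inf_{s\in m'}P_n(\gamma(s))=P_n(\gamma(\widehat{s}_m))$, attained at $C=\ell(s_m,\widehat{s}_m)$ and $s=\widehat{s}_m$, so rearranging the displayed identity at the optimal $C$ and recalling \eqref{def_emp_risk} yields $\ell_{\emp}(s_m,\widehat{s}_m)=P_n(\gamma(s_m))-P_n(\gamma(\widehat{s}_m))=\max_{C\ge0}\{\sup_{s\in\widetilde{d}_C}(P_n-P)(\gamma(s_m)-\gamma(s))-C\}$, which is \eqref{formula_emp_excess_risk}.

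For the ball versions \eqref{formula_excess_risk_ball} and \eqref{formula_emp_excess_risk_ball} I would argue directly, since on $\widetilde{m}_C$ one only has $\ell(s_m,s)\le C$. Setting $h(C):=\sup_{s\in\widetilde{m}_C}(P_n-P)(\gamma(s_m)-\gamma(s))-C$, the decomposition above gives, for every $C\ge0$,
\[
h(C)\le\sup_{s\in\widetilde{m}_C}\bigl(P_n(\gamma(s_m))-P_n(\gamma(s))\bigr)\le P_n(\gamma(s_m))-\inf_{s\in m}P_n(\gamma(s))=\ell_{\emp}(s_m,\widehat{s}_m),
\]
using $\widehat{s}_m\in\arg\min_{s\in m}P_n(\gamma(s))$; while taking $s=\widehat{s}_m\in\widetilde{m}_{C_0}$ at $C_0:=\ell(s_m,\widehat{s}_m)$ produces the lower bound $h(C_0)\ge(P_n-P)(\gamma(s_m)-\gamma(\widehat{s}_m))-C_0=P_n(\gamma(s_m))-P_n(\gamma(\widehat{s}_m))$, which matches the upper bound, so $h(C_0)=\ell_{\emp}(s_m,\widehat{s}_m)=\max_{C\ge0}h(C)$. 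This is \eqref{formula_emp_excess_risk_ball}, and it shows $C_0=\ell(s_m,\widehat{s}_m)\in\arg\max_{C\ge0}h(C)$, which is \eqref{formula_excess_risk_ball}. The only genuinely delicate point in the argument is the one settled at the outset, namely that Proposition~\ref{prop_func_rep} may be invoked on the set $m'$ although it is not a linear subspace when $R_0<+\infty$; everything after that is bookkeeping around the centering identity.
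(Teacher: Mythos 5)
Your proof is correct, and its skeleton matches the paper's: both handle $R_0<+\infty$ by observing that $m':=\{s\in m\,;\,\mathcal{G}(s)\le R_0\}$ can play the role of $m$ in Proposition~\ref{prop_func_rep} (since $\widehat{s}_m\in m'$ on the event and remains a minimizer of $P_n(\gamma(\cdot))$ there), and both obtain \eqref{formula_excess_risk} by applying that proposition with $\mathcal{F}=\ell(s_m,\cdot)$ and re-centering via $P(\gamma(s_m)-\gamma(s))=-C$ on the slice $\widetilde{d}_C$. Where you genuinely diverge is in the last two steps. For \eqref{formula_emp_excess_risk} the paper argues directly, writing $\ell_{\emp}(s_m,\widehat{s}_m)=\max_{s\in m}P_n(\gamma(s_m)-\gamma(s))$ and partitioning $m$ into the slices $\widetilde{d}_C$, whereas you read off the value of the optimum from the argmin identity (the common minimal value being $P_n(\gamma(\widehat{s}_m))$); the two computations are equivalent. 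For the ball versions the paper introduces, for each $C$, a level $C_1\in[0,C]$ at which the supremum over the ball $\widetilde{m}_C$ is realized on a slice, and then chains inequalities up to $C_\ast$; your argument instead bounds $h(C)=\sup_{s\in\widetilde{m}_C}\{(P_n-P)(\gamma(s_m)-\gamma(s))\}-C$ above by $\ell_{\emp}(s_m,\widehat{s}_m)$ uniformly in $C$ (using only $\ell(s_m,s)\le C$ on the ball and the minimality of $\widehat{s}_m$) and matches it from below at $C_0=\ell(s_m,\widehat{s}_m)$ by plugging in $s=\widehat{s}_m$. Your route is slightly more economical and avoids having to justify that the supremum over a ball is attained on a single slice, which is a small technical point left implicit in the paper; what it gives up is the paper's explicit reduction of the ball case to the already-proved slice case. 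Both are valid.
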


 \begin{figure}[t]
 \centering
 \includegraphics[width=0.9\textwidth]{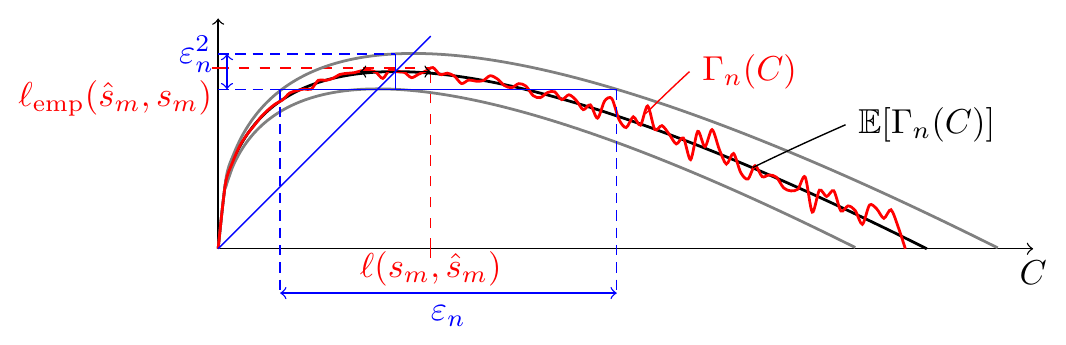}
 \caption{The true and empirical excess risks are given respectively as the maximizer and the maximum of the same function $\Gamma _{n}$. If $\Gamma _{n}$ is regular around its maximum this explains why concentration rate for the empirical excess risk---given by $\varepsilon _{n}^2$---is better than for true excess risk---given by $\varepsilon _{n}$.}
\label{fig:rep}
\end{figure}

The same type of excess risks representation as the one obtained in (\ref%
{formula_excess_risk_ball}) are at the core of the approach to excess risk's
concentration recently developed by Chatterjee \cite{chatterjee2014}, Muro
and van de Geer \cite{MurovandeGeer:15} and van de Geer and Wainwright \cite%
{vandeGeerWain:16}. The main difference with our approach is that these
authors rather use the parametrization $t=\sqrt{C}$ and take into advantage
an argument of concavity with respect to $t$ of the supremum of the
empirical process on "balls" of excess risk smaller than $t^{2}$. We refer
to van de Geer and Wainwright \cite{vandeGeerWain:16} for more details about
this concavity argument (called "second order margin condition" by these
authors). But with this concavity argument, nothing can be said \textit{a
priori} about the point around which the excess risk concentrates. To obtain
optimal bounds on this point, as in Theorem \ref%
{theorem_excess_risk_strong_loc}\ above, we rather apply a technology
developed in \cite{saum:12} and based on the least-squares \textit{contrast
expansion }around the projection $s_{m}$ of the target. We refer to Section
3 of \cite{saum:12} for a detailed presentation of the latter approach.

Proposition \ref{prop_excess_rep_loc} also allows to make it transparent the
fact the empirical excess risk has better concentration rates---given by the
term $\varepsilon _{n}^{2}$ in Theorem \ref{theorem_excess_risk_strong_loc}---than the excess risk---which concentrates at the rate $\varepsilon _{n}$.
Indeed, if we set 
\begin{equation*}
\Gamma _{n}\left( C\right) :=\sup_{s\in \widetilde{d}_{C}}\left\{ \left(
P_{n}-P\right) \left( \gamma \left( s_{m}\right) -\gamma \left( s\right)
\right) \right\} -C,
\end{equation*}%
with $\left\{ \mathcal{G}\left( \widehat{s}_{m}\right) \leq R_{0}\right\}
=\left\{ \left\Vert \widehat{s}_{m}-s_{m}\right\Vert _{\infty }\leq L\sqrt{%
\frac{D_{m}\ln n}{n}}\right\} $, the proof of Theorem \ref%
{theorem_excess_risk_strong_loc} shows that $\Gamma _{n}\left( C\right) $
concentrates around the quantity $2\sqrt{n^{-1}\mathcal{C}_{m}C}-C$, which
is parabolic around its maximum. The conclusion can now be directly read in
Figure~\ref{fig:rep}.
\section{Numerical experiments}\label{section_experiments}
A simulation study was conducted in order to compare the numerical performances of the model selection procedures we have discussed. We consider wavelet models as non trivial illustrative examples of the theory developed above for the selection of linear estimators using the slope heuristics and $V$-fold model selection. However, it is a rather different question than designing the best possible estimators using wavelet expansions, since these estimators are likely to be nonlinear as for the thresholding strategies (see e.g., \cite{anto:01} for a comparative simulation study of wavelet based estimators). Although a linear wavelet estimator is not as flexible, or potentially as powerful, as a nonlinear one, it still preserves the computational benefits of wavelet methods. See e.g., \cite{antoniadis:94} which is a key reference for linear wavelet methods in nonparametric regression.
All simulations have been conducted with Matlab and the wavelet toolbox Wavelab850 \cite{donoho:06} that is freely available from \url{http://statweb.stanford.edu/~wavelab/}. In order to reproduce all the experiments, the codes used to generate the numerical results presented in this paper will be available online at \url{https://github.com/fabnavarro}.

\subsection{Computational aspects}
For sample sizes $n=256,1024,4096$, data were generated according to
\[
Y_i = s_*(X_i)+\sigma(X_i)\varepsilon_i, \quad i=1,\ldots,n
\]
where $X_i$'s are uniformly distributed on $[0,1]$, $\varepsilon_i$'s are independent $\mathcal{N}(0,1)$ variables and independent of $X_i$'s. 
In the case of fixed design, thanks to Mallat's pyramid algorithm (see \cite{mallat:08}), the computation of wavelet-based estimators is straightforward and fast.
In the case where the function $s_*$ is observed on a random grid, the implementation requires some extra precautions and several strategies have been proposed in the literature (see e.g. \cite{cai:98,hall:97}). In the context of random uniform design regression estimation, \cite{cai:99} have examined convergence rates when the unknown function is in a H\"{o}lder class. They showed that the standard equispaced wavelet method with universal thresholding can be directly applied to the nonequispaced data (without a loss in the rate of convergence). In this simulations study, we have adopted this approach, since it preserves the computational simplicity and efficiency of the equispaced algorithm. The same choice was made in the context of wavelet regression in random design with heteroscedastic dependent errors by \cite{kulik:09}. Thus, in this case, the collection of models is computed by a simple application of Mallat's algorithm using the ordered $Y_{i}$'s as input variables.

\subsection{Examples}
\begin{figure}[!t]
\centering
\subfigure[\textit{Wave}]{\includegraphics[width=0.23\textwidth]{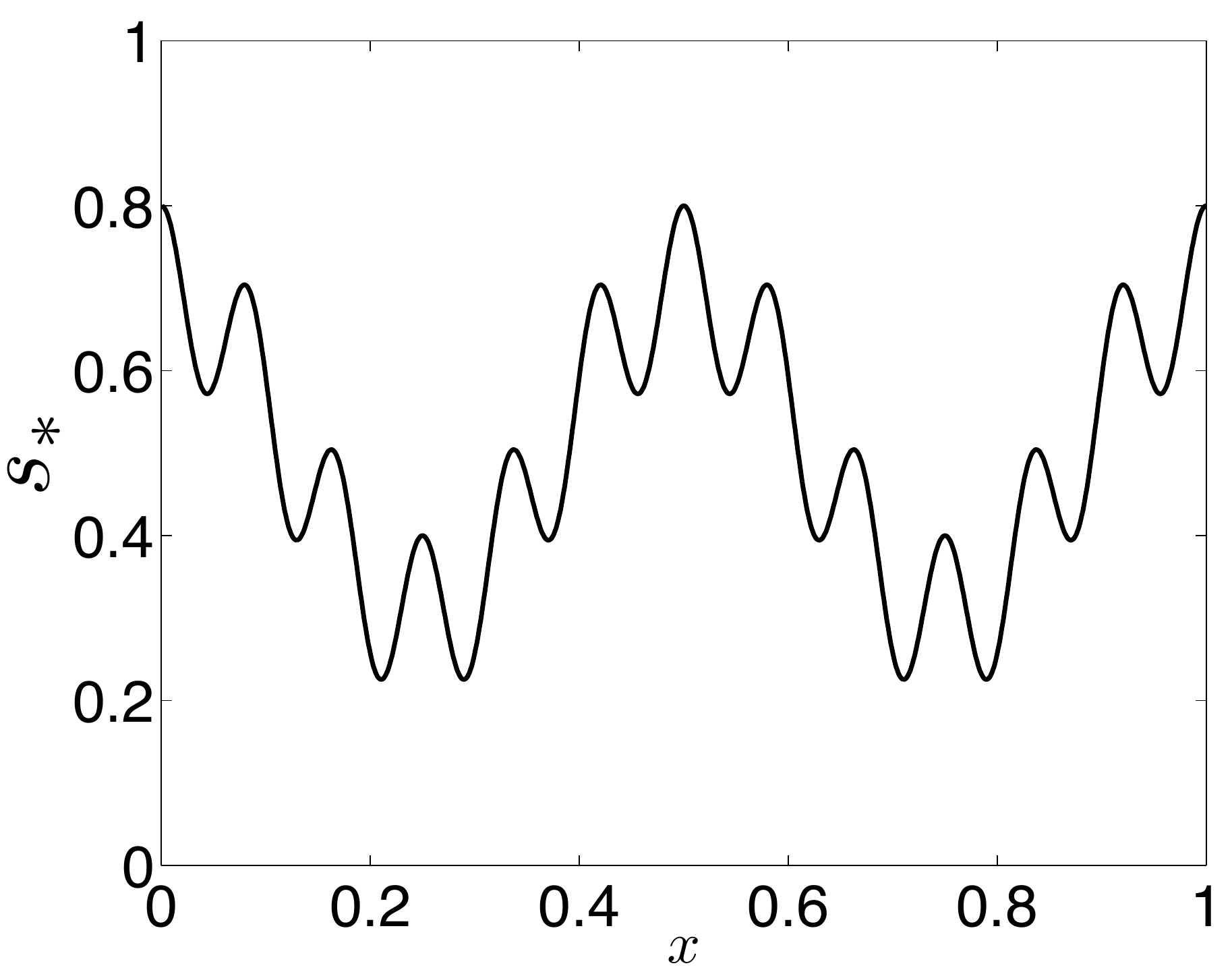}}
\subfigure[\textit{HeaviSine}]{\includegraphics[width=0.23\textwidth]{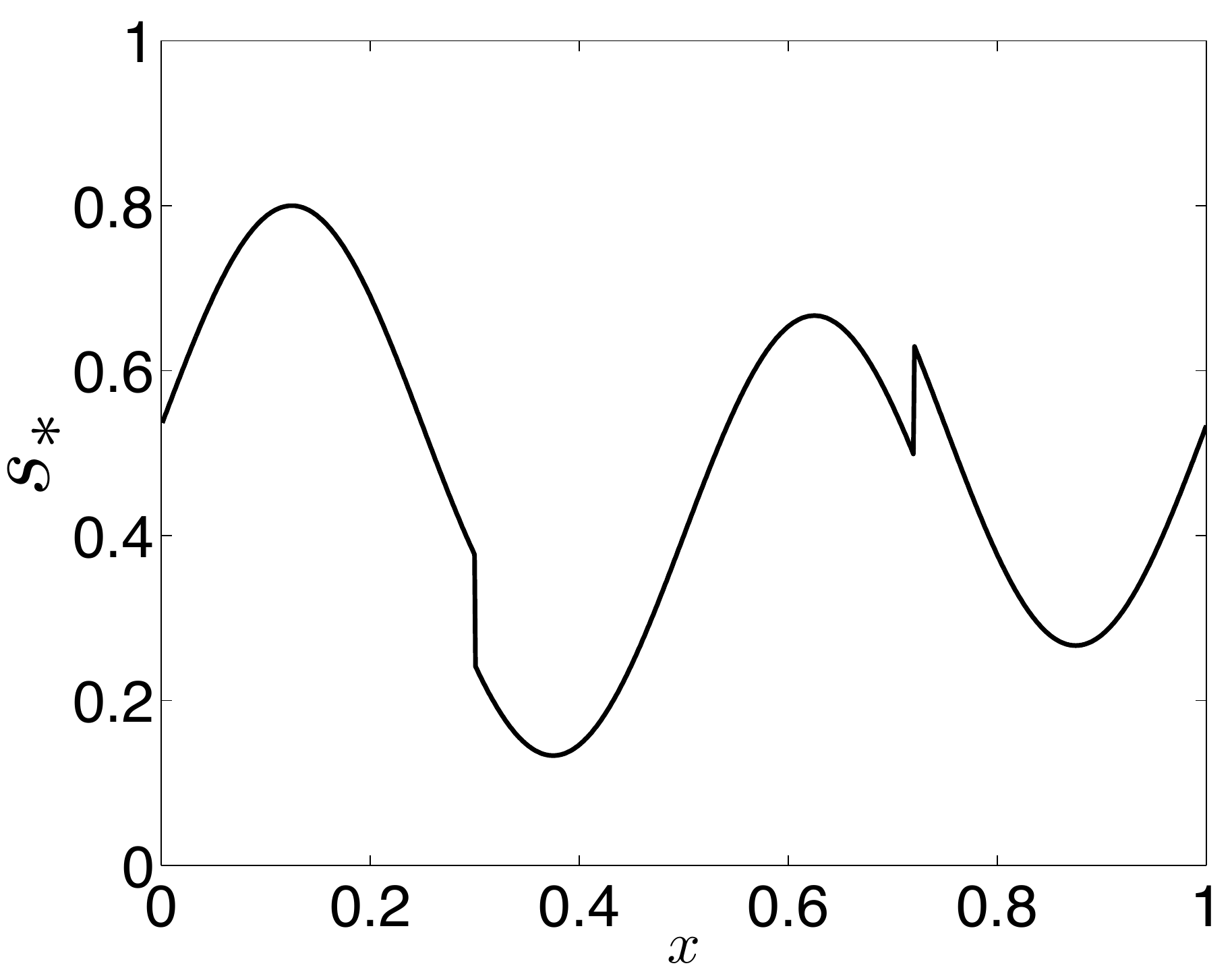}}
\subfigure[\textit{Doppler}]{\includegraphics[width=0.23\textwidth]{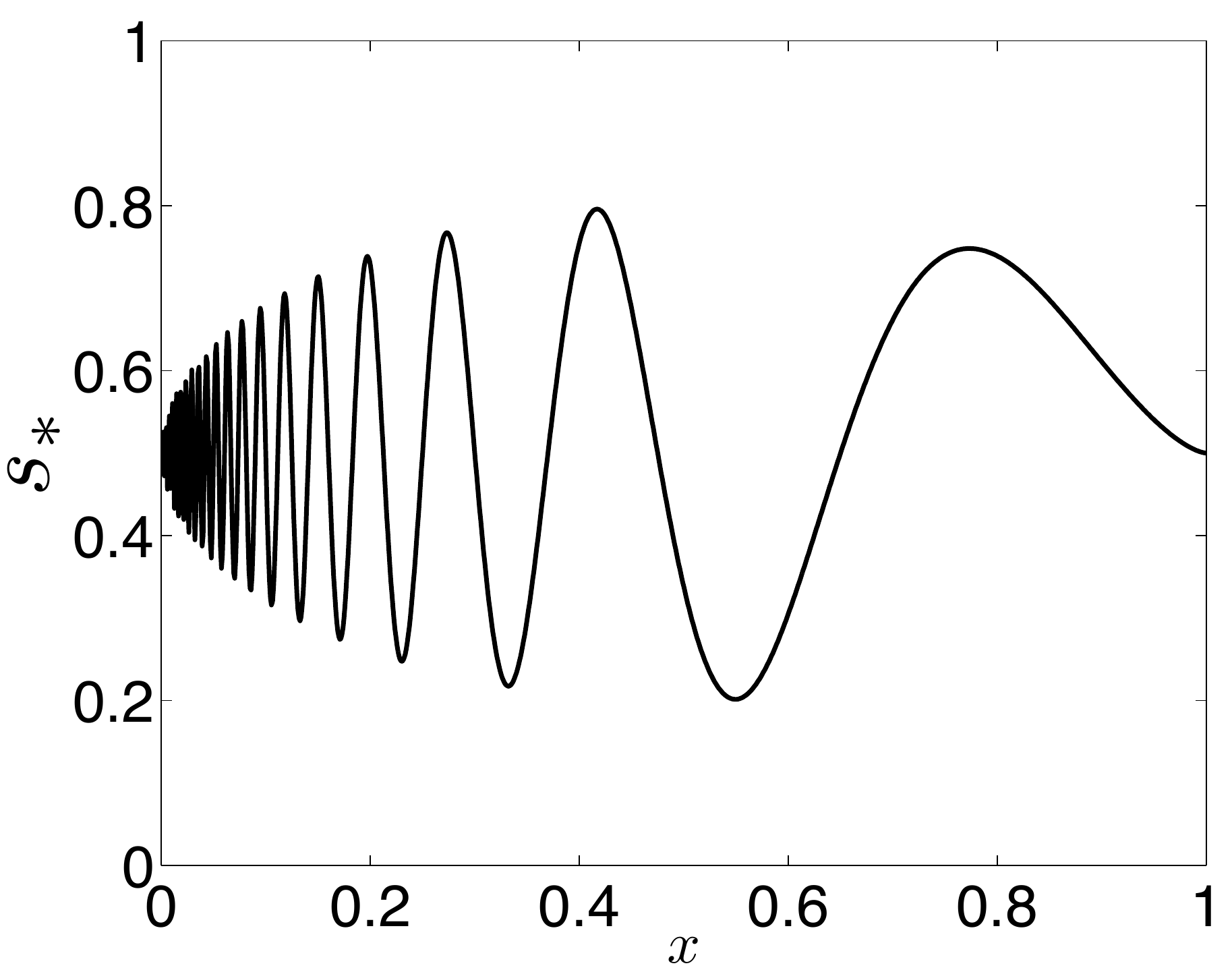}}
\subfigure[\textit{Spikes}]{\includegraphics[width=0.23\textwidth]{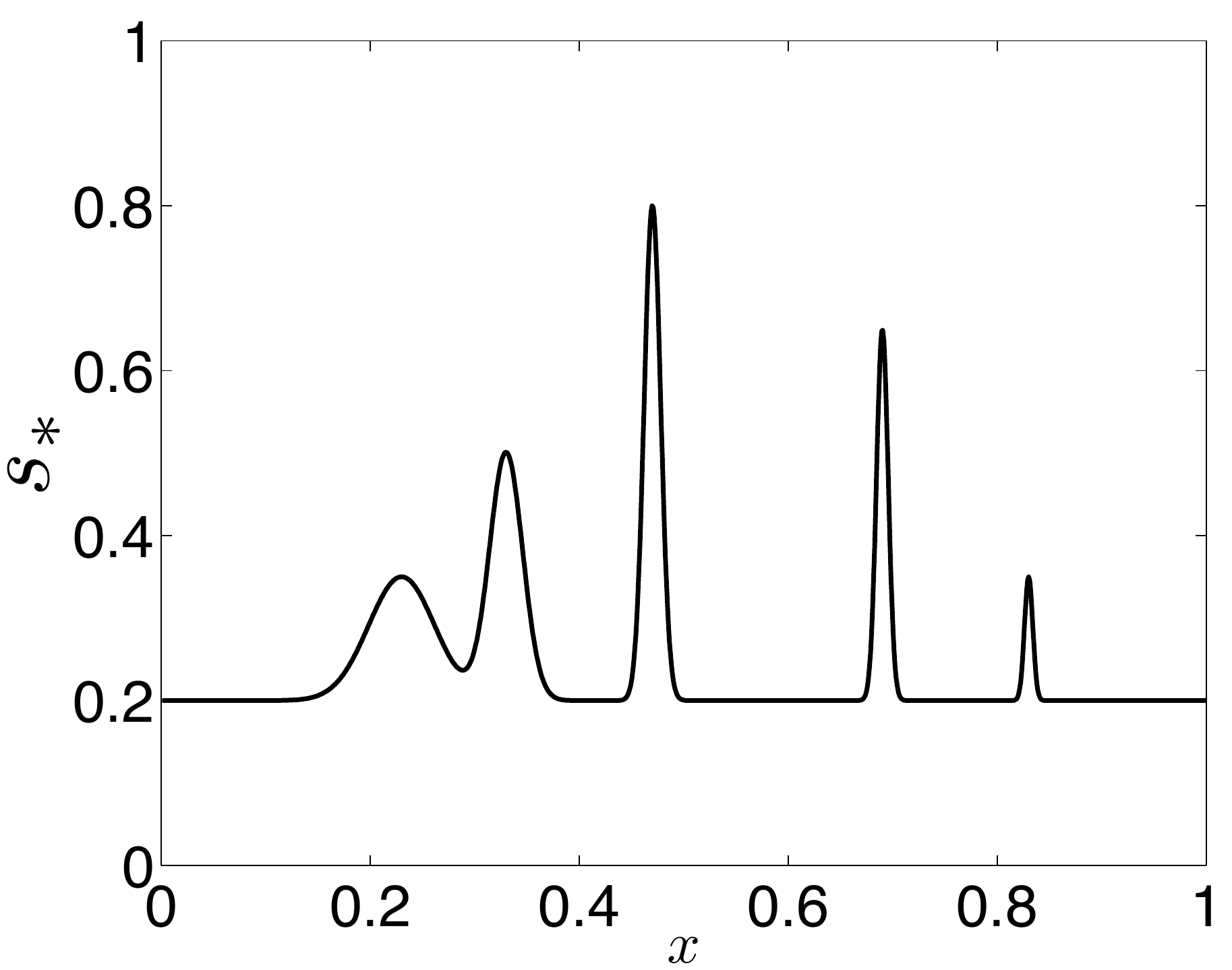}}

\subfigure[]{
\includegraphics[width=0.23\textwidth]{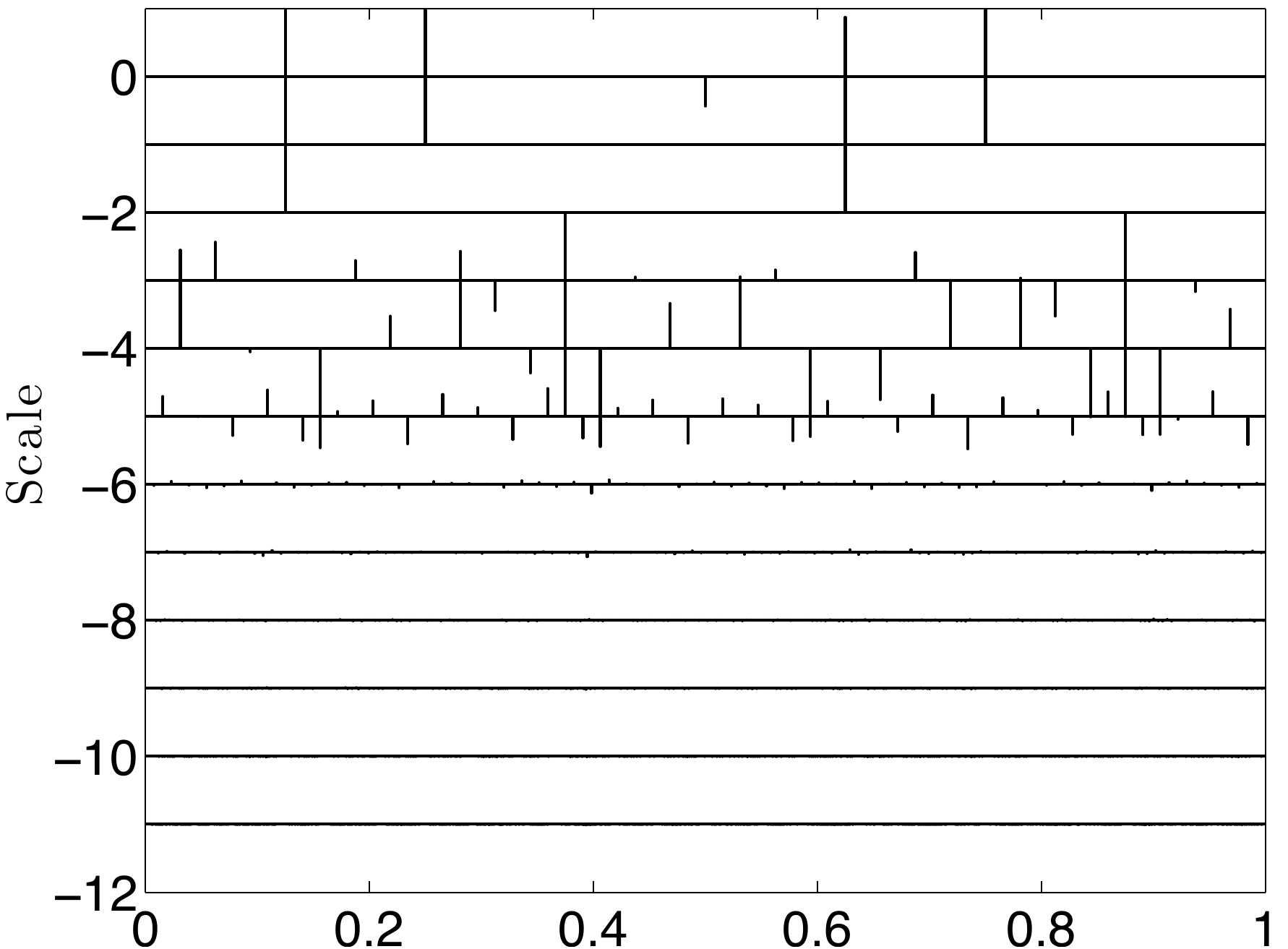}
\includegraphics[width=0.23\textwidth]{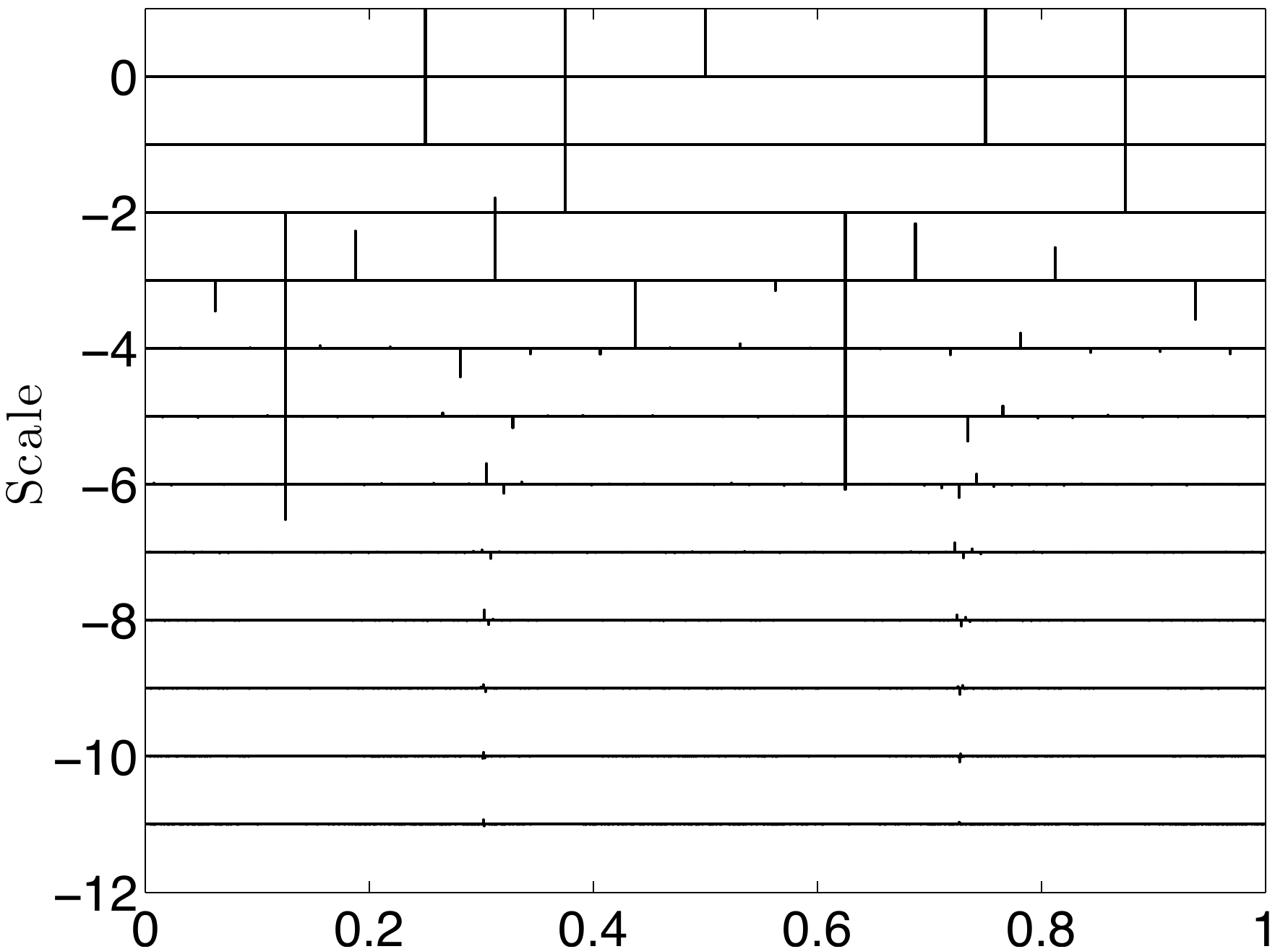}
\includegraphics[width=0.23\textwidth]{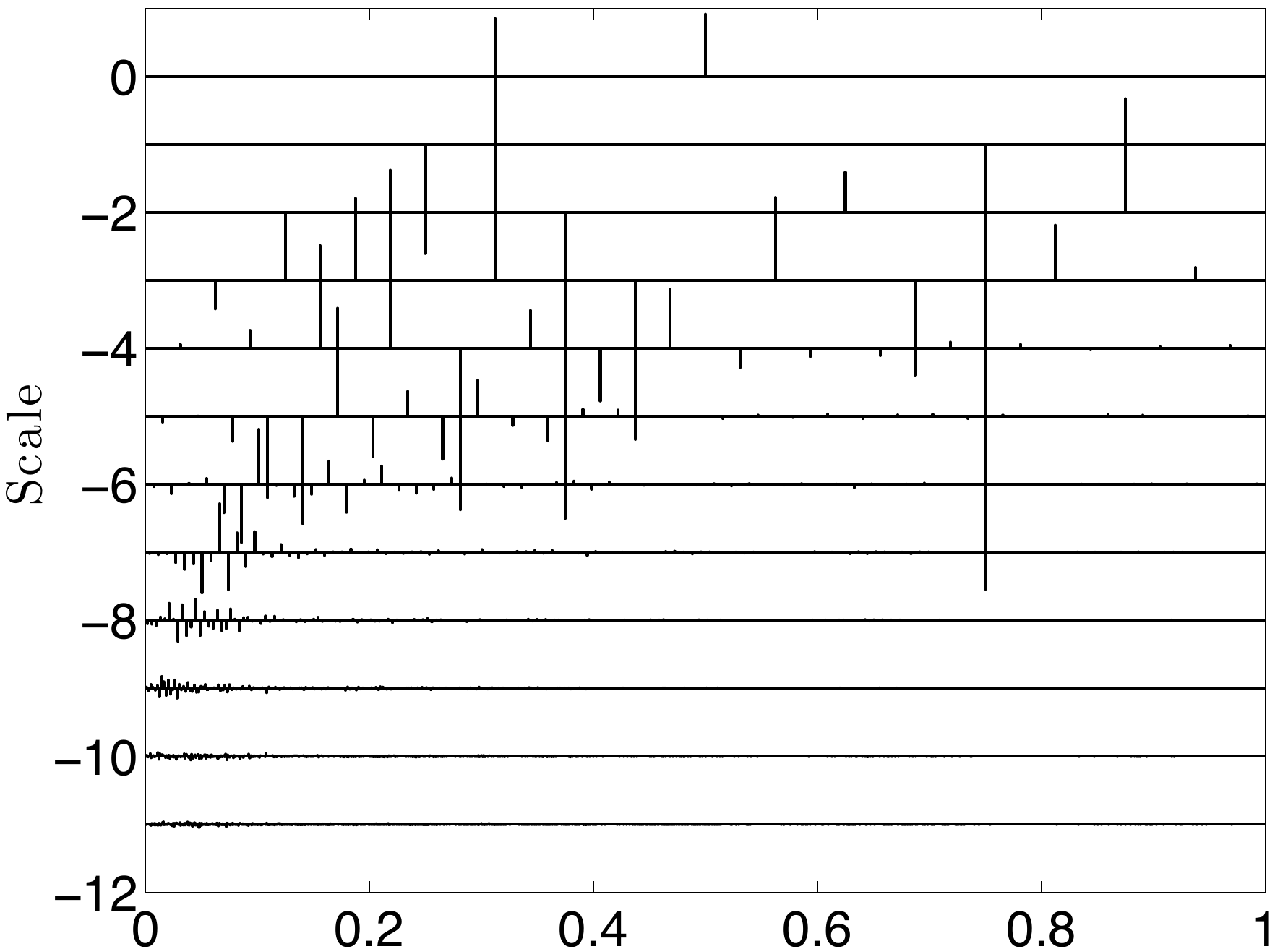}
\includegraphics[width=0.23\textwidth]{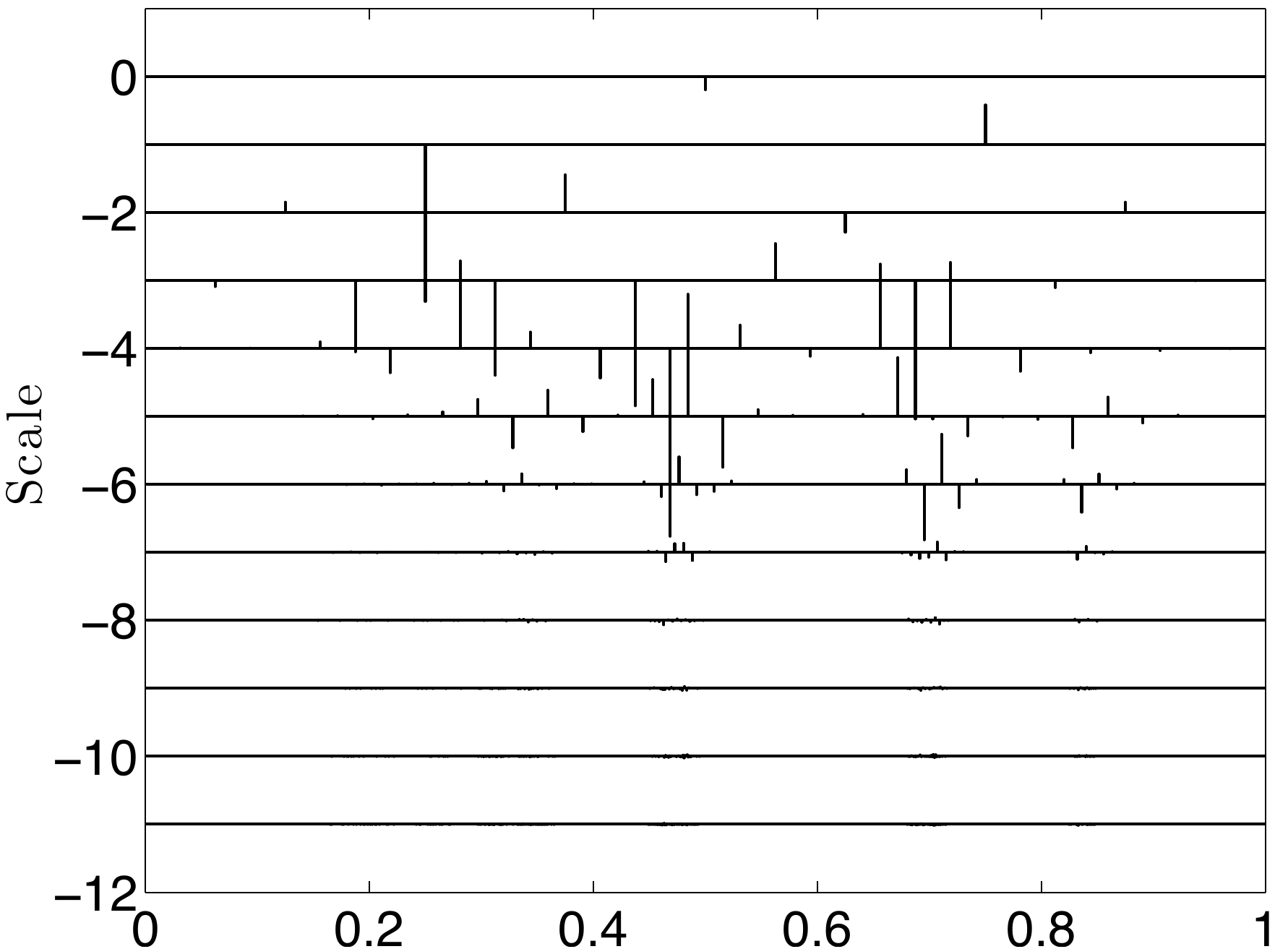}}

\caption{(a)--(d): The four test functions used in the simulation study sampled at 4096 points. (e): Wavelet coefficients of the test functions.}
\label{fig:target}
\end{figure} 

Four standard regression functions representing different level of spatial variability (\textit{Wave, HeaviSine, Doppler} and \textit{Spikes}, see \cite{donoho:94a,marron:98,cai:99a}) and the following four $\sigma(\cdot)$ scenarios were considered:
\begin{itemize}
\item[(a)] \textit{Low Homoscedastic Noise}: $\sigma_{l1}(x) = 0.01$;
\item[(b)] \textit{Low Heteroscedastic Noise}: $\sigma_{l2}(x) = 0.02x$;
\item[(c)] \textit{High Homoscedastic Noise}: $\sigma_{h1}(x) = 0.05$;
\item[(d)] \textit{High Heteroscedastic Noise}: $\sigma_{h2}(x) = 0.1x$. 
\end{itemize}
The test functions are plotted in Figure \ref{fig:target} and a visual idea of the four noise levels is given in Figures~\ref{fig:singleSpikes}(a)--(d). Several different wavelets were used. In the following, we only report in detail the results for Daubechies' compactly supported wavelet with 8 vanishing moments. 

\begin{figure}[t!]
\centering
\subfigure[$\sigma_{l1}(x)=0.01$]{\includegraphics[width=0.23\textwidth]{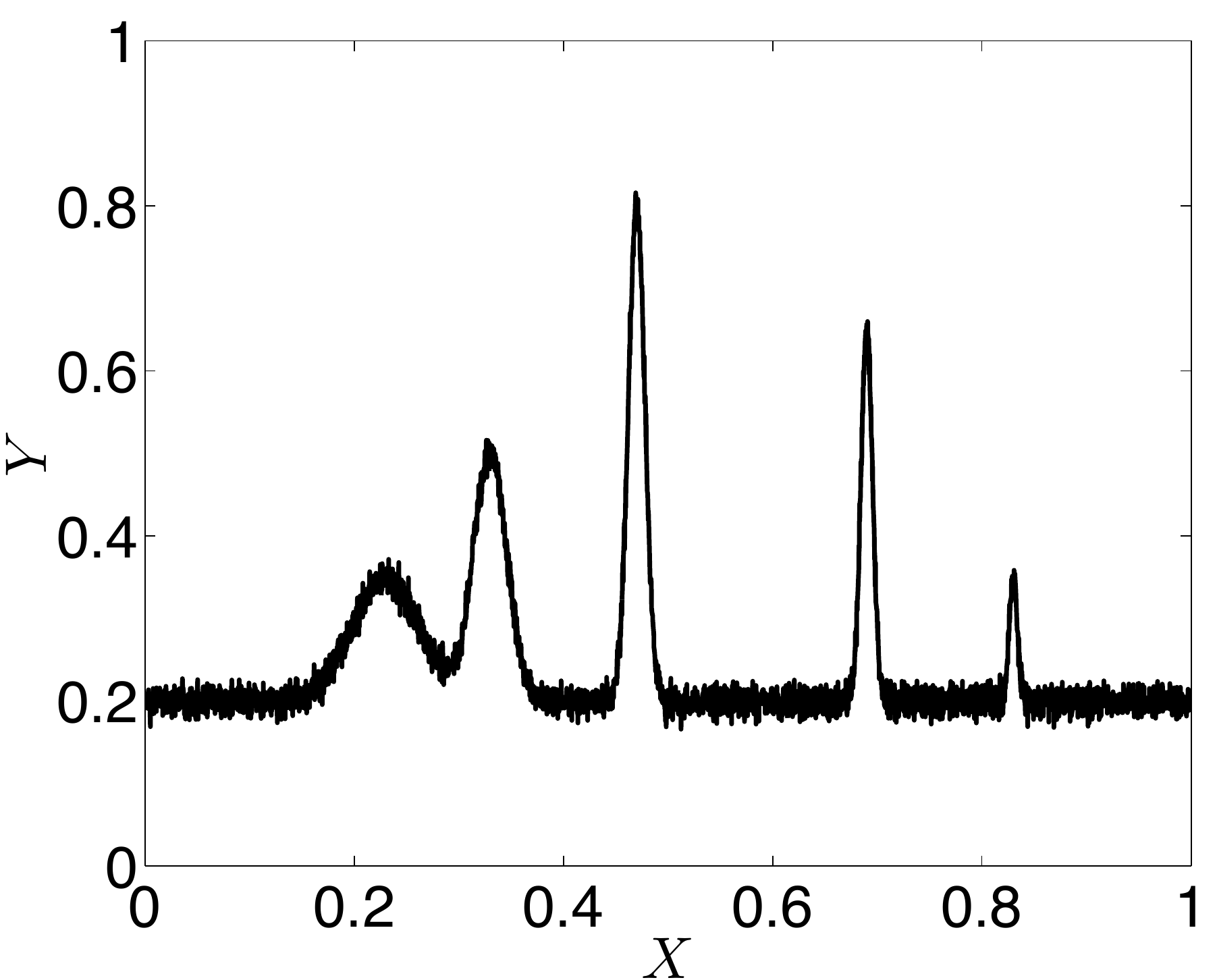}}
\subfigure[$\sigma_{l2}(x)=0.02x$]{\includegraphics[width=0.23\textwidth]{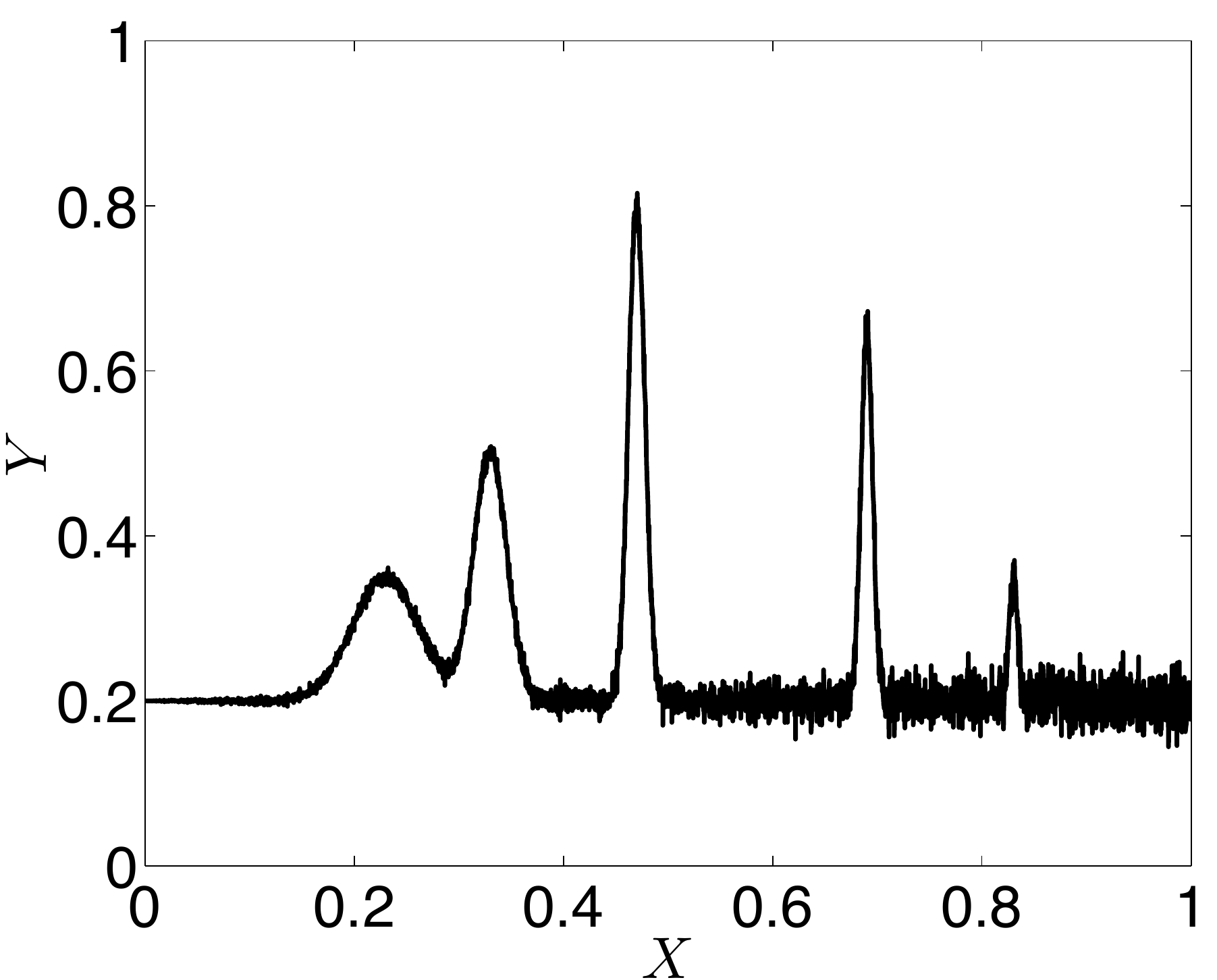}}
\subfigure[$\sigma_{h1}(x)=0.05$]{\includegraphics[width=0.23\textwidth]{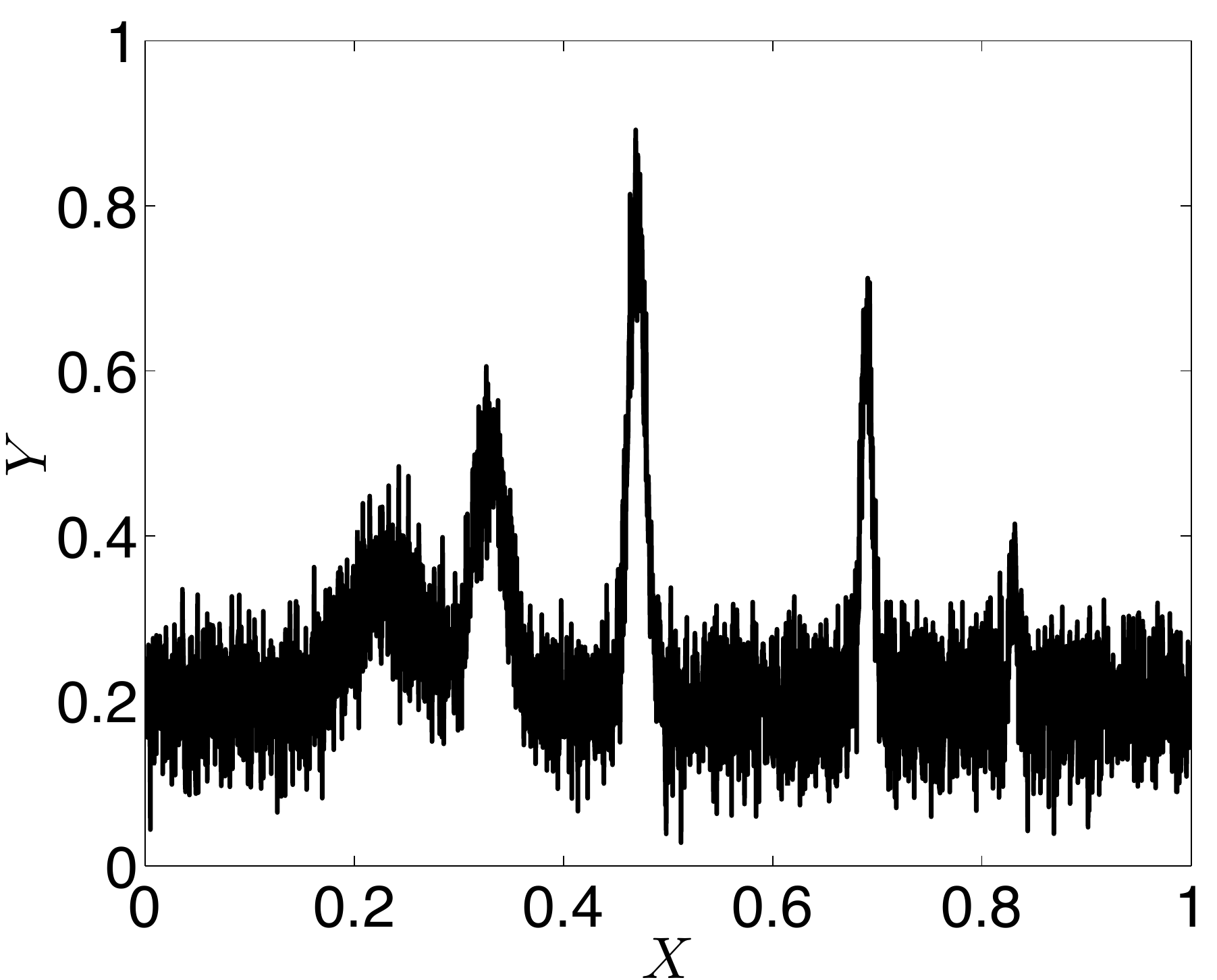}}
\subfigure[$\sigma_{h2}(x)=0.1x$]{\includegraphics[width=0.23\textwidth]{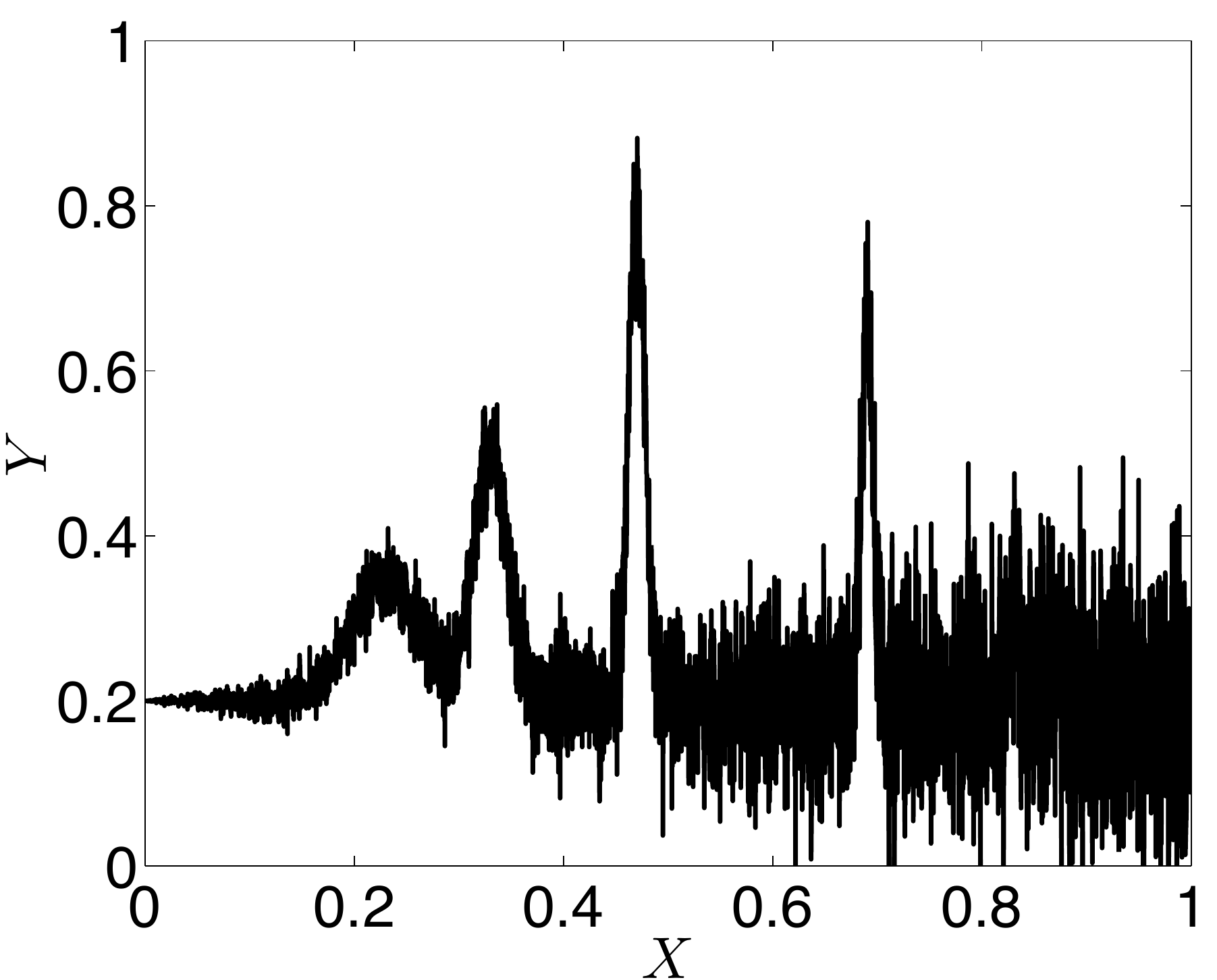}}
\subfigure[]{
\includegraphics[width=0.23\textwidth]{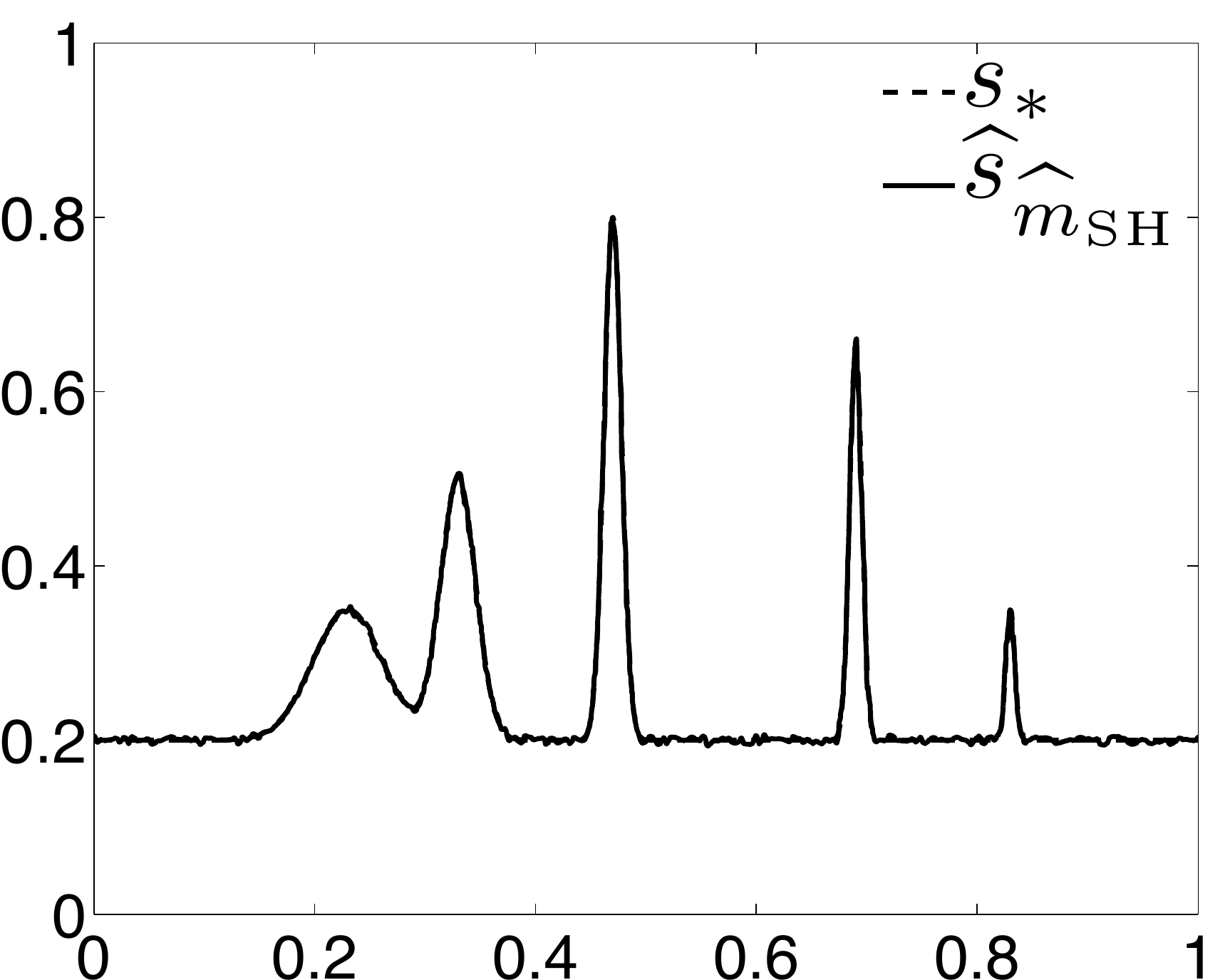}
\includegraphics[width=0.23\textwidth]{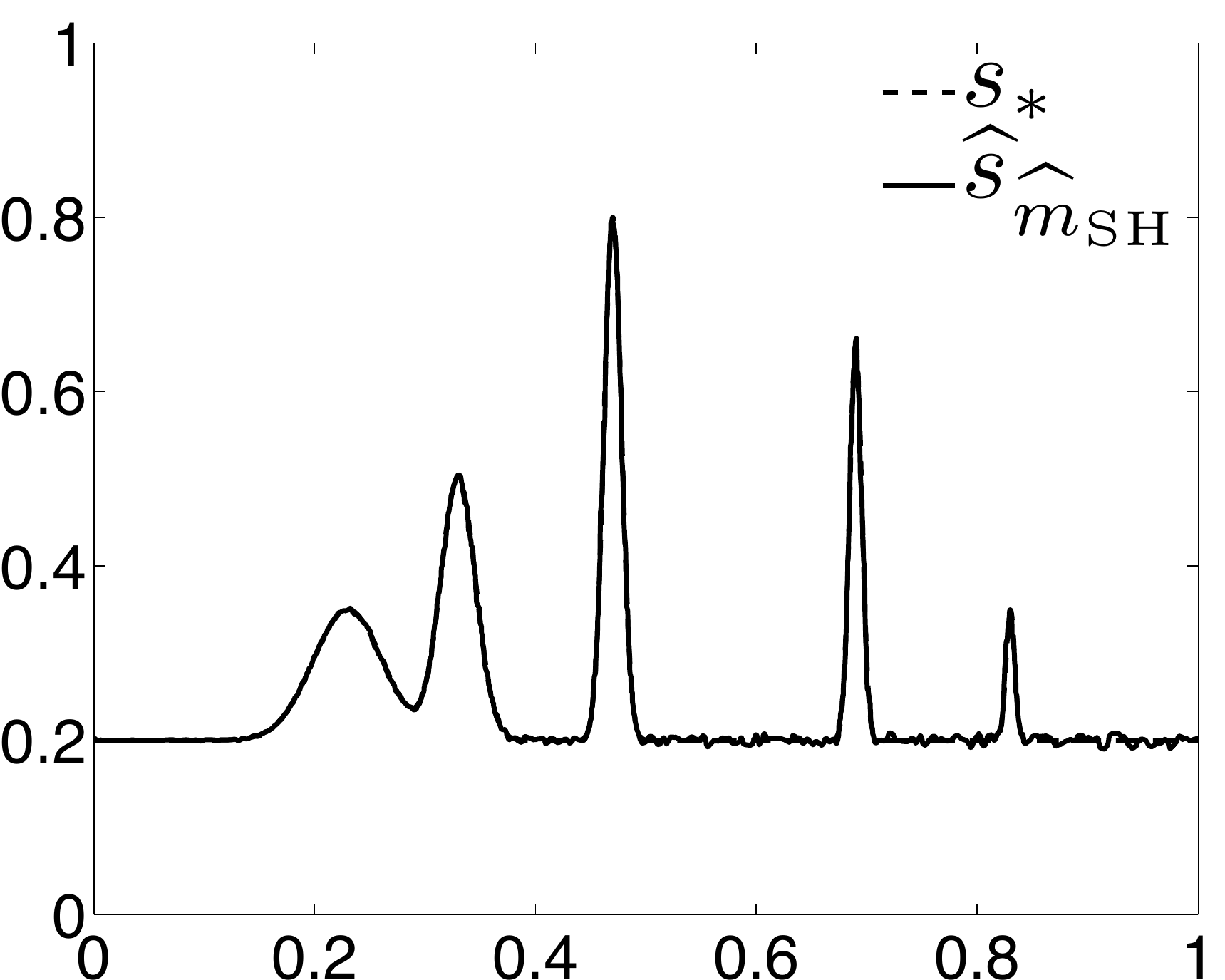}
\includegraphics[width=0.23\textwidth]{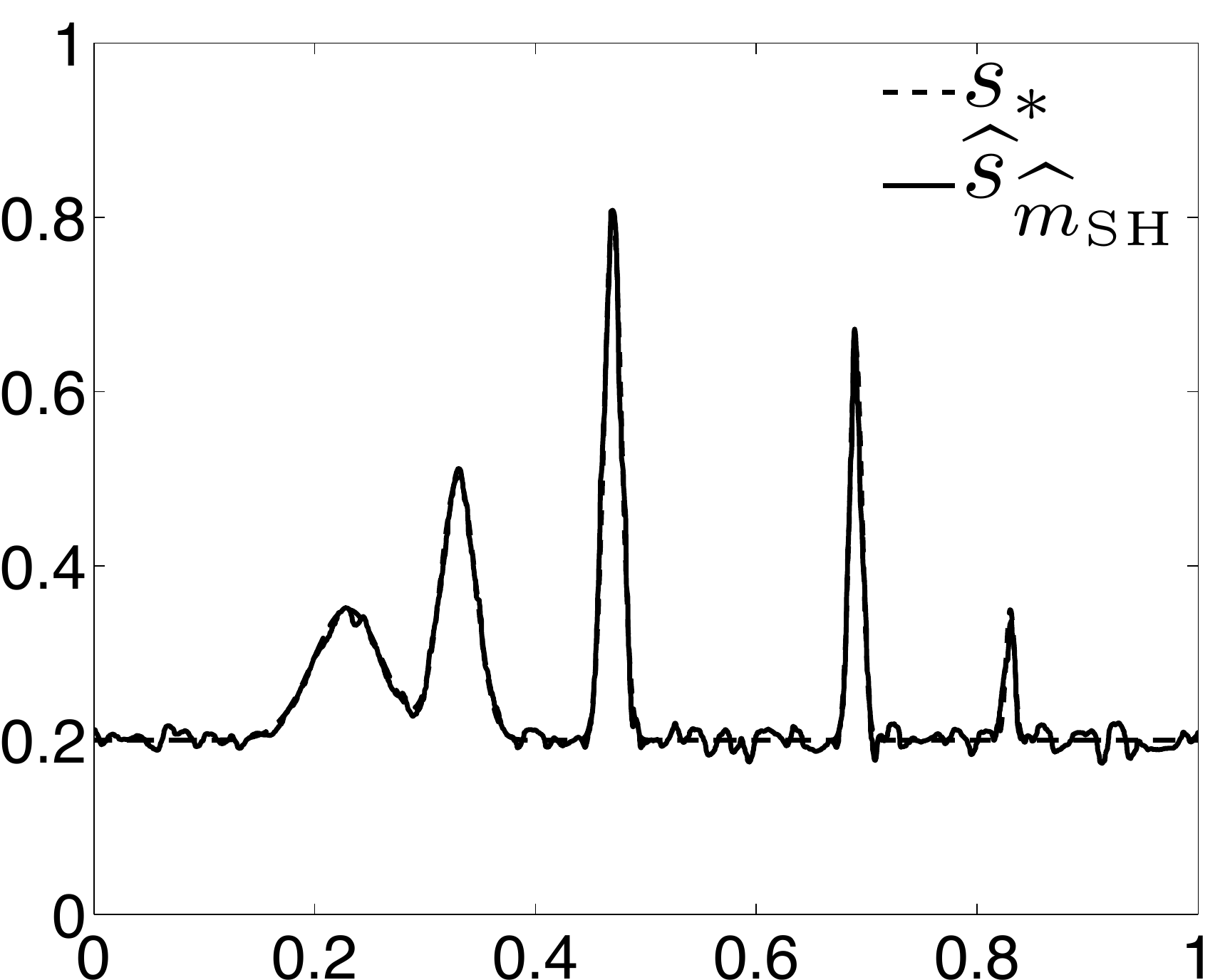}
\includegraphics[width=0.23\textwidth]{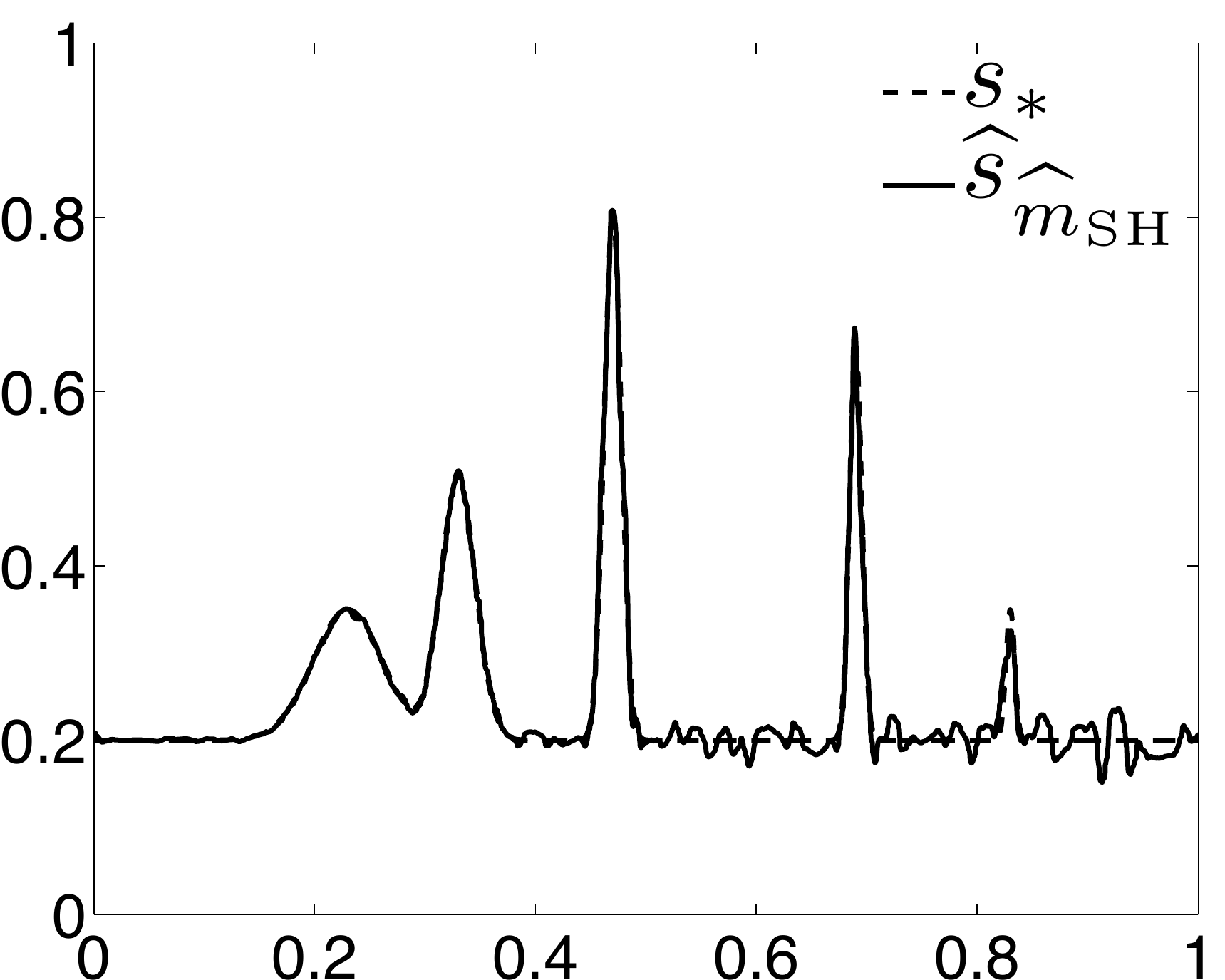}}
\subfigure[]{
\includegraphics[width=0.23\textwidth]{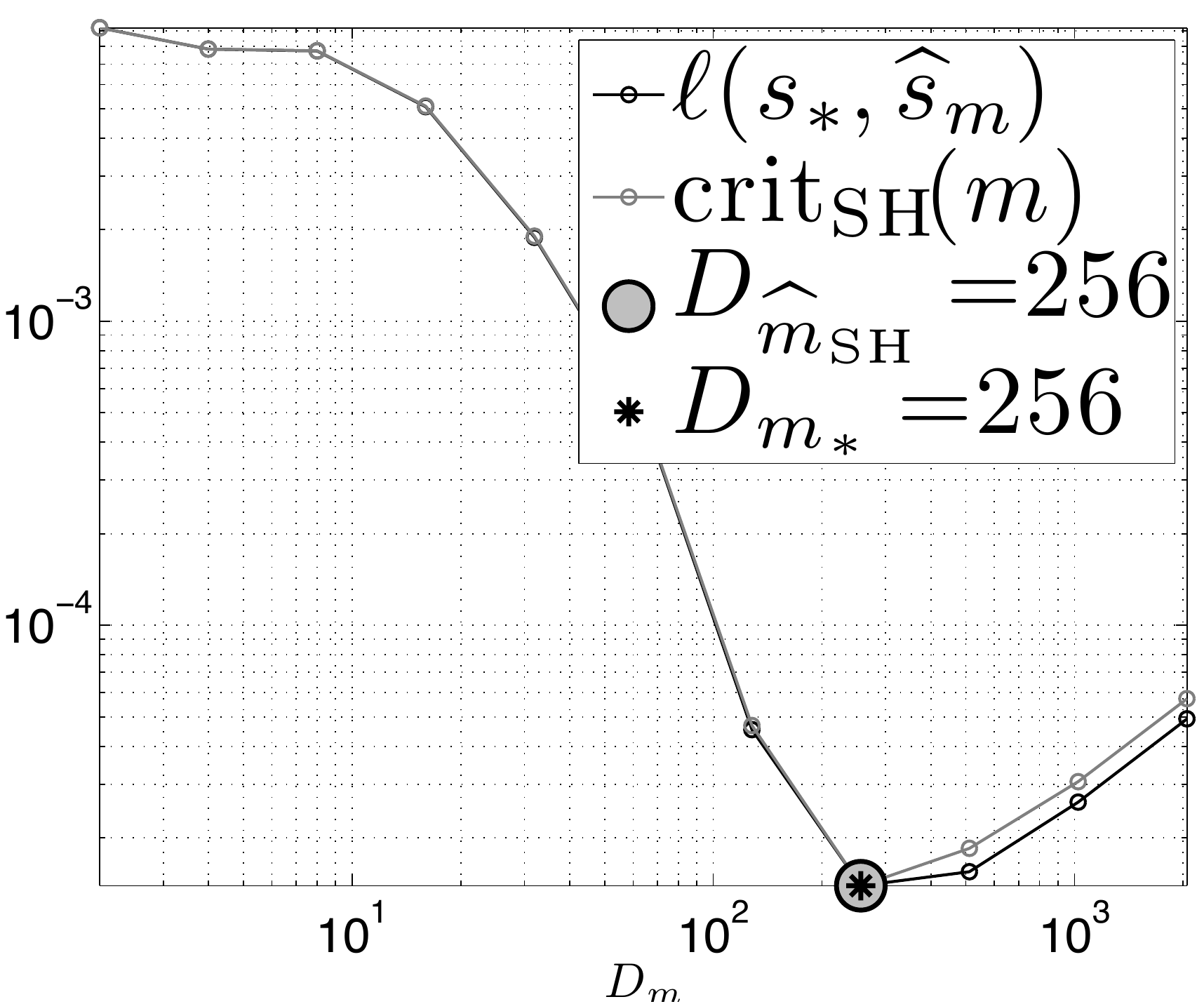}
\includegraphics[width=0.23\textwidth]{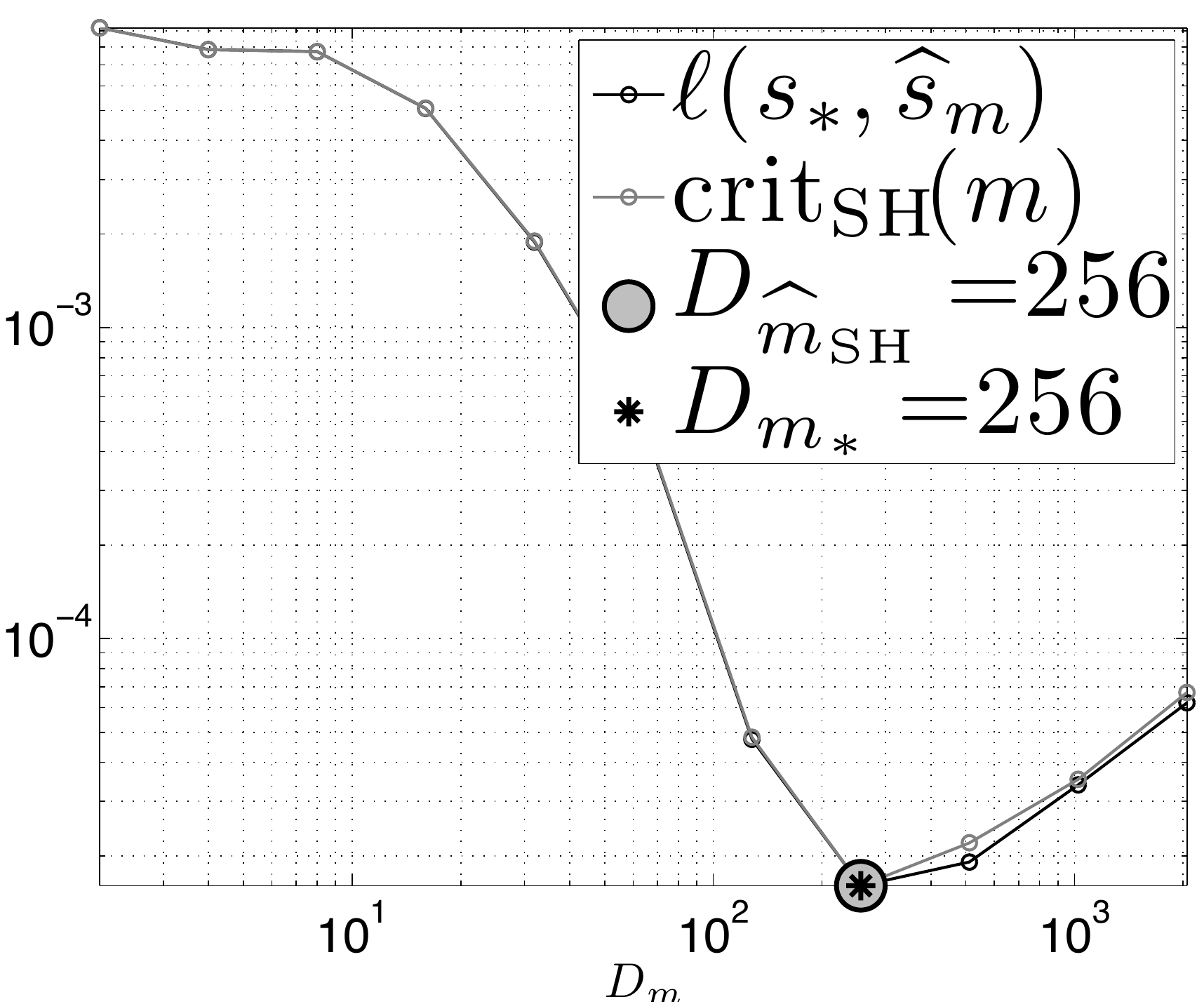}
\includegraphics[width=0.23\textwidth]{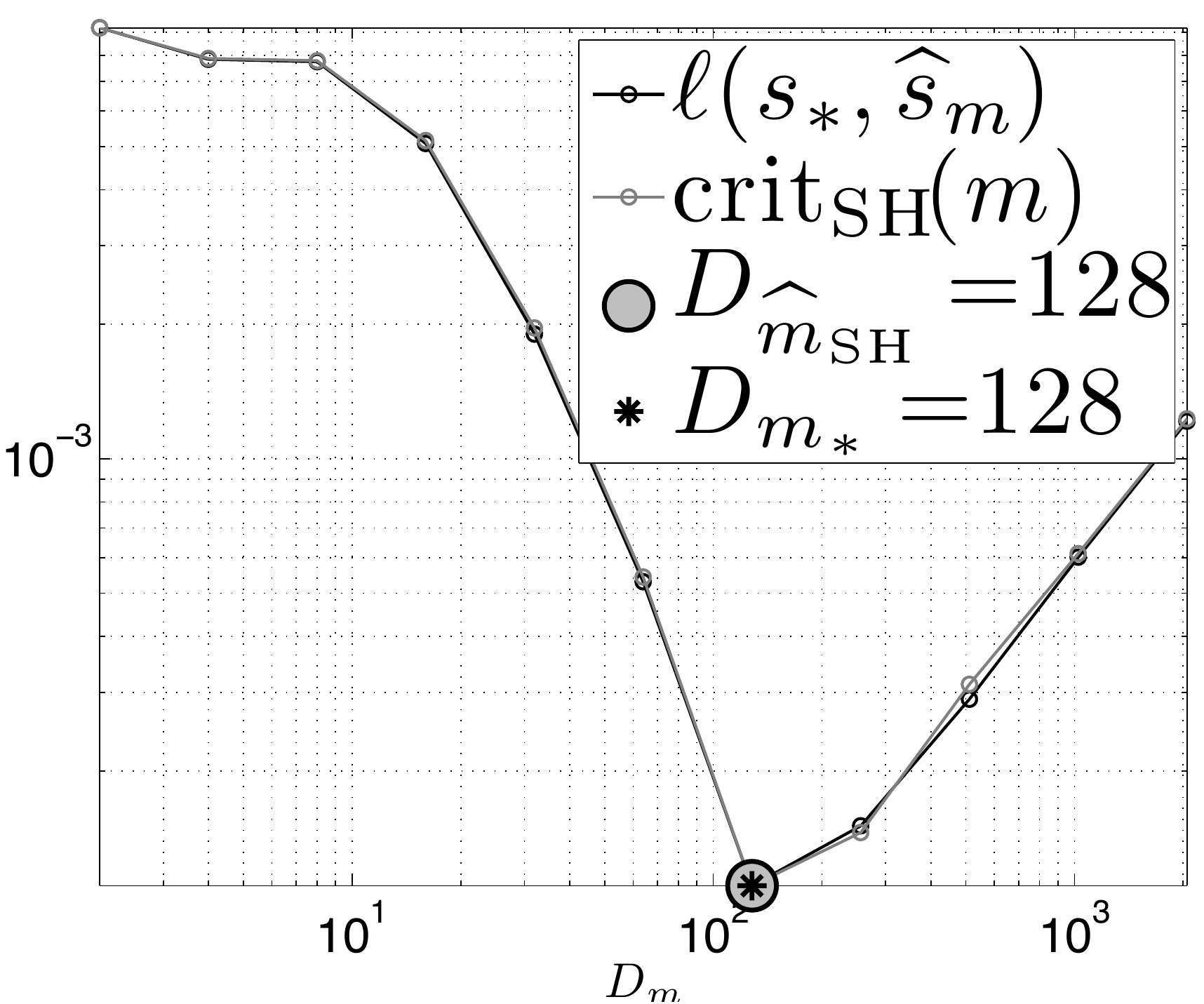}
\includegraphics[width=0.23\textwidth]{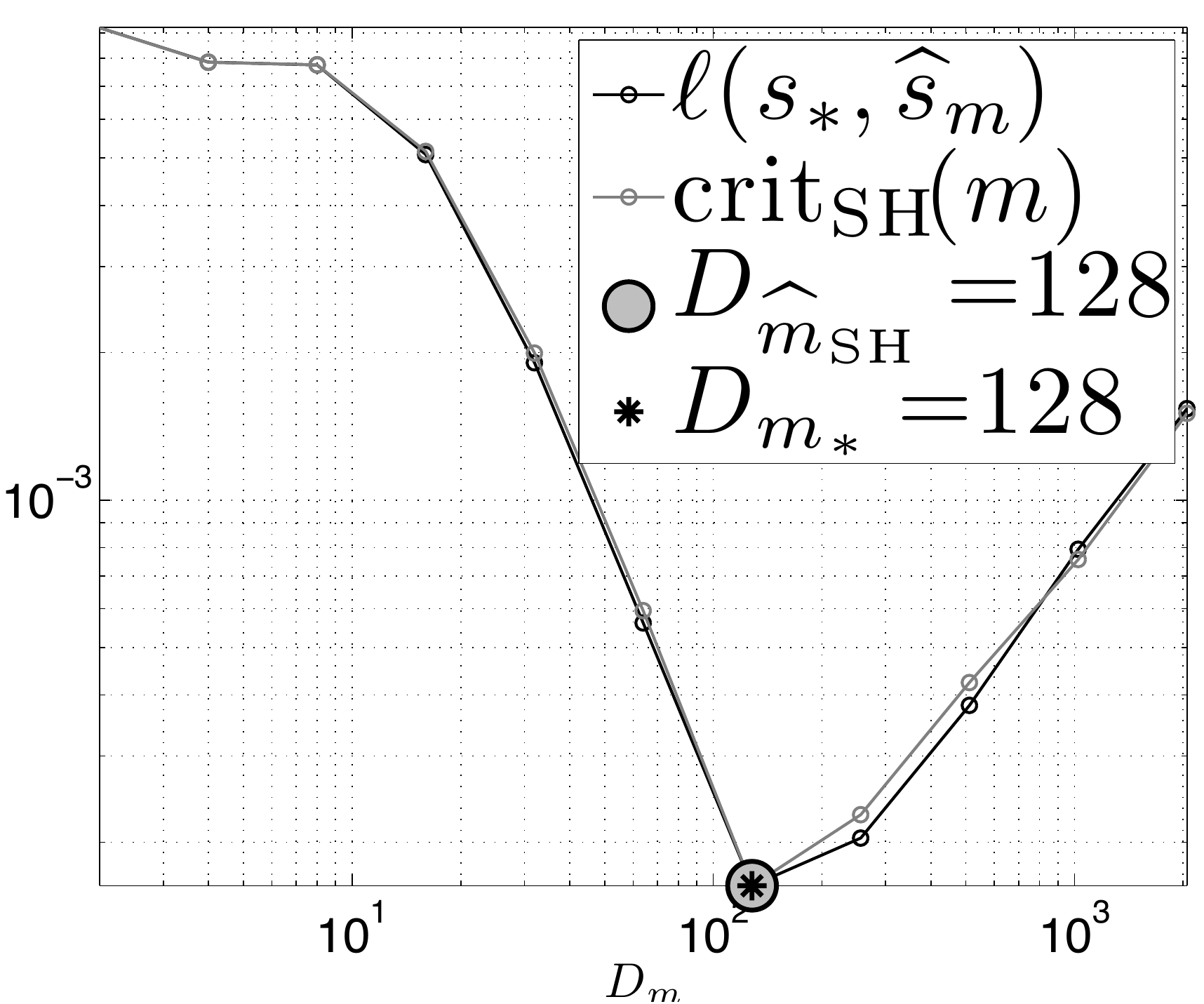}}
\subfigure[]{
\includegraphics[width=0.23\textwidth]{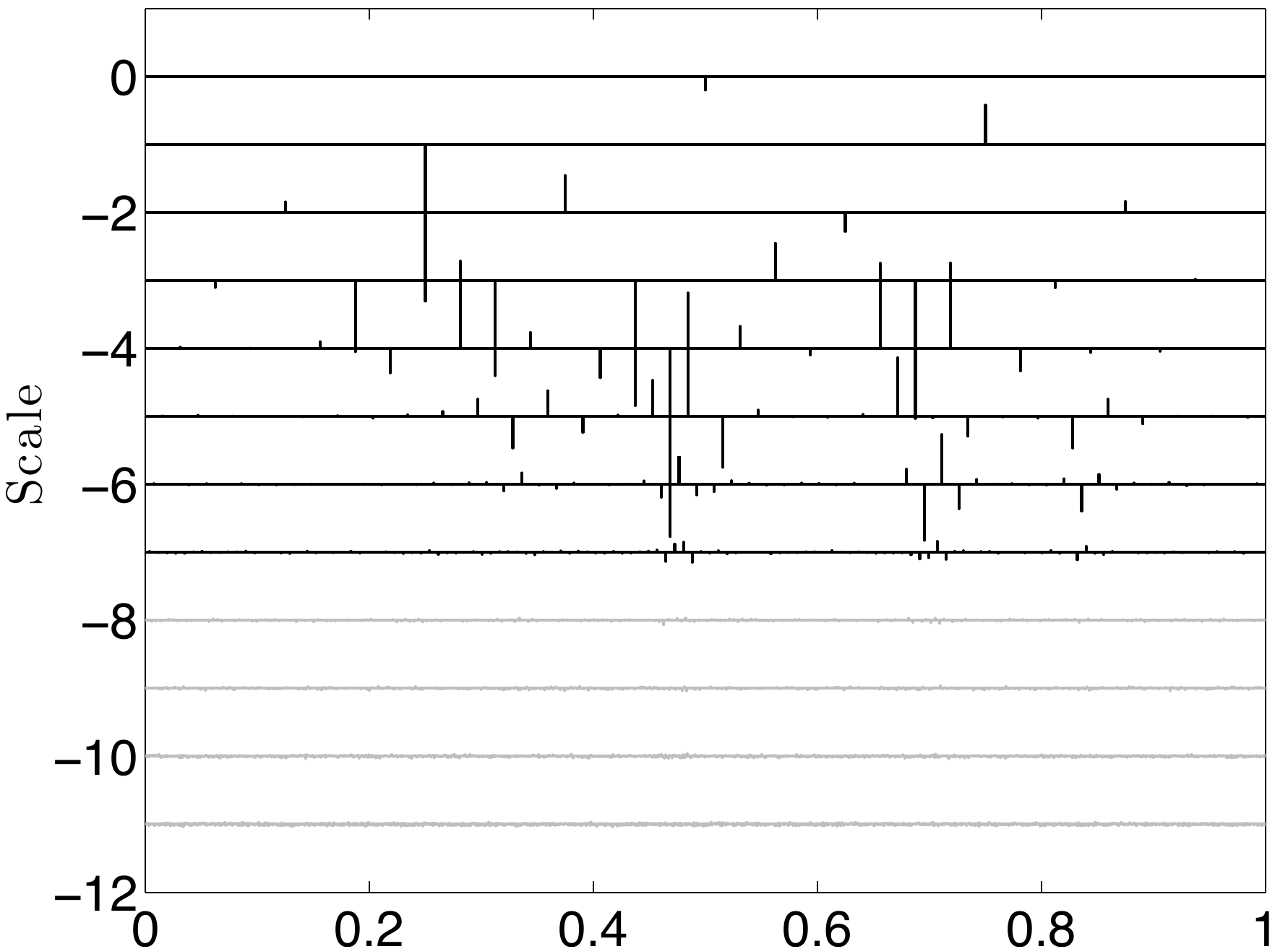}
\includegraphics[width=0.23\textwidth]{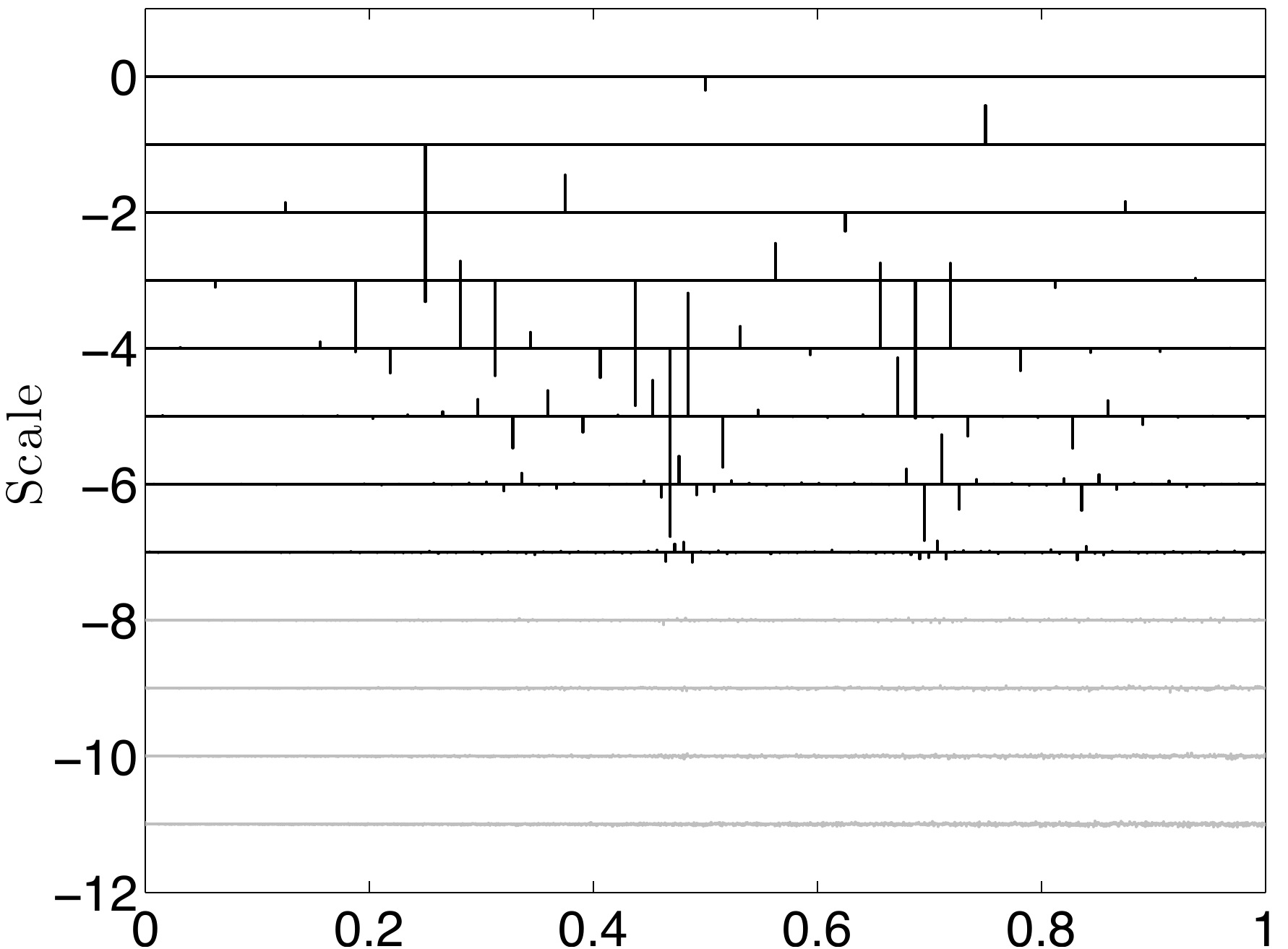}
\includegraphics[width=0.23\textwidth]{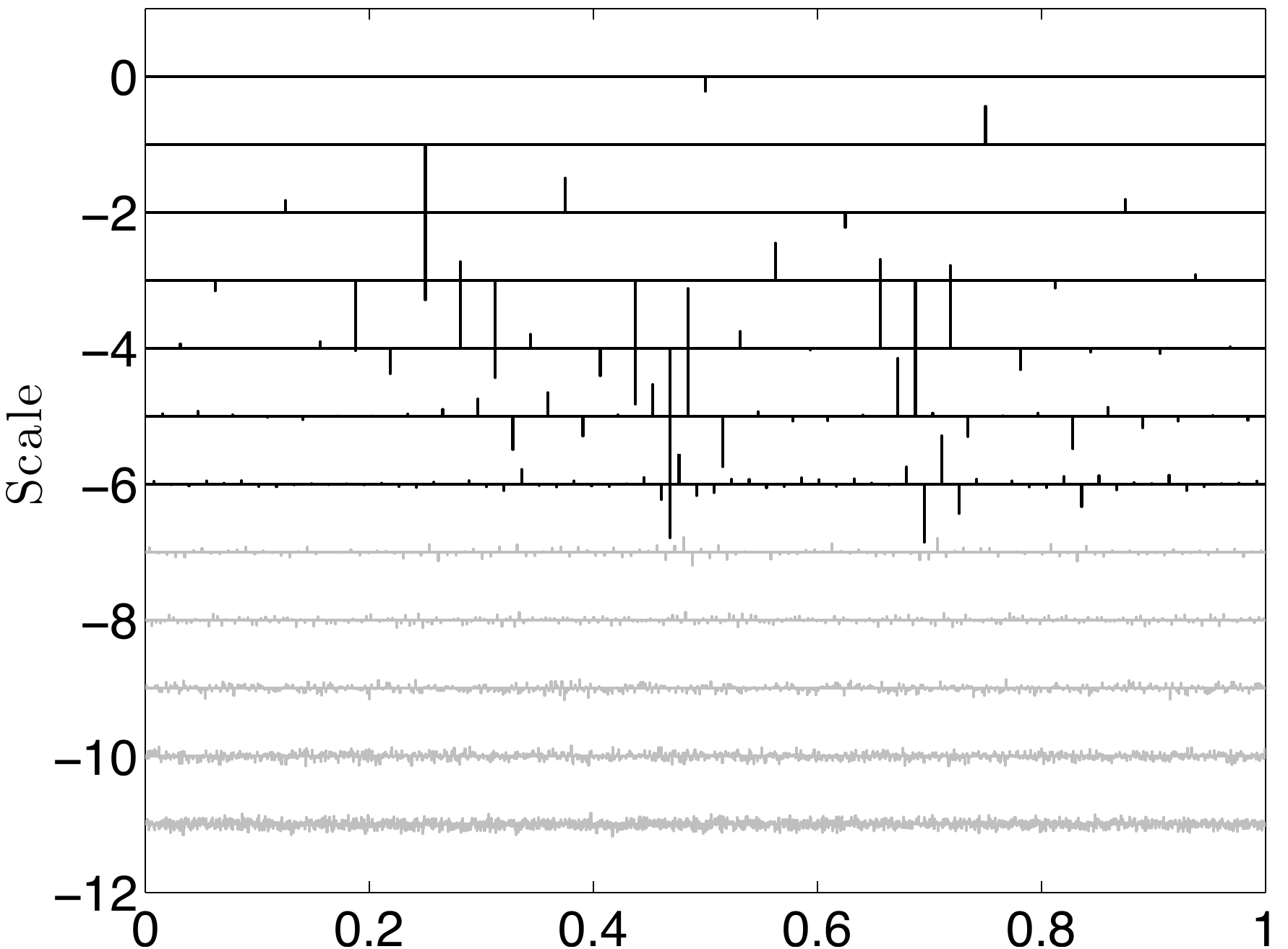}
\includegraphics[width=0.23\textwidth]{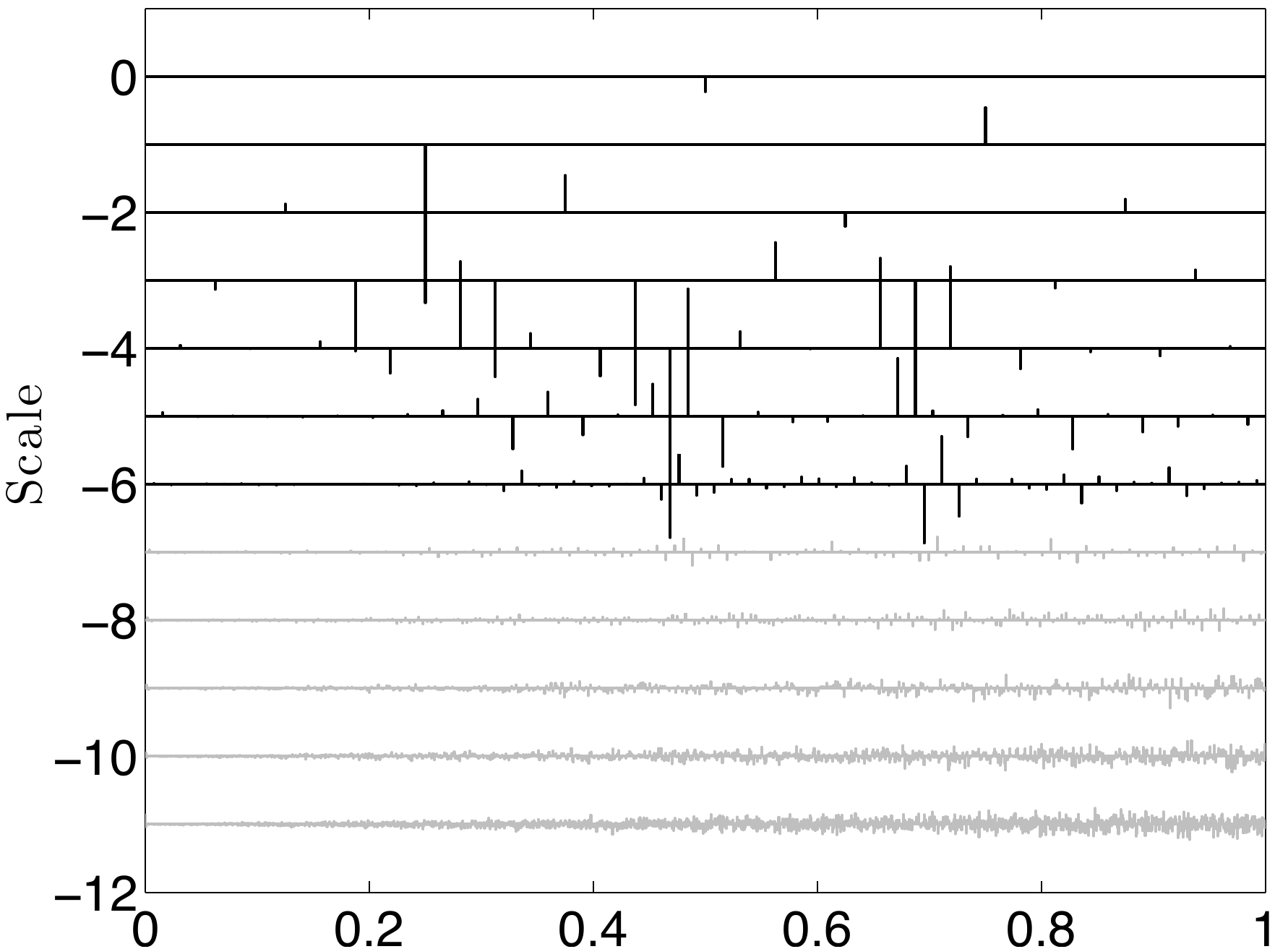}}

\caption{(a)-(d): Noisy version of \textit{Spikes} for each $\sigma(\cdot)$ scenarios. (e): Typical reconstructions from a single simulation with $n=4096$. The dotted line is the true signal and the solid one depicts the estimates $\widehat{s}_{\widehat{m}_{\mathrm{SH}}}$. (f): Graph of the excess risk $\ell(s_\ast, \widehat{s}_m)$ against the dimension $D_m$ and (shifted) $\mathrm{crit}_{\mathrm{SH}}(m)$ (in a log-log scale). The gray circle represents the global minimizer $\widehat{m}$ of $\mathrm{crit_{\mathrm{SH}}}(m)$ and the black star the oracle model $m_\ast$. (g): Noisy and selected (black) wavelet coefficients (see Figure~\ref{fig:target}(e) for a visual comparison with the original wavelet coefficients).}
\label{fig:singleSpikes}
\end{figure} 

\subsection{Four model selection procedures}
The performance of the following four model selection methods were compared: 
\begin{itemize}
\item The slope heuritics (SH): 
\begin{equation*}
\widehat{m}_{\mathrm{SH}}\in\arg \min_{m\in \mathcal{M}_{n}}\left\{\mathrm{crit_{SH}}(m)\right\},
\end{equation*}
with
\[
\mathrm{crit_{SH}}(m) = P_{n}\left( \gamma \left( \widehat{s}_{m}\right) \right) +\pen_{\mathrm{SH}}(m),
\]
and
\[
\pen_{\mathrm{SH}}(m) = 2\frac{\widehat{\alpha}_{\mathrm{min}}D_m}{n}.
\]
where $\widehat{\alpha}_{\mathrm{min}}$ is obtained from the dimension jump method (see Figure~ \ref{fig:dimjump}). Practical issues about SH are addressed in \cite{BauMauMich:12} and our implementation is based on the Matlab package CAPUSHE. 
\item Mallow's $C_p$ (Cp): 
\begin{equation*}
\widehat{m}_{\mathrm{Cp}}\in\arg \min_{m\in \mathcal{M}_{n}}\left\{\mathrm{crit_{Cp}}(m)\right\},
\end{equation*}
with
\[
\mathrm{crit_{Cp}}(m) = P_{n}\left( \gamma \left( \widehat{s}_{m}\right) \right) +\pen_{\mathrm{Cp}}(m),
\]
and
\[
\pen_{\mathrm{Cp}}(m) = 2\frac{\widehat{\sigma}^2D_m}{n},
\]
where $\widehat{\sigma}^2$ is globally estimated  by the classical variance estimator defined as
\[
\widehat{\sigma}^2 = \frac{d^2(Y_{1\ldots n}, m_{n/2})}{n-n/2},
\]
where $Y_{1\ldots n}=\left(Y_i\right)_{1\leq i\leq n}\in \mathbb{R}^n$, $m_{n/2}$ is the largest model of dimension $n/2$, and $d$ is the Euclidean distance on $\mathbb{R}^n$.
\item Nason's 2-fold cross-validation (2FCV).
Nason adjusted the usual 2FCV method---which cannot be applied directly to wavelet estimation---for choosing the threshold parameter in wavelet shrinkage \cite{nason:96}. Adapting his strategy to our context, we test, for every model of the collection, an interpolated wavelet estimator learned from the (ordered) even-indexed data against the odd-indexed data and vice versa. More precisely, considering the data $X_i$ are ordered, the selected model $\widehat{m}_{\mathrm{2FCV}}$ is obtained by minimizing \eqref{def_crit_VFCV} with $V=2$, $B_1=\{2,4,\ldots,n\}$ and $B_2=\{1,3,\ldots,n-1\}$.

\item A penalized version of Nason's 2-fold cross-validation (pen2F). As for the 2FCV, we compute $\widehat{m}_{\mathrm{pen2F}}$ by minimizing \eqref{def_crit_penVF}  with $V=2$, $B_1=\{2,4,\ldots,n\}$ and $B_2=\{1,3,\ldots,n-1\}$.
\end{itemize}
For each method, the model collection described in Section~\ref{section_Haar} is constructed by adding successively whole resolution levels of wavelet coefficients. Thus, the considered dimensions are $\{D_m, m\in\mathcal{M}_{n}\}=\{2^j, j=1,\ldots,\log_2(n)-1\}$. Note that unlike the local behaviours of the nonlinear models (e.g. thresholding), these linear models operate in a global fashion since entire scale levels of coefficients are suppressed (see Figures~\ref{fig:singleWave}(g),\ref{fig:singleSpikes}(g) for an illustration).

Typical estimations from a single simulation with $n=4096$ are depicted in \ref{fig:singleSpikes}(e)  for the \textit{Spikes} function. Figure~\ref{fig:singleSpikes}(f) also contains a plot of the excess risk $\ell(s_\ast, \widehat{s}_m)$ against the dimension $D_m$ and a vertical shift of the curve $\mathrm{crit}_{\mathrm{SH}}(m)$ is also overlayed for visualization purposes.  It can be observed that $\mathrm{crit}_{\mathrm{SH}}(m)$ gives a very reliable estimate for the risk $\ell(s_\ast, \widehat{s}_m)$, and in turn, also a high-quality estimate of the optimal model.  Indeed, for all cases, SH consistently selects the best model.

\subsection{Model selection performances} 

We compared the procedures on $N=1000$ independent data sets of size $n$ ranging from $256$ to $4096$. As in Arlot \cite{Arl:2008a}, we estimate the quality of the model-selection strategies through the following constant
\[
C_{\mathrm{or}} = \mathbb{E}\left[\frac{\left\Vert \widehat{s}_{\widehat{m}}-s_{\ast}\right\Vert _{2}^{2}}{\inf_{m\in\mathcal{M}_n}\left\Vert \widehat{s}_{m_\ast}-s_{\ast}\right\Vert _{2}^{2}}\right]
\]
which represents the constant that would appear in front of an oracle inequality. This ratio, which is greater than $1$, represents the accuracy of the model selection procedure. The average $C_{\mathrm{or}}$ over $1000$ replications are given in Tables~\ref{tab:low} and ~\ref{tab:high}.
 
 \subsection{Results and discussion}
 
It can be seen from Tables~\ref{tab:low} and ~\ref{tab:high} that none of the methods clearly outperforms the others in all cases. However, in our experiments, Mallows' $C_{p}$ seems to perform slightly better in many situations, both in the low and high noise regimes and for either homoscedastic and heteroscedastic noise. Also, the slope heuristics has roughly comparable results with Mallows' $C_{p}$, except for the small sample size case $n=256$, where Mallows' $C_{p}$
performs better, especially in the low noise regime. The quite bad behavior of the slope heuristics in the latter case (low noise, small sample size) can be explained by the fact that in such situation, the oracle model is the greatest model, that the slope heuristics tries to avoid through the use of the dimension jump. 

In the low noise regime (Table~\ref{tab:low}), $2$-fold penalization is slightly better
than $2$-fold cross-validation, especially when the sample size is small ($n=256$). Moreover, $2$-fold penalization is competitive with Mallows' $C_{p}$ in the low noise regime. When the noise is high (Table~\ref{tab:high}), $2$FCV and pen$2$F give roughly equivalent results.

Finally, it seems surprising that Mallows' $C_{p}$ and the slope heuristics,
that are based on linear penalties, outperform cross-validation methods in
the heteroscedastic noise case. Indeed linear penalties are proved to be
asymptotically suboptimal in such case, see Arlot \cite{Arl:2010}, while we
proved in Theorem~\ref{Theorem_VFpen} that $V$-fold penalization for a fixed $V$ is
asymptotically optimal. However, in order to be able to use Mallat's
algorithm for the discrete wavelet transform, we restricted ourselves to the 
$2$-fold and this could be the reason for the rather mild performances of
the cross-validation techniques compared to Mallows' $C_{p}$. Indeed, it is
well-known that in general, it is better to take $V=5$ or $10$ instead of $2$
(see for instance \cite{ArlotCelisse:10}), because it reduces the variance
of the cross-validation criterion. Also, Nason's cross-validation for
wavelet models allows to use Mallat's algorithm, but at the price of an
approximation of the original cross-validation criterion. These two aspects
might be at the origin of the superiority of Mallows' $C_{p}$ over the
cross-validation techniques, at least in the heteroscedastic case.

\begin{table}\centering
\begin{tabular}{|C|C|C||CCCC|}
\hline
s_\ast&\sigma_{\cdot}&n&\textbf{SH}&\textbf{Cp}&\textbf{2FCV}&\textbf{pen2F}\\\hline
\multirow{6}{*}{\textit{Wave}}&\multirow{3}{*}{$l1$}
&256&1.980\pm 0.011&1.106\pm 0.008&1.406\pm 0.019&\cellcolor{lgray}1.034\pm 0.005\\
&&1024&1.051\pm 0.002&\cellcolor{lgray}1.031\pm 0.002&1.062\pm 0.002&1.056\pm 0.004\\
&&4096&\cellcolor{lgray}1.021\pm 0.001&\cellcolor{lgray}1.021\pm 0.001&1.055\pm 0.002&\cellcolor{lgray}1.021\pm 0.001\\
&\multirow{3}{*}{$l2$}
&256&1.799\pm 0.009&1.140\pm 0.008&1.341\pm 0.015&\cellcolor{lgray}1.042\pm 0.005\\
&&1024&\cellcolor{lgray}1.021\pm 0.002&1.027\pm 0.002&1.029\pm 0.003&1.084\pm 0.006\\
&&4096&1.033\pm 0.002&1.032\pm 0.002&\cellcolor{lgray}1.015\pm 0.001&1.039\pm 0.002\\\hline
\multirow{6}{*}{\textit{HeaviSine}}&\multirow{3}{*}{$l1$}
&256&1.482\pm 0.014&1.157\pm 0.005&1.437\pm 0.016&\cellcolor{lgray}1.084\pm 0.006\\
&&1024&1.065\pm 0.003&\cellcolor{lgray}1.023\pm 0.002&1.155\pm 0.006&1.062\pm 0.004\\
&&4096&1.011\pm 0.001&\cellcolor{lgray}1.008\pm 0.001&1.101\pm 0.004&\cellcolor{lgray}1.010\pm 0.001\\
&\multirow{3}{*}{$l2$}
&256&1.357\pm 0.012&1.122\pm 0.005&1.357\pm 0.013&\cellcolor{lgray}1.063\pm 0.004\\
&&1024&1.048\pm 0.003&\cellcolor{lgray}1.032\pm 0.002&1.133\pm 0.006&1.093\pm 0.006\\
&&4096&1.016\pm 0.001&\cellcolor{lgray}1.013\pm 0.001&1.064\pm 0.003&1.020\pm 0.001\\\hline
\multirow{6}{*}{\textit{Doppler}}&\multirow{3}{*}{$l1$}
&256&2.890\pm 0.039&1.106\pm 0.008&1.852\pm 0.038&\cellcolor{lgray}1.072\pm 0.008\\
&&1024&2.091\pm 0.015&1.064\pm 0.006&1.486\pm 0.022&\cellcolor{lgray}1.013\pm 0.003\\
&&4096&1.010\pm 0.001&\cellcolor{lgray}1.000\pm 0.000&1.141\pm 0.007&1.025\pm 0.003\\
&\multirow{3}{*}{$l2$}
&256&2.820\pm 0.040&1.127\pm 0.009&1.784\pm 0.036&\cellcolor{lgray}1.059\pm 0.006\\
&&1024&1.874\pm 0.013&1.078\pm 0.006&1.419\pm 0.016&\cellcolor{lgray}1.009\pm 0.002\\
&&4096&1.024\pm 0.002&\cellcolor{lgray}1.002\pm 0.000&1.187\pm 0.006&1.019\pm 0.003\\\hline
\multirow{6}{*}{\textit{Spikes}}&\multirow{3}{*}{$l1$}
&256&3.541\pm 0.071&1.092\pm 0.007&2.075\pm 0.062&\cellcolor{lgray}1.062\pm 0.010\\
&&1024&1.077\pm 0.006&\cellcolor{lgray}1.021\pm 0.002&1.198\pm 0.012&1.045\pm 0.003\\
&&4096&1\cellcolor{lgray}.008\pm 0.001&\cellcolor{lgray}1.008\pm 0.001&1.029\pm 0.002&1.014\pm 0.001\\
&\multirow{3}{*}{$l2$}
&256&3.236\pm 0.058&1.087\pm 0.007&2.008\pm 0.055&\cellcolor{lgray}1.071\pm 0.011\\
&&1024&1.054\pm 0.004&\cellcolor{lgray}1.013\pm 0.001&1.187\pm 0.012&1.069\pm 0.004\\
&&4096&\cellcolor{lgray}1.007\pm 0.001&\cellcolor{lgray}1.007\pm 0.001&\cellcolor{lgray}1.009\pm 0.001&1.019\pm 0.002\\\hline
\end{tabular}
\caption{Comparison of mean performance $C_{\mathrm{or}}$ for each procedure over $N=1000$ realizations of the low noise level setting with corresponding empirical standard deviation divided by $\sqrt{N}$.}
\label{tab:low}
\end{table}

\begin{table}\centering
\begin{tabular}{|C|C|C||CCCC|}
\hline
s_\ast&\sigma_{\cdot}&n&\textbf{SH}&\textbf{Cp}&\textbf{2FCV}&\textbf{pen2F}\\\hline
\multirow{6}{*}{\textit{Wave}}&\multirow{3}{*}{$h1$}
&256&1.029\pm 0.004&\cellcolor{lgray}1.016\pm 0.003&1.236\pm 0.011&1.158\pm 0.009\\
&&1024&\cellcolor{lgray}1.003\pm 0.001&\cellcolor{lgray}1.002\pm 0.001&\cellcolor{lgray}1.002\pm 0.001&1.033\pm 0.005\\
&&4096&1.011\pm 0.002&1.008\pm 0.002&\cellcolor{lgray}1.000\pm 0.000&1.040\pm 0.004\\
&\multirow{3}{*}{$h2$}
&256&1.076\pm 0.006&\cellcolor{lgray}1.052\pm 0.006&1.252\pm 0.010&1.244\pm 0.012\\
&&1024&1.022\pm 0.005&1.014\pm 0.004&\cellcolor{lgray}1.004\pm 0.002&1.072\pm 0.008\\
&&4096&1.020\pm 0.004&1.019\pm 0.004&\cellcolor{lgray}1.006\pm 0.002&1.067\pm 0.007\\\hline
\multirow{6}{*}{\textit{HeaviSine}}&\multirow{3}{*}{$h1$}
&256&\cellcolor{lgray}1.096\pm 0.005&\cellcolor{lgray}1.090\pm 0.005&1.115\pm 0.006&1.185\pm 0.013\\
&&1024&\cellcolor{lgray}1.057\pm 0.003&\cellcolor{lgray}1.054\pm 0.003&1.123\pm 0.006&1.075\pm 0.004\\
&&4096&\cellcolor{lgray}1.029\pm 0.002&\cellcolor{lgray}1.028\pm 0.002&1.081\pm 0.004&1.041\pm 0.003\\
&\multirow{3}{*}{$h2$}
&256&1.155\pm 0.009&1.153\pm 0.011&\cellcolor{lgray}1.125\pm 0.008&1.300\pm 0.020\\
&&1024&\cellcolor{lgray}1.101\pm 0.006&\cellcolor{lgray}1.091\pm 0.006&1.133\pm 0.007&1.159\pm 0.010\\
&&4096&\cellcolor{lgray}1.047\pm 0.003&\cellcolor{lgray}1.046\pm 0.003&1.122\pm 0.006&1.083\pm 0.005\\\hline
\multirow{6}{*}{\textit{Doppler}}&\multirow{3}{*}{$h1$}
&256&1.330\pm 0.011&1.107\pm 0.005&1.347\pm 0.013&\cellcolor{lgray}1.043\pm 0.003\\
&&1024&1.054\pm 0.003&\cellcolor{lgray}1.025\pm 0.002&1.108\pm 0.005&1.067\pm 0.005\\
&&4096&\cellcolor{lgray}1.013\pm 0.001&\cellcolor{lgray}1.014\pm 0.001&1.029\pm 0.002&1.021\pm 0.001\\
&\multirow{3}{*}{$h2$}
&256&1.224\pm 0.010&1.076\pm 0.004&1.291\pm 0.011&\cellcolor{lgray}1.053\pm 0.003\\
&&1024&\cellcolor{lgray}1.035\pm 0.002&\cellcolor{lgray}1.031\pm 0.002&1.079\pm 0.004&1.098\pm 0.007\\
&&4096&\cellcolor{lgray}1.010\pm 0.001&\cellcolor{lgray}1.009\pm 0.001&1.022\pm 0.003&1.023\pm 0.002\\\hline
\multirow{6}{*}{\textit{Spikes}}&\multirow{3}{*}{$h1$}
&256&1.156\pm 0.009&\cellcolor{lgray}1.047\pm 0.003&1.282\pm 0.014&1.076\pm 0.005\\
&&1024&\cellcolor{lgray}1.006\pm 0.001&\cellcolor{lgray}1.005\pm 0.001&1.094\pm 0.007&1.029\pm 0.004\\
&&4096&\cellcolor{lgray}1.012\pm 0.002&\cellcolor{lgray}1.010\pm 0.001&\cellcolor{lgray}1.009\pm 0.002&1.021\pm 0.002\\
&\multirow{3}{*}{$h2$}
&256&1.119\pm 0.008&\cellcolor{lgray}1.052\pm 0.004&1.284\pm 0.014&1.126\pm 0.006\\
&&1024&\cellcolor{lgray}1.015\pm 0.002&\cellcolor{lgray}1.014\pm 0.002&1.137\pm 0.008&1.059\pm 0.008\\
&&4096&\cellcolor{lgray}1.015\pm 0.002&\cellcolor{lgray}1.011\pm 0.002&\cellcolor{lgray}1.014\pm 0.003&1.030\pm 0.004\\\hline
\end{tabular}
\caption{Comparison of mean performance $C_{\mathrm{or}}$ for each procedure over $N=1000$ realizations of the high noise level setting with corresponding empirical standard deviation divided by $\sqrt{N}$.}
\label{tab:high}
\end{table}

\section{Proofs\label{section_proof_slope_reg}}

\subsection{Proofs related to Section \protect\ref{section_strong_loc_bas} 
\label{ssection_proofs_loc_bas}}

\begin{proof}[Proof of Proposition \protect\ref{prop_strong_loc_loc}]
The proof simply follows from the following computations. For every $\beta
=\left( \beta _{k}\right) _{k=1}^{D_m}\in \mathbb{R}^{D_m}$,%
\begin{eqnarray*}
\left\Vert \sum_{k=1}^{D_m}\beta _{k}\varphi _{k}\right\Vert _{\infty } &\leq
&\sum_{i=1}^{b_m}\left\Vert \sum_{l\in \Pi _{i}}\beta _{l}\varphi
_{l}\right\Vert _{\infty } \\
&\leq &\sum_{i=1}^{b_m}A_{c}\max_{l\in \Pi _{i}}\left\Vert \varphi
_{l}\right\Vert _{\infty } \times \max_{l\in \Pi _{i}}\left\vert \beta _{l}\right\vert
\\
&\leq &A_{c}r_{m}\sum_{i=1}^{b_m}\sqrt{A_{i}}\max_{l\in \Pi _{i}}\left\vert
\beta _{l}\right\vert \\
&\leq &A_{c}r_{m}^{2}\sqrt{D_m}\max_{k\in \left\{ 1,\ldots,D_m\right\} }\left\vert
\beta _{k}\right\vert .
\end{eqnarray*}
\end{proof}

\begin{proof}[Proof of Proposition \protect\ref{prop_wav}]
The fact that $\left\{ \psi _{\lambda }^{\text{per}}\text{ };\text{ }\lambda
\in \Lambda _{b_m}\right\} $ is an orthonormal family - and thus an
orthonormal basis of $m$ - is a classical fact of wavelet theory (see for
instance \cite{CohenDaubVial:93}). Take $m>0$ such that 
\begin{equation*}
\supp\left( \psi _{0}\right) \bigcup \supp\left(\phi
_{0}\right) \subset \left[0,m\right] .
\end{equation*}%
For $j\geq 0$ and $1\leq k\leq 2^{j}$, we have%
\begin{equation*}
\left\Vert \psi _{j,k}^{\text{per}}\right\Vert _{\infty }\leq \left( \left[ m%
\right] +2\right) \left\Vert \psi _{j,k}\right\Vert _{\infty }\leq \left( %
\left[ m\right] +2\right) 2^{j/2}\left\Vert \psi _{0}\right\Vert _{\infty }%
,
\end{equation*}%
where $\left[ m\right] $ is the integer part of $m$. We thus take $%
A_{j}=2^{j}$ for $j\geq 0$ and $A_{-1}=1$, which gives%
\begin{equation*}
\sum_{i=-1}^{b_m}\sqrt{A_{i}}\leq \left( 1+\sqrt{2}\right) \sqrt{D_m},
\end{equation*}%
since $D_m=2^{b_m+1}$. By taking $r_{m}=\max \left\{ \left( \left[ m\right]
+2\right) \left\Vert \psi _{0}\right\Vert _{\infty },1+\sqrt{2}\right\} $,
we thus get, for any $j\geq -1$ and $k\in \left\{ 1,\ldots,2^{j}\right\} $,%
\begin{equation*}
\left\Vert \psi _{j,k}^{\text{per}}\right\Vert _{\infty }\leq r_{m}\sqrt{%
A_{j}}\text{ \ \ and \ \ }\sum_{i=-1}^{b_m}\sqrt{A_{i}}\leq r_{m}\sqrt{D_m}\text{
.}
\end{equation*}%
It remains to prove that there exists $A_{c}>0$ such that, by denoting for $%
\mu \in \Lambda _{b_m}$ and $j\in \left\{ -1,0,1,\ldots,m\right\} $, 
\begin{equation*}
\Lambda _{j\left\vert \mu \right. }=\left\{ \lambda \in \Lambda \left(
j\right) \text{ };\supp\left( \psi _{\mu }\right) \bigcap \supp\left( \psi _{\lambda }\right) \neq \emptyset \right\} ,
\end{equation*}%
one has 
\begin{equation}
\max_{\mu \in \Lambda \left( i\right) }%
\card%
\left( \Lambda _{j\left\vert \mu \right. }\right) \leq A_{c}\left(
A_{j}A_{i}^{-1}\vee 1\right) .  
\label{slgoal}
\end{equation}%
Take $j_{0}=\max \left\{ \left[ \log _{2}\left( m\right) \right]
+1,0\right\} $. Then for all $j\geq j_{0}$ and $k\in \left\{
1,\ldots,2^{j-j_{0}}\right\}$, $\supp\left( \psi _{j,k}\right) \subset \left[
0,1\right) $. Furthermore, for every $k\in \left\{ 1,\ldots,2^{j-j_{0}}\right\} 
$ set $\Gamma \left( k\right) =\left\{ 2^{j-j_{0}}l+k;l\in \left\{
0,\ldots,2^{j_{0}}-1\right\} \right\} $. Then $\left\{ \Gamma \left( k\right)
;k\in \left\{ 1,\ldots,2^{j-j_{0}}\right\} \right\} $ form a partition of $%
\left\{ 1,\ldots,2^{j}\right\} $ and for $k,k^{\prime }\in \left\{
1,\ldots,2^{j-j_{0}}\right\} $, $k\neq k^{\prime }$,%
\begin{equation*}
\supp\left( \psi _{j,k}\right) \bigcap \supp\left(\psi
_{j,k^{\prime }}\right) =\emptyset .
\end{equation*}%
It is then easy to see that taking $A_{c}=2^{j_{0}}$ gives (\ref{slgoal}).
\end{proof}

\subsection{Proofs related to the slope heuristics\label%
{ssection_proof_slope}}

We first notice that, from \cite{saum:13}, Section 5, Theorems \ref%
{theorem_min_pen_reg_pp}, \ref{theorem_opt_pen_reg_pp} are valid under the
following general set of assumptions (i.e. by replacing (\textbf{SA}) by 
\textbf{(GSA) }in the statement\textbf{\ }of the theorems):

\smallskip

\noindent \textbf{General set of assumptions: (GSA)}

\smallskip

Assume (\textbf{P1}), (\textbf{P2}), (\textbf{P3}), (\textbf{Ab}), (\textbf{%
An}) and (\textbf{Ap}$_{u}$) of (\textbf{SA}). Furthermore suppose that,

\begin{description}
\item[(\textbf{Alb})] there exists a constant $r_{\mathcal{M}}$ such that
for each $m\in \mathcal{M}_{n}$ one can find an orthonormal basis $\left(
\varphi _{k}\right) _{k=1}^{D_{m}}$ satisfying, for all $\left( \beta
_{k}\right) _{k=1}^{D_{m}}\in \mathbb{R}^{D_{m}},$%
\begin{equation*}
\left\Vert \sum_{k=1}^{D_{m}}\beta _{k}\varphi _{k}\right\Vert _{\infty
}\leq r_{\mathcal{M}}\sqrt{D_{m}}\left\vert \beta \right\vert _{\infty }%
\text{ },  
\end{equation*}%
where $\left\vert \beta \right\vert _{\infty }=\max \left\{ \left\vert \beta
_{k}\right\vert ;k\in \left\{ 1,\ldots,D_{m}\right\} \right\} $.

\item[(Ac$_{\infty }$)] a positive integer $n_{1}$ exists such that, for all 
$n\geq n_{1}$, there exist a positive constant $A_{cons}$ and an event $%
\Omega _{\infty }$ of probability at least $1-n^{-2-\alpha _{\mathcal{M}}}$,
on which for all $m\in \mathcal{M}_{n}$,%
\begin{equation*}
\left\Vert \widehat{s}_{m}-s_{m}\right\Vert _{\infty }\leq A_{cons}\sqrt{\frac{%
D_{m}\ln n}{n}}\text{ }.  
\end{equation*}
\end{description}

\smallskip

Now the proofs of Theorems \ref{theorem_min_pen_reg_pp} and \ref%
{theorem_opt_pen_reg_pp} simply rely on the fact that assumptions (\textbf{%
Alb}) and (\textbf{Ac}$_{\infty }$) in (\textbf{GSA}) are ensured under (%
\textbf{SA}). Indeed, assumption (\textbf{Alb}) in (\textbf{GSA}) is
satisfied under assumption (\textbf{Auslb}) in the set of assumptions (%
\textbf{SA}), see Proposition \ref{prop_strong_loc_loc}. Furthermore,
Theorem \ref{theorem_general_sup} shows that assumption (\textbf{Ac}$%
_{\infty }$) in (\textbf{GSA}) is also satisfied under assumption (\textbf{%
Auslb}).

\subsection{Proofs related to V-fold procedures} \label{ssection_proof_VF}

\subsubsection{Proofs related to V-fold cross-validation} 

Theorem \ref{Theorem_VFCV} is a straightforward consequence of the following
result, that will be proved below. Recall that the set of assumptions (%
\textbf{GSA}) is defined in Section \ref{ssection_proof_slope} above.

\begin{thrm}
\label{theorem_VFCV_gene}Assume that (\textbf{GSA}) holds. Let $r\in \left(
2,+\infty \right) $ and $V\in \left\{ 2,\ldots,n-1\right\} $ satisfying $%
1<V\leq r$. Define the VFCV procedure as the model
selection procedure given by (\ref{def_VFCV}) and (\ref{def_crit_VFCV}). Then, for all $n\geq n_{0}\left( \left( \text{\textbf{GSA}}\right) ,r\right) 
$, with probability at least $1-L_{\left( \text{\textbf{GSA}}\right)
,r}n^{-2}$,%
\begin{equation*}
\ell \left( s_{\ast },\widehat{s}_{\widehat{m}}^{\left( -1\right) }\right)
\leq \left( 1+\frac{L_{\left( \text{\textbf{GSA}}\right) ,r}}{\sqrt{\ln n}}%
\right) \inf_{m\in \mathcal{M}_{n}}\left\{ \ell \left( s_{\ast },\widehat{s}%
_{m}^{\left( -1\right) }\right) \right\} +L_{\left( \text{\textbf{GSA}}%
\right) ,r}\frac{\left( \ln n\right) ^{3}}{n}.
\end{equation*}
\end{thrm}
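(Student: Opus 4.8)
The plan is to adapt the argument of Arlot \cite{Arl:2008a}, who established the optimality of $V$-fold cross-validation for histograms, by feeding into it the excess-risk controls available under \textbf{(GSA)} --- the sup-norm consistency \textbf{(Ac}$_{\infty}$\textbf{)} and the concentration of the true excess risk from \cite{saum:12,saum:13}. First I would rewrite the $V$-fold criterion. Writing, for $t\in m$, $\gamma(t)-\gamma(s_{\ast})=\bigl(\gamma(t)-\gamma(s_m)\bigr)+\bigl(\gamma(s_m)-\gamma(s_{\ast})\bigr)$ and using $P\bigl(\gamma(\widehat{s}_m^{(-j)})-\gamma(s_{\ast})\bigr)=\ell(s_{\ast},\widehat{s}_m^{(-j)})$, one gets
\[
\mathrm{crit}_{\mathrm{VFCV}}(m)=P_n\gamma(s_{\ast})+\frac{1}{V}\sum_{j=1}^{V}\ell\bigl(s_{\ast},\widehat{s}_m^{(-j)}\bigr)+\delta(m),\qquad \delta(m):=\frac{1}{V}\sum_{j=1}^{V}(P_n^{(j)}-P)\bigl(\gamma(\widehat{s}_m^{(-j)})-\gamma(s_{\ast})\bigr).
\]
Since $P_n\gamma(s_{\ast})$ does not depend on $m$, $\widehat{m}$ minimizes $R(m)+\delta(m)$ with $R(m):=\tfrac1V\sum_{j}\ell(s_{\ast},\widehat{s}_m^{(-j)})$. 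It then suffices to show (i) that $\delta(m)$ is negligible in front of $R(m)$ uniformly over $\mathcal{M}_n$, and (ii) that $R(m)$ equals $\ell(s_{\ast},\widehat{s}_m^{(-1)})$ up to a $(1+o(1))$ factor and an additive $(\ln n)^3/n$ term for every $m$; the oracle inequality then drops out of the comparison $\mathrm{crit}_{\mathrm{VFCV}}(\widehat{m})\le\mathrm{crit}_{\mathrm{VFCV}}(m')$ applied to an arbitrary competitor $m'$ and optimized over $m'$.

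The hard part will be step (i), the uniform control of the centered term $\delta(m)$. I would fix $m$ and $j$ and split $\gamma(\widehat{s}_m^{(-j)})-\gamma(s_{\ast})$ into the deterministic ``bias direction'' $\gamma(s_m)-\gamma(s_{\ast})$ and the ``fluctuation direction'' $\gamma(\widehat{s}_m^{(-j)})-\gamma(s_m)$. For the bias direction, $\gamma(s_m)-\gamma(s_{\ast})$ is a fixed bounded function (by \textbf{(Ab)}, using also $\|s_{\ast}\|_\infty\le A$) with $P$-variance $\lesssim\ell(s_{\ast},s_m)$, so Bernstein's inequality on the block $B_j$ gives $|(P_n^{(j)}-P)(\gamma(s_m)-\gamma(s_{\ast}))|\lesssim\sqrt{\ell(s_{\ast},s_m)\ln n/n}+\ln n/n$ with probability at least $1-n^{-\alpha}$. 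For the fluctuation direction I would exploit the decisive fact that $\widehat{s}_m^{(-j)}$ is $P_n^{(-j)}$-measurable, hence independent of $B_j$: conditionally on $P_n^{(-j)}$ it is a frozen element of $m$, and $\gamma(\widehat{s}_m^{(-j)})-\gamma(s_m)=-2(\widehat{s}_m^{(-j)}-s_m)\bigl(\cdot-s_m(\cdot)\bigr)+(\widehat{s}_m^{(-j)}-s_m)^2$ is then a fixed function whose $P$-variance is $\lesssim\|\widehat{s}_m^{(-j)}-s_m\|_2^2=\ell(s_m,\widehat{s}_m^{(-j)})$ and whose sup-norm is controlled by $\|\widehat{s}_m^{(-j)}-s_m\|_\infty$. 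The latter I would bound by $L\sqrt{D_m\ln n/n}$ using Theorem \ref{theorem_general_sup} --- equivalently \textbf{(Ac}$_{\infty}$\textbf{)} --- applied to the subsample $P_n^{(-j)}$, whose size $(1-V^{-1})n$ lies between $n/2$ and $n$ since $V\le r$, so that probabilities and constants change only by factors depending on $r$, followed by a union bound over $j\le V\le r$. Bernstein on $B_j$ then yields $|(P_n^{(j)}-P)(\gamma(\widehat{s}_m^{(-j)})-\gamma(s_m))|\lesssim\sqrt{\ell(s_m,\widehat{s}_m^{(-j)})\ln n/n}+\sqrt{D_m}(\ln n)^{3/2}/n^{3/2}$.

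Adding the two directions, summing over the finitely many $j\le V\le r$, applying $\sqrt{ab}\le\tfrac12\theta_n a+\tfrac1{2\theta_n}b$ with $\theta_n=L/\sqrt{\ln n}$ to each occurrence of $\sqrt{\ell(s_{\ast},s_m)\ln n/n}$ and $\sqrt{\ell(s_m,\widehat{s}_m^{(-j)})\ln n/n}$, recombining via $\ell(s_{\ast},s_m)+\ell(s_m,\widehat{s}_m^{(-j)})=\ell(s_{\ast},\widehat{s}_m^{(-j)})$ so that $\tfrac1V\sum_j$ produces $R(m)$, and using \textbf{(P2)} ($D_m\le A_{\mathcal{M},+}n(\ln n)^{-2}$) to absorb the lower-order remainders into $(\ln n)^3/n$, and finally a union bound over $m\in\mathcal{M}_n$ permitted by the polynomial complexity \textbf{(P1)} (choosing $\alpha$ large enough), I would obtain that on an event of probability at least $1-Ln^{-2}$,
\[
|\delta(m)|\le\theta_n\,R(m)+L\frac{(\ln n)^3}{n}\qquad\text{simultaneously for all }m\in\mathcal{M}_n.
\]
For step (ii), exchangeability of the folds makes $\mathbb{E}[\ell(s_m,\widehat{s}_m^{(-j)})]$ independent of $j$, so the excess-risk concentration valid under \textbf{(GSA)} (Theorem 2 of \cite{saum:12}, see also \cite{saum:13}), applied to each subsample $P_n^{(-j)}$ together with \textbf{(Ac}$_{\infty}$\textbf{)}, gives $|\ell(s_{\ast},\widehat{s}_m^{(-j)})-\ell(s_{\ast},\widehat{s}_m^{(-1)})|\le\theta_n\ell(s_{\ast},\widehat{s}_m^{(-1)})+L(\ln n)^3/n$ for every fixed $m$ and every $j$; a union bound over $m$ then yields, on a good event of probability at least $1-Ln^{-2}$, $R(m)=(1+O(\theta_n))\ell(s_{\ast},\widehat{s}_m^{(-1)})+O((\ln n)^3/n)$ for all $m$. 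On the intersection of the two good events, $R(\widehat{m})+\delta(\widehat{m})\le R(m')+\delta(m')$ for any $m'\in\mathcal{M}_n$; inserting the bounds on $\delta$ and $R$, rearranging (using $\theta_n<1$ for $n$ large) and translating between $R$ and $\ell(s_{\ast},\widehat{s}_{\cdot}^{(-1)})$ gives $\ell(s_{\ast},\widehat{s}_{\widehat{m}}^{(-1)})\le\bigl(1+L/\sqrt{\ln n}\bigr)\ell(s_{\ast},\widehat{s}_{m'}^{(-1)})+L(\ln n)^3/n$, and taking the infimum over $m'$ yields the claim. I expect the genuinely delicate points to be the bookkeeping of the various $\theta_n$ and $(\ln n)^3/n$ terms and, above all, the uniform control of $\delta(m)$ in the second and third paragraphs, which works only because the independence between $\widehat{s}_m^{(-j)}$ and $B_j$ lets one replace a chaining argument over $m$ by a Bernstein bound at a frozen point.
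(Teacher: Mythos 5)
Your proposal follows essentially the same route as the paper: the centering by $P_n\gamma(s_\ast)$, the splitting of the centered criterion into the bias-direction fluctuation $\bar\delta(m)$ (Bernstein on each block), the hold-out fluctuation $(P_n^{(j)}-P)(\gamma(\widehat{s}_m^{(-j)})-\gamma(s_m))$ controlled by conditional Bernstein with variance $\lesssim\ell(s_m,\widehat{s}_m^{(-j)})$ and sup-norm from Theorem \ref{theorem_general_sup} (this is exactly Lemma \ref{lemma_hold_out}, i.e.\ Lemma 9 of \cite{saum:13}), the identification of the fold averages with $\ell(s_\ast,\widehat{s}_m^{(-1)})$ via excess-risk concentration (Corollary \ref{corollary_V_fold}), and the standard comparison argument under \textbf{(P1)}. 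The only detail you leave implicit is the dichotomy between models with $D_m$ above and below $A_{\mathcal{M},+}(\ln n)^3$, which the paper treats separately because the excess-risk concentration is unavailable for very small models and must be replaced there by the crude bound $\ell(s_m,\widehat{s}_m^{(-j)})\lesssim(\ln n)^3/n$; your additive $(\ln n)^3/n$ remainder absorbs this, so it is a bookkeeping point rather than a gap.
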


\noindent \textbf{Proof of Theorem \ref{theorem_VFCV_gene}. }All along the%
\textbf{\ }proof, the value of the constant $L_{\text{(\textbf{GSA})},r}$ may vary from line to line.
We set 
\begin{equation*}
\crit%
_{%
\VFCV%
}^{0}\left( m\right) =%
\crit%
_{%
\VFCV%
}\left( m\right) -\frac{1}{V}\sum_{j=1}^{V}P_{n}^{(j)}\left( \gamma \left(
s_{\ast }\right) \right) .
\end{equation*}%
It is worth noting that the difference between $%
\crit%
_{%
\VFCV%
}^{0}\left( m\right) $ and $%
\crit%
_{%
\VFCV%
}\left( m\right) $ is a quantity independent of $m$, when $m$ varies in $%
\mathcal{M}_{n}$. Hence, the procedure defined by $%
\crit%
_{%
\VFCV%
}^{0}$ gives the same result as the VFCV procedure
defined by $%
\crit%
_{%
\VFCV%
}$. It will be convenient for our analysis to consider $%
\crit%
_{%
\VFCV%
}^{0}$ instead of $%
\crit%
_{%
\VFCV%
}$.

We get for all $m\in \mathcal{M}_{n}$,%
\begin{eqnarray}
\crit%
_{%
\VFCV%
}^{0}\left( m\right) &=&\frac{1}{V}\sum_{j=1}^{V}P_{n}^{\left( j\right)
}\left( \gamma \left( \widehat{s}_{m}^{\left( -j\right) }\right) -\gamma
\left( s_{\ast }\right) \right)  \notag \\
&=&\frac{1}{V}\sum_{j=1}^{V}\left[ P_{n}^{\left( j\right) }\left( \gamma
\left( \widehat{s}_{m}^{\left( -j\right) }\right) -\gamma \left(
s_{m}\right) \right) \right.  \notag \\
&&+\left. \left( P_{n}^{\left( j\right) }-P\right) \left( \gamma \left(
s_{m}\right) -\gamma \left( s_{\ast }\right) \right) +P\left( \gamma \left(
s_{m}\right) -\gamma \left( s_{\ast }\right) \right) \right]  \notag \\
&=&\ell \left( s_{\ast}, \widehat{s}_{m}^{\left( -1\right) }\right) +\Delta
_{V}\left( m\right) +\bar{\delta}\left( m\right)  \label{crit_0_VF}
\end{eqnarray}%
where%
\begin{equation*}
\Delta _{V}\left( m\right) =\frac{1}{V}\sum_{j=1}^{V}P_{n}^{\left( j\right)
}\left( \gamma \left( \widehat{s}_{m}^{\left( -j\right) }\right) -\gamma
\left( s_{m}\right) \right) -P\left( \gamma \left( \widehat{s}_{m}^{\left(
-1\right) }\right) -\gamma \left( s_{m}\right) \right),
\end{equation*}
and $\bar{\delta}\left( m\right)$ has been defined in Lemma \ref{lemma_delta_VFCV}. Furthermore denote 
\begin{equation*}
\p%
_{1}^{\left( -1\right) }\left( m\right) =P\left( \gamma \left( \widehat{s}%
_{m}^{\left( -1\right) }\right) -\gamma \left( s_{m}\right) \right) \text{ \
\ and \ \ }%
\p%
_{2}^{\left( -1\right) }\left( m\right) =P_{n}^{\left( -1\right) }\left(
\gamma \left( s_{m}\right) -\gamma \left( \widehat{s}_{m}^{\left( -1\right)
}\right) \right) .
\end{equation*}%
Let $\Omega _{n}$ be the event on which:

\begin{itemize}
\item For all models $m\in \mathcal{M}_{n}$ of dimension $D_{m}$ such that $%
A_{\mathcal{M},+}\left( \ln n\right) ^{3}\leq D_{m}$, it holds 
\begin{align}
\left\vert 
\p%
_{1}^{\left( -1\right) }\left( m\right) -\mathbb{E}\left[ 
\p%
_{2}^{\left( -1\right) }\left( m\right) \right] \right\vert & \leq L_{\text{(%
\textbf{GSA})},r}\varepsilon _{n}\left( m\right) \mathbb{E}\left[ 
\p%
_{2}^{\left( -1\right) }\left( m\right) \right]  \label{line_p1_VF} \\
\left\vert 
\p%
_{2}^{\left( -1\right) }\left( m\right) -\mathbb{E}\left[ 
\p%
_{2}^{\left( -1\right) }\left( m\right) \right] \right\vert & \leq L_{\text{(%
\textbf{GSA})},r}\varepsilon _{n}^{2}\left( m\right) \mathbb{E}\left[ 
\p%
_{2}^{\left( -1\right) }\left( m\right) \right] \nonumber 
\end{align}%
together with%
\begin{eqnarray}
\left\vert \Delta _{V}\left( m\right) \right\vert &\leq &L_{\text{(\textbf{%
GSA}),}r}\varepsilon _{n}\left( m\right) \mathbb{E}\left[ 
\p%
_{2}^{(-1)}\left( m\right) \right]  \label{line_Delta_VF} \\
\left\vert \bar{\delta}\left( m\right) \right\vert &\leq &\frac{\ell \left(
s_{\ast },s_{m}\right) }{\sqrt{D_{m}}}+L_{\text{(\textbf{GSA})},r}\frac{\ln n%
}{\sqrt{D_{m}}}\mathbb{E}\left[ 
\p%
_{2}^{\left( -1\right) }\left( m\right) \right]  \label{line_delta_bar_VF}
\end{eqnarray}

\item For all models $m\in \mathcal{M}_{n}$ of dimension $D_{m}$ such that $%
D_{m}\leq A_{\mathcal{M},+}\left( \ln n\right) ^{3}$, it holds%
\begin{eqnarray}
\left\vert \Delta _{V}\left( m\right) \right\vert &\leq &L_{\text{(\textbf{%
GSA})},r}\frac{\left( \ln n\right) ^{2}}{n}  \label{Delta_VF_small} \\
\left\vert \bar{\delta}\left( m\right) \right\vert &\leq &L_{(\mathbf{GSA}%
),r}\left( \sqrt{\frac{\ell \left( s_{\ast },s_{m}\right) \ln n}{n}}+\frac{%
\ln n}{n}\right) \nonumber \\
\p%
_{2}^{\left( -1\right) }\left( m\right) &\leq &L_{\text{(\textbf{GSA})},r}%
\frac{D_{m}\vee \ln n}{n}\leq L_{\text{(\textbf{GSA})},r}\frac{\left( \ln
n\right) ^{3}}{n}  \nonumber \\
\p%
_{1}^{\left( -1\right) }\left( m\right) &\leq &L_{\text{(\textbf{GSA})},r}%
\frac{D_{m}\vee \ln n}{n}\leq L_{\text{(\textbf{GSA})},r}\frac{\left( \ln
n\right) ^{3}}{n}  \label{p1_VF_small}
\end{eqnarray}
\end{itemize}

\noindent By Theorem 2 of \cite{saum:12} and Lemma 4 of \cite{saum:13}
applied with $\alpha =2+\alpha _{\mathcal{M}}$ and sample size $%
n_{V}=n\left( V-1\right) /V$, Corollary \ref{corollary_V_fold} and Lemma %
\ref{lemma_delta_VFCV} applied with $\alpha =2+\alpha _{\mathcal{M}}$, we
get for all $n\geq n_{0}\left( \text{(\textbf{GSA})},r\right) $, 
\begin{equation*}
\mathbb{P}\left( \Omega _{n}\right) \geq 1-L_{\text{(\textbf{GSA})}%
,r}\sum_{m\in \mathcal{M}_{n}}n^{-2-\alpha _{\mathcal{M}}}\geq 1-L_{\text{(%
\textbf{GSA})},r}n^{-2}\text{ }.
\end{equation*}

\smallskip

\noindent \noindent \underline{\textbf{Control on the criterion }$%
\crit_{\VFCV}^0$\textbf{\ for models of dimension not too small: }}

\smallskip

\noindent We consider models $m\in \mathcal{M}_{n}$ such that $A_{\mathcal{M}%
,+}\left( \ln n\right) ^{3}\leq D_{m}$.

\begin{eqnarray*}
\crit%
_{%
\VFCV%
}^{0}\left( m\right) &=&\frac{1}{V}\sum_{j=1}^{V}P_{n}^{\left( j\right)
}\left( \gamma \left( \widehat{s}_{m}^{\left( -j\right) }\right) -\gamma
\left( s_{\ast }\right) \right) \\
&=&\frac{1}{V}\sum_{j=1}^{V}\left[ P_{n}^{\left( j\right) }\left( \gamma
\left( \widehat{s}_{m}^{\left( -j\right) }\right) -\gamma \left(
s_{m}\right) \right) \right. \\
&&+\left. \left( P_{n}^{\left( j\right) }-P\right) \left( \gamma \left(
s_{m}\right) -\gamma \left( s_{\ast }\right) \right) +P\left( \gamma \left(
s_{m}\right) -\gamma \left( s_{\ast }\right) \right) \right] \\
&=&\ell \left(s_{\ast },\widehat{s}_{m}^{\left( -1\right) }\right) +\Delta
_{V}\left( m\right) +\bar{\delta}\left( m\right)
\end{eqnarray*}%
By (\ref{line_p1_VF}), (\ref{line_Delta_VF}) and (\ref{line_delta_bar_VF})
we have on $\Omega _{n}$, 
\begin{eqnarray*}
\max \left\{ \left\vert \Delta _{V}\left( m\right) \right\vert ,\left\vert 
\bar{\delta}\left( m\right) \right\vert \right\} &\leq &L_{\text{(\textbf{GSA%
}),}r}\varepsilon _{n}\left( m\right) \left( \ell \left( s_{\ast
},s_{m}\right) +\mathbb{E}\left[ 
\p%
_{2}^{(-1)}\left( m\right) \right] \right) \\
&\leq &L_{\text{(\textbf{GSA}),}r}\varepsilon _{n}\left( m\right) \ell \left(s_{\ast },\widehat{s}_{m}^{\left( -1\right) }\right) .
\end{eqnarray*}%
Hence, identity (\ref{crit_0_VF}) gives%
\begin{equation}
\left\vert 
\crit%
_{%
\VFCV%
}^{0}\left( m\right) -\ell \left(s_{\ast },\widehat{s}_{m}^{\left( -1\right) }\right) \right\vert \leq L_{\text{(\textbf{GSA}),}r}\varepsilon
_{n}\left( m\right) \ell \left(s_{\ast },\widehat{s}_{m}^{\left( -1\right) }\right) .  \label{control_crit_VF_large}
\end{equation}

\smallskip

\noindent \underline{\textbf{Control on the criterion }$\crit_{\VFCV}^0$\textbf{\ for models of small dimension: }}

\smallskip

\noindent We consider models $m\in \mathcal{M}_{n}$ such that $D_{m}\leq A_{%
\mathcal{M},+}\left( \ln n\right) ^{3}$. By (\ref{Delta_VF_small}), (\ref%
{delta_bar_small}) and (\ref{p1_VF_small}), it holds on $\Omega _{n}$, for
any $\tau >0$ and for all $m\in \mathcal{M}_{n}$ such that $D_{m}\leq A_{%
\mathcal{M},+}\left( \ln n\right) ^{3}$,

\begin{align}
& \left\vert 
\crit%
_{%
\VFCV%
}^{0}\left( m\right) -\ell \left( s_{\ast },\widehat{s}_{m}^{\left( -1\right) }\right) \right\vert  \notag \\
& \leq L_{\text{(\textbf{GSA})},r}\frac{\left( \ln n\right) ^{2}}{n}+L_{%
\text{(\textbf{GSA})},r}\left( \sqrt{\frac{\ell \left( s_{\ast
},s_{m}\right) \ln n}{n}}+\frac{\ln n}{n}\right)  \notag \\
& \leq L_{\text{(\textbf{GSA})},r}\frac{\left( \ln n\right) ^{2}}{n}+L_{%
\text{(\textbf{GSA})},r}\tau \ell \left( s_{\ast },s_{m}\right) +\left( \tau
^{-1}+1\right) L_{\text{(\textbf{GSA})}}\frac{\ln n}{n}.  \notag
\end{align}%
Hence, by taking $\tau =\left( \ln n\right) ^{-2}$ in the last display we
get,%
\begin{equation}
\left\vert 
\crit%
_{%
\VFCV%
}^{0}\left( m\right) -\ell \left( s_{\ast },\widehat{s}_{m}^{\left( -1\right) }\right) \right\vert \leq L_{\text{(\textbf{GSA}),}r}\left( \frac{%
\ell \left( s_{\ast },\widehat{s}_{m}^{\left( -1\right) }\right) }{\left(
\ln n\right) ^{2}}+\frac{\left( \ln n\right) ^{3}}{n}\right) .
\label{majo_small_2}
\end{equation}

\smallskip

\noindent \underline{\textbf{Oracle inequalities: }}

\smallskip

\noindent We exploit the following inequality, that defines the selected
model $\widehat{m}$,%
\begin{equation}
\crit%
_{%
\VFCV%
}^{0}\left( \widehat{m}\right) \leq \inf_{m\in \mathcal{M}_{n}}\left\{ 
\crit%
_{%
\VFCV%
}^{0}\left( m\right) \right\} .  \label{def_m_hat_crit_0_VF}
\end{equation}%
Indeed, using (\ref{control_crit_VF_large}) and (\ref{majo_small_2}), we get
that on $\Omega _{n}$ it holds,%
\begin{eqnarray}
&&%
\crit%
_{%
\VFCV%
}^{0}\left( \widehat{m}\right)  \notag \\
&\geq &\left( 1-L_{\text{(\textbf{GSA}),}r}\left[ \frac{1}{\left( \ln
n\right) ^{2}}-\sup_{m:D_{m}\geq A_{\mathcal{M},+}\left( \ln n\right)
^{3}}\varepsilon _{n}\left( m\right) \right] \right) \ell \left( s_{\ast },%
\widehat{s}_{\widehat{m}}^{\left( -1\right) }\right) -L_{\text{(\textbf{GSA}%
),}r}\frac{\left( \ln n\right) ^{3}}{n}  \notag \\
&\geq &\left( 1-\frac{L_{\text{(\textbf{GSA}),}r}}{\sqrt{\ln n}}\right) \ell
\left( s_{\ast },\widehat{s}_{\widehat{m}}^{\left( -1\right) }\right) -L_{%
\text{(\textbf{GSA}),}r}\frac{\left( \ln n\right) ^{3}}{n}.
\label{lower_crit_VF}
\end{eqnarray}%
Furthermore, using again (\ref{control_crit_VF_large}) and (\ref%
{majo_small_2}), we get%
\begin{eqnarray}
&&\inf_{m\in \mathcal{M}_{n}}\left\{ 
\crit%
_{%
\VFCV%
}^{0}\left( m\right) \right\}  \notag \\
&\leq &\left( 1+\frac{L_{\text{(\textbf{GSA}),}r}}{\sqrt{\ln n}}\right)
\inf_{m\in \mathcal{M}_{n}}\left\{ \ell \left( s_{\ast },\widehat{s}%
_{m}^{\left( -1\right) }\right) \right\} +L_{\text{(\textbf{GSA}),}r}\frac{%
\left( \ln n\right) ^{3}}{n}.  \label{majo_crit_VF}
\end{eqnarray}%
Putting (\ref{lower_crit_VF}) and (\ref{majo_crit_VF}) in (\ref%
{def_m_hat_crit_0_VF}), we get that for all $n\geq n_{0}\left( \text{(%
\textbf{GSA}),}r\right) $,%
\begin{eqnarray*}
\ell \left( s_{\ast },\widehat{s}_{\widehat{m}}^{\left( -1\right) }\right)
&\leq &\left( 1-\frac{L_{\text{(\textbf{GSA}),}r}}{\sqrt{\ln n}}\right) ^{-1}%
\left[ \left( 1+\frac{L_{\text{(\textbf{GSA}),}r}}{\sqrt{\ln n}}\right)
\inf_{m\in \mathcal{M}_{n}}\left\{ \ell \left( s_{\ast },\widehat{s}%
_{m}^{\left( -1\right) }\right) \right\} +L_{\text{(\textbf{GSA}),}r}\frac{%
\left( \ln n\right) ^{3}}{n}\right] \\
&\leq &\left( 1+\frac{L_{\text{(\textbf{GSA}),}r}}{\sqrt{\ln n}}\right)
\inf_{m\in \mathcal{M}_{n}}\left\{ \ell \left( s_{\ast },\widehat{s}%
_{m}^{\left( -1\right) }\right) \right\} +L_{\text{(\textbf{GSA}),}r}\frac{%
\left( \ln n\right) ^{3}}{n}.
\end{eqnarray*}%
This concludes the proof of Theorem \ref{theorem_VFCV_gene}.

\subsubsection{Proofs related to V-fold penalization} \label{sssection_proof_VFpen}

Recall that the set of assumptions (\textbf{GSA}) is defined in Section \ref%
{ssection_proof_slope} above. The proof of Theorem \ref{Theorem_VFpen} will be based
on the following theorem, proved in \cite{saum:13} - see Theorem 2 and its
proof under (\textbf{GSA})\ therein.

\begin{thrm}
\label{theorem_opt_pen_reg_pp copy(1)}Suppose that the assumptions \textit{(%
\textbf{GSA})} of Section \ref{section_main_assumptions} hold, and
furthermore suppose that for some $\delta \in \left[ 0,1\right) $ and $%
A_{p},A_{r}>0$, there exists an event of probability at least $1-A_{p}n^{-2}$
on which, for every model $m\in \mathcal{M}_{n}$ such that $D_{m}\geq A_{%
\mathcal{M},+}\left( \ln n\right) ^{3}$, it holds%
\begin{equation*}
\left\vert 
\pen%
\left( m\right) -2\mathbb{E}\left[ P_{n}\left( \gamma \left( s_{m}\right)
-\gamma \left( \widehat{s}_{m}\right) \right) \right] \right\vert \leq
\delta \left( \ell \left( s_{\ast },s_{m}\right) +\mathbb{E}\left[
P_{n}\left( \gamma \left( s_{m}\right) -\gamma \left( \widehat{s}_{m}\right)
\right) \right] \right)
\end{equation*}%
together with%
\begin{equation*}
\left\vert 
\pen%
\left( m\right) \right\vert \leq A_{r}\left( \frac{\ell \left( s_{\ast
},s_{m}\right) }{\left( \ln n\right) ^{2}}+\frac{\left( \ln n\right) ^{3}}{n}%
\right) .
\end{equation*}%
Then there exist an integer $n_{0}$ only depending on $\delta $ and $\beta
_{+}$ and on constants in \textit{(\textbf{GSA})}, a positive constant $%
L_{3} $ only depending on $c_{\mathcal{M}}$ given in \textit{(\textbf{GSA})}
and on $A_{p}$, two positive constants $L_{4}$ and $L_{5}$ only depending on
constants in \textit{(\textbf{GSA})} and on $A_{r}$ and a sequence 
\begin{equation*}
\theta _{n}\leq \frac{L_{4}}{\left( \ln n\right) ^{1/4}}
\end{equation*}%
such that it holds for all $n\geq n_{0}$, with probability at least $%
1-L_{3}n^{-2}$, 
\begin{equation*}
\ell \left( s_{\ast },\widehat{s}_{\widehat{m}}\right) \leq \left( \frac{%
1+\delta }{1-\delta }+\frac{5\theta _{n}}{\left( 1-\delta \right) ^{2}}%
\right) \inf_{m\in \mathcal{M}_{n}}\left\{ \ell \left( s_{\ast },\widehat{s}%
_{m}\right) \right\} +L_{5}\frac{\left( \ln n\right) ^{3}}{n}.
\end{equation*}
\end{thrm}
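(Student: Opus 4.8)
The statement is Theorem~2 of \cite{saum:13}, already established there under exactly the assumptions (\textbf{GSA}); the plan is to re-derive it by combining the sharp excess-risk concentration of Theorem~\ref{theorem_excess_risk_strong_loc} --- valid under (\textbf{GSA}) thanks to Proposition~\ref{prop_strong_loc_loc} and Theorem~\ref{theorem_general_sup}, i.e. because (\textbf{Alb}) and (\textbf{Ac}$_{\infty}$) hold --- with a single Bernstein deviation bound, and then with the minimality defining $\widehat m\in\arg\min_{m\in\mathcal{M}_{n}}\{P_{n}(\gamma(\widehat{s}_{m}))+\pen(m)\}$. First I would fix the decomposition of the penalized criterion $\mathrm{crit}(m):=P_{n}(\gamma(\widehat{s}_{m}))+\pen(m)$: since $\ell_{\emp}(\widehat{s}_{m},s_{m})=P_{n}(\gamma(s_{m})-\gamma(\widehat{s}_{m}))$ and $P_{n}\gamma(s_{m})-P_{n}\gamma(s_{\ast})=(P_{n}-P)(\gamma(s_{m})-\gamma(s_{\ast}))+\ell(s_{\ast},s_{m})$, one has for every $m\in\mathcal{M}_{n}$
\begin{equation*}
\bigl[\mathrm{crit}(m)-P_{n}\gamma(s_{\ast})\bigr]-\ell(s_{\ast},\widehat{s}_{m})=\pen(m)-\ell_{\emp}(\widehat{s}_{m},s_{m})-\ell(s_{m},\widehat{s}_{m})+(P_{n}-P)\bigl(\gamma(s_{m})-\gamma(s_{\ast})\bigr),
\end{equation*}
and, $P_{n}\gamma(s_{\ast})$ being independent of $m$, minimizing $\mathrm{crit}$ over $\mathcal{M}_{n}$ amounts to minimizing the left-hand side.

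For the models with $D_{m}\geq A_{\mathcal{M},+}(\ln n)^{3}$ I would work on the intersection of the concentration events of Theorem~\ref{theorem_excess_risk_strong_loc} (applicable since then $A_{-}(\ln n)^{2}\leq D_{m}\leq A_{\mathcal{M},+}n/(\ln n)^{2}$ by (\textbf{P2})): there $\ell_{\emp}(\widehat{s}_{m},s_{m})$ and $\ell(s_{m},\widehat{s}_{m})$ both equal $(1+O(\varepsilon_{n}(m)))\,\mathcal{C}_{m}/n$, and, integrating that concentration together with the deterministic bound $\ell_{\emp}(\widehat{s}_{m},s_{m})\leq 4A^{2}$ off the event, $\mathbb{E}[\ell_{\emp}(\widehat{s}_{m},s_{m})]=(1+o(1))\,\mathcal{C}_{m}/n$, so that $\ell_{\emp}(\widehat{s}_{m},s_{m})+\ell(s_{m},\widehat{s}_{m})=(2+O(\varepsilon_{n}(m)))\,\mathbb{E}[\ell_{\emp}(\widehat{s}_{m},s_{m})]$; with the hypothesis $|\pen(m)-2\mathbb{E}[\ell_{\emp}(\widehat{s}_{m},s_{m})]|\leq\delta(\ell(s_{\ast},s_{m})+\mathbb{E}[\ell_{\emp}(\widehat{s}_{m},s_{m})])$ this bounds the first three terms on the right of the display by $\delta\,\ell(s_{\ast},s_{m})+(\delta+O(\varepsilon_{n}(m)))\,\mathbb{E}[\ell_{\emp}(\widehat{s}_{m},s_{m})]$. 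The empirical average is handled by Bernstein's inequality, using $\var(\gamma(s_{m})-\gamma(s_{\ast}))\lesssim\ell(s_{\ast},s_{m})$ and the sup-norm bound granted by (\textbf{Ab}), giving $|(P_{n}-P)(\gamma(s_{m})-\gamma(s_{\ast}))|\lesssim\sqrt{\ell(s_{\ast},s_{m})\ln n/n}+\ln n/n\leq\tau\,\ell(s_{\ast},s_{m})+\tau^{-1}\ln n/n$ with $\tau=(\ln n)^{-1}$, at the cost of failure probability $\leq n^{-2-\alpha_{\mathcal{M}}}$ per model. Since $\mathbb{E}[\ell_{\emp}(\widehat{s}_{m},s_{m})]\asymp\mathcal{C}_{m}/n\asymp\ell(s_{m},\widehat{s}_{m})\leq\ell(s_{\ast},\widehat{s}_{m})$ and $\ell(s_{\ast},s_{m})\leq\ell(s_{\ast},\widehat{s}_{m})$, these pieces assemble into $|[\mathrm{crit}(m)-P_{n}\gamma(s_{\ast})]-\ell(s_{\ast},\widehat{s}_{m})|\leq(\delta+\theta_{n})\,\ell(s_{\ast},\widehat{s}_{m})+L(\ln n)^{3}/n$, where $\theta_{n}\leq L(\ln n)^{-1/4}$ bounds $\varepsilon_{n}(m)$ uniformly over the models with $A_{\mathcal{M},+}(\ln n)^{3}\leq D_{m}\leq A_{\mathcal{M},+}n/(\ln n)^{2}$. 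For the models with $D_{m}<A_{\mathcal{M},+}(\ln n)^{3}$ I would instead bound $\ell(s_{m},\widehat{s}_{m})$ and $\ell_{\emp}(\widehat{s}_{m},s_{m})$ by $L(\ln n)^{3}/n$ (invoking Theorem~\ref{theorem_excess_risk_strong_loc} when $D_{m}\geq A_{-}(\ln n)^{2}$, and (\textbf{Ac}$_{\infty}$) with $\|\cdot\|_{2}\leq\|\cdot\|_{\infty}$ below that scale), use the crude hypothesis $|\pen(m)|\leq A_{r}(\ell(s_{\ast},s_{m})/(\ln n)^{2}+(\ln n)^{3}/n)$ and the same Bernstein step with $\tau=(\ln n)^{-2}$, reaching $|[\mathrm{crit}(m)-P_{n}\gamma(s_{\ast})]-\ell(s_{\ast},\widehat{s}_{m})|\leq L(\ell(s_{\ast},\widehat{s}_{m})/(\ln n)^{2}+(\ln n)^{3}/n)$.

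Finally I would conclude on the event $\Omega_{n}$ obtained by intersecting all of the above, which has probability $\geq 1-L\,n^{-2}$ by the union bound allowed by the polynomial complexity (\textbf{P1}). Writing $m_{\ast}$ for an oracle model, the defining inequality $\mathrm{crit}(\widehat m)\leq\mathrm{crit}(m_{\ast})$, after subtracting $P_{n}\gamma(s_{\ast})$, lower-bounding the left-hand side for $\widehat m$ and upper-bounding the right-hand side for $m_{\ast}$, yields
\begin{equation*}
(1-\delta-\theta_{n})\,\ell(s_{\ast},\widehat{s}_{\widehat m})-L\tfrac{(\ln n)^{3}}{n}\leq(1+\delta+\theta_{n})\inf_{m\in\mathcal{M}_{n}}\ell(s_{\ast},\widehat{s}_{m})+L\tfrac{(\ln n)^{3}}{n},
\end{equation*}
and rearranging, with the elementary estimate $\tfrac{1+\delta+\theta_{n}}{1-\delta-\theta_{n}}\leq\tfrac{1+\delta}{1-\delta}+\tfrac{5\theta_{n}}{(1-\delta)^{2}}$ valid once $n$ is large, gives the announced oracle inequality, with $L_{5}$ depending only on $A_{r}$ and the constants of (\textbf{GSA}) and $L_{3}$ on $c_{\mathcal{M}}$ and $A_{p}$. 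I expect the only genuinely delicate point to be the reliance on the \emph{exact} concentration point $\mathcal{C}_{m}/n$ --- for both the true and the empirical excess risk --- furnished by Theorem~\ref{theorem_excess_risk_strong_loc}: a mere concentration around an unspecified point (of the Chatterjee type) would not let $\ell_{\emp}(\widehat{s}_{m},s_{m})+\ell(s_{m},\widehat{s}_{m})$ cancel against $2\mathbb{E}[\ell_{\emp}(\widehat{s}_{m},s_{m})]$, which is precisely what produces the sharp leading constant $(1+\delta)/(1-\delta)$; a secondary, bookkeeping difficulty is to keep the $\varepsilon_{n}(m)$-errors and the Bernstein deviations uniform over $\mathcal{M}_{n}$, for which (\textbf{P1}) is used.
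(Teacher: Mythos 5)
Your proposal is correct and follows essentially the same route as the paper, which gives no self-contained proof of this statement but simply defers to Theorem 2 of \cite{saum:13} and its proof under (\textbf{GSA}); your reconstruction (criterion decomposition, concentration of both excess risks around the common point $\mathcal{C}_{m}/n$, Bernstein for the linear term $(P_{n}-P)(\gamma(s_{m})-\gamma(s_{\ast}))$, crude bounds for small models, union bound via (\textbf{P1}), and the minimality of $\widehat{m}$) is exactly that argument, including the correctly identified key point that the \emph{exact} concentration point is what yields the leading constant $(1+\delta)/(1-\delta)$. One small labeling remark: under (\textbf{GSA}) the concentration input is Theorem 2 of \cite{saum:12} applied directly with (\textbf{Alb}) and (\textbf{Ac}$_{\infty}$), rather than Theorem \ref{theorem_excess_risk_strong_loc} itself (which is stated under (\textbf{Aslb})), but the conclusion you invoke is the same.
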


We now prove Theorem \ref{Theorem_VFpen}.

\noindent \textbf{Proof of Theorem \ref{Theorem_VFpen}. }We set 
\begin{equation*}
\pen%
_{0}\left( m\right) =%
\pen%
_{\mathrm{VF}}\left( m\right) -\frac{V-1}{V}\sum_{j=1}^{V}\left( P_{n}\left(
\gamma \left( s_{\ast }\right) \right) -P_{n}^{(-j)}\left( \gamma \left(
s_{\ast }\right) \right) \right) \text{ }.
\end{equation*}%
It is worth noting that the penalization procedure defined by $%
\pen%
_{0}$ gives the same result as the procedure defined by $%
\pen%
_{\mathrm{VF}}$. It will be convenient for our analysis to consider $%
\pen%
_{0}$ instead of $%
\pen%
_{\mathrm{VF}}$. Our strategy is to derive Theorem \ref{Theorem_VFpen}\ as a
corollary of Theorem \ref{theorem_opt_pen_reg_pp copy(1)}\ applied with $%
\pen%
\equiv 
\pen%
_{0}$.

As $P_{n}=(1-V^{-1})P_{n}^{\left( -j\right) }+V^{-1}P_{n}^{\left( j\right) }$%
, we get for all $m\in \mathcal{M}_{n}$,%
\begin{eqnarray*}
\pen%
_{0}\left( m\right) &=&\frac{V-1}{V}\sum_{j=1}^{V}\left( P_{n}\left( \gamma
\left( \widehat{s}_{m}^{(-j)}\right) -\gamma \left( s_{\ast }\right) \right)
-P_{n}^{\left( -j\right) }\left( \gamma \left( \widehat{s}_{m}^{(-j)}\right)
-\gamma \left( s_{\ast }\right) \right) \right) \\
&=&\frac{V-1}{V^{2}}\sum_{j=1}^{V}\left( P_{n}^{\left( j\right) }\left(
\gamma \left( \widehat{s}_{m}^{(-j)}\right) -\gamma \left( s_{\ast }\right)
\right) -P_{n}^{\left( -j\right) }\left( \gamma \left( \widehat{s}%
_{m}^{(-j)}\right) -\gamma \left( s_{\ast }\right) \right) \right) \\
&=&\frac{V-1}{V^{2}}\sum_{j=1}^{V}\left( P_{n}^{\left( j\right) }\left(
\gamma \left( \widehat{s}_{m}^{(-j)}\right) -\gamma \left( s_{m}\right)
\right) -P_{n}^{\left( -j\right) }\left( \gamma \left( \widehat{s}%
_{m}^{(-j)}\right) -\gamma \left( s_{m}\right) \right) \right) \\
&&+\frac{V-1}{V^{2}}\sum_{j=1}^{V}\left( \left( P_{n}^{\left( j\right)
}-P\right) \left( \gamma \left( s_{m}\right) -\gamma \left(
s_{\ast }\right) \right) -\left( P_{n}^{\left( -j\right) }-P\right) \left(
\gamma \left( s_{m}\right) -\gamma \left( s_{\ast }\right) \right)
\right) \\
&=&\frac{V-1}{V}\left( \text{\={p}}_{1}\left( m\right) +\text{\={p}}%
_{2}\left( m\right) +\bar{\delta}\left( m\right) -\bar{\delta}^{^{\prime
}}\left( m\right) \right)
\end{eqnarray*}%
where%
\begin{equation*}
\text{\={p}}_{1}\left( m\right) =\frac{1}{V}\sum_{j=1}^{V}P_{n}^{\left(
j\right) }\left( \gamma \left( \widehat{s}_{m}^{(-j)}\right) -\gamma \left(
s_{m}\right) \right) \text{ , \={p}}_{2}\left( m\right) =\frac{1}{V}%
\sum_{j=1}^{V}P_{n}^{\left( -j\right) }\left( \gamma \left( s_{m}\right)
-\gamma \left( \widehat{s}_{m}^{(-j)}\right) \right) ,
\end{equation*}%
and $\bar{\delta}\left( m\right)$ and $\bar{\delta}'\left( m\right)$ have been defined in Lemma \ref{lemma_delta_VFCV}.
We also set%
\begin{equation*}
\text{p}_{1}\left( m\right) =P\left( \gamma \left( \widehat{s}_{m}\right)
-\gamma \left( s_{m}\right) \right) \text{ \ \ and \ \ p}_{2}\left( m\right)
=P_{n}\left( \gamma \left( s_{m}\right) -\gamma \left( \widehat{s}%
_{m}\right) \right) .
\end{equation*}%
Let $\Omega _{n}$ be the event on which:

\begin{itemize}
\item For all models $m\in \mathcal{M}_{n}$ of dimension $D_{m}$ such that $%
A_{\mathcal{M},+}\left( \ln n\right) ^{3}\leq D_{m}$, it holds 
\begin{align*}
\left\vert \text{p}_{1}\left( m\right) -\mathbb{E}\left[ \text{p}_{2}\left(
m\right) \right] \right\vert & \leq L_{\text{(\textbf{GSA})}}\varepsilon
_{n}\left( m\right) \mathbb{E}\left[ \text{p}_{2}\left( m\right) \right]
 \\
\left\vert \text{p}_{2}\left( m\right) -\mathbb{E}\left[ \text{p}_{2}\left(
m\right) \right] \right\vert & \leq L_{\text{(\textbf{GSA})}}\varepsilon
_{n}^{2}\left( m\right) \mathbb{E}\left[ \text{p}_{2}\left( m\right) \right]
\end{align*}%
where $\varepsilon
_{n}\left( m\right)$ is defined in Theorem \ref{theorem_excess_risk_strong_loc}, together with%
\begin{eqnarray}
\left\vert \text{\={p}}_{1}\left( m\right) -\frac{V}{V-1}\mathbb{E}\left[ 
\text{p}_{2}\left( m\right) \right] \right\vert &\leq &L_{\text{(\textbf{GSA}%
)},r}\varepsilon _{n}\left( m\right) \mathbb{E}\left[ \text{p}_{2}\left(
m\right) \right] \nonumber  \\
\left\vert \text{\={p}}_{2}\left( m\right) -\frac{V}{V-1}\mathbb{E}\left[ 
\text{p}_{2}\left( m\right) \right] \right\vert &\leq &L_{\text{(\textbf{GSA}%
)},r}\varepsilon _{n}^{2}\left( m\right) \mathbb{E}\left[ \text{p}_{2}\left(
m\right) \right]  \nonumber\\
\max \left\{ \left\vert \bar{\delta}\left( m\right) \right\vert ,\left\vert 
\bar{\delta}^{\prime }\left( m\right) \right\vert \right\} &\leq &\frac{\ell
\left( s_{\ast },s_{m}\right) }{\sqrt{D_{m}}}+L_{\text{(\textbf{GSA})},r}%
\frac{\ln n}{\sqrt{D_{m}}}\mathbb{E}\left[ \text{p}_{2}\left( m\right) %
\right]  \label{line_delta}
\end{eqnarray}

\item For all models $m\in \mathcal{M}_{n}$ of dimension $D_{m}$ such that $%
D_{m}\leq A_{\mathcal{M},+}\left( \ln n\right) ^{3}$, it holds%
\begin{eqnarray}
\max \left\{ \left\vert \bar{\delta}\left( m\right) \right\vert ,\left\vert 
\bar{\delta}^{\prime }\left( m\right) \right\vert \right\} &\leq &L_{\text{(%
\textbf{GSA})},r}\left( \sqrt{\frac{\ell \left( s_{\ast },s_{m}\right) \ln n%
}{n}}+\frac{\ln n}{n}\right)  \label{delta_small} \\
\text{\={p}}_{2}\left( m\right) &\leq &L_{\text{(\textbf{GSA})},r}\frac{%
D_{m}\vee \ln n}{n}\leq L_{\text{(\textbf{GSA})},r}\frac{\left( \ln n\right)
^{3}}{n}  \label{p2bar_small} \\
\text{\={p}}_{1}\left( m\right) &\leq &L_{\text{(\textbf{GSA})},r}\left( 
\frac{\left( \ln n\right) ^{2}}{n}+\frac{D_{m}\vee \ln n}{n}\right) \leq L_{%
\text{(\textbf{GSA})},r}\frac{\left( \ln n\right) ^{3}}{n}
\label{p1bar_small}
\end{eqnarray}
\end{itemize}

\noindent By Theorem 2 of \cite{saum:12} and Lemma 4 of \cite{saum:13}
applied with $\alpha =2+\alpha _{\mathcal{M}}$ and sample size $%
n_{V}=n\left( V-1\right) /V$, Corollary \ref{corollary_V_fold} and Lemma %
\ref{lemma_delta_VFCV} applied with $\alpha =2+\alpha _{\mathcal{M}}$, we
get for all $n\geq n_{0}\left( \text{(\textbf{GSA})},r\right) $, 
\begin{equation*}
\mathbb{P}\left( \Omega _{n}\right) \geq 1-L\sum_{m\in \mathcal{M}%
_{n}}n^{-2-\alpha _{\mathcal{M}}}\geq 1-L_{c_{\mathcal{M}}}n^{-2}\text{ }.
\end{equation*}

\smallskip

\noindent We consider models $m\in \mathcal{M}_{n}$ such that $A_{\mathcal{M}%
,+}\left( \ln n\right) ^{3}\leq D_{m}$. Notice that (\ref{line_delta})
implies by (\ref{def_epsilon}) that, for all $m\in \mathcal{M}_{n}$ such
that $A_{\mathcal{M},+}\left( \ln n\right) ^{3}\leq D_{m}$,%
\begin{eqnarray*}
\max \left\{ \left\vert \bar{\delta}\left( m\right) \right\vert ,\left\vert 
\bar{\delta}^{\prime }\left( m\right) \right\vert \right\} &\leq &L_{\text{(%
\textbf{GSA})},r}\left( \frac{\left( \ln n\right) ^{3}}{D_{m}}%
\cdot%
\frac{\ln n}{D_{m}}\right) ^{1/4}\times \left( \ell \left( s_{\ast
},s_{m}\right) +\mathbb{E}\left[ p_{2}\left( m\right) \right] \right) \\
&\leq &L_{\text{(\textbf{GSA})},r}\varepsilon _{n}\left( m\right) \left(
\ell \left( s_{\ast },s_{m}\right) +\mathbb{E}\left[ p_{2}\left( m\right) %
\right] \right) .
\end{eqnarray*}%
We deduce that on $\Omega _{n}$ we have, for all models $m\in \mathcal{M}%
_{n} $ such that $A_{\mathcal{M},+}\left( \ln n\right) ^{3}\leq D_{m}$ and
for all $n\geq n_{0}\left( \text{(\textbf{GSA),}}r\right) $,

\begin{eqnarray}
&&\left\vert 
\pen%
_{0}\left( m\right) -2\mathbb{E}\left[ \text{p}_{2}\left( m\right) \right]
\right\vert  \notag \\
&\leq &\frac{V-1}{V}\left( \left\vert \text{\={p}}_{1}\left( m\right) -\frac{V}{%
V-1}\mathbb{E}\left[ \text{p}_{2}\left( m\right) \right] \right\vert
+\left\vert \text{\={p}}_{2}\left( m\right) -\frac{V}{V-1}\mathbb{E}%
\left[ \text{p}_{2}\left( m\right) \right] \right\vert \right)  \notag \\
&&+\max \left\{ \left\vert \bar{\delta}\left( m\right) \right\vert
,\left\vert \bar{\delta}^{\prime }\left( m\right) \right\vert \right\} 
\notag \\
&\leq &L_{\text{(\textbf{GSA})},r}\varepsilon _{n}\left( m\right) \left(
\ell \left( s_{\ast },s_{m}\right) +\mathbb{E}\left[ \text{p}_{2}\left(
m\right) \right] \right)  \label{control_pen_ho_res}
\end{eqnarray}

\noindent Let us now consider models $m\in \mathcal{M}_{n}$ such that $%
D_{m}\leq A_{\mathcal{M},+}\left( \ln n\right) ^{3}$. By (\ref{delta_small}%
), (\ref{p2bar_small}) and (\ref{p1bar_small}), we have on $\Omega _{n}$,%
\begin{eqnarray}
\left\vert 
\pen%
_{0}\left( m\right) \right\vert &=&\frac{V-1}{V}\left\vert \text{\={p}}%
_{1}\left( m\right) +\text{\={p}}_{2}\left( m\right) +\bar{\delta}\left(
m\right) -\bar{\delta}^{^{\prime }}\left( m\right) \right\vert  \notag \\
&\leq &L_{\text{(\textbf{GSA})},r}\left( \sqrt{\frac{\ell \left( s_{\ast
},s_{m}\right) \ln n}{n}}+\frac{\left( \ln n\right) ^{3}}{n}\right)  \notag
\\
&\leq &L_{\text{(\textbf{GSA})},r}\left( \frac{\ell \left( s_{\ast
},s_{m}\right) }{\left( \ln n\right) ^{2}}+\frac{\left( \ln n\right) ^{3}}{n}%
\right)  \label{control_pen_VF_small}
\end{eqnarray}%
Inequality (\ref{control_pen_VF_small}) implies that inequality (\ref%
{pen_id_2_pp}) of Theorem \ref{theorem_opt_pen_reg_pp}\ is satisfied with $%
A_{r}=L_{\text{(\textbf{GSA})},r}$. From (\ref{control_pen_ho_res}) and (\ref%
{control_pen_VF_small}), we thus apply Theorem \ref{theorem_opt_pen_reg_pp
copy(1)}\ with $A_{p}=L_{A_{p},c_{\mathcal{M}}}$, and this gives Theorem \ref%
{Theorem_VFpen} with%
\begin{equation*}
\theta _{n}=L_{\text{(\textbf{GSA})},r}\left( \left( \ln n\right)
^{-2}+\sup_{m\in \mathcal{M}_{n}}\left\{ \varepsilon _{n}\left( m\right) ;%
\text{ }A_{\mathcal{M},+}\left( \ln n\right) ^{3}\leq D_{m}\leq n^{\eta
+1/\left( 1+\beta _{+}\right) }\right\} \right) .
\end{equation*}

\subsection{Proofs related to Section \protect\ref%
{section_excess_risk_concentration}}

\subsubsection{Proofs for strongly localized bases}

\noindent \textbf{Proof of Theorem \ref{theorem_general_sup}.} Let $C>0$. Set%
\begin{equation*}
\mathcal{F}_{C}^{\infty }:=\left\{ s\in m\text{ };\left\Vert
s-s_{m}\right\Vert _{\infty }\leq C\right\}
\end{equation*}%
and%
\begin{equation*}
\mathcal{F}_{>C}^{\infty }:=\left\{ s\in m\text{ };\left\Vert
s-s_{m}\right\Vert _{\infty }>C\right\} =m\backslash \mathcal{F}_{C}^{\infty
}.
\end{equation*}%
Take an orthonormal basis $\left( \varphi _{k}\right) _{k=1}^{D_{m}}$ of $\left(
m,\left\Vert 
\cdot%
\right\Vert _{2}\right) $ satisfying (\textbf{Aslb}). By Lemma \ref%
{lemma_dev_phi1_reg_sup}, we get that there exists $L_{A,r_{m},\alpha
}^{(1)}>0$ such that, by setting%
\begin{equation*}
\Omega _{1}=\left\{ \max_{k\in \left\{ 1,\ldots,D_m\right\} }\left\vert \left(
P_{n}-P\right) \left( \psi _{m}%
\cdot%
\varphi _{k}\right) \right\vert \leq L_{A,r_{m},\alpha }^{(1)}\sqrt{\frac{%
\ln n}{n}}\right\} ,
\end{equation*}%
we have for all $n\geq n_{0}\left( A_{+}\right) $, $\mathbb{P}\left( \Omega
_{1}\right) \geq 1-n^{-\alpha }$. Moreover, we set%
\begin{equation*}
\Omega _{2}=\left\{ \max_{\left( k,l\right) \in \left\{ 1,\ldots,D_m\right\}
^{2}}\left\vert \left( P_{n}-P\right) \left( \varphi _{k}%
\cdot%
\varphi _{l}\right) \right\vert \leq L_{\alpha ,r_{m}}^{(2)}\min \left\{
\left\Vert \varphi _{k}\right\Vert _{\infty };\left\Vert \varphi
_{l}\right\Vert _{\infty }\right\} \sqrt{\frac{\ln n}{n}}\right\} ,
\end{equation*}%
where $L_{\alpha ,r_{m}}^{(2)}$ is defined in Lemma \ref%
{lemma_termes_croises}. By Lemma \ref{lemma_termes_croises}, we have that
for all $n\geq n_{0}\left( A_{+}\right) $, $\mathbb{P}\left( \Omega
_{2}\right) \geq 1-n^{-\alpha }$ and so, for all $n\geq n_{0}\left(
A_{+}\right) $,%
\begin{equation*}
\mathbb{P}\left( \Omega _{1}\bigcap \Omega _{2}\right) \geq 1-2n^{-\alpha }%
\text{ }.  
\end{equation*}%
We thus have for all $n\geq n_{0}\left( A_{+}\right) $,%
\begin{eqnarray}
&&\mathbb{P}\left( \left\Vert \widehat{s}_{m}-s_{m}\right\Vert _{\infty
}>C\right)  \notag \\
&\leq &\mathbb{P}\left( \inf_{s\in \mathcal{F}_{>C}^{\infty }}P_{n}\left(
\gamma \left( s\right) -\gamma \left( s_{m}\right) \right) \leq \inf_{s\in 
\mathcal{F}_{C}^{\infty }}P_{n}\left( \gamma \left( s\right) -\gamma \left(
s_{m}\right) \right) \right)  \notag \\
&=&\mathbb{P}\left( \sup_{s\in \mathcal{F}_{>C}^{\infty }}P_{n}\left( \gamma
\left( s_{m}\right) -\gamma \left( s\right) \right) \geq \sup_{s\in \mathcal{%
F}_{C}^{\infty }}P_{n}\left( \gamma \left( s_{m}\right) -\gamma \left(
s\right) \right) \right)  \notag \\
&\leq &\mathbb{P}\left( \left\{ \sup_{s\in \mathcal{F}_{>C}^{\infty
}}P_{n}\left( \gamma \left( s_{m}\right) -\gamma \left( s\right) \right)
\geq \sup_{s\in \mathcal{F}_{C/2}^{\infty }}P_{n}\left( \gamma \left(
s_{m}\right) -\gamma \left( s\right) \right) \right\} \bigcap \Omega
_{1}\bigcap \Omega _{2}\right) +2n^{-\alpha }.
\label{rewriting_proof_sup}
\end{eqnarray}%
Now, for any $s\in m$ such that%
\begin{equation*}
s-s_{m}=\sum_{k=1}^{D_m}\beta _{k}\varphi _{k}\text{, }\beta =\left( \beta
_{k}\right) _{k=1}^{D_m}\in \mathbb{R}^{D_m}\text{,}
\end{equation*}%
we have 
\begin{eqnarray*}
&&P_{n}\left( \gamma \left( s_{m}\right) -\gamma \left( s\right) \right) \\
&=&\left( P_{n}-P\right) \left( \psi _{m}%
\cdot%
\left( s_{m}-s\right) \right) -\left( P_{n}-P\right) \left( \left(
s-s_{m}\right) ^{2}\right) -P\left( \gamma \left( s\right) -\gamma \left(
s_{m}\right) \right) \\
&=&\sum_{k=1}^{D_m}\beta _{k}\left( P_{n}-P\right) \left( \psi _{m}%
\cdot%
\varphi _{k}\right) -\sum_{k,l=1}^{D_m}\beta _{k}\beta _{l}\left(
P_{n}-P\right) \left( \varphi _{k}%
\cdot%
\varphi _{l}\right) -\sum_{k=1}^{D_m}\beta _{k}^{2}.
\end{eqnarray*}%
We set for any $\left( k,l\right) \in \left\{ 1,\ldots,D_m\right\} ^{2}$, 
\begin{equation*}
R_{n,k}^{(1)}=\left( P_{n}-P\right) \left( \psi _{m}%
\cdot%
\varphi _{k}\right) \text{ \ and \ }R_{n,k,l}^{(2)}=\left( P_{n}-P\right)
\left( \varphi _{k}%
\cdot%
\varphi _{l}\right) .
\end{equation*}%
Moreover, we set a function $h_{n}$, defined as follows,%
\begin{equation*}
h_{n}:\beta =\left( \beta _{k}\right) _{k=1}^{D_m}\longmapsto
\sum_{k=1}^{D_m}\beta _{k}R_{n,k}^{(1)}-\sum_{k,l=1}^{D_m}\beta _{k}\beta
_{l}R_{n,k,l}^{(2)}-\sum_{k=1}^{D_m}\beta _{k}^{2}.
\end{equation*}%
We thus have for any $s\in m$ such that $s-s_{m}=\sum_{k=1}^{D_m}\beta
_{k}\varphi _{k}$, $\beta =\left( \beta _{k}\right) _{k=1}^{D_m}\in \mathbb{R}%
^{D_m}$,%
\begin{equation}
P_{n}\left( \gamma \left( s_{m}\right) -\gamma \left( s\right) \right)
=h_{n}\left( \beta \right) .  \label{identity_h_n_emp_excess_risk}
\end{equation}%
In addition we set for any $\beta =\left( \beta _{k}\right) _{k=1}^{D_m}\in 
\mathbb{R}^{D_m}$,%
\begin{equation*}
\left\vert \beta \right\vert _{m,\infty }=r_{m}\sum_{i=1}^{b_m}\sqrt{A_{i}}%
\max_{k\in \Pi _{i}}\left\vert \beta _{k}\right\vert .
\end{equation*}%
It is straightforward to see that $\left\vert 
\cdot%
\right\vert _{m,\infty }$ is a norm on $\mathbb{R}^{D_{m}}$. We also set for a
real $D_m\times D_m$ matrix $B$, its operator norm $\left\Vert B\right\Vert _{m}$
associated to the norm $\left\vert 
\cdot%
\right\vert _{m,\infty }$ on the $D_m$-dimensional vectors. More explicitly,
we set for any $B\in \mathbb{R}^{D_{m}\times D_{m}}$,%
\begin{equation*}
\left\Vert B\right\Vert _{m}:=\sup_{\beta \in \mathbb{R}^{D},\text{ }\beta
\neq 0}\frac{\left\vert B\beta \right\vert _{m,\infty }}{\left\vert \beta
\right\vert _{m,\infty }}.
\end{equation*}%
We have, for any $B=\left( B_{k,l}\right) _{k,l=1,\ldots D_m}\in \mathbb{R}^{D_m\times
D_m}$,%
\begin{eqnarray*}
\left\Vert B\right\Vert _{m} &=&\sup_{\beta \in \mathbb{R}^{D_m},\text{ }%
\left\vert \beta \right\vert _{m,\infty }=1}\left\{ r_{m}\sum_{i=1}^{b_m}\sqrt{%
A_{i}}\max_{k\in \Pi _{i}}\left\vert \sum_{l=1}^{D_{m}}B_{k,l}\beta
_{l}\right\vert \right\}  \notag \\
&=&\sup_{\beta \in \mathbb{R}^{D_{m}},\text{ }\left\vert \beta \right\vert
_{m,\infty }=1}\left\{ r_{m}\sum_{i=1}^{b_m}\sqrt{A_{i}}\max_{k\in \Pi
_{i}}\left\vert \sum_{j=1}^{b_m}\sum_{l\in \Pi _{j}}B_{k,l}\beta
_{l}\right\vert \right\}  \notag \\
&=&\sup_{\beta \in \mathbb{R}^{D_{m}},\text{ }\left\vert \beta \right\vert
_{m,\infty }=1}\left\{ \sum_{i=1}^{b_m}\sqrt{A_{i}}\max_{k\in \Pi _{i}}\left\{
r_{m}\sum_{j=1}^{b_m}\sqrt{A_{j}}\max_{l\in \Pi _{j}}\left\vert \beta
_{l}\right\vert \left( \sqrt{A_{j}^{-1}}\sum_{l\in \Pi _{j}}\left\vert
B_{k,l}\right\vert \right) \right\} \right\}  \notag \\
&=&\sum_{i=1}^{b_m}\sqrt{A_{i}}\max_{k\in \Pi _{i}}\left\{ \max_{j\in \left\{
1,\ldots,b_m\right\} }\left\{ \sqrt{A_{j}^{-1}}\sum_{l\in \Pi _{j}}\left\vert
B_{k,l}\right\vert \right\} \right\} .  
\end{eqnarray*}%
Notice that by Inequality (\ref{loc_plus}) of (\textbf{Aslb}), it holds 
\begin{equation}
\mathcal{F}_{>C}^{\infty }\subset \left\{ s\in m\text{ };\text{ }%
s-s_{m}=\sum_{k=1}^{D_{m}}\beta _{k}\varphi _{k}\And \left\vert \beta
\right\vert _{m,\infty }\geq C\right\}  \label{inclu_slice_>C}
\end{equation}%
and 
\begin{equation}
\mathcal{F}_{C/2}^{\infty }\supset \left\{ s\in m\text{ };\text{ }%
s-s_{m}=\sum_{k=1}^{D_{m}}\beta _{k}\varphi _{k}\And \left\vert \beta
\right\vert _{m,\infty }\leq C/2\right\} .  \label{inclu_slice_C}
\end{equation}%
Hence, from (\ref{rewriting_proof_sup}), (\ref{identity_h_n_emp_excess_risk}%
) (\ref{inclu_slice_>C}) and (\ref{inclu_slice_C}) we deduce that if we find
on $\Omega _{1}\bigcap \Omega _{2}$ a value of $C$ such that 
\begin{equation*}
\sup_{\beta \in \mathbb{R}^{D_{m}},\text{ }\left\vert \beta \right\vert
_{m,\infty }\geq C}h_{n}\left( \beta \right) <\sup_{\beta \in \mathbb{R}^{D_{m}},%
\text{ }\left\vert \beta \right\vert _{m,\infty }\leq C/2}h_{n}\left( \beta
\right) ,  
\end{equation*}%
then Inequality (\ref{conv_sup_norm_reg_sup}) follows and Theorem \ref%
{theorem_general_sup}\ is proved. Taking the partial derivatives of $h_{n}$
with respect to the coordinates of its arguments, it then holds for any $%
\left( k,l\right) \in \left\{ 1,\ldots,D_m\right\} ^{2}$ and $\beta =\left( \beta
_{i}\right) _{i=1}^{D_{m}}\in \mathbb{R}^{D_{m}}$,%
\begin{equation}
\frac{\partial h_{n}}{\partial \beta _{k}}\left( \beta \right)
=R_{n,k}^{(1)}-2\sum_{i=1}^{D_{m}}\beta _{i}R_{n,k,i}^{(2)}-2\beta _{k}
\label{derivative_h_n}
\end{equation}%
We look now at the set of solutions $\beta$ of the following system,%
\begin{equation}
\frac{\partial h_{n}}{\partial \beta _{k}}\left( \beta \right) =0\text{ , }%
\forall k\in \left\{ 1,\ldots,D_m\right\} .  \label{def_system_1}
\end{equation}%
We define the $D_m\times D_m$ matrix $R_{n}^{(2)}$ to be%
\begin{equation*}
R_{n}^{(2)}:=\left( R_{n,k,l}^{(2)}\right) _{k,l=1,\ldots,D_m}
\end{equation*}%
and by (\ref{derivative_h_n}), the system given in (\ref{def_system_1}) can
be written%
\begin{equation}
2\left( I_{D_{m}}+R_{n}^{(2)}\right) \beta =R_{n}^{(1)}, 
\tag{\textbf{S}}  \label{def_system_matrix}
\end{equation}%
where $R_{n}^{(1)}$ is a $D$-dimensional vector defined by%
\begin{equation*}
R_{n}^{(1)}=\left( R_{n,k}^{(1)}\right) _{k=1,\ldots,D_m}.
\end{equation*}%
Let us give an upper bound of the norm $\left\Vert R_{n}^{(2)}\right\Vert
_{m}$, in order to show that the matrix $I_{D_{m}}+R_{n}^{(2)}$ is nonsingular.
On $\Omega _{2}$ we have, using (\ref{def_card_Pi_i_k}),%
\begin{eqnarray}
\left\Vert R_{n}^{(2)}\right\Vert _{m} &=&\sum_{i=1}^{b_m}\sqrt{A_{i}}%
\max_{k\in \Pi _{i}}\left\{ \max_{j\in \left\{ 1,\ldots,b_m\right\} }\left\{ 
\sqrt{A_{j}^{-1}}\sum_{l\in \Pi _{j}}\left\vert R_{n,k,l}^{(2)}\right\vert
\right\} \right\}  \notag \\
&=&\sum_{i=1}^{b_m}\sqrt{A_{i}}\max_{k\in \Pi _{i}}\left\{ \max_{j\in \left\{
1,\ldots,b_m\right\} }\left\{ \sqrt{A_{j}^{-1}}\sum_{l\in \Pi _{j\left\vert
k\right. }}\left\vert R_{n,k,l}^{(2)}\right\vert \right\} \right\}  \notag \\
&\leq &\sum_{i=1}^{b_m}\sqrt{A_{i}}\max_{k\in \Pi _{i}}\left\{ \max_{j\in
\left\{ 1,\ldots,b_m\right\} }\left\{ \sqrt{A_{j}^{-1}}\text{Card}\left( \Pi
_{j\left\vert k\right. }\right) \max_{l\in \Pi _{j}}\left\vert \left(
P_{n}-P\right) \left( \varphi _{k}%
\cdot%
\varphi _{l}\right) \right\vert \right\} \right\}  \notag \\
&\leq &A_{c}L_{\alpha ,r_{m}}^{(2)}\sqrt{\frac{\ln n}{n}}\sum_{i=1}^{b_m}%
\max_{j\in \left\{ 1,\ldots,b_m\right\} }\left\{ \sqrt{\frac{A_{i}}{A_{j}}}\left( 
\frac{A_{j}}{A_{i}}\vee 1\right) \sqrt{\min \left\{ A_{i};A_{j}\right\} }%
\right\}  \label{calcul_norm_R_2}
\end{eqnarray}%
We deduce from (\ref{def_Ai}) and (\ref{calcul_norm_R_2}) that on $\Omega
_{2}$,%
\begin{equation}
\left\Vert R_{n}^{(2)}\right\Vert _{m}\leq L_{A_{c},\alpha ,r_{m}}%
\cdot%
b_m\sqrt{\frac{A_{b_m}\ln n}{n}}.  \label{upper_R_2_norme}
\end{equation}%
Hence, from (\ref{upper_R_2_norme}) and the fact that $b_m^{2}A_{b_m}\leq A_{+}%
\frac{n}{\left( \ln n\right) ^{2}}$, we get that for all $n\geq n_{0}\left(
A_{+},A_{c},r_{m},\alpha \right) $, it holds on $\Omega _{2}$,%
\begin{equation*}
\left\Vert R_{n}^{(2)}\right\Vert _{m}\leq \frac{1}{2}
\end{equation*}%
and the matrix $\left( I_{D_{m}}+R_{n}^{(2)}\right) $ is nonsingular, of inverse 
$\left( I_{D_{m}}+R_{n}^{(2)}\right) ^{-1}=\sum_{u=0}^{+\infty }\left(
-R_{n}^{(2)}\right) ^{u}$. Hence, the system (\ref{def_system_matrix})
admits a unique solution $\beta ^{(n)}$, given by%
\begin{equation*}
\beta ^{(n)}=\frac{1}{2}\left( I_{D_{m}}+R_{n}^{(2)}\right) ^{-1}R_{n}^{(1)}%
.
\end{equation*}%
Now, on $\Omega _{1}$ we have by (\ref{def_Ai}),%
\begin{equation*}
\left\vert R_{n}^{(1)}\right\vert _{m,\infty }\leq r_{m}\left( \sum_{i=1}^{b_m}%
\sqrt{A_{i}}\right) \max_{k\in \left\{ 1,\ldots,D_m\right\} }\left\vert \left(
P_{n}-P\right) \left( \psi _{m}%
\cdot%
\varphi _{k}\right) \right\vert \leq r_{m}L_{A_{m},r_{m},\alpha }^{(1)}\sqrt{%
\frac{D_{m}\ln n}{n}}  
\end{equation*}%
and we deduce that for all $n_{0}\left( A_{+},A_{c},r_{m},\alpha \right) $,
it holds on $\Omega _{2}\bigcap \Omega _{1}$,%
\begin{equation}
\left\vert \beta ^{(n)}\right\vert _{m,\infty }\leq \frac{1}{2}\left\Vert
\left( I_{d}+R_{n}^{(2)}\right) ^{-1}\right\Vert _{m}\left\vert
R_{n}^{(1)}\right\vert _{m,\infty }\leq r_{m}L_{A,r_{m},\alpha }^{(1)}\sqrt{%
\frac{D_m\ln n}{n}}.  \label{upper_betha_n}
\end{equation}%
Moreover, by the formula (\ref{identity_h_n_emp_excess_risk}) we have%
\begin{equation*}
h_{n}\left( \beta \right) =P_{n}\left( \gamma \left( s_{m}\right) \right)
-P_{n}\left( Y-\sum_{k=1}^{D_{m}}\beta _{k}\varphi _{k}\right) ^{2}
\end{equation*}%
and we thus see that $h_{n}$ is concave. Hence, for all $n_{0}\left(
A_{+},A_{c},r_{m},\alpha \right) $, we get that on $\Omega _{2}$, $\beta
^{(n)}$ is the unique maximum of $h_{n}$ and on $\Omega _{2}\bigcap \Omega
_{1}$, by (\ref{upper_betha_n}), concavity of $h_{n}$ and uniqueness of $%
\beta ^{(n)}$, we get%
\begin{equation*}
h_{n}\left( \beta ^{(n)}\right) =\sup_{\beta \in \mathbb{R}^{D_{m}},\text{ }%
\left\vert \beta \right\vert _{m,\infty }\leq C/2}h_{n}\left( \beta \right)
>\sup_{\beta \in \mathbb{R}^{D_{m}},\text{ }\left\vert \beta \right\vert
_{m,\infty }\geq C}h_{n}\left( \beta \right) ,
\end{equation*}%
with $C=2r_{m}L_{A,r_{m},\alpha }^{(1)}\sqrt{\frac{D_{m}\ln n}{n}}$, which
concludes the proof. 
$\blacksquare$%

\begin{rmk} \label{remark_proof}
The proof of Theorem \ref{theorem_general_sup}
 can be adapted for models endowed with a localized basis structure. Indeed, if we set for any $B\in \mathbb{R}^{D_{m}\times D_{m}}$,%
\begin{equation*}
\left\Vert B\right\Vert _{m}:=\sup_{\beta \in \mathbb{R}^{D_{m}},\text{ }\beta
\neq 0}\frac{\left\vert B\beta \right\vert _{m,\infty }}{\left\vert \beta
\right\vert _{m,\infty }}=\sup_{\beta \in \mathbb{R}^{D_{m}},\text{ }\beta \neq
0}\frac{\left\vert B\beta \right\vert _{\infty }}{\left\vert \beta
\right\vert _{\infty }},
\end{equation*}%
then we have, the following classical formula%
\begin{equation*}
\left\Vert B\right\Vert _{m}=\max_{k\in \left\{ 1,\ldots,D_m\right\} }\left\{
\left\{ \sum_{l\in \left\{ 1,\ldots,D_m\right\} }\left\vert B_{k,l}\right\vert
\right\} \right\} .
\end{equation*}%
Now, it holds,
\begin{align*}
\left\Vert R_{n}^{(2)}\right\Vert _{m} &=\max_{k\in \left\{ 1,\ldots,D_m\right\}
}\left\{ \left\{ \sum_{l\in \left\{ 1,\ldots,D_m\right\} }\left\vert \left(
P_{n}-P\right) \left( \varphi _{k}%
\cdot%
\varphi _{l}\right) \right\vert \right\} \right\}  \notag \\
&\leq L_{\alpha ,r_{m}}^{(2)}\max_{k\in \left\{ 1,\ldots,D_m\right\} }\left\{
\left\{ \sum_{l\in \left\{ 1,\ldots,D_m\right\} }\min \left\{ \left\Vert \varphi
_{k}\right\Vert _{\infty };\left\Vert \varphi _{l}\right\Vert _{\infty
}\right\} \sqrt{\frac{\ln n}{n}}\right\} \right\}  \notag \\
&\leq r_{m}L_{\alpha ,r_{m}}^{(2)}\sqrt{\frac{D_{m}^{3}\ln n}{n}}
\end{align*}%
The previous bound tends to zero if $D_m \leq n^{1/3}/ln^2(n)$ and this is the essential reason why results for localized bases are restricted to models with dimension lower that $n^{1/3}/ln^2(n)$ while for strongly localized bases we can go as far as $D_m \leq n/ln^2(n)$  (see also Remark \ref{remark_core_text}).
\end{rmk}

\subsubsection{Proofs related to excess risks' representations\label%
{ssection_proofs_excess_risk_rep}}

\begin{proof}[Proof of Proposition \protect\ref{prop_func_rep}]
Let us write $C_{\ast }:=\mathcal{F}\left( \widehat{s}_{m}\right) $. It
holds 
\begin{align*}
\inf_{s\in d_{C_{\ast }}}P_{n}\left( \gamma \left( s\right) \right)
&=\inf_{s\in m}P_{n}\left( \gamma \left( s\right) \right) \\
&\leq \min_{C\geq 0}\left\{ \inf_{s\in d_{C}}P_{n}\left( \gamma \left(
s\right) \right) \right\} ,
\end{align*}%
which readily proves Formula (\ref{formula_funct_rep}). Formula (\ref%
{formula_funct_rep_ball}) is a direct consequence of (\ref{formula_funct_rep}%
), since $m_{C}=\bigcup _{R\leq C}d_{C}$.
\end{proof}

\begin{proof}[Proof of Proposition \protect\ref{prop_excess_rep_loc}]
We will only prove the case where $R_{0}=+\infty $. Then the situation where 
$R_{0}\in \mathbb{R}_{+}$ can be deduced easily by noticing that the subset $%
\left\{ s\in m\text{ };\text{ }\mathcal{G}\left( s\right) \leq R_{0}\right\} 
$ of $m$ actually plays the role of $m$ in this latter case.

When $R_{0}=+\infty $, we have with the notations of Proposition \ref%
{prop_func_rep} and by taking $\mathcal{F}=P\left( \gamma \left( \cdot
\right) -\gamma \left( s_{m}\right) \right) $, $\widetilde{d}_{C}=d_{C}$ and 
$\widetilde{m}_{C}=m_{C}$. From formula (\ref{formula_funct_rep}) we thus get%
\begin{align*}
P\left( \gamma \left( \widehat{s}_{m}\right) -\gamma \left( s_{m}\right)
\right) &\in \arg \min_{C\geq 0}\left\{ \inf_{s\in \widetilde{d}%
_{C}}P_{n}\left( \gamma \left( s\right) \right) \right\} \\
&=\arg \max_{C\geq 0}\left\{ \sup_{s\in \widetilde{d}_{C}}P_{n}\left(
\gamma \left( s_{m}\right) -\gamma \left( s\right) \right) \right\} \\
&=\arg \max_{C\geq 0}\left\{ \sup_{s\in \widetilde{d}_{C}}\left(
P_{n}-P\right) \left( \gamma \left( s_{m}\right) -\gamma \left( s\right)
\right) -C\right\} .
\end{align*}%
Hence, Formula (\ref{formula_excess_risk}) is proved. Now, for (\ref%
{formula_excess_risk_ball}), take any $C\geq 0$ and notice that there exists
a random variable $C_{1}\mathbb{\in }\left[ 0,C\right] $ such that 
\begin{align}
\sup_{s\in \widetilde{m}_{C}}\left\{ \left( P_{n}-P\right) \left( \gamma
\left( s_{m}\right) -\gamma \left( s\right) \right) \right\} -C
&=\sup_{s\in \widetilde{d}_{C_{1}}}\left( P_{n}-P\right) \left( \gamma
\left( s_{m}\right) -\gamma \left( s\right) \right) -C  \notag \\
&\leq \sup_{s\in \widetilde{d}_{C_{1}}}\left( P_{n}-P\right) \left( \gamma
\left( s_{m}\right) -\gamma \left( s\right) \right) -C_{1}  \notag \\
&\leq \sup_{s\in \widetilde{d}_{C_{\ast }}}\left( P_{n}-P\right) \left(
\gamma \left( s_{m}\right) -\gamma \left( s\right) \right) -C_{\ast }\text{ ,%
}  \label{majo_ball_sph}
\end{align}%
where $C_{\ast }:=P\left( \gamma \left( \widehat{s}_{m}\right) -\gamma
\left( s_{m}\right) \right) $. Taking $C=C_{\ast }$, we get%
\begin{equation*}
\sup_{s\in \widetilde{m}_{C_{\ast }}}\left\{ \left( P_{n}-P\right) \left(
\gamma \left( s_{m}\right) -\gamma \left( s\right) \right) \right\} -C_{\ast
}\leq \sup_{s\in \widetilde{d}_{C_{\ast }}}\left( P_{n}-P\right) \left(
\gamma \left( s_{m}\right) -\gamma \left( s\right) \right) -C_{\ast }
\end{equation*}%
and since $\widetilde{d}_{C_{\ast }}\subset \widetilde{m}_{C_{\ast }}$, this
implies%
\begin{equation*}
\sup_{s\in \widetilde{m}_{C_{\ast }}}\left\{ \left( P_{n}-P\right) \left(
\gamma \left( s_{m}\right) -\gamma \left( s\right) \right) \right\} -C_{\ast
}=\sup_{s\in \widetilde{d}_{C_{\ast }}}\left( P_{n}-P\right) \left( \gamma
\left( s_{m}\right) -\gamma \left( s\right) \right) -C_{\ast }.
\end{equation*}%
Together with (\ref{majo_ball_sph}), the latter equality gives that for any $%
C\geq 0$,%
\begin{equation*}
\sup_{s\in \widetilde{m}_{C}}\left\{ \left( P_{n}-P\right) \left( \gamma
\left( s_{m}\right) -\gamma \left( s\right) \right) \right\} -C\leq
\sup_{s\in \widetilde{m}_{C_{\ast }}}\left\{ \left( P_{n}-P\right) \left(
\gamma \left( s_{m}\right) -\gamma \left( s\right) \right) \right\} -C_{\ast
},
\end{equation*}%
which is another way to write (\ref{formula_excess_risk_ball}).

Now, considering the case of the empirical excess risk, we could again apply
Proposition \ref{prop_func_rep}, but we will follow a more direct proof. We
have, by definition of $\widehat{s}_{m}$,%
\begin{align*}
\ell _{%
\emp%
}\left( s_{m},\widehat{s}_{m}\right) &=P_{n}\left( \gamma \left(
s_{m}\right) -\gamma \left( \widehat{s}_{m}\right) \right) \\
&=\max_{s\in m}\left\{ P_{n}\left( \gamma \left( s_{m}\right) -\gamma
\left( s\right) \right) \right\} .
\end{align*}%
Now, as $\left\{ \mathcal{G}\left( \widehat{s}_{m}\right) \leq R_{0}\right\}
=\bigcup_{C\geq 0}\widetilde{d}_{C}=\bigcup_{C\geq 0}\widetilde{m}_{C}$,
we get%
\begin{align*}
\ell _{%
\emp%
}\left( s_{m},\widehat{s}_{m}\right) &=\max_{s\in m}\left\{ P_{n}\left(
\gamma \left( s_{m}\right) -\gamma \left( s\right) \right) \right\} \\
&=\max_{C\geq 0}\sup_{s\in \widetilde{d}_{C}}\left\{ P_{n}\left( \gamma
\left( s_{m}\right) -\gamma \left( s\right) \right) \right\} \\
&=\max_{C\geq 0}\left\{ \sup_{s\in \widetilde{d}_{C}}\left\{ \left(
P_{n}-P\right) \left( \gamma \left( s_{m}\right) -\gamma \left( s\right)
\right) \right\} -C\right\} ,
\end{align*}%
that is (\ref{formula_emp_excess_risk}). Now formula (\ref%
{formula_emp_excess_risk_ball}) follows from the kind of arguments that
allow to prove (\ref{formula_excess_risk_ball}) based on (\ref%
{formula_emp_excess_risk}).
\end{proof}

\subsection*{Acknowledgements}
\noindent The authors are very grateful to the Associate Editor and the anonymous referees for their careful reading and valuable comments that have led to several improvements and new developments of the paper.

\bibliographystyle{abbrv}
\bibliography{Slope_heuristics_regression_13}

\section*{Appendix}\label{section_appendix}

\subsection{Some lemmas instrumental in the proofs} \label{ssection_appen_proof}

We gather here the lemmas that are used in the proofs of Section \ref{section_proof_slope_reg}.

In the next lemma, we apply Lemma 9 of \cite{saum:13}, with $n_{2}=n/V$, $%
n_{1}=n-n_{2}=n\left( 1-1/V\right) $, $\tau =2$ and we set $r=c^{-1}\in
\left( 1,+\infty \right) $. Furthermore, the notations $P_{n_{2}}$ and $%
s_{n_{1}}\left( m\right) $ used in \cite{saum:13} correspond respectively to
the quantities $P_{n}^{(j)}$ and $\widehat{s}_{m}^{(-j)}$.

\begin{lmm}
\label{lemma_hold_out}Assume that (\textbf{GSA}) holds. Let $r\in \left(
2,+\infty \right) $ and $V\in \left\{ 2,\ldots,n-1\right\} $ satisfying $%
1<V\leq r$. Then there exists $L=L_{\left( \text{\textbf{GSA}}\right) \text{,%
}r}>0$ such that for all $m\in \mathcal{M}_{n}$ satisfying $D_{m}\geq A_{%
\mathcal{M},+}\left( \ln n\right) ^{3}$, by setting%
\begin{equation*}
\varepsilon _{n}^{\left( 1\right) }\left( m\right) =L\sqrt{\frac{\ln n}{D_{m}%
}}\leq \frac{L}{\ln n},
\end{equation*}

it holds for all $n\geq n_{0}\left( \left( \text{\textbf{GSA}}\right) \text{,%
}r\right) $ and for all $j\in \left\{ 1,\ldots,V\right\} $, 
\begin{equation*}
\mathbb{P}\left( \left\vert P_{n}^{(j)}\left( \gamma \left( \widehat{s}%
_{m}^{(-j)}\right) -\gamma \left( s_{m}\right) \right) -P\left( \gamma
\left( \widehat{s}_{m}^{(-j)}\right) -\gamma \left( s_{m}\right) \right)
\right\vert \geq \varepsilon _{n}^{\left( 1\right) }\left( m\right) \mathbb{E%
}\left[ 
\p%
_{2}^{(-1)}\left( m\right) \right] \right) \leq 12n^{-2-\alpha _{\mathcal{M}%
}},
\end{equation*}%
where $%
\p%
_{2}^{(-1)}\left( m\right) =P_{n}^{\left( -1\right) }\left( \gamma \left(
s_{m}\right) -\gamma \left( \widehat{s}_{m}^{(-1)}\right) \right) $. If $%
D_{m}\leq A_{\mathcal{M},+}\left( \ln n\right) ^{3}$, then for all $n\geq
n_{0}\left( \left( \text{\textbf{GSA}}\right) \text{,}r\right) $,%
\begin{equation*}
\mathbb{P}\left( \left\vert P_{n}^{(j)}\left( \gamma \left( \widehat{s}%
_{m}^{(-j)}\right) -\gamma \left( s_{m}\right) \right) -P\left( \gamma
\left( \widehat{s}_{m}^{(-j)}\right) -\gamma \left( s_{m}\right) \right)
\right\vert \geq L\frac{\left( \ln n\right) ^{2}}{n}\right) \leq
12n^{-2-\alpha _{\mathcal{M}}}.
\end{equation*}
\end{lmm}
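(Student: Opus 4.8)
The statement is an application of the hold-out lemma, Lemma~9 of \cite{saum:13}, so the plan is first to bring the quantity of interest into the form handled there. The crucial observation is the independence built into $V$-fold subsampling: for a fixed $j$, the block empirical measure $P_n^{(j)}$ is a function of $(\xi_i)_{i\in B_j}$ only, whereas $\widehat{s}_m^{(-j)}$ is a function of the disjoint sample $(\xi_i)_{i\notin B_j}$; hence $P_n^{(j)}$ and $\widehat{s}_m^{(-j)}$ are independent. Since the $\xi_i$ are i.i.d.\ and the partition is regular, the joint law of $\bigl(P_n^{(j)},\widehat{s}_m^{(-j)}\bigr)$ does not depend on $j$, so it suffices to treat $j=1$ with $\card(B_1)=n_2:=n/V$ and $\card(B_1^{c})=n_1:=n(1-1/V)$. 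Because $1<V\le r$, we have $r^{-1}n\le n_2$ and $n/2\le n_1\le n$, so the training sample size $n_1$ is of order $n$ and the quantitative hypotheses of $(\textbf{GSA})$ — in particular the dimension bound and $(\textbf{Ab}($m$))$ — remain in force with $n_1$ in place of $n$.

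Next I would condition on the training sample, so that $t:=\widehat{s}_m^{(-1)}\in m$ becomes deterministic, and work on the intersection of the good events supplied by Theorem~\ref{theorem_general_sup} (with sample size $n_1$ and $\alpha=2+\alpha_{\mathcal{M}}$), on which $\|t-s_m\|_\infty\le L\sqrt{D_m\ln n/n}$, and by Theorem~\ref{theorem_excess_risk_strong_loc} (again with sample size $n_1$), on which $\ell(s_m,t)$ and $\ell_{\emp}(t,s_m)$ are both comparable to $\mathcal{C}_m/n_1$; integrating the latter also yields $\mathbb{E}\bigl[\p_{2}^{(-1)}(m)\bigr]\asymp\mathcal{C}_m/n_1\asymp D_m/n$. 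Using the pointwise contrast expansion
\begin{equation*}
\gamma(t)-\gamma(s_m)=-2\bigl(Y-s_m(X)\bigr)\bigl(t(X)-s_m(X)\bigr)+\bigl(t(X)-s_m(X)\bigr)^2,
\end{equation*}
the conditional fluctuation $\bigl(P_n^{(1)}-P\bigr)\bigl(\gamma(t)-\gamma(s_m)\bigr)$ is an average of $n_2$ i.i.d.\ terms that I would control by Bernstein's inequality applied to each summand: the linear part has conditional variance $\lesssim\|t-s_m\|_2^2=\ell(s_m,t)$ and sup-norm $\lesssim\|t-s_m\|_\infty$, hence deviates from its mean by at most $L\bigl(\sqrt{\ell(s_m,t)\ln n/n}+\|t-s_m\|_\infty\ln n/n\bigr)$ with conditional probability $1-O(n^{-2-\alpha_{\mathcal{M}}})$, and the quadratic part is of strictly smaller order by the same computation. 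In the regime $D_m\ge A_{\mathcal{M},+}(\ln n)^3$, plugging in $\ell(s_m,t)\asymp\mathbb{E}[\p_{2}^{(-1)}(m)]\asymp D_m/n$ and $\|t-s_m\|_\infty\lesssim\sqrt{D_m\ln n/n}$ shows the dominant contribution is $L\sqrt{D_m\ln n}/n\asymp L\sqrt{\ln n/D_m}\cdot\mathbb{E}[\p_{2}^{(-1)}(m)]=\varepsilon_n^{(1)}(m)\,\mathbb{E}[\p_{2}^{(-1)}(m)]$, the announced bound.

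For the small-dimension regime $D_m\le A_{\mathcal{M},+}(\ln n)^3$ the same expansion and Bernstein estimates apply, but now $\ell(s_m,t)\lesssim(D_m\vee\ln n)/n\lesssim(\ln n)^3/n$ and $\|t-s_m\|_\infty\lesssim(\ln n)^2/\sqrt n$, so the linear part deviates by at most $L\bigl((\ln n)^2/n+(\ln n)^3 n^{-3/2}\bigr)\le L(\ln n)^2/n$ and the quadratic part is negligible, yielding the claimed $L(\ln n)^2/n$ bound. Finally one removes the conditioning by integrating over the training sample and intersecting with the good events of Theorems~\ref{theorem_general_sup} and~\ref{theorem_excess_risk_strong_loc}, all of probability $1-O(n^{-2-\alpha_{\mathcal{M}}})$, which only affects the numerical constants and produces the factor $12$. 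This reproduces Lemma~9 of \cite{saum:13} with the substitutions $n_2=n/V$, $n_1=n(1-1/V)$, $\tau=2$, $r=c^{-1}$; the only real point — not an obstacle so much as careful bookkeeping — is the identification of the independence structure together with the comparison between $\mathbb{E}[\p_{2}^{(-1)}(m)]$, $\mathcal{C}_m/n_1$ and $D_m/n$, the concentration inputs being already available from Theorems~\ref{theorem_general_sup} and~\ref{theorem_excess_risk_strong_loc}.
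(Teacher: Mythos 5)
Your proposal is correct and follows exactly the route the paper takes: the paper's entire ``proof'' of this lemma is the remark preceding its statement, namely that it is Lemma~9 of \cite{saum:13} applied with $n_{2}=n/V$, $n_{1}=n(1-1/V)$, $\tau=2$ and $r=c^{-1}$, identifying $P_{n_{2}}$ with $P_{n}^{(j)}$ and $s_{n_{1}}(m)$ with $\widehat{s}_{m}^{(-j)}$ --- precisely the substitutions you arrive at. Your reconstruction of the underlying argument (independence of $P_{n}^{(j)}$ and $\widehat{s}_{m}^{(-j)}$, conditioning on the training block, Bernstein's inequality on the linear and quadratic parts of the contrast expansion, and the comparison of $\mathbb{E}[\p_{2}^{(-1)}(m)]$ with $\mathcal{C}_{m}/n_{1}\asymp D_{m}/n$ via Theorems~\ref{theorem_general_sup} and~\ref{theorem_excess_risk_strong_loc}) is a faithful account of what that cited lemma does, so there is nothing to add.
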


Taking into account the averaging between the blocks of the $V$-fold, we get
from Lemma \ref{lemma_hold_out}\ the following corollary.

\begin{crllr}
\label{corollary_V_fold}Assume that (\textbf{GSA}) holds. Let $r\in \left(
0,1\right) $ and $V\in \left\{ 2,\ldots,n-1\right\} $ satisfying $1<V\leq r$.
Then there exists $L=L_{\left( \text{\textbf{GSA}}\right) \text{,}r}>0$ such
that for all $m\in \mathcal{M}_{n}$ satisfying $D_{m}\geq A_{\mathcal{M}%
,+}\left( \ln n\right) ^{3}$, it holds for all $n\geq n_{0}\left( \left( 
\text{\textbf{GSA}}\right) \text{,}r\right) $, 
\begin{equation}
\mathbb{P}\left( \left\vert P\left( \gamma \left( \widehat{s}%
_{m}^{(-1)}\right) -\gamma \left( s_{m}\right) \right) -\frac{1}{V}%
\sum_{j=1}^{V}P_{n}^{(j)}\left( \gamma \left( \widehat{s}_{m}^{(-j)}\right)
-\gamma \left( s_{m}\right) \right) \right\vert \geq L\varepsilon _{n}\left( m\right) \mathbb{E}\left[ 
\p%
_{2}^{(-1)}\left( m\right) \right] \right) \leq 22rn^{-2-\alpha _{\mathcal{M}%
}},  \label{ineq_excess_grand_V_fold}
\end{equation}%
where $%
\p%
_{2}^{(-1)}\left( m\right) =P_{n}^{\left( -1\right) }\left( \gamma \left(
s_{m}\right) -\gamma \left( \widehat{s}_{m}^{(-1)}\right) \right) $ and $\varepsilon _{n}\left( m\right)$ is defined in Theorem \ref{theorem_excess_risk_strong_loc}. If $%
D_{m}\leq A_{\mathcal{M},+}\left( \ln n\right) ^{3}$, then for all $n\geq
n_{0}\left( \left( \text{\textbf{GSA}}\right) \text{,}r\right) $,%
\begin{equation}
\mathbb{P}\left( \left\vert P\left( \gamma \left( \widehat{s}%
_{m}^{(-1)}\right) -\gamma \left( s_{m}\right) \right) -\frac{1}{V}%
\sum_{j=1}^{V}P_{n}^{(j)}\left( \gamma \left( \widehat{s}_{m}^{(-j)}\right)
-\gamma \left( s_{m}\right) \right) \right\vert \geq L\frac{\left( \ln n\right) ^{2}}{n}\right) \leq 22rn^{-2-\alpha _{\mathcal{%
M}}}.  \label{ineq_excess_petit_V_fold}
\end{equation}
\end{crllr}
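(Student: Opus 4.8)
The plan is to deduce Corollary \ref{corollary_V_fold} from Lemma \ref{lemma_hold_out} by a symmetrization over the $V$ blocks. For $j\in\{1,\ldots,V\}$ write $g_j=\gamma(\widehat{s}_m^{(-j)})-\gamma(s_m)$; since $\widehat{s}_m^{(-j)}\in m$ and $s_m-s_\ast\perp m$ one has $Pg_j=\|\widehat{s}_m^{(-j)}-s_m\|_2^2=:\p_1^{(-j)}(m)=\ell(s_m,\widehat{s}_m^{(-j)})$, and $\p_1^{(-1)}(m)=P(\gamma(\widehat{s}_m^{(-1)})-\gamma(s_m))$ is exactly the quantity whose distance to $\frac1V\sum_jP_n^{(j)}g_j$ is to be bounded. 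I would start from the decomposition
\begin{equation*}
P\big(\gamma(\widehat{s}_m^{(-1)})-\gamma(s_m)\big)-\frac1V\sum_{j=1}^V P_n^{(j)}g_j
=\Big(\p_1^{(-1)}(m)-\frac1V\sum_{j=1}^V\p_1^{(-j)}(m)\Big)+\frac1V\sum_{j=1}^V\big(Pg_j-P_n^{(j)}g_j\big)=:T_1+T_2 .
\end{equation*}
The term $T_2$ is an average over $j$ of precisely the quantities controlled by Lemma \ref{lemma_hold_out}, so it is dealt with by applying that lemma to each block with $\alpha=2+\alpha_{\mathcal M}$ and taking a union bound, at a probability cost $12Vn^{-2-\alpha_{\mathcal M}}\leq12rn^{-2-\alpha_{\mathcal M}}$. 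The term $T_1$ is a ``bias homogenization'' term, and the key elementary remark is that, the sample being i.i.d.\ and $\card(B_j)=n/V$ for every $j$, the estimators $\widehat{s}_m^{(-j)}$ are identically distributed, so $\mathbb{E}[\p_1^{(-j)}(m)]=\mathbb{E}[\p_1^{(-1)}(m)]$ for all $j$ and $\mathbb{E}[T_1]=0$.

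For the large-dimensional case \eqref{ineq_excess_grand_V_fold} ($D_m\geq A_{\mathcal M,+}(\ln n)^3$), Lemma \ref{lemma_hold_out} gives $|T_2|\leq\varepsilon_n^{(1)}(m)\,\mathbb{E}[\p_2^{(-1)}(m)]$, and since $D_m\gg\ln n$ one has $\varepsilon_n^{(1)}(m)=L\sqrt{\ln n/D_m}\leq L(\ln n/D_m)^{1/4}\leq L\,\varepsilon_n(m)$, whence $|T_2|\leq L\,\varepsilon_n(m)\,\mathbb{E}[\p_2^{(-1)}(m)]$. For $T_1$ I would apply Theorem \ref{theorem_excess_risk_strong_loc} with sample size $n_V:=n(1-V^{-1})\in[n/2,n]$ and $\alpha=2+\alpha_{\mathcal M}$, separately for each $j$ (its hypotheses are met for $n$ large by (P2) and $D_m\geq A_{\mathcal M,+}(\ln n)^3$): on an event of probability $1-10Vn^{-2-\alpha_{\mathcal M}}$, every $\p_1^{(-j)}(m)$ lies within a relative factor $\varepsilon_{n_V}(m)$ of $\mathcal{C}_m/n_V$, so that $|T_1|\leq\frac1V\sum_j|\p_1^{(-1)}(m)-\p_1^{(-j)}(m)|\leq2\varepsilon_{n_V}(m)\,\mathcal{C}_m/n_V$. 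Since $\varepsilon_{n_V}(m)\leq L\,\varepsilon_n(m)$ (as $n/2\leq n_V\leq n$) and $\mathcal{C}_m/n_V\leq2\,\mathbb{E}[\p_2^{(-1)}(m)]$ — the latter because $\mathbb{E}[\p_2^{(-1)}(m)]$ equals $\mathcal{C}_m/n_V$ up to a factor $1+o(1)$, by \eqref{upper_emp_strong} at sample size $n_V$ together with the uniform bound $0\leq\p_2^{(-1)}(m)\leq4A^2$ coming from (Ab) — this becomes $|T_1|\leq L\,\varepsilon_n(m)\,\mathbb{E}[\p_2^{(-1)}(m)]$. Intersecting the two events (total probability $\geq1-22Vn^{-2-\alpha_{\mathcal M}}\geq1-22rn^{-2-\alpha_{\mathcal M}}$) and summing the bounds on $T_1$ and $T_2$ gives \eqref{ineq_excess_grand_V_fold}.

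For the small-dimensional case \eqref{ineq_excess_petit_V_fold} ($D_m\leq A_{\mathcal M,+}(\ln n)^3$), $T_2$ is treated in the same way using the second part of Lemma \ref{lemma_hold_out}, which gives $|T_2|\leq L(\ln n)^2/n$. For $T_1$, using $\mathbb{E}[\p_1^{(-j)}(m)]=\mathbb{E}[\p_1^{(-1)}(m)]$ I would write $|T_1|\leq2\max_{1\leq j\leq V}\big|\p_1^{(-j)}(m)-\mathbb{E}[\p_1^{(-j)}(m)]\big|$ and then invoke a concentration inequality for the excess risk $\ell(s_m,\widehat{s}_m^{(-j)})$ \emph{around its own mean}, at sample size $n_V$ and level $\alpha=2+\alpha_{\mathcal M}$, at the rate $\sqrt{D_m\ln n}/n\leq(\ln n)^2/n$; such a bound follows from the empirical process machinery of \cite{saum:12} (Theorem~2 therein) together with (Ab), and each of the $V$ applications costs probability $\leq10n^{-2-\alpha_{\mathcal M}}$, so the total failure probability stays at $\leq22rn^{-2-\alpha_{\mathcal M}}$. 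I expect this last step to be the main obstacle: one needs concentration of $\p_1^{(-j)}(m)$ about its mean at a rate fine enough to beat $(\ln n)^2/n$ throughout the whole range $1\leq D_m\leq A_{\mathcal M,+}(\ln n)^3$, where Theorem \ref{theorem_excess_risk_strong_loc} is not applicable; by contrast the large-dimensional part and the control of $T_2$ in both regimes are essentially bookkeeping on top of Lemma \ref{lemma_hold_out} and Theorem \ref{theorem_excess_risk_strong_loc}.
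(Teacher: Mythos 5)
Your decomposition into $T_1+T_2$, the treatment of $T_2$ by applying Lemma \ref{lemma_hold_out} blockwise with a union bound (cost $12Vn^{-2-\alpha_{\mathcal{M}}}$), and the treatment of $T_1$ by concentrating each $P(\gamma(\widehat{s}_m^{(-j)})-\gamma(s_m))$ around the deterministic point $\mathcal{C}_m/n_V$ via Theorem 2 of \cite{saum:12} at sample size $n_V$ (cost $10Vn^{-2-\alpha_{\mathcal{M}}}$, then $C=2\varepsilon_{n_V}(m)\mathcal{C}_m/n_V\leq L\varepsilon_n(m)\mathbb{E}[\p_2^{(-1)}(m)]$) is exactly the paper's proof, including the $22V\leq 22r$ bookkeeping. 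For the small-dimension regime the paper only says the bound ``derives from Lemma \ref{lemma_hold_out} with the same type of reasoning'' and leaves the details to the reader, so your sketch there is at least as complete as the paper's, and you correctly flag that the only nontrivial ingredient is concentration of $\p_1^{(-j)}(m)$ at rate $\sqrt{D_m\ln n}/n\leq(\ln n)^2/n$ over the range $D_m\leq A_{\mathcal{M},+}(\ln n)^3$.
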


\begin{proof}
First we prove the following inequality,%
\begin{equation}\resizebox{.99\hsize}{!}{$
\mathbb{P}\left( \left\vert \frac{1}{V}\sum_{j=1}^{V}P\left( \gamma \left( 
\widehat{s}_{m}^{(-j)}\right) -\gamma \left( s_{m}\right) \right) -\frac{1}{V%
}\sum_{j=1}^{V}P_{n}^{(j)}\left( \gamma \left( \widehat{s}_{m}^{(-j)}\right)
-\gamma \left( s_{m}\right) \right) \right\vert \geq \varepsilon
_{n}^{\left( 1\right) }\left( m\right) \mathbb{E}\left[ 
\p%
_{2}^{(-1)}\left( m\right) \right] \right) \leq 12Vn^{-2-\alpha _{\mathcal{M}%
}}.$}  \label{ineq_intermed_V_fold}
\end{equation}%
Indeed, it easily derives from Lemma \ref{lemma_hold_out} together with a
union bound along the $V$ blocks, taking advantage of the following formula%
\begin{eqnarray*}
&&\left\vert \frac{1}{V}\sum_{j=1}^{V}P\left( \gamma \left( \widehat{s}%
_{m}^{(-j)}\right) -\gamma \left( s_{m}\right) \right) -\frac{1}{V}%
\sum_{j=1}^{V}P_{n}^{(j)}\left( \gamma \left( \widehat{s}_{m}^{(-j)}\right)
-\gamma \left( s_{m}\right) \right) \right\vert  \\
&\leq &\max_{j\in \left\{ 1,\ldots,V\right\} }\left\vert P_{n}^{(j)}\left(
\gamma \left( \widehat{s}_{m}^{(-j)}\right) -\gamma \left( s_{m}\right)
\right) -P\left( \gamma \left( \widehat{s}_{m}^{(-j)}\right) -\gamma \left(
s_{m}\right) \right) \right\vert .
\end{eqnarray*}%
Then, we show that the quantity $\frac{1}{V}\sum_{j=1}^{V}P\left( \gamma
\left( \widehat{s}_{m}^{(-j)}\right) -\gamma \left( s_{m}\right) \right) $
is close enough to $P\left( \gamma \left( \widehat{s}_{m}^{(-1)}\right)
-\gamma \left( s_{m}\right) \right) $ with probability close to one. Indeed,
it holds for any $C\geq 0$,%
\begin{eqnarray*}
&&\mathbb{P}\left( \left\vert P\left( \gamma \left( \widehat{s}%
_{m}^{(-1)}\right) -\gamma \left( s_{m}\right) \right) -\frac{1}{V}%
\sum_{j=1}^{V}P\left( \gamma \left( \widehat{s}_{m}^{(-j)}\right) -\gamma
\left( s_{m}\right) \right) \right\vert \geq C\right)  \\
&=&\mathbb{P}\left( \left\vert \frac{1}{V}\sum_{j=2}^{V}\left[ P\left(
\gamma \left( \widehat{s}_{m}^{(-1)}\right) -\gamma \left( s_{m}\right)
\right) -P\left( \gamma \left( \widehat{s}_{m}^{(-j)}\right) -\gamma \left(
s_{m}\right) \right) \right] \right\vert \geq C\right)  \\
&\leq &\mathbb{P}\left( \max_{j\in \left\{ 2,\ldots,V\right\} }\left\vert
P\left( \gamma \left( \widehat{s}_{m}^{(-1)}\right) -\gamma \left(
s_{m}\right) \right) -P\left( \gamma \left( \widehat{s}_{m}^{(-j)}\right)
-\gamma \left( s_{m}\right) \right) \right\vert \geq C\right)  \\
&\leq &\sum_{j=2}^{V}\mathbb{P}\left( \left\vert P\left( \gamma \left( 
\widehat{s}_{m}^{(-1)}\right) -\gamma \left( s_{m}\right) \right) -P\left(
\gamma \left( \widehat{s}_{m}^{(-j)}\right) -\gamma \left( s_{m}\right)
\right) \right\vert \geq C\right)  \\
&=&\sum_{j=2}^{V}\mathbb{P}\left( \left\vert \left[ P\left( \gamma \left( 
\widehat{s}_{m}^{(-1)}\right) -\gamma \left( s_{m}\right) \right) -\frac{%
\mathcal{C}_{m}}{n}\right] -\left[ P\left( \gamma \left( \widehat{s}%
_{m}^{(-j)}\right) -\gamma \left( s_{m}\right) \right) -\frac{\mathcal{C}_{m}%
}{n}\right] \right\vert \geq C\right)  \\
&\leq &2V\mathbb{P}\left( \left\vert P\left( \gamma \left( \widehat{s}%
_{m}^{(-1)}\right) -\gamma \left( s_{m}\right) \right) -\frac{\mathcal{C}_{m}%
}{n}\right\vert \geq \frac{C}{2}\right) .
\end{eqnarray*}%
Hence, from Theorem 2 of \cite{saum:12}\ applied with $\alpha =2+\alpha _{%
\mathcal{M}}$ and sample size equal to $n_{V}=nV/\left( V-1\right) $, we get
that by taking 
\begin{equation*}
C=2\varepsilon _{n_{V}}\left( m\right) \frac{\mathcal{C}_{m}}{n_{V}}\leq L_{%
\mathrm{(\mathbf{GSA}),}r}\varepsilon _{n}\left( m\right) \mathbb{E}\left[ 
\p%
_{2}^{(-1)}\left( m\right) \right] ,
\end{equation*}%
it holds%
\begin{equation}
\mathbb{P}\left( \left\vert P\left( \gamma \left( \widehat{s}%
_{m}^{(-1)}\right) -\gamma \left( s_{m}\right) \right) -\frac{1}{V}%
\sum_{j=1}^{V}P\left( \gamma \left( \widehat{s}_{m}^{(-j)}\right) -\gamma
\left( s_{m}\right) \right) \right\vert \geq C\right) \leq 10Vn^{-2-\alpha _{%
\mathcal{M}}}.  \label{ineq_P_V_fold}
\end{equation}%
Inequality (\ref{ineq_excess_grand_V_fold}) now follows from combining (\ref%
{ineq_intermed_V_fold}) with (\ref{ineq_P_V_fold}) and noticing that $%
\varepsilon _{n}^{(1)}\left( m\right) \leq L_{\mathrm{(\textbf{GSA})}%
,r}\varepsilon _{n}\left( m\right) $. Inequality (\ref%
{ineq_excess_petit_V_fold}) also derives from Lemma \ref{lemma_hold_out}\
with the same type of reasoning and further details are left to the reader.
\end{proof}

\begin{lmm}
\label{lemma_delta_VFCV}Let $\alpha >0$. Assume that (\textbf{GSA}) is
satisfied and that $1< V\leq r$. Then by setting 
\begin{equation*}
\bar{\delta}\left( m\right) =\frac{1}{V}\sum_{j=1}^{V}\left( P_{n}^{\left(
j\right) }-P\right) \left( \gamma \left( s_{m}\right) -\gamma \left( s_{\ast
}\right) \right) \text{ \ \ and \ \ }\bar{\delta}^{\prime }\left( m\right) =%
\frac{1}{V}\sum_{j=1}^{V}\left( P_{n}^{\left( -j\right) }-P\right) \left(
\gamma \left( s_{m}\right) -\gamma \left( s_{\ast }\right) \right) ,
\end{equation*}%
we have for all $m\in \mathcal{M}_{n}$, 
\begin{equation}
\mathbb{P}\left( \max \left\{ \left\vert \bar{\delta}\left( m\right)
\right\vert ,\left\vert \bar{\delta}^{\prime }\left( m\right) \right\vert
\right\} \geq L_{(\mathrm{\mathbf{GSA}}),r}\left( \sqrt{\frac{\ell \left( s_{\ast
},s_{m}\right) \ln n}{n}}+\frac{\ln n}{n}\right) \right) \leq 2rn^{-\alpha }%
\text{ }.  \label{delta_bar_small}
\end{equation}%
Furthermore, for all $m\in \mathcal{M}_{n}$ such that $A_{\mathcal{M}%
,+}\left( \ln n\right) ^{2}\leq D_{m}$ and for all $n\geq n_{0}\left( (%
\mathbf{GSA}),\alpha \right) $, we have%
\begin{equation*}
\mathbb{P}\left( \max \left\{ \left\vert \bar{\delta}\left( m\right)
\right\vert ,\left\vert \bar{\delta}^{\prime }\left( m\right) \right\vert
\right\} \geq \frac{\ell \left( s_{\ast },s_{m}\right) }{\sqrt{D_{m}}}+L_{(%
\mathbf{GSA}),r}\frac{\ln n}{\sqrt{D_{m}}}\mathbb{E}\left[ 
\p%
_{2}^{(-1)}\left( m\right) \right] \right) \leq 2rn^{-\alpha },
\end{equation*}%
where $%
\p%
_{2}^{(-1)}\left( m\right) :=P_{n}^{\left( -1\right) }\left( \gamma \left(
s_{m}\right) -\gamma \left( \widehat{s}_{m}^{(-1)}\right) \right) \geq 0$.
\end{lmm}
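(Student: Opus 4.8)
The plan is to recognise that both $\bar{\delta}(m)$ and $\bar{\delta}'(m)$ are, block by block, centred empirical means of the \emph{single} function $f_m:=\gamma(s_m)-\gamma(s_\ast)$, and then to control their deviations by Bernstein's inequality and a union bound over the $V\le r$ blocks. First I would record the elementary identity $f_m(X,Y)=(s_\ast(X)-s_m(X))(2Y-s_m(X)-s_\ast(X))$, from which $Pf_m=\|s_m-s_\ast\|_2^2=\ell(s_\ast,s_m)$ (using $\mathbb{E}[Y\mid X]=s_\ast(X)$), $\|f_m\|_\infty\le 8A^2$ and $\var(f_m)\le P(f_m^2)\le 16A^2\,\ell(s_\ast,s_m)$ --- all under (\textbf{Ab}), noting that $\|s_\ast\|_\infty\le A$ because $s_\ast(X)=\mathbb{E}[Y\mid X]$. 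Each $(P_n^{(j)}-P)f_m$ is then a centred empirical mean over $\card(B_j)=n/V\ge n/r$ i.i.d. copies of the variable $f_m-Pf_m$, which is bounded (by $12A^2$, using $\ell(s_\ast,s_m)\le 4A^2$) and has variance at most $16A^2\,\ell(s_\ast,s_m)$; so Bernstein's inequality yields a deviation of order $\sqrt{\ell(s_\ast,s_m)\ln n/n}+\ln n/n$ with probability at least $1-n^{-\alpha}$, and the same holds for $(P_n^{(-j)}-P)f_m$ over the complementary blocks of size $\ge n/2$. Since $|\bar{\delta}(m)|\le\max_j|(P_n^{(j)}-P)f_m|$ and $|\bar{\delta}'(m)|\le\max_j|(P_n^{(-j)}-P)f_m|$, a union bound over the $\le 2r$ relevant events produces (\ref{delta_bar_small}) (after enlarging the constant).

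For the second, sharper estimate --- valid when $A_{\mathcal{M},+}(\ln n)^2\le D_m$ --- the idea is to re-express the term $\sqrt{\ell(s_\ast,s_m)\ln n/n}$ coming from the previous step in terms of $\ell(s_\ast,s_m)/\sqrt{D_m}$ and $\mathbb{E}[\p_2^{(-1)}(m)]$. The key input is the lower bound $\mathbb{E}[\p_2^{(-1)}(m)]=\mathbb{E}[\ell_{\emp}(\widehat{s}_m^{(-1)},s_m)]\ge \sigma_{\min}D_m/(4n)$, which I would obtain from Theorem \ref{theorem_excess_risk_strong_loc} (equivalently Theorem 2 of \cite{saum:12}) applied at sample size $n_V=n(V-1)/V\in[n/2,n]$, together with $\mathcal{C}_m\ge\sigma_{\min}D_m/2$ and the fact that the rate $\varepsilon_{n_V}(m)$ is $\le 1/2$ for $n$ large, uniformly over the admissible range of $D_m$ (this is precisely why $D_m\ge A_{\mathcal{M},+}(\ln n)^2$ is imposed, since it forces $\ln n/D_m$ and $D_m\ln n/n$ both small by (\textbf{P2})). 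Given this, $\sqrt{D_m}\ln n/n\le \tfrac{4}{\sigma_{\min}}(\ln n/\sqrt{D_m})\,\mathbb{E}[\p_2^{(-1)}(m)]$ and $\ln n/n\le\sqrt{D_m}\ln n/n$, while the arithmetic--geometric mean inequality gives $L\sqrt{\ell(s_\ast,s_m)\ln n/n}\le \ell(s_\ast,s_m)/\sqrt{D_m}+(L^2/4)\sqrt{D_m}\ln n/n$; inserting these three estimates into the bound of the first step, on the same event of probability at least $1-2rn^{-\alpha}$, yields the claimed inequality.

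I expect the only genuinely delicate point to be the third step: one must invoke the excess-risk concentration at the reduced sample size $n_V$ to pin down $\mathbb{E}[\p_2^{(-1)}(m)]$ up to constants and check that the associated rate is negligible uniformly over $m\in\mathcal{M}_n$ with $D_m$ in the prescribed window --- this is what fixes the threshold $A_{\mathcal{M},+}(\ln n)^2\le D_m$ and the requirement $n\ge n_0((\mathbf{GSA}),\alpha)$. The first two steps are routine: $f_m$ is bounded and its variance is linear in the bias $\ell(s_\ast,s_m)$, so Bernstein's inequality applies directly with no empirical-process machinery.
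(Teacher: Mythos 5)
Your proposal is correct and follows essentially the same route as the paper: the paper also bounds $\max\{|\bar{\delta}(m)|,|\bar{\delta}'(m)|\}$ by the maximum over the $V$ blocks, applies a union bound, and controls each block via a Bernstein-type deviation inequality for $(P_n^{(j)}-P)(\gamma(s_m)-\gamma(s_\ast))$ at sample size $n/V$ (it simply outsources this per-block bound, in both its forms, to Lemma 5 of \cite{saum:13}, whereas you reconstruct it explicitly together with the AM--GM conversion and the lower bound $\mathbb{E}[\p_2^{(-1)}(m)]\gtrsim \sigma_{\min}D_m/n$). The only cosmetic caveat is that under (\textbf{GSA}) the lower bound on $\mathbb{E}[\p_2^{(-1)}(m)]$ should be drawn from Theorem 2 of \cite{saum:12} combined with (\textbf{Ac}$_{\infty}$) rather than from Theorem \ref{theorem_excess_risk_strong_loc} itself, which assumes (\textbf{Aslb}); you already note this equivalence.
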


\begin{proof}
Notice that for any $C\geq 0$,%
\begin{eqnarray*}
\mathbb{P}\left( \left\vert \bar{\delta}\left( m\right) \right\vert \geq
C\right) &\leq &\mathbb{P}\left( \max_{j\in \left\{ 1,\ldots,V\right\}
}\left\vert \left( P_{n}^{\left( j\right) }-P\right) \left( \gamma \left(
s_{m}\right) -\gamma \left( s_{\ast }\right) \right) \right\vert \geq
C\right) \\
&\leq &\sum_{j=1}^{V}\mathbb{P}\left( \left\vert \left( P_{n}^{\left(
j\right) }-P\right) \left( \gamma \left( s_{m}\right) -\gamma \left( s_{\ast
}\right) \right) \right\vert \geq C\right) .
\end{eqnarray*}%
Then use $j$ times Lemma 5 of \cite{saum:13} with a sample size equal to $%
n/V $ in order to control the summands at the right-hand side of the
inequality in the last display. The same reasoning holds for $\left\vert 
\bar{\delta}^{\prime }\left( m\right) \right\vert $. Further details are
left to the reader.
\end{proof}

\smallskip

\begin{lmm}
\label{lemma_termes_croises}Let $\alpha >0$. Consider a finite-dimensional
linear model $m$ of linear dimension $D$ and assume that $\left( \varphi
_{k}\right) _{k=1}^{D_{m}}$ is a localized orthonormal basis of $\left(
m,\left\Vert 
\cdot%
\right\Vert _{2}\right) $ with index of localization $r_{m}>0$. More
explicitly, we thus assume that for all $\beta =\left( \beta _{k}\right)
_{k=1}^{D_{m}}\in \mathbb{R}^{D_{m}}$,%
\begin{equation*}
\left\Vert \sum_{k=1}^{D_{m}}\beta _{k}\varphi _{k}\right\Vert _{\infty }\leq
r_{m}\sqrt{D_{m}}\left\vert \beta \right\vert _{\infty }.
\end{equation*}%
If (\textbf{Ab}$\left( m\right) $) given in Theorem \ref{theorem_general_sup}%
\ holds and if for some positive constant $A_{+}$,%
\begin{equation*}
D_m\leq A_{+}\frac{n}{\left( \ln n\right) ^{2}},
\end{equation*}%
then there exists a positive constant $L_{\alpha ,r_{m}}^{(2)}$ such that
for all $n\geq n_{0}\left( A_{+}\right) $, we have%
\begin{equation}
\mathbb{P}\left( \max_{\left( k,l\right) \in \left\{ 1,\ldots,D_m\right\}
^{2}}\left\vert \left( P_{n}-P\right) \left( \varphi _{k}%
\cdot%
\varphi _{l}\right) \right\vert \geq L_{\alpha ,r_{m}}^{(2)}\min \left\{
\left\Vert \varphi _{k}\right\Vert _{\infty };\left\Vert \varphi
_{l}\right\Vert _{\infty }\right\} \sqrt{\frac{\ln n}{n}}\right) \leq
n^{-\alpha }.  \label{dev_croises}
\end{equation}
\end{lmm}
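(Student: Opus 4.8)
The plan is to treat a fixed pair $(k,l)$ by Bernstein's inequality applied to the i.i.d.\ centered sum
\[
(P_n-P)(\varphi_k\cdot\varphi_l)=\frac1n\sum_{i=1}^n\Big(\varphi_k(X_i)\varphi_l(X_i)-P(\varphi_k\varphi_l)\Big),
\]
and then to conclude by a union bound over the $D_m^2$ pairs. Assume without loss of generality that $\|\varphi_k\|_\infty\le\|\varphi_l\|_\infty$ and write $v:=\|\varphi_k\|_\infty=\min\{\|\varphi_k\|_\infty,\|\varphi_l\|_\infty\}$. Taking $\beta$ supported on a single coordinate in the defining inequality of a localized basis gives $\|\varphi_j\|_\infty\le r_m\sqrt{D_m}$ for every $j$, hence the summands $Z_i:=\varphi_k(X_i)\varphi_l(X_i)-P(\varphi_k\varphi_l)$ satisfy $|Z_i|\le 2\|\varphi_k\|_\infty\|\varphi_l\|_\infty\le 2\,v\,r_m\sqrt{D_m}$. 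For the variance, orthonormality of the basis yields $\var(Z_i)\le P(\varphi_k^2\varphi_l^2)\le\|\varphi_k\|_\infty^2\,P(\varphi_l^2)=v^2$; this is precisely the step responsible for the appearance of the \emph{minimum} of the two sup-norms in \eqref{dev_croises} rather than their product.

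First I would apply Bernstein's inequality: for every $t>0$,
\[
\mathbb{P}\Big(\big|(P_n-P)(\varphi_k\varphi_l)\big|\ge t\Big)\le 2\exp\!\left(-\frac{n t^2/2}{v^2+\tfrac{2}{3}\,v\,r_m\sqrt{D_m}\,t}\right).
\]
Then I would choose $t=L\,v\sqrt{\ln n/n}$ with a constant $L$ fixed later. The dimension restriction $D_m\le A_+\,n/(\ln n)^2$ gives $r_m\sqrt{D_m\ln n/n}\le r_m\sqrt{A_+}/\ln n\to 0$, so for all $n$ large enough the sub-exponential term in the Bernstein denominator is controlled, $\tfrac23 v r_m\sqrt{D_m}\,t\le v^2$, and the exponent becomes at least $n t^2/(4v^2)=L^2\ln n/4$. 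This yields $\mathbb{P}\big(|(P_n-P)(\varphi_k\varphi_l)|\ge L\,v\sqrt{\ln n/n}\big)\le 2\,n^{-L^2/4}$.

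To finish, I would take a union bound over the at most $D_m^2\le A_+^2 n^2$ ordered pairs $(k,l)$, bounding the probability in \eqref{dev_croises} by $2A_+^2 n^{2-L^2/4}$; choosing $L=L^{(2)}_{\alpha,r_m}$ so that $L^2/4>\alpha+2$ then makes this at most $n^{-\alpha}$ as soon as $n\ge n_0(A_+)$. (Note that $(\textbf{Ab}($m$))$ is not actually used in this particular estimate, since $\varphi_k\varphi_l$ involves neither $Y$ nor $s_m$; it is retained here only for uniformity with the companion lemmas.)

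The computations are routine; the one point that requires care is pairing the deviation scale $v\sqrt{\ln n/n}$ with the verification that, under $D_m\ll n/(\ln n)^2$, the linear (sub-exponential) term in the Bernstein bound is dominated by the quadratic (sub-Gaussian) one — this is the only place the hypothesis on $D_m$ enters, and it is what fixes the threshold $n_0$.
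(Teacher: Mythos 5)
Your proof is correct and follows essentially the same route as the paper's: Bernstein's inequality for each fixed pair $(k,l)$ with the variance bounded by $\min\{\left\Vert \varphi _{k}\right\Vert _{\infty }^{2};\left\Vert \varphi _{l}\right\Vert _{\infty }^{2}\}$ via orthonormality, the sup-norm of the product bounded by the minimum times $r_{m}\sqrt{D_{m}}$, the condition $D_{m}\leq A_{+}n(\ln n)^{-2}$ used to absorb the linear Bernstein term into the sub-Gaussian one, and a union bound over the $D_{m}^{2}$ pairs with $\gamma =\alpha +2$. Your side remark that (\textbf{Ab}$(m)$) is not actually needed here is also accurate.
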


\begin{proof}
For any $%
\left( k,l\right) \in \left\{ 1,\ldots,D_m\right\} ^{2}$, we have 
\begin{equation*}
\mathbb{E}\left[ \left( \varphi _{k}%
\cdot%
\varphi _{l}\right) ^{2}\left( X\right) \right] \leq \min \left\{ \left\Vert
\varphi _{k}\right\Vert _{\infty }^{2};\left\Vert \varphi _{l}\right\Vert
_{\infty }^{2}\right\}
\end{equation*}%
and%
\begin{eqnarray*}
\left\Vert \varphi _{k}%
\cdot%
\varphi _{l}\right\Vert _{\infty } &\leq &\min \left\{ \left\Vert \varphi
_{k}\right\Vert _{\infty };\left\Vert \varphi _{l}\right\Vert _{\infty
}\right\} \times \max \left\{ \left\Vert \varphi _{k}\right\Vert _{\infty
};\left\Vert \varphi _{l}\right\Vert _{\infty }\right\} \\
&\leq &\min \left\{ \left\Vert \varphi _{k}\right\Vert _{\infty };\left\Vert
\varphi _{l}\right\Vert _{\infty }\right\} \times r_{m}\sqrt{D_{m}}.
\end{eqnarray*}%
Hence, we apply Bernstein's inequality (see Proposition 2.9 in \cite{Massart:07}) and we get, for all $\gamma >0$,%
\begin{equation}
\mathbb{P}\left( \left\vert \left( P_{n}-P\right) \left( \varphi _{k}%
\cdot%
\varphi _{l}\right) \right\vert \geq \min \left\{ \left\Vert \varphi
_{k}\right\Vert _{\infty };\left\Vert \varphi _{l}\right\Vert _{\infty
}\right\} \left( \sqrt{\frac{2\gamma \ln n}{n}}+\frac{r_{m}\sqrt{D_{m}}\gamma
\ln n}{3n}\right) \right) \leq 2n^{-\gamma }.
\label{bernstein_dev_reg_sup}
\end{equation}%
Since, for all $n\geq n_{0}\left( A_{+}\right) $, 
\begin{equation*}
\frac{r_{m}\sqrt{D_{m}}\ln n}{n}\leq \frac{r_{m}\sqrt{A_{+}}}{\sqrt{\ln n}}%
\cdot%
\sqrt{\frac{\ln n}{n}}\leq r_{m}\sqrt{\frac{\ln n}{n}},
\end{equation*}%
we get from (\ref{bernstein_dev_reg_sup}) that for all $n\geq n_{0}\left(
A_{+}\right) $,%
\begin{eqnarray}
&&\mathbb{P}\left( \max_{\left( k,l\right) \in \left\{ 1,\ldots,D_m\right\}
^{2}}\left\vert \left( P_{n}-P\right) \left( \varphi _{k}%
\cdot%
\varphi _{l}\right) \right\vert \geq \left( \sqrt{2\gamma }+\frac{\gamma
r_{m}}{3}\right) \min \left\{ \left\Vert \varphi _{k}\right\Vert _{\infty
};\left\Vert \varphi _{l}\right\Vert _{\infty }\right\} \sqrt{\frac{\ln n}{n}%
}\right)  \notag \\
&\leq &\sum_{\left( k,l\right) \in \left\{ 1,\ldots,D_m\right\} ^{2}}\mathbb{P}%
\left( \left\vert \left( P_{n}-P\right) \left( \varphi _{k}%
\cdot%
\varphi _{l}\right) \right\vert \geq \left( \sqrt{2\gamma }+\frac{\gamma
r_{m}}{3}\right) \min \left\{ \left\Vert \varphi _{k}\right\Vert _{\infty
};\left\Vert \varphi _{l}\right\Vert _{\infty }\right\} \sqrt{\frac{\ln n}{n}%
}\right)  \notag \\
&\leq &\sum_{\left( k,l\right) \in \left\{ 1,\ldots,D_m\right\} ^{2}}\mathbb{P}%
\left( \left\vert \left( P_{n}-P\right) \left( \varphi _{k}%
\cdot%
\varphi _{l}\right) \right\vert \geq \min \left\{ \left\Vert \varphi
_{k}\right\Vert _{\infty };\left\Vert \varphi _{l}\right\Vert _{\infty
}\right\} \sqrt{\frac{2\gamma \ln n}{n}}+\frac{r_{m}\sqrt{D_{m}}\gamma \ln n}{3n}%
\right)  \notag \\
&\leq &2D^{2}n^{-\gamma }\leq n^{-\gamma +2}.  \label{dev_2_reg_sup}
\end{eqnarray}%
We deduce from (\ref{dev_2_reg_sup}) that (\ref{dev_croises}) holds with $%
L_{\alpha,r_m}^{(2)}=\sqrt{2\alpha +4}+\left( \alpha +2\right) r_{m}/3>0$.\ 
\end{proof}

\begin{lmm}
\label{lemma_dev_phi1_reg_sup}Under the assumptions of Lemma \ref{lemma_termes_croises} there exists a positive constant $L_{A,r_{m},\alpha }^{(1)}$ such that
for all $n\geq n_{0}\left( A_{+}\right) $, we have%
\begin{equation*}
\mathbb{P}\left( \max_{k\in \left\{ 1,\ldots,D_m\right\} }\left\vert \left(
P_{n}-P\right) \left( \psi _{m}%
\cdot%
\varphi _{k}\right) \right\vert \geq L_{A,r_{m},\alpha }^{(1)}\sqrt{\frac{%
\ln n}{n}}\right) \leq n^{-\alpha },  
\end{equation*}%
where $\psi _{m}\left( x,y\right) =-2\left( y-s_{m}\left( x\right) \right) $.
\end{lmm}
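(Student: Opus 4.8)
The plan is to transcribe, almost verbatim, the proof of Lemma~\ref{lemma_termes_croises}, replacing one of the two basis functions by $\psi _{m}$. First I would record that $\psi _{m}$ is uniformly bounded under (\textbf{Ab}($m$)): since $|Y|\le A$ holds $P$-almost surely and $\left\Vert s_{m}\right\Vert _{\infty }\le A$, we get $\left\Vert \psi _{m}\right\Vert _{\infty }=2\left\Vert Y-s_{m}\right\Vert _{\infty }\le 4A$. Consequently, for every $k\in \left\{ 1,\ldots ,D_{m}\right\} $, using $\left\Vert \varphi _{k}\right\Vert _{2}=1$ and the localized basis inequality applied to a vector $\beta $ supported on the single coordinate $k$,
\[
\mathbb{E}\left[ \left( \psi _{m}\cdot \varphi _{k}\right) ^{2}(\xi )\right]
=4\,\mathbb{E}\left[ \left( Y-s_{m}(X)\right) ^{2}\varphi _{k}^{2}(X)\right]
\le 16A^{2},\qquad \left\Vert \psi _{m}\cdot \varphi _{k}\right\Vert _{\infty
}\le 4A\left\Vert \varphi _{k}\right\Vert _{\infty }\le 4Ar_{m}\sqrt{D_{m}}.
\]

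Next I would apply Bernstein's inequality (Proposition~2.9 in \cite{Massart:07}) to each centered quantity $(P_{n}-P)(\psi _{m}\cdot \varphi _{k})$, with variance factor $v=16A^{2}\ge \mathbb{E}[(\psi _{m}\varphi _{k})^{2}(\xi )]$ and sup-norm factor $b=4Ar_{m}\sqrt{D_{m}}$: for every $\gamma >0$,
\[
\mathbb{P}\left( \left\vert (P_{n}-P)(\psi _{m}\cdot \varphi _{k})\right\vert
\ge 4A\sqrt{\frac{2\gamma \ln n}{n}}+\frac{4Ar_{m}\sqrt{D_{m}}\,\gamma \ln n}{
3n}\right) \le 2n^{-\gamma }.
\]
Exactly as in the proof of Lemma~\ref{lemma_termes_croises}, the assumption $D_{m}\le A_{+}n/(\ln n)^{2}$ gives $\sqrt{D_{m}}\,\ln n/n\le \sqrt{\ln n/n}$ for all $n\ge n_{0}(A_{+})$, so the deviation threshold above is bounded by $\bigl( 4A\sqrt{2\gamma }+\tfrac{4}{3}Ar_{m}\gamma \bigr) \sqrt{\ln n/n}$.

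Finally, a union bound over $k\in \left\{ 1,\ldots ,D_{m}\right\} $ gives, for $n\ge n_{0}(A_{+})$ (large enough that $2D_{m}\le n^{2}$),
\[
\mathbb{P}\left( \max_{k\in \left\{ 1,\ldots ,D_{m}\right\} }\left\vert
(P_{n}-P)(\psi _{m}\cdot \varphi _{k})\right\vert \ge \Bigl( 4A\sqrt{2\gamma }
+\tfrac{4}{3}Ar_{m}\gamma \Bigr) \sqrt{\frac{\ln n}{n}}\right) \le
2D_{m}n^{-\gamma }\le n^{2-\gamma },
\]
and choosing $\gamma =\alpha +2$ yields the claim, with for instance $L_{A,r_{m},\alpha }^{(1)}=4A\sqrt{2(\alpha +2)}+\tfrac{4}{3}Ar_{m}(\alpha +2)$. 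I do not anticipate any genuine obstacle: the statement is a direct analogue of Lemma~\ref{lemma_termes_croises}, the only additional input being the uniform bound $\left\Vert \psi _{m}\right\Vert _{\infty }\le 4A$, which is immediate from (\textbf{Ab}($m$)), together with the (trivial) variance computation above.
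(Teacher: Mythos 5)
Your proposal is correct and follows essentially the same route as the paper, whose proof of this lemma is only a sketch ("by Bernstein's inequality... in the spirit of the proof of Lemma \ref{lemma_termes_croises}... then a union bound"); you supply precisely the intended details: the bound $\left\Vert \psi _{m}\right\Vert _{\infty }\leq 4A$ from (\textbf{Ab}($m$)), the variance and sup-norm factors for Bernstein, absorption of the $\sqrt{D_{m}}\ln n/n$ term via $D_{m}\leq A_{+}n/(\ln n)^{2}$, and a union bound over $k$. The only cosmetic difference is the exponent used in the union bound ($\gamma =\alpha +2$ versus the paper's $\beta =\alpha +1$, which exploits $D_{m}\lesssim n/(\ln n)^{2}$); both choices are valid.
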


\begin{proof}
Let $\beta
>0 $. Notice that by (\textbf{Ab}$\left( m\right) $), 
\begin{equation*}
\left\vert \psi _{m}\left( X,Y\right) \right\vert \leq 4A\text{ \ \ }a.s.
\end{equation*}%
Then by Bernstein's inequality, we get by straightforward computations (in
the spirit of the proof of Lemma \ref{lemma_termes_croises}) that there
exists $L {(1)}_{A,r_{m},\beta }>0$ such that, for all $k\in \left\{
1,\ldots,D_m\right\} $,%
\begin{equation*}
\mathbb{P}\left( \left\vert \left( P_{n}-P\right) \left( \psi _{m}%
\cdot%
\varphi _{k}\right) \right\vert \geq L_{A,r_{m},\beta }^{(1)}\sqrt{\frac{\ln
n}{n}}\right) \leq n^{-\beta }.
\end{equation*}%
Now the result follows from a simple union bound with $\beta =\alpha +1$. 
\end{proof}

\subsection{Additional simulation results}

This section provides additional simulation results to those in Section \ref{section_experiments}. Figure~\ref{fig:singleWave} is an analogy 	to Figure~\ref{fig:singleSpikes} which illustrates the difference between the test functions \textit{Spikes} and \textit{Wave} for a smaller sample size (i.e. $n=1024$).


\begin{figure}
\centering
\subfigure[$\sigma_{l1}(x)=0.01$]{\includegraphics[width=0.23\textwidth]{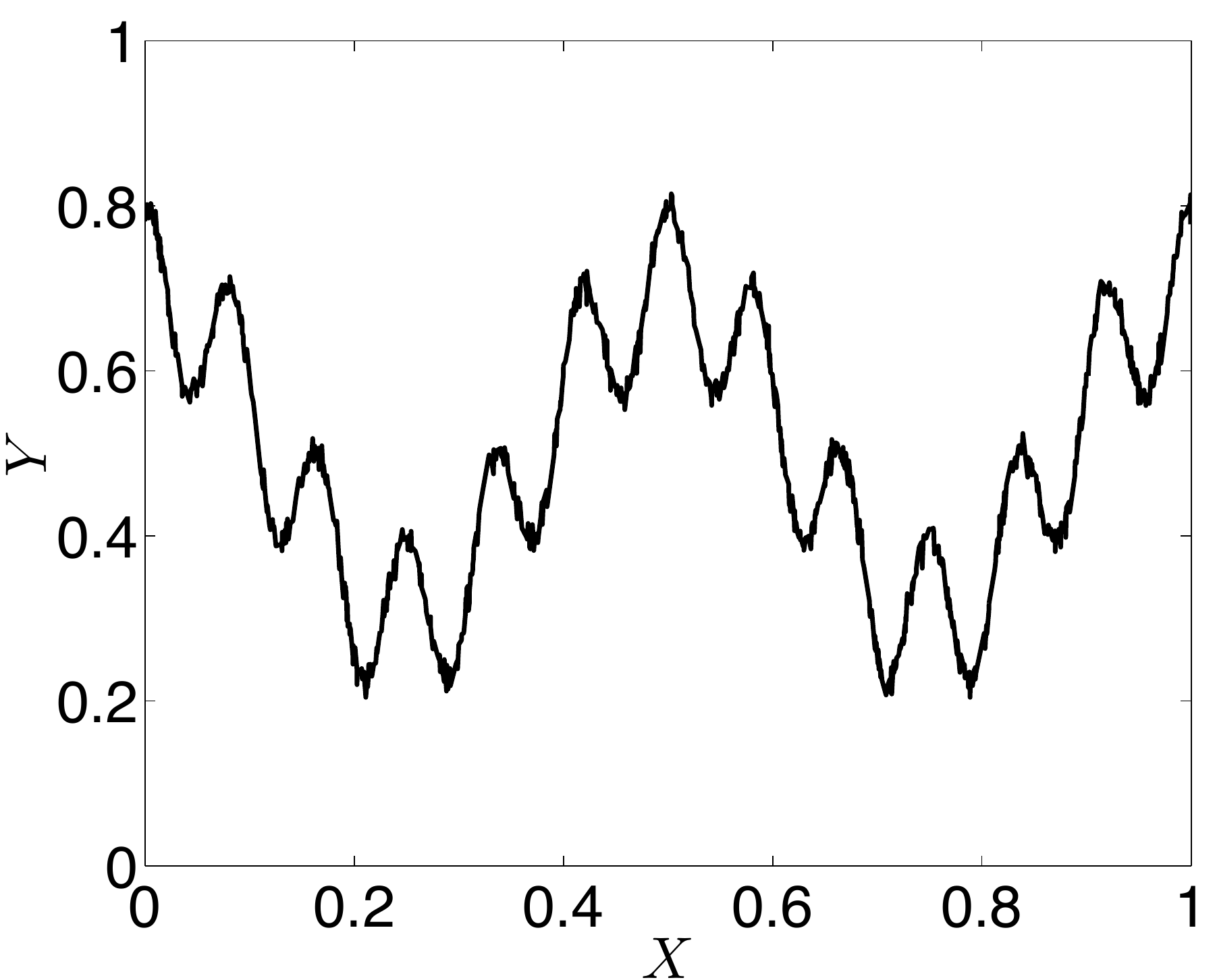}}
\subfigure[$\sigma_{l2}(x)=0.02x$]{\includegraphics[width=0.23\textwidth]{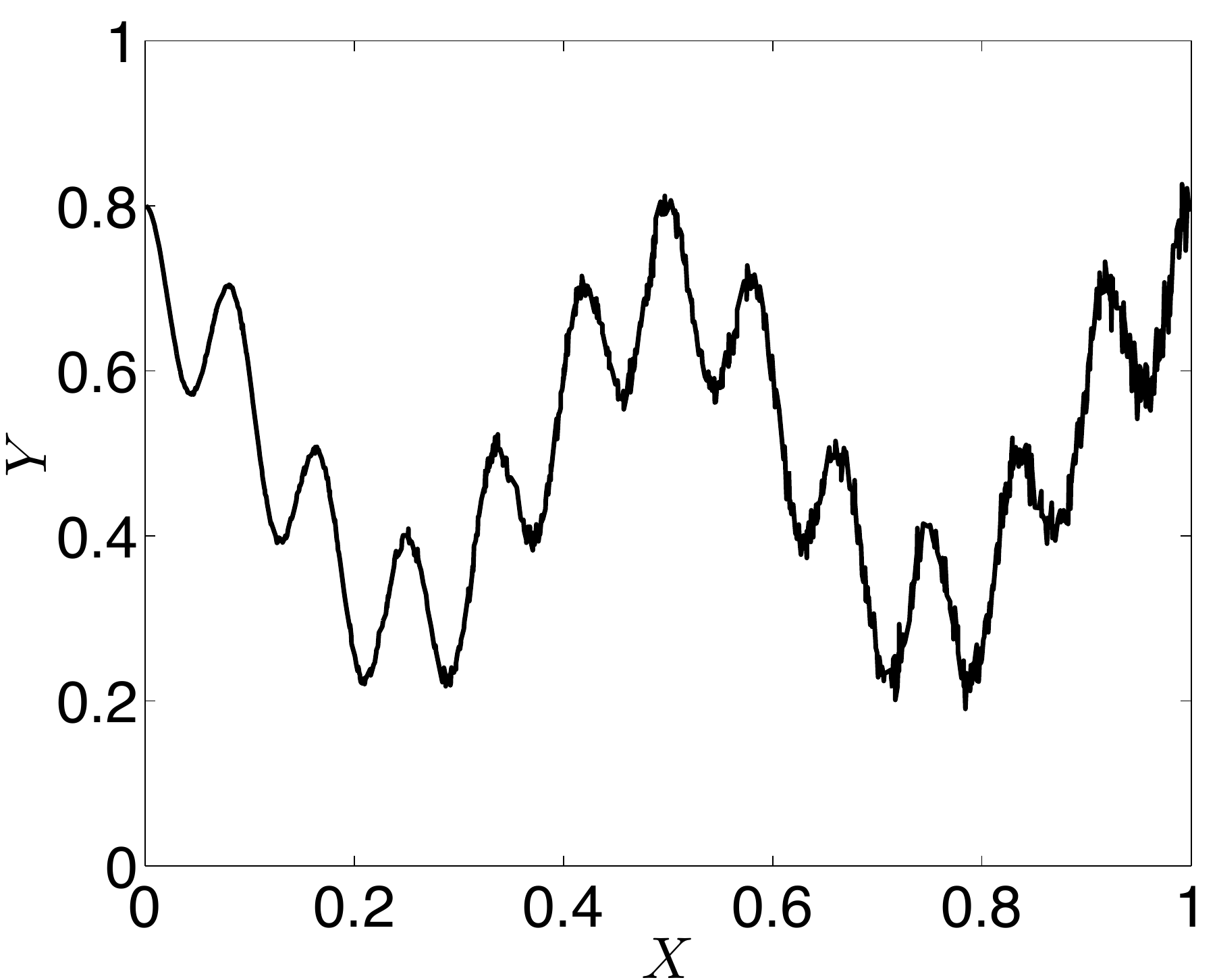}}
\subfigure[$\sigma_{h1}(x)=0.05$]{\includegraphics[width=0.23\textwidth]{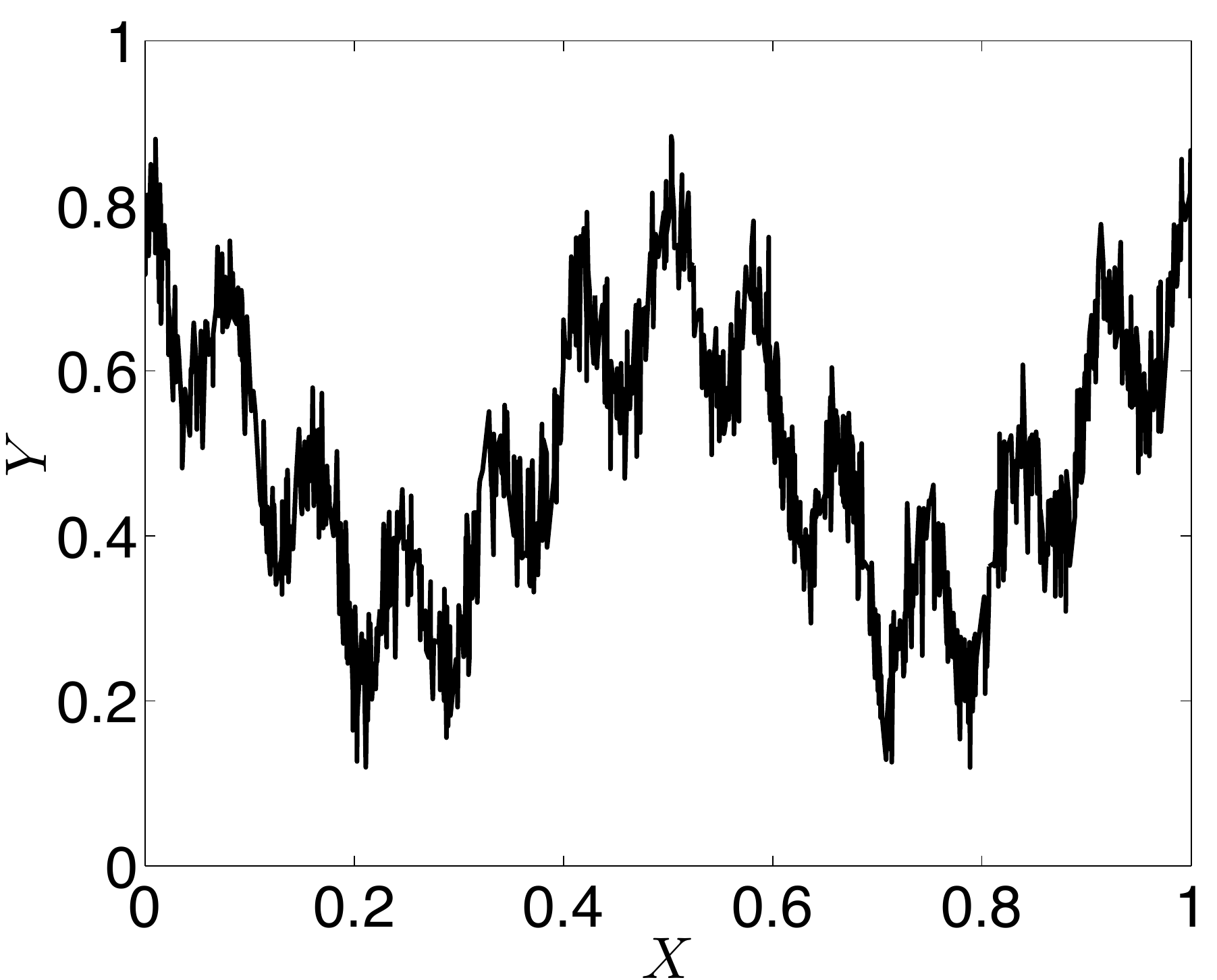}}
\subfigure[$\sigma_{h2}(x)=0.1x$]{\includegraphics[width=0.23\textwidth]{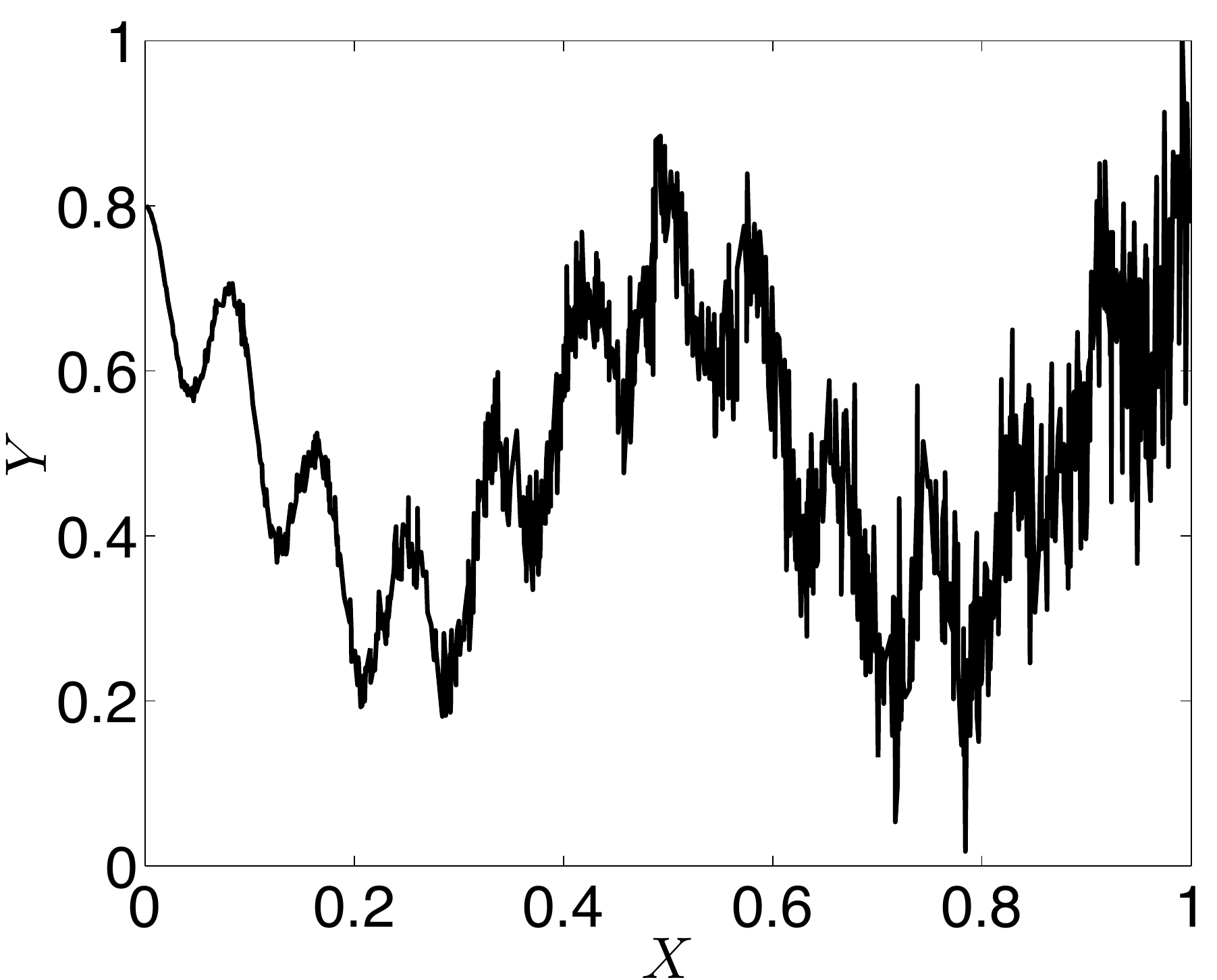}}
\subfigure[]{
\includegraphics[width=0.23\textwidth]{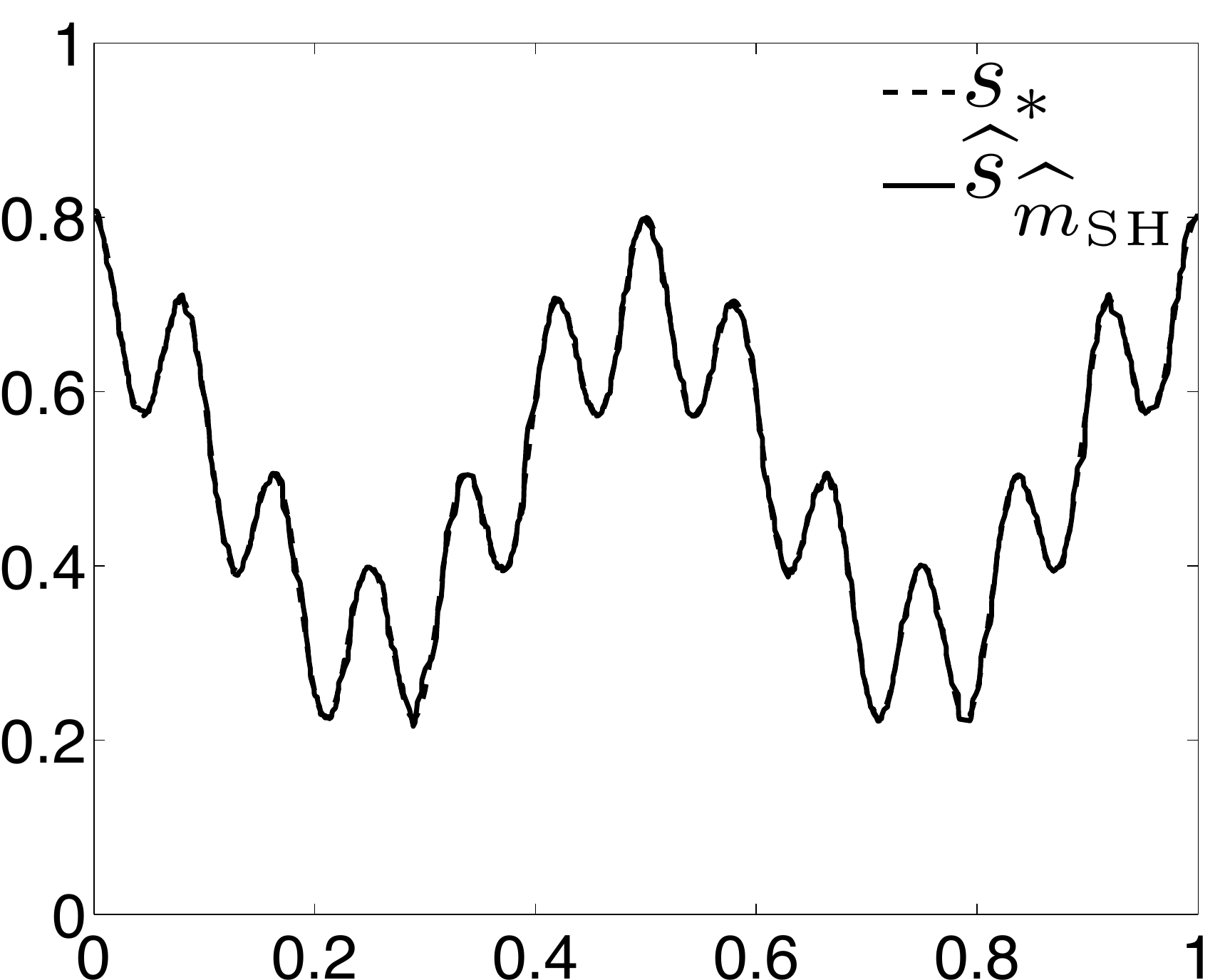}
\includegraphics[width=0.23\textwidth]{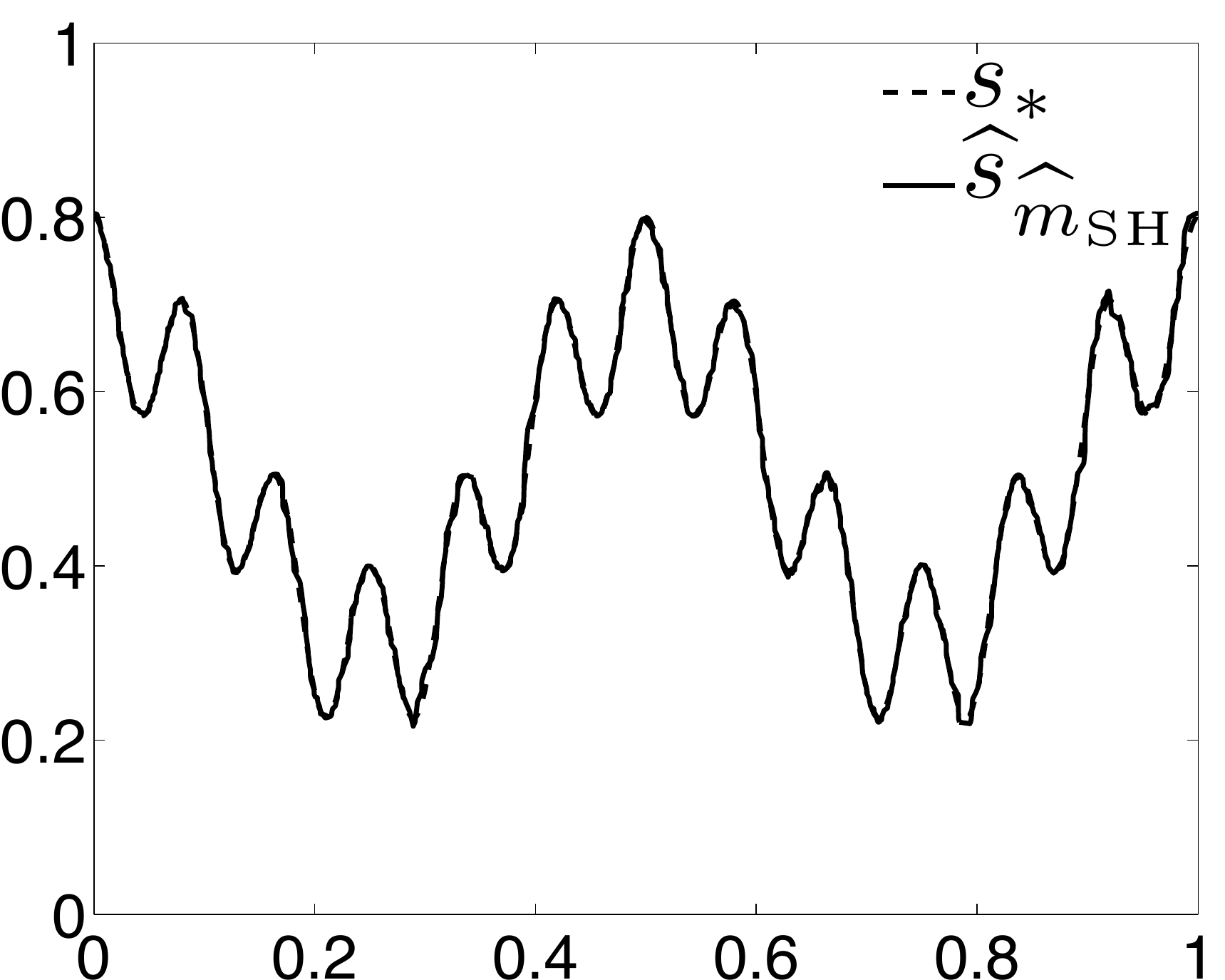}
\includegraphics[width=0.23\textwidth]{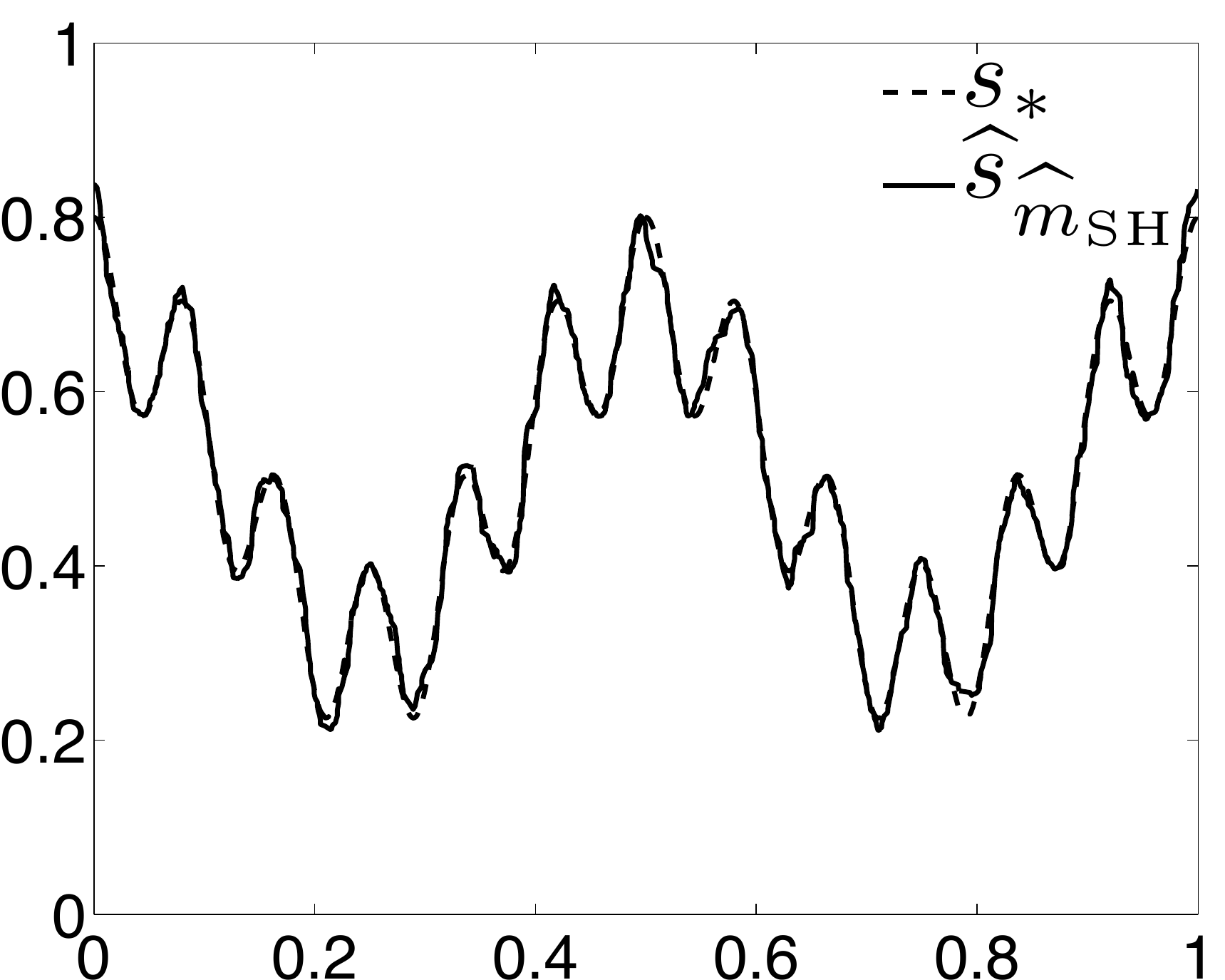}
\includegraphics[width=0.23\textwidth]{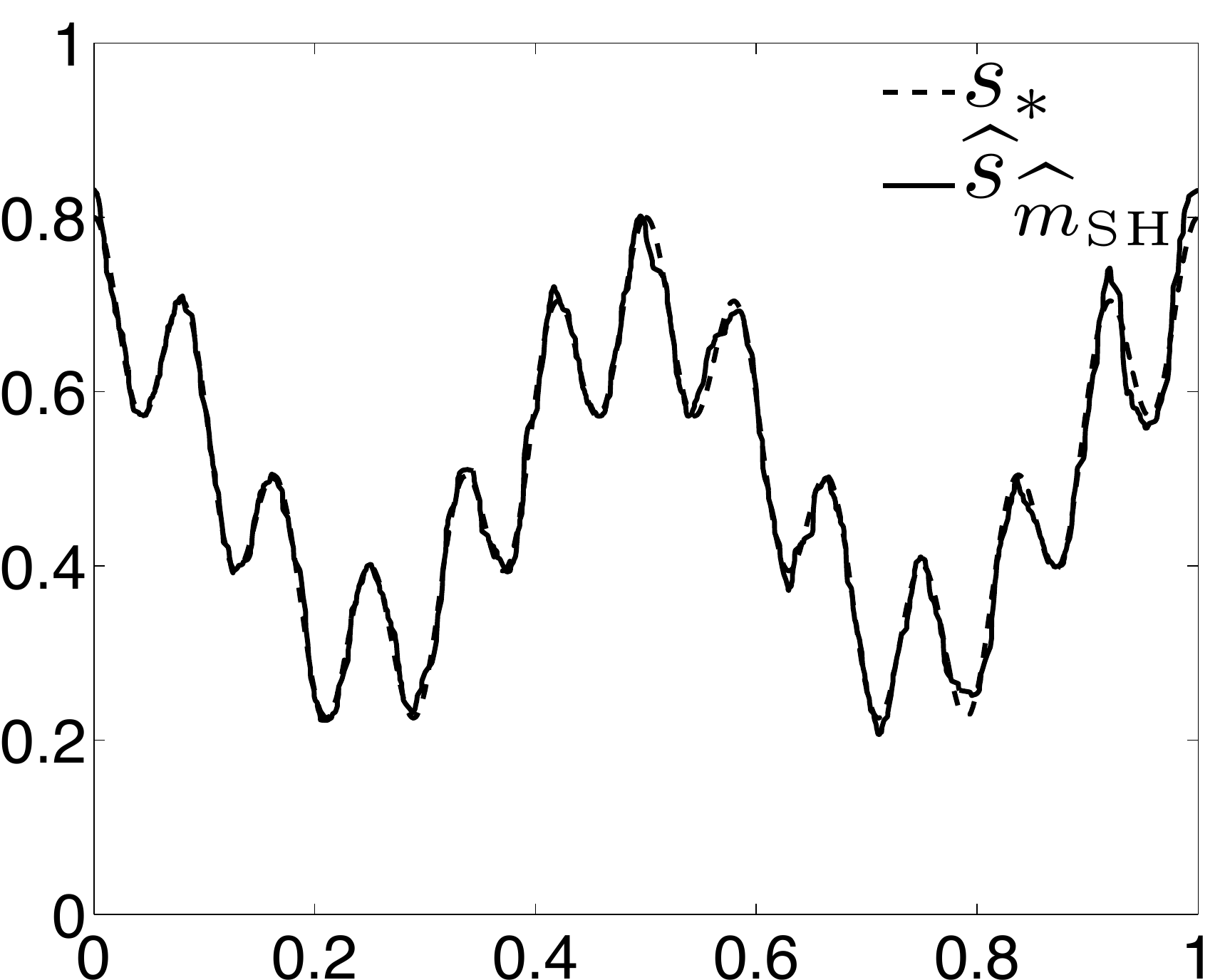}}
\subfigure[]{
\includegraphics[width=0.23\textwidth]{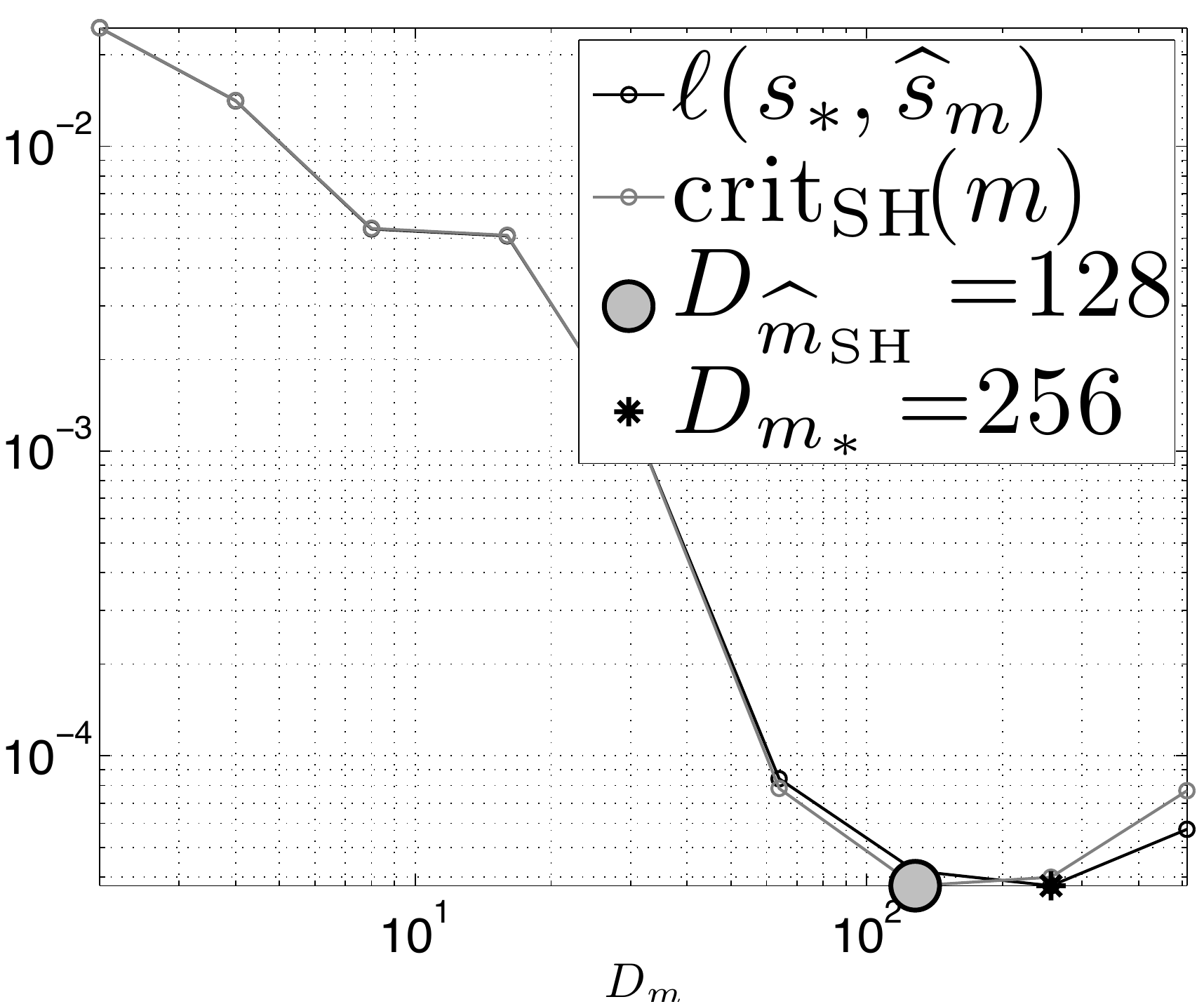}
\includegraphics[width=0.23\textwidth]{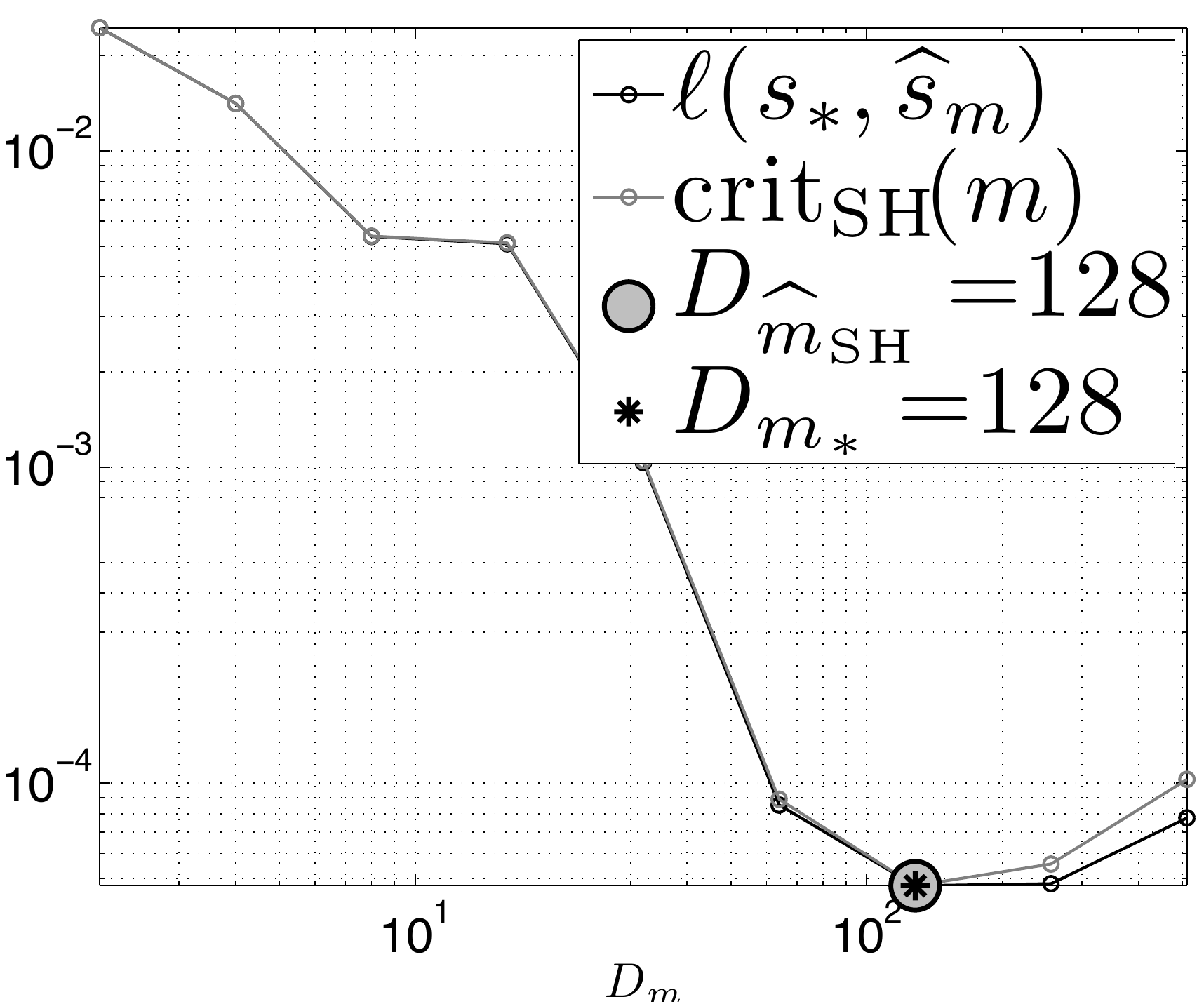}
\includegraphics[width=0.23\textwidth]{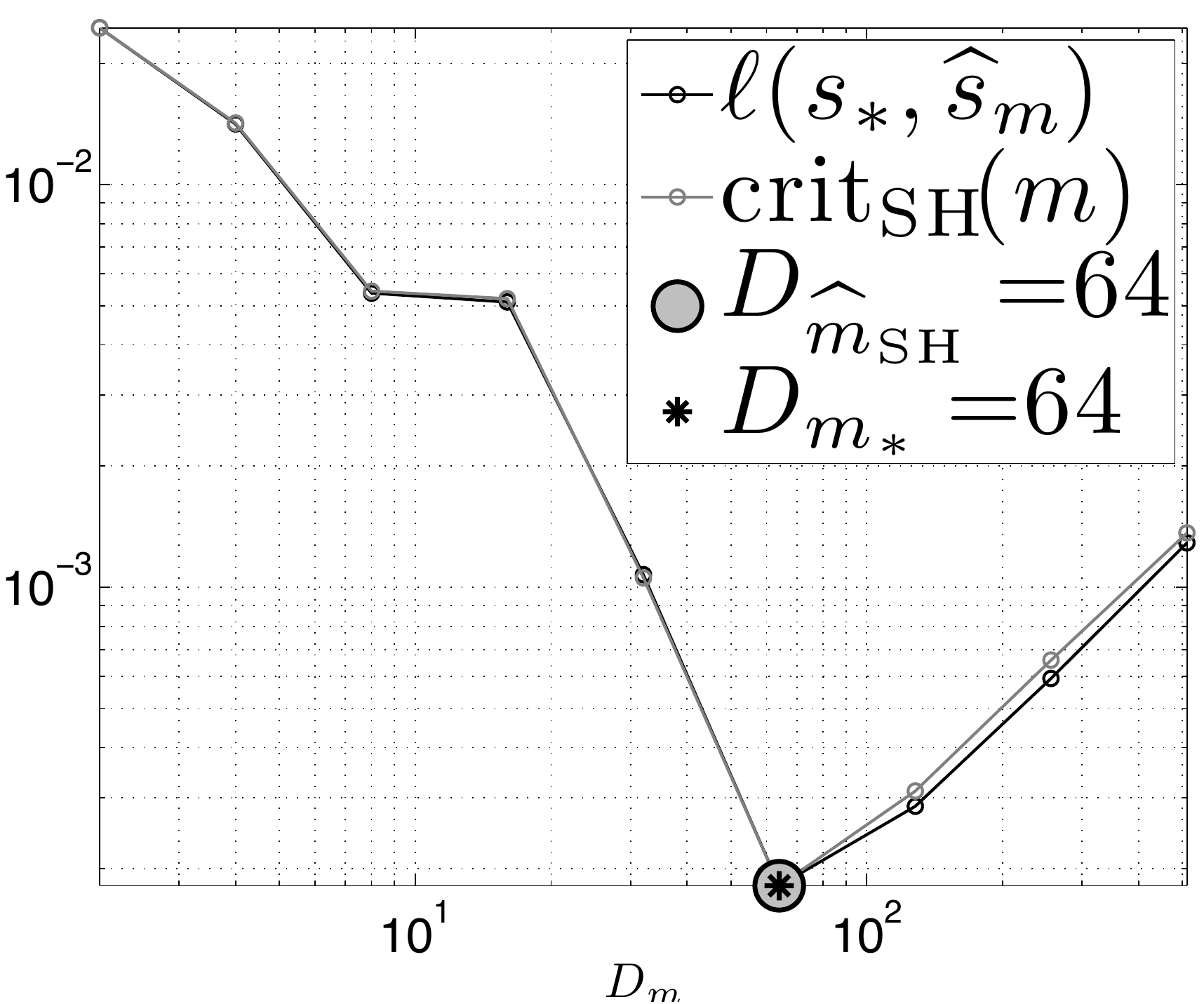}
\includegraphics[width=0.23\textwidth]{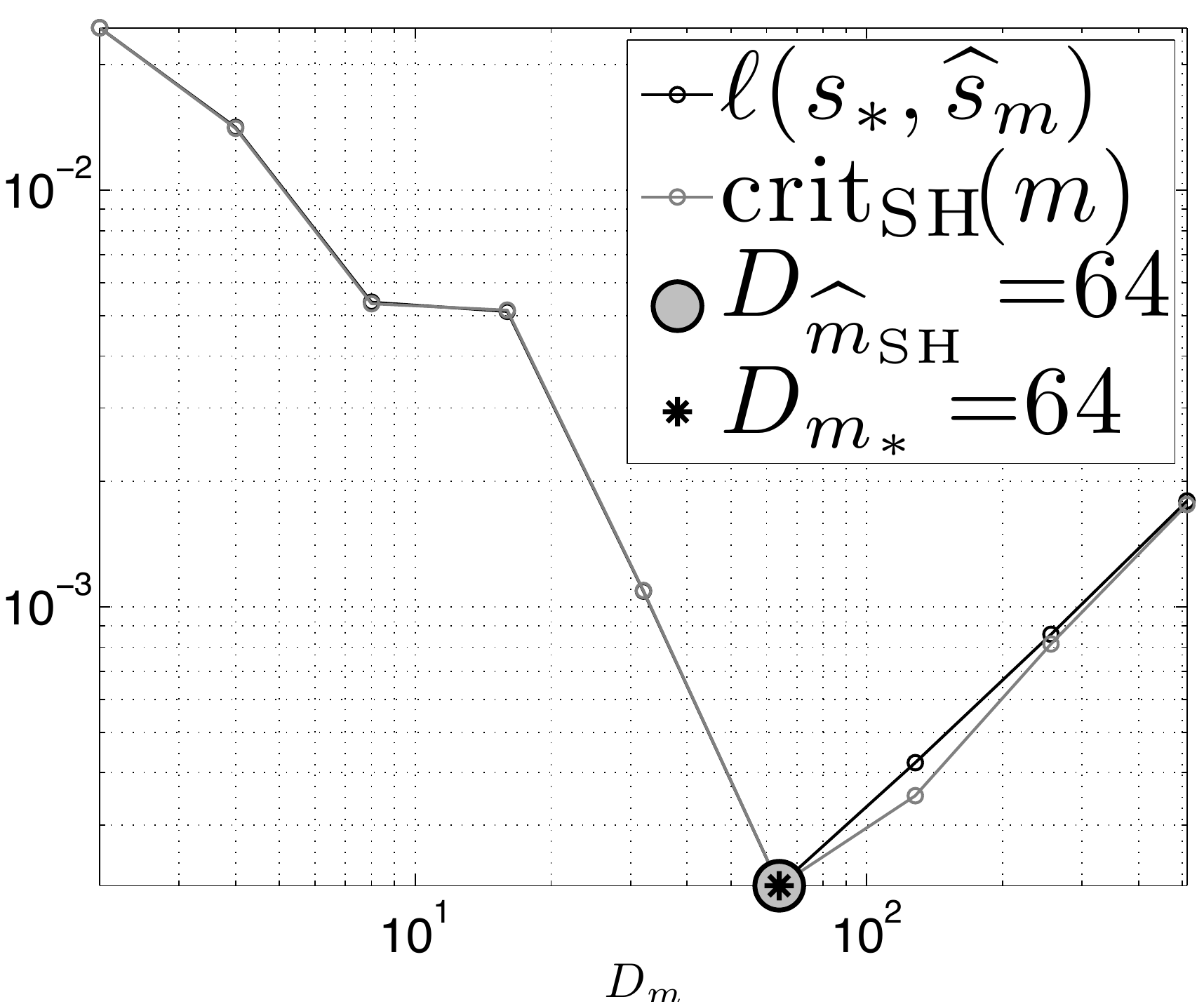}}
\subfigure[]{
\includegraphics[width=0.23\textwidth]{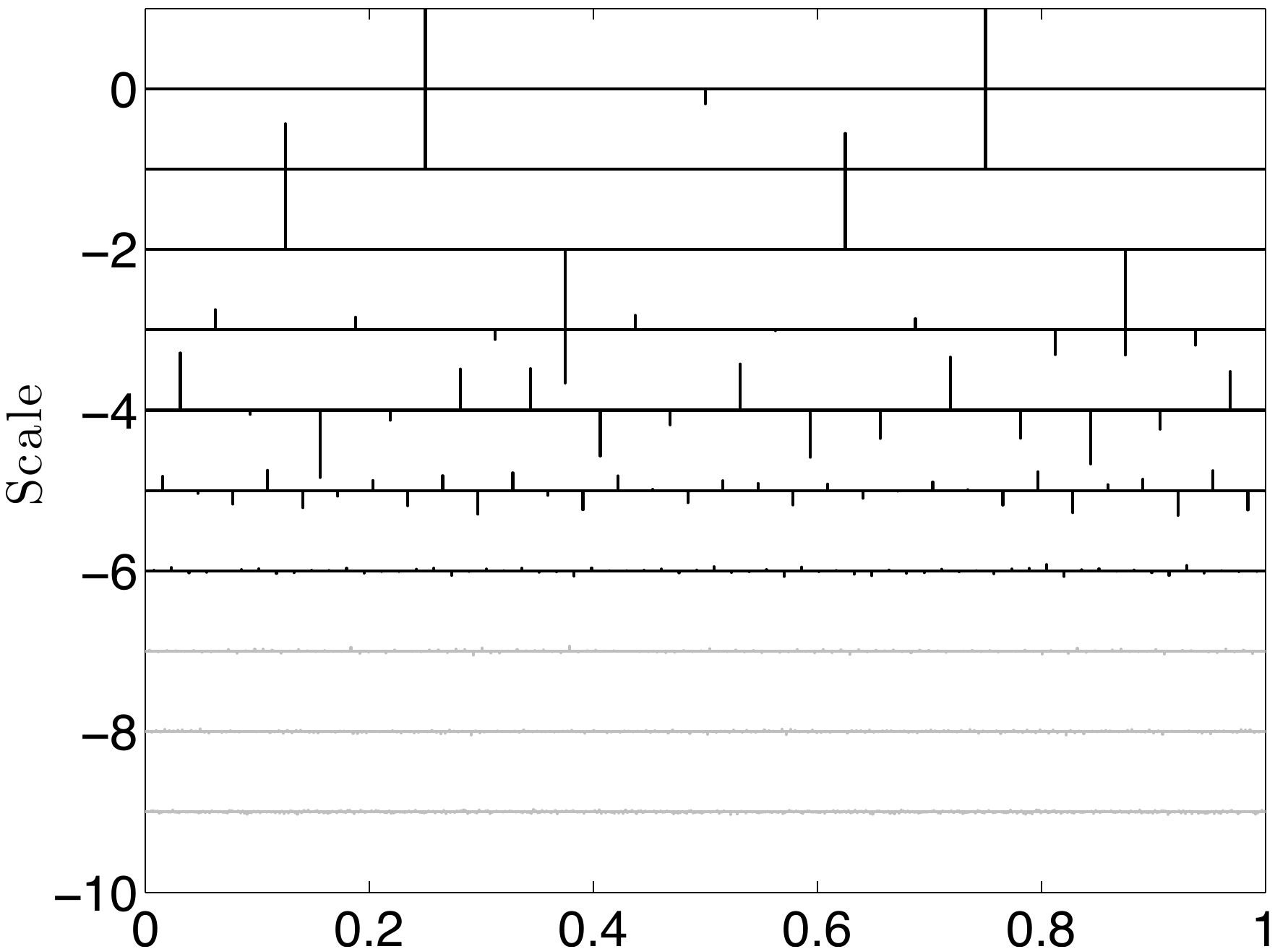}
\includegraphics[width=0.23\textwidth]{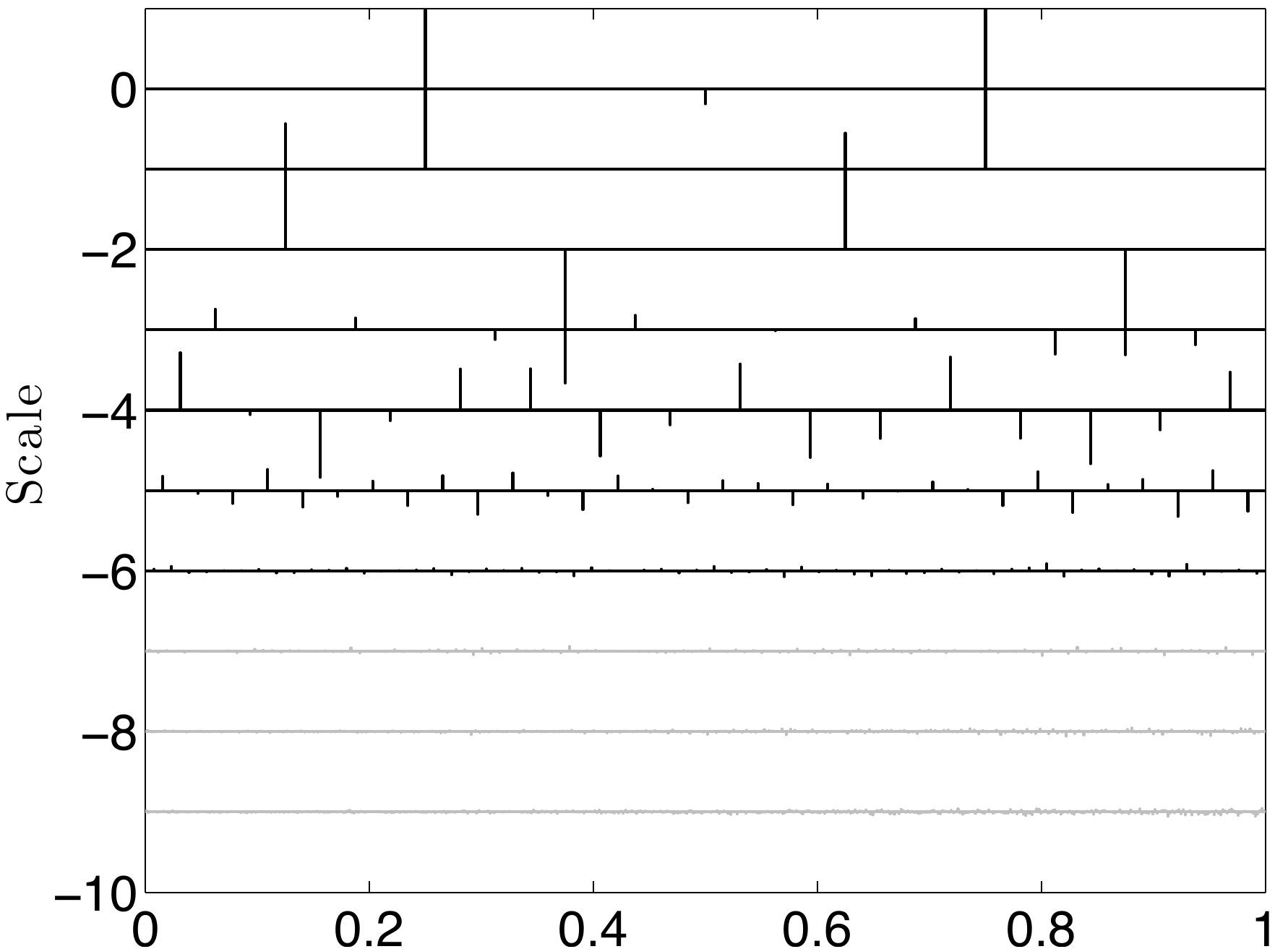}
\includegraphics[width=0.23\textwidth]{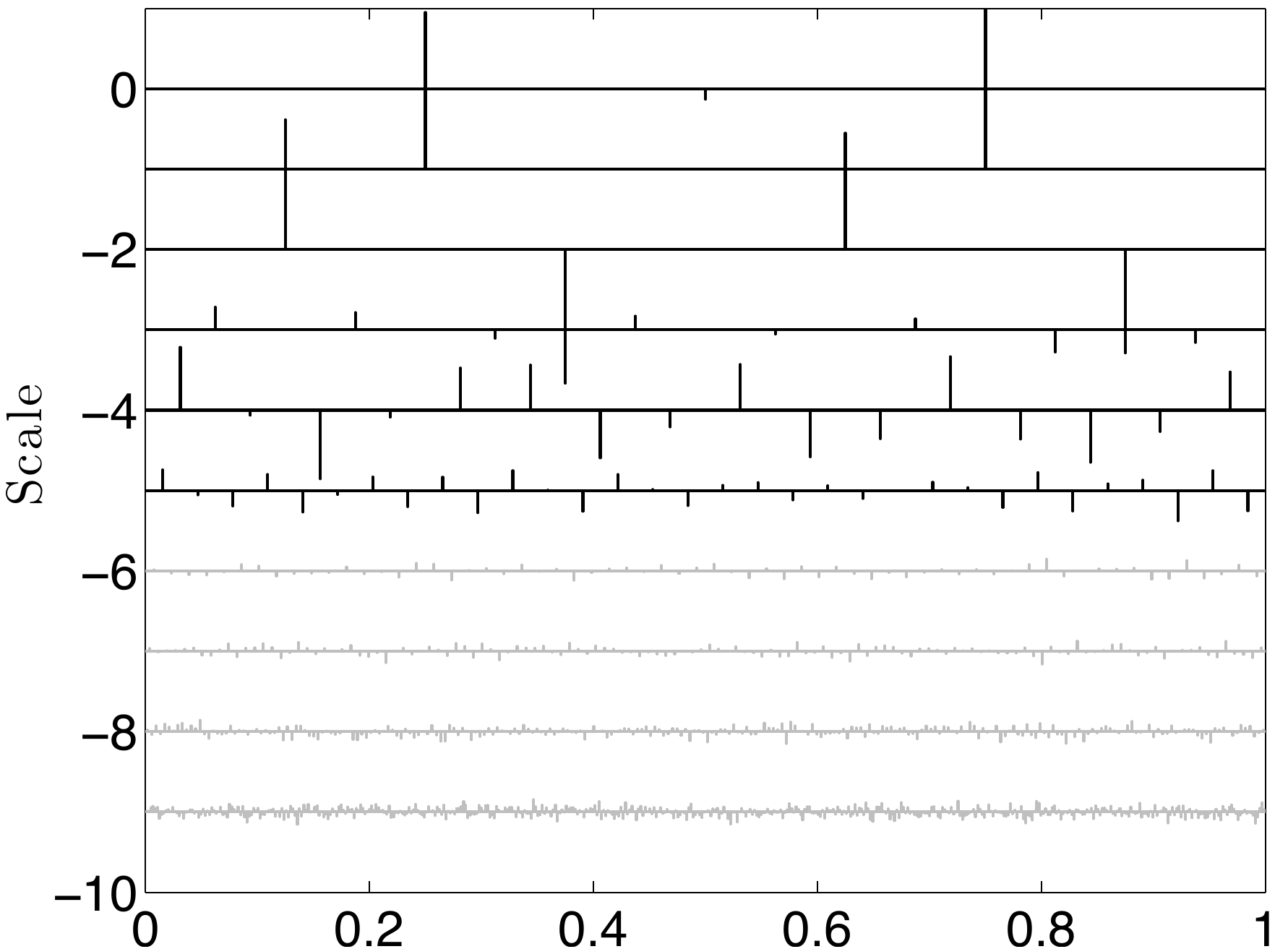}
\includegraphics[width=0.23\textwidth]{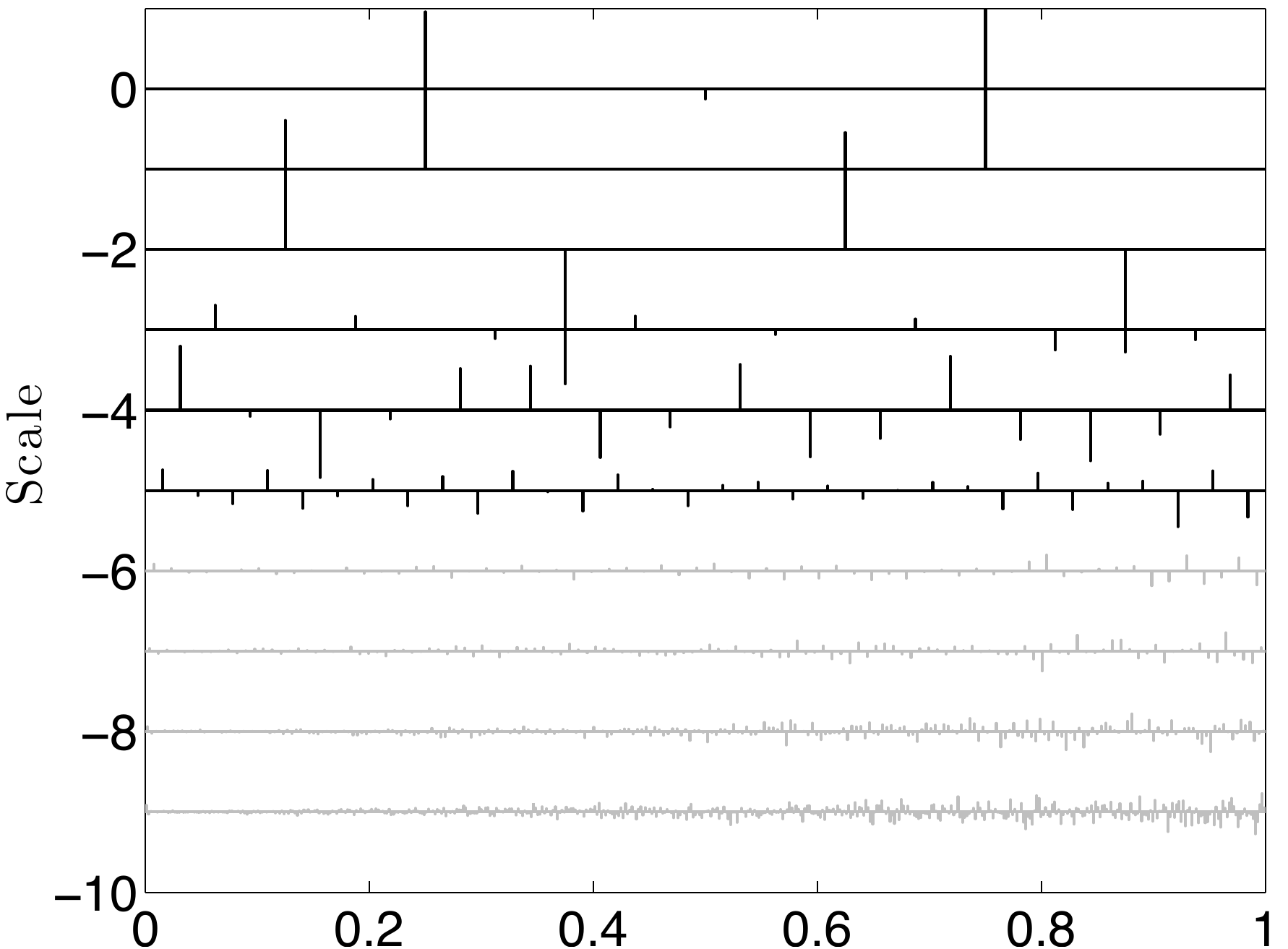}}
\caption{(a)-(d): Noisy version of \textit{Wave} for each $\sigma(\cdot)$ scenarios. (e): Typical reconstructions from a single simulation with $n=1024$. The dotted line is the true signal and the solid one depicts the estimates $\widehat{s}_{\widehat{m}_{\mathrm{SH}}}$. (f): Graph of the excess risk $\ell(s_\ast, \widehat{s}_m)$ against the dimension $D_m$ and (shifted) $\mathrm{crit}_{\mathrm{SH}}(m)$ (in a log-log scale). The gray circle represents the global minimizer $\widehat{m}$ of $\mathrm{crit_{\mathrm{SH}}}(m)$ and the black star the oracle model $m_\ast$. (g): Noisy and selected (black) wavelet coefficients (see Figure~\ref{fig:target}(e) for a visual comparison with the original wavelet coefficients).}
\label{fig:singleWave}
\end{figure} 

\end{document}